\title{Unoriented 2-dimensional TQFTs and the category $\Rep(S_t\wr \mathbb Z_2)$}
\author{Agustina Czenky}
\numberwithin{equation}{section}\theoremstyle{plain}
\newcommand{\Coordinate}[2]%
{ \coordinate (#1) at (#2);
}
\newtheoremstyle{defstyle}
{0.5cm}                   
{0.5cm}                   
{\normalfont}           
{}     
{\normalfont\bfseries}  
{:}                     
{0.3cm}              
{\thmname{#1}\thmnumber{ #2}\thmnote{ (#3)}}
\numberwithin{equation}{section}
\newtheorem*{rep@theorem}{\rep@title}
\newcommand{\newreptheorem}[2]{%
	\newenvironment{rep#1}[1]{%
		\def\rep@title{#2 \ref{##1}}%
		\begin{rep@theorem}}%
		{\end{rep@theorem}}}
\newtheorem*{rep@corollary}{\rep@title}
\newcommand{\newrepcorollary}[2]{%
	\newenvironment{rep#1}[1]{%
		\def\rep@title{#2 \ref{##1}}%
		\begin{rep@corollary}}%
		{\end{rep@corollary}}}
\newtheorem{theorem}{Theorem}[section]
\newtheorem{proposition}[theorem]{Proposition}
\newtheorem{corollary}[theorem]{Corollary}
\newtheorem{lemma}[theorem]{Lemma}
\newtheorem{theorem*}{Theorem}
\theoremstyle{definition}
\newtheorem{definition}[theorem]{Definition}
\newtheorem{example}[theorem]{Example}
\newtheorem{remark}[theorem]{Remark}
\newtheorem{conjecture}[theorem]{Conjecture}
\newcommand\pf{\begin{proof}}
	\newcommand\epf{\end{proof}}
\newcommand\Hom{\operatorname{Hom}}
\newcommand\End{\operatorname{End}}
\newcommand\Rep{\operatorname{Rep}}
\newcommand\tr{\operatorname{tr}}
\newcommand\Id{\operatorname{Id}}
\newcommand\UCob{\operatorname{\text{UCob}_2}}
\newcommand\Cob{\operatorname{\text{Cob}_2}}
\newcommand\StC{\operatorname{S_t(\mathcal C)}}
\newcommand\RepSt{\operatorname{\text{Rep}(S_t \wr \mathbb Z_2)}}
\newcommand\VUCob{\operatorname{\text{VUCob}_{\alpha,\beta,\gamma}}}
\newcommand\SUCob{\operatorname{\text{SUCob}_{\alpha,\beta,\gamma}}}
\newcommand\PsUCob{\operatorname{\text{UCob}_{\alpha,\beta,\gamma}}}
\newcommand\OCob{\operatorname{\text{OCob}_{\alpha,\beta,\gamma}}}
\newcommand\PsOCob{\operatorname{\text{OCob}_{\alpha}}}
\newcommand\SOCob{\operatorname{\text{SOCob}_{\alpha}}}
\newtheorem{maintheorem}{Theorem}
\newcommand\figureud{
	\begin{tikzpicture}
		\draw (0,0) to (0,0.3);
		\draw (-0.1,0.1) to (0,0);
		\draw (0.1,0.1) to (0,0);
		\draw (-0.1,0.2) to (0,0.3);
		\draw (0.1,0.2) to (0,0.3);
	\end{tikzpicture}
}
\newcommand\figureXv{
	\begin{tikzpicture}
		\coordinate (O) at (0,0.1);
		\coordinate (A) at (1/9,1/2);
		\coordinate (B) at (0,0.4);
		\coordinate (C) at (1/9,0);
		
		\draw[style=thick] (O)--(A);
		\draw[style=thick,color=black] (B) to  (C);
	\end{tikzpicture}
}
\newcommand\figureXvl{
	\begin{tikzpicture}
		\coordinate (O) at (-0.1,0.1);
		\coordinate (A) at (1/9,0.6);
		\coordinate (B) at (-0.1,0.55);
		\coordinate (C) at (1/9,0);
		
		\draw[style=thick] (O)--(A);
		\draw[style=thick,color=black] (B) to  (C);
	\end{tikzpicture}
}
\newcommand\smallfigureXv{
	\resizebox{2pt}{!}{%
		\begin{tikzpicture}
			\coordinate (O) at (0,0.1);
			\coordinate (A) at (1/9,1/2);
			\coordinate (B) at (0,0.4);
			\coordinate (C) at (1/9,0);

			\draw[style=thick] (O)--(A);
			\draw[style=thick,color=black] (B) to  (C);
	\end{tikzpicture}}
}
\newcommand\figureA{
	\begin{tikzpicture}
		\coordinate (O) at (0,0);
		\coordinate (A) at (1/9,0.45);
		\coordinate (B) at (0,0.4);
		\coordinate (C) at (1/9,0);
		
		\draw[] (O)--(A);
		\draw[color=black] (B) to  (C);
	\end{tikzpicture}
}
\newcommand\figureS{
	\reflectbox{
		\begin{tikzpicture}
			\coordinate (O) at (0,0.1);
			\coordinate (A) at (1/9,1/2);
			\coordinate (B) at (0,0.4);
			\coordinate (C) at (1/9,0);
			
			\draw[style=thick] (O)--(A);
			\draw[style=thick,color=black] (B) to  (C);
	\end{tikzpicture}}
}
\newcommand{\Arrow}{\mathord{\begin{tikzpicture}[baseline=0ex, line width=1, scale=0.13]
			\draw (0,1) -- (2,1);
			\draw (1,2) -- (2,1);
			\draw (1,0)--(2,1);
\end{tikzpicture}}}
\def\l@subsection{\@tocline{2}{0pt}{2.5pc}{5pc}{}}
\def\l@subsubsection{\@tocline{2}{0pt}{2.5pc}{5pc}{}}
\begin{document}

	\begin{abstract}
		We construct a family of unoriented 2-dimensional cobordism theories parametrized by certain triples of sequences. We also prove that some specializations of these sequences yield equivalences with an exterior product of Deligne categories. In particular, one of these specializations recovers the generalized Deligne category $\Rep\left(S_{t} \wr \mathbb Z_2\right)$.
	\end{abstract}
	
	\maketitle
	
	\tableofcontents
	
	\section{Introduction}
	
	Let $\mathbf{k}$ be  an algebraically closed field. Symmetric monoidal functors from the category $\text{Cob}_n$ of  oriented $n$-dimensional cobordisms into a symmetric monoidal category $\mathcal C$ are known as $n$-dimensional $\mathcal C$-valued topological quantum field theories (TQFTs) \cite{A}. In the $2$-dimensional oriented case, the theory of tensor categories has been employed to understand and build examples of TQFTs, see e.g. \cite{Kh, KKO, SP} and references therein. A special feature of $\Cob$ is that it admits a description by generators and relations, which provides an algebraic understanding of topological quantum field theories in the 2-dimensional case. Explicitly, it is well known (see e.g. \cite[Theorem 0.1]{SP}) that 2-dimensional TQFTs $\text{Cob}_2 \to \mathcal C$ are in bijection with commutative Frobenius algebras in $\mathcal C$.
	
	This paper is devoted to the study of \emph{unoriented} $2$-dimensional TQFTs, that is, symmetric monoidal functors from the category $\UCob$ of unoriented $2$-dimensional cobordisms into a symmetric monoidal category. As in the oriented case, $\UCob$ admits a description by generators and relations. We will use the one given in \cite{TT}, see also \cite{AN} for a related construction. Explicitly, the generators are given by the cup, cap, pair of pants, reverse pair of pants, and twist cobordisms, same as in $\Cob$, plus two extra ones, representing the orientation reversing diffeomorphism of the circle, and the M\"oebius band or \emph{crosscap}. The presence of these two extra generators results in a connection with \emph{extended} Frobenius algebras, which are Frobenius algebras with additional structure \cite{TT}.

	A family of \textbf{k}-linear $2$-dimensional TQFTs, one for each rational sequence $\alpha$ in \textbf{k},  was  introduced by Khovanov and Sazdanovic in \cite{KS}. One can   ``linearize" the category $\Cob$ using the sequence $\alpha$ by allowing \textbf{k}-linear combinations of cobordisms and evaluating  closed connected oriented surfaces of genus $g$ to $\alpha_g$. This results in the \textbf{k}-linear tensor category $\text{VCob}_{\alpha}$ of \emph{viewable cobordisms} \cite{KS, KKO}. It can then be quotiented by a tensor ideal defined using the sequence $\alpha$ to produce the category $\text{SCob}_{\alpha}$ of \emph{skein} cobordisms, with objects non-negative integers, and hom spaces $\Hom_{\text{SCob}_{\alpha}}(n,m)$ given by linear combinations of oriented cobordisms from $n$ to $m$ circles. See also \cite{CMS} for a related construction concerning $r$-spin TQFTs, which generalizes some of the results in \cite{KKO}. 
	
	When \textbf{k} has characteristic zero, 2-dimensional cobordisms can be understood in purely algebraic terms using the Deligne category $\Rep(S_t)$. Introduced in \cite{D}, $\Rep(S_t)$  interpolates the categories $\Rep(S_n)$ of finite-dimensional \textbf{k}-representations of the symmetric group $S_n$ from $n\in \mathbb N$ to any parameter $t\in \textbf{k}$. It was observed by Comes in \cite[Section 2.2]{C} that, quotienting $\Cob$ by the tensor ideal arising from the relation that a handle is the identity and evaluating 2-spheres to $t$,  induces an equivalence with $\Rep(S_t)$. In terms of the Khovanov-Sazdanovic construction, if we specialize
	\begin{align*}
		\alpha=(t, t, \dots),
	\end{align*}
	we get an equivalence
	\begin{align*}
	\text{Cob}_{\alpha}\cong \Rep(S_t),
	\end{align*}
	where $\text{Cob}_{\alpha}$ denotes the pseudo-abelian envelope of $\text{SCob}_{\alpha}$. That is, this construction recovers the Deligne category $\Rep(S_t)$ from $\Cob$, and so the categories $\text{SCob}_{\alpha}$ give generalizations of the Deligne categories. 
	
	\medspace
	
	The aim of this paper is to show an analogous story in the unoriented case. It turns out that, in characteristic zero, unoriented 2-dimensional cobordisms can also be understood in purely algebraic terms. We show a connection to  the category $\Rep(S_t \wr \mathbb Z_2)$, which interpolates the category $\Rep(S_n\wr \mathbb Z_2)$ of finite dimensional representations of the wreath product $S_n \wr \mathbb Z_2$, see \cite{Kn, M}. Our two main results,  Theorems \ref{maintheorem1} and \ref{maintheorem2}, found in Sections \ref{section:theorem I} and \ref{section:theorem II}, respectively, describe this connection explicitly.
	
 Following the construction in the oriented case, see \cite{Kh, KKO},  we define a family of $2$-dimensional unoriented TQFTs parametrized by sequences $\alpha$, $\beta$ and $\gamma$. We still evaluate orientable closed surfaces of genus $g$ to $\alpha_g$. However, the presence of the two extra sequences $\beta$ and $\gamma$ is due to the existence of unorientable surfaces: unorientable surfaces with one or two crosscaps and genus $g$ are evaluated to $\beta_g$ and $\gamma_g$, respectively. We thus start by constructing the category $\VUCob$ of viewable unoriented cobordisms, obtained by linearizing $\UCob$ and evaluating unoriented closed surfaces via $\alpha, \beta$ and $\gamma$. When these sequences satisfy certain conditions, we can quotient $\VUCob$ by the tensor ideal generated by the \emph{handle relation} associated to these sequences, see Section \ref{SUCob}. The resulting category  $\SUCob$ has objects non-negative integers and morphisms given by linear combinations of unoriented cobordisms from $n$ to $m$ circles,  with up to a certain number of handles. Hence the morphism spaces $\Hom_{\SUCob}(n,m)$ are finite dimensional. We denote the pseudo-abelian closure of $\SUCob$ by $\PsUCob$, see Table \ref{table:1}.

	Assume from now on that $\mathbf k$ has characteristic zero. For a spherical tensor category $\mathcal C$, we denote by $\underline{\mathcal C}$ its quotient by negligible morphisms. Our first main result, Theorem \ref{maintheorem1}, considers theories that evaluate unorientable closed surfaces to zero and orientable closed surfaces via a geometric progressions.
	\begin{reptheorem}{maintheorem1}
		Let $\alpha=(\alpha_0, \lambda \alpha_0, \lambda^2 \alpha_0, \dots)$ and $\beta=(0,0,  \dots)=\gamma$ be sequences in $\mathbf k$, for $\alpha_0, \lambda \in \mathbf k^{\times}$
	. We have an equivalence of symmetric tensor categories
	\begin{align*}
		\underline{\operatorname{OCob}_{\alpha}}\simeq  \underline{\Rep(\text{S}_t\wr \mathbb Z_2 )},
	\end{align*}
	where $t=\frac{\lambda \alpha_0}{2}$, and $\operatorname{OCob}_{\alpha}$ is the quotient of $\operatorname{SUCob}_{\alpha, \beta, \gamma}$ by the relation $\theta=0$, where $\theta$ denotes the crosscap cobordism. 
	\end{reptheorem}
	
	\begin{repcorollary}{Corollary I}
	Let $\alpha, \beta$ and $\gamma$ be as above. If $\lambda\alpha_0$ is not a non-negative even integer, then 
	\begin{align*}
		\operatorname{OCob}_{\alpha}\simeq \Rep(S_t\wr \mathbb Z_2).
	\end{align*}
	In particular, $\operatorname{OCob}_{\alpha}$ is semisimple.
\end{repcorollary}

	The proofs of Theorem \ref{maintheorem1} and Corollary \ref{Corollary I} can be found in Section \ref{section:theorem I}.
	
	We state now our second main result, where we consider theories that evaluate all closed surfaces via geometric progressions, and show their connection with a product of Deligne categories. 
	
\begin{reptheorem}{maintheorem2}
		Let $\alpha_0, \beta_0, \gamma_0, \lambda\in \mathbf{k}^{\times}$, and consider  the sequences  $$\alpha=(\alpha_0, \lambda \alpha_0, \lambda^2 \alpha_0, \dots), \ \ \beta=(\beta_0, \lambda \beta_0, \lambda^2 \beta_0, \dots) \ \text{and} \  \gamma=(\gamma_0, \lambda \gamma_0, \lambda^2 \gamma_0, \dots).$$  
		We have an equivalence
		\begin{align*}
		\underline{\operatorname{UCob}_{\alpha,\beta, \gamma}}\simeq  \underline{\Rep(\text{S}_t\wr \mathbb Z_2 )}\boxtimes \underline{\Rep(S_{t_+})}\boxtimes \underline{\Rep(S_{t_-})},
	\end{align*}
		where $t= \frac{\lambda}{2}\alpha_0 - \frac{1}{2}\gamma_0$, $t_+=\frac{\sqrt{\lambda}}{2}\beta_0 + \frac{1}{2}\gamma_0$ and $t_-=-\frac{\sqrt{\lambda}}{2}\beta_0 + \frac{1}{2}\gamma_0$. 
	\end{reptheorem}


	See Section \ref{section:theorem II} for the proof of Theorem \ref{maintheorem2}.
	
	\medspace
	This paper is organized as follows. A brief introduction to unoriented TQFTs, extended Frobenius algebras, and representation categories of wreath products in non-integral rank  is given in Section \ref{section:preliminaries}. A detailed description of the category of unoriented cobordisms in dimension 2  is given in Section \ref{section: unoriented 2dim}. In Section 4 we define and provide a full description of the categories $\VUCob$ and $\SUCob$; the table below contains a brief summary of the notation introduced. A decomposition  of $\PsUCob$ into an exterior product of categories is  obtained in Section \ref{section: direct sums}. Lastly, Sections \ref{section:SOCob} and \ref{section:theorem II} contain the precise statements and proofs of Theorem I and Theorem II, respectively.

	\begin{table}[H]\label{table}
		\centering
		\begin{tabular}{|m{2.5cm}|m {9cm}|}
			\hline
			Notation&  Category  \\
			\hline
			$\UCob$  & Unoriented 2-dimensional cobordisms.  \\
			\hline
			$\VUCob$  & Viewable unoriented cobordisms. Closed components are evaluated via $\alpha, \beta, \gamma.$ \\
			\hline
			$\SUCob $ & Quotient of $\VUCob$
			by the handle relation.    \\
			\hline
			$\PsUCob$ & Pseudo-abelian envelope of  $\SUCob.$\\
			\hline
			$	\SOCob  $& For $\beta=\gamma=(0,\dots)$, quotient of $\SUCob$
			by $\theta=0$.    \\	
			\hline
			$	\PsOCob$  &Pseudo-abelian envelope of  $\SOCob.$   \\
			\hline
		\end{tabular}
		\caption{}
		\label{table:1}
	\end{table}

	\settocdepth{part}
	\section*{Acknowledgements}
	\settocdepth{subsection}
I would like to warmly thank my advisor Victor Ostrik for posing
the problem, for providing insightful advice, and for his numerous comments that made the exposition of this work much clearer.  I also
thank the referee for carefully reading this work and for their many helpful suggestions.

	\section{Preliminaries}\label{section:preliminaries}
	We work over an algebraically closed field \textbf{k}.  We denote by $\text{Vec}$ the symmetric tensor category of finite dimensional vector spaces over \textbf{k}, and by $\Rep(G)$ the category of finite dimensional representations of a finite group $G$ over $\mathbf k$. 
	\subsection{Monoidal categories}
	We start by recalling some basic definitions.
	
	\begin{definition}
		$\diamondsuit$	We call a category $\mathcal C$ \emph{$\mathbf k$-linear} if for each pair of objects $X,Y\in \mathcal C$ the set $\Hom_{\mathcal C}(X,Y)$ has a \textbf{k}-module structure such that composition of morphisms is \textbf{k}-bilinear.
		
		$\diamondsuit$ We say $\mathcal C$ is \emph{additive} if it is \textbf{k}-linear and for every finite sequence of objects $X_1,\dots, X_n $ in $\mathcal C$, there exists their direct sum $X_1 \oplus \dots \oplus X_n$ in $\mathcal C$.
		
		$\diamondsuit$  We say $\mathcal C$ is \emph{Karoubian} if it is \textbf{k}-linear and every idempotent $e=e^2:X \to  X$ in $\mathcal C$ has a kernel, and hence also a cokernel.
		
		$\diamondsuit$  We say $\mathcal C$ is \emph{pseudo-abelian} if it is additive and Karoubian.
		
	\end{definition}
	
	If $\mathcal C$ is Karoubian then for any idempotent $e\in \End_{\mathcal C}(X)$ there exists its image $eX\in \mathcal C$. That is, we have a direct sum decomposition $X\simeq eX \oplus (1-e) X.$
	
	Starting with a \textbf{k}-linear category we can construct new categories by formally adding images of idempotents or direct sum of objects.
	
	\begin{definition}\label{karoubian}
		Let $\mathcal C$ be a \textbf{k}-linear category. 
		
		$\diamondsuit$ We define the \emph{additive envelope} $\mathcal{A(C)}$ as the category with:
		\begin{itemize}
			\item Objects:  Finite formal sums $X_1 \oplus \dots \oplus X_m$ of objects in $\mathcal C$.
			\item Morphisms: For every $X_1\oplus \dots X_m, Y_1\oplus \dots Y_n$ in $\mathcal{A(C)}$, let 
			$$\Hom_{\mathcal{A(C)}}(X_1\oplus \dots X_m, Y_1\oplus \dots \oplus Y_n) := \bigoplus\limits_{i,j} \Hom_{\mathcal C}(X_i, Y_j).$$ 
			Composition of morphisms is given by matrix multiplication.
		\end{itemize}
		
		$\diamondsuit$ We define the \emph{Karoubian envelope} $\mathcal K(\mathcal C)$ of $\mathcal C$ as the category with:
		\begin{itemize}
			\item Objects: Pairs $(X,e),$ where $X$ is an object in $\mathcal C$ and $e$ is an idempotent in $\End_{\mathcal C}(X).$
			\item Morphisms: For every $(X,e), (Y,f)\in \mathcal{K(C)}$, let $$\Hom_{\mathcal{K(C)}}((X,e),(Y,f))= f  \circ \Hom_{\mathcal C}(X,Y) \circ e.$$
			Composition of morphisms is as expected.
		\end{itemize}
		$\diamondsuit$ We define the \emph{pseudo-abelian envelope} of $\mathcal C$ as the category $\mathcal{P(C)}:= \mathcal{K(A(C))}$.
	\end{definition}
	
	We note that the operations above do not commute in general. These constructions take a \textbf{k}-linear category  and return an additive, Karoubian or pseudo-abelian category, respectively.

	\begin{definition}
		$\diamondsuit$ A \emph{monoidal category} is a collection $(\mathcal{C},\otimes,a, \mathbb{1},l,\emph{r})$, where $\mathcal{C}$ is a category, $\otimes : \mathcal{C} \times \mathcal{C} \to \mathcal{C}$ is a  bifunctor called \emph{tensor product}, $\mathbb 1 \in \mathcal{C}$ is called the  \emph{unit object} and  $a_{X,Y,Z}:(X\otimes Y) \otimes Z \to X \otimes (Y \otimes Z),\ r_X:X \otimes\mathbb{1} \to X, \ \text{and} \ l_X:\mathbb{1} \otimes X \to X$ are natural isomorphisms for $X,Y,Z \in \mathcal{C}$, called associativity and unit constraints, respectively,  subject to the so-called pentagon and triangle axioms, see \cite[Section 2.1]{EGNO}.
		
		$\diamondsuit$	A category $\mathcal C$ is a \emph{tensor category} if it is both \textbf{k}-linear and monoidal, and the tensor bifunctor is \textbf{k}-linear. 
	\end{definition}
	
	We remark that our definition of tensor category is not the standard one, as it is usually required for a tensor category to be abelian, see \cite{EGNO}.
	
	\begin{definition}
		$\diamondsuit$	A \emph{braided monoidal category} is a monoidal category $\mathcal C$ endowed with a natural isomorphism $c_{X,Y} : X \otimes  Y \to  Y\otimes X$, $X, Y \in \mathcal C$, called \emph{braiding}, subject to the so-called hexagon axioms, see \cite[Definition 8.1.1]{EGNO}.
		
		$\diamondsuit$	We say a braided monoidal category $\mathcal C$ is \emph{symmetric} if $c_{Y,X}c_{X,Y} = \operatorname{id}_{X\otimes Y},$ for all $X,Y \in \mathcal C$.
	\end{definition}
	
	In this paper, we will always assume that the unit object $\mathbb 1$ satisfies $\End_{\mathcal C}({\mathbb 1})\simeq \textbf{k}$.

	\subsection{Unoriented cobordisms and unoriented TQFTs}
	
	We denote by $\operatorname{Cob}_n$ the category of  $n$-dimensional oriented cobordisms. This is a symmetric monoidal category, with monoidal structure given by disjoint union, see for example \cite{Ko}.

	\begin{definition}\label{def of UCobn}
		Let $\text{UCob}_n$ be  the category of $\emph{n-dimensional unoriented cobordisms}$ defined as follows:
		
		$\diamondsuit$ Objects: closed $(n-1)$-dimensional smooth manifolds.
		
		$\diamondsuit$ Morphisms: given two objects $X, Y$, we define $\Hom_{\text{UCob}_n}(X,Y)$ to be the space consisting of $n$-dimensional smooth compact manifolds with boundary $M$, together with a diffeomorphism of their boundary $\partial M\simeq  X \sqcup Y$. Such two manifolds are considered to be the same if they differ by a diffeomorphism which induces  a diffeomorphism between their boundaries that is isotopic to the identity.
		
		$\diamondsuit$ Composition: let $M: X\to Y$ and $N:Y \to Z$ be morphisms in $\text{UCob}_n$. Since $\partial M\simeq X \sqcup Y$ and $\partial N\simeq  Y \sqcup Z$, we have  induced diffeomorphisms $M_0 \simeq Y$ and $N_0 \simeq Y$ for some $M_0\subseteq \partial M$ and $N_0 \subseteq \partial N$. The composition of $M$ and $N$ is  given by glueing $M$ with $N$ via these diffeomorphisms, see \cite{MSS}.
	\end{definition}

	The category $\text{UCob}_n$ is a rigid symmetric monoidal category, with monoidal structure induced by the disjoint union.
	
	\begin{example}\label{example}
		Let $X$ be a closed $(n-1)$-manifold. Then $\Hom_{\text{UCob}_n}(X,X)$ contains cylinder $n$-manifolds $M=X\times [0,1]$.
		In the special case where $n=2$ and $X=S^1$ we have exactly two classes of diffeomorphisms up to isotopy: one which is orientation preserving and another which is orientation reversing. Thus we have exactly two diffeomorphism classes in $\Hom_{\UCob}(S^1, S^1)$,  pictured below:
		\begin{align*}
			\begin{tikzpicture}[tqft/cobordism/.style={draw,thick},
				tqft/view from=outgoing, tqft/boundary separation=30pt,
				tqft/cobordism height=40pt, tqft/circle x radius=8pt,
				tqft/circle y radius=4.5pt, tqft/every boundary component/.style={rotate=90}
				]
				\pic[tqft/cylinder,rotate=90,name=a,anchor={(1,0)}, every incoming
				boundary component/.style={draw,dotted,thick},every outgoing
				boundary component/.style={draw,thick}
				];
				\node at ([yshift=-20pt,xshift=-20pt]a-outgoing boundary 1){\small{Id}};
				\pic[tqft/cylinder,rotate=90,name=k,anchor={(1,-3.5)},every incoming
				boundary component/.style={draw,dotted,thick},every outgoing
				boundary component/.style={draw,thick}];
				\node at ([yshift=-18pt,xshift=-20pt]k-outgoing boundary 1){\small{$\phi$}};
				\node at ([xshift=-20pt]k-outgoing boundary 1){$\leftrightarrow$};
				\node at ([xshift=15pt]k-outgoing boundary 1){.};
			\end{tikzpicture} 
		\end{align*}
		From a slightly different point of view, we can consider the cylinders above as morphisms from $0\to 2$ and $2 \to 0$,  instead of $1\to 1$. This yields two different compositions of cylinders, which result in the Klein bottle and the Torus, respectively:
		
		\begin{tikzpicture}[tqft/cobordism/.style={draw,thick},
			tqft/view from=outgoing, tqft/boundary separation=30pt,
			tqft/cobordism height=40pt, tqft/circle x radius=8pt,
			tqft/circle y radius=4.5pt, tqft/every boundary component/.style={draw,rotate=90},tqft/every incoming
			boundary component/.style={draw,dotted,thick},tqft/every outgoing
			boundary component/.style={draw,thick},scale=0.8]
			\pic[tqft,
			incoming boundary components=0,
			outgoing boundary components=2,
			rotate=90,name=j,anchor={(1,0)}];
			\node at ([xshift=10pt]j-outgoing boundary 1) {$\Arrow$};
			\node at ([xshift=10pt]j-outgoing boundary 2) {$\Arrow$};
			\pic[tqft,
			incoming boundary components=2,
			outgoing boundary components=0,
			rotate=90,name=m,anchor={(1,-0.5)},at=(j-outgoing boundary 1)];
			\node at ([xshift=50pt, yshift=20pt]m-incoming boundary 1) [font=\huge] {\(=\)};
			\node at ([xshift=20pt, yshift=20pt]m-incoming boundary 1) {$\figureud$};
		\end{tikzpicture} 
		\begin{tikzpicture}[scale=0.2]
			\hspace*{-0.5cm} 
			\node[inner sep=0mm] (P1) at (4.9,6.7) {};
			\node[inner sep=0mm] (P2) at (2.5,5.4) {};
			\node[inner sep=0mm] (P3) at (1.6,4) {};
			\Coordinate{e5b}{3.6,1.3}
			\Coordinate{e4l}{0.7,4.8}
			\Coordinate{e4r}{9.9,3.7}
			\Coordinate{si}{4.5,7.4}
			\Coordinate{bottom}{6.8,0.9}
			{[thick,black]
				\draw (e4l) to[out=270,in=160,looseness=1] (P3);
				\draw (P3) to[out=340,in=270,looseness=0.3]  (e4r);
				\draw[name path=P2e4r] (P2) to[out=120,in=80,looseness=3.7] (e4r);
				\draw[name path=P1P1] (P1) to[out=160,in=270,looseness=1] (2.6,9) to[out=90,in=90,looseness=1.3]  (6.6,9.6) to[out=270,in=40,looseness=1]  (P1) ;
				\draw (P2) to[out=315,in=315,looseness=0.5](P1);
				\draw[dashed] (P2) to[out=135,in=135,looseness=0.5] (P1);
				\draw (P2) to[out=220,in=90,looseness=1] (P3);
				\draw (P3) to[out=270,in=150,looseness=1] (e5b);
				\draw (e4l) to[out=270,in=160,looseness=1.3] (e5b);
				\draw (e5b) to[out=340,in=260,looseness=1.1] (e4r);
				\draw[dashed] (P2) to[out=300,in=330,looseness=1] (e5b);
				\draw[dashed] (e4l) to[out=90,in=90,looseness=0.6] (e4r);
				\draw[dashed] (P1) to[out=320,in=190,looseness=0.4] (bottom);
				\draw (P1) to[out=110,in=300,looseness=1] (si);
				\draw (si) to[out=120,in=270,looseness=1] (4,8.5) to[out=90,in=180,looseness=1] (5.2,9.7) to[out=0,in=90,looseness=1] (6,9) to[out=270,in=20,looseness=1] (si);
				\path[name path=e4lsi] (e4l) to[out=90,in=200,looseness=0.8] (si);
				\draw[name intersections={of=e4lsi and P2e4r}] (e4l) to[out=90,in=210,looseness=1] (intersection-1) coordinate (h1);
				\draw[dashed] (intersection-1) to[out=30,in=200,looseness=0.6] (si);
			}
			\node at (13,5) {,};
		\end{tikzpicture}
		\begin{tikzpicture}[tqft/cobordism/.style={draw,thick},
			tqft/view from=outgoing, tqft/boundary separation=30pt,
			tqft/cobordism height=40pt, tqft/circle x radius=8pt,
			tqft/circle y radius=4.5pt, tqft/every boundary component/.style={draw,rotate=90},tqft/every incoming
			boundary component/.style={draw,dotted,thick},tqft/every outgoing
			boundary component/.style={draw,thick},scale=0.8]
			\hspace*{-0.5cm}
			\pic[tqft,
			incoming boundary components=0,
			outgoing boundary components=2,
			rotate=90,name=j,anchor={(1,0)}];
			\node at ([xshift=10pt]j-outgoing boundary 1) {$\Arrow$};
			\node at ([xshift=10pt]j-outgoing boundary 2) {$\Arrow$};
			\pic[tqft,
			incoming boundary components=2,
			outgoing boundary components=0,
			rotate=90,name=m,anchor={(1,-0.5)},at=(j-outgoing boundary 1)];
			\node at ([xshift=50pt, yshift=20pt]m-incoming boundary 1) [font=\huge] {\(=\)};
		\end{tikzpicture}
		\begin{tikzpicture}
			\begin{scope}[scale=0.8,shift={(0,10)}]		
				\hspace*{-0.5cm}
				\draw[fill=white,style=thick] (-17, 1.5) ellipse (1.5cm and 0.8cm);
				\draw[style=thick] (-17.3,1.5) to[out=90, looseness = 0.7,in=90] (-16.6,1.5); 
				\draw[style=thick] (-17.5,1.6) to[out=270, looseness = 0.4,in=270] (-16.4,1.6);
				\node at (-15,1.5) {.};
			\end{scope}  
		\end{tikzpicture}
	\end{example}
	
	\begin{definition}
		An n-dimensional $\mathcal C$-valued \emph{unoriented topological quantum field theory (TQFT)}  is a symmetric monoidal functor $\text{UCob}_n \to \mathcal C$ for some symmetric monoidal category $\mathcal C$.
	\end{definition}
	
	There is a similar definition of $\mathcal C$-valued \emph{oriented} TQFTs $\operatorname{Cob}_n\to \mathcal C$, and it is well known they are in bijection with Frobenius algebras in $\mathcal C$, see for example \cite{Ko}. We discuss in the following section the relation between unoriented TQFTs and extended Frobenius algebras. 
	
	\subsection{Extended Frobenius algebras}
	In this section we recall some algebraic structures in monoidal categories, with the goal of talking about extended Frobenius algebras and their relation to unoriented TQFTs.
	
	\begin{definition}Let $\mathcal C$ be a monoidal category. 
		\begin{itemize}
			\item An \emph{algebra} in $\mathcal C$ is a triple $(A, m_A, u_A),$ consisting of an object $A \in \mathcal C$, and morphisms $m_A : A \otimes  A \to A$ and $u_A :\mathbb  1 \to A$ in $\mathcal C$, called \emph{multiplication} and \emph{unit}, satisfying the following commutative diagrams:
			\begin{equation}
				\begin{tikzcd}  [row sep=large,column sep=huge]
					A\otimes A \otimes A \arrow{r}{m_A\otimes \Id_A } \arrow{d}{ \Id_A\otimes m_A} &  A\otimes A\arrow{d}{m_A} \\
					A\otimes A \arrow{r}{m_A} &A 
				\end{tikzcd}
				,
				\begin{tikzcd}  [row sep=large,column sep=huge]
					\mathbb 1\otimes A \arrow{rd}{\sim}\arrow{r}{u_A\otimes \Id_A} \arrow{d}{ \Id_A\otimes u_A} &  A\otimes A\arrow{d}{m_A}\\
					A\otimes A \arrow{r}{m_A} &  A.
				\end{tikzcd}
			\end{equation}
			A \emph{ morphism of algebras} $(A, m_A, u_A)$ and $(B, m_B, u_B)$ is a morphism $f : A \to B$ in $\mathcal C$ so that	$fm_A = m_B(f \otimes f)$ and $fu_A = u_B$. 
			
			\item A \emph{coalgebra} in $\mathcal C$ is a triple $(A, \Delta_A, \varepsilon_A),$ consisting of an object $A \in \mathcal C$, and morphisms $\Delta_A : A \to  A\otimes A$ and $\varepsilon_A  : A \to \mathbb 1$ in $\mathcal C$, called \emph{comultiplication} and \emph{counit},   satisfying the following commutative diagrams:
			\begin{equation}\label{frob algebra diagram}
				\begin{tikzcd}  [row sep=large,column sep=huge]
					A\arrow{r}{\Delta_A } \arrow{d}{\Delta_A} &  A\otimes A \arrow{d}{ \Delta_A\otimes \Id_A}\\
					A\otimes A \arrow{r}{\Id_A\otimes \Delta_A} &A \otimes A \otimes A
				\end{tikzcd}
				,
				\begin{tikzcd}  [row sep=large,column sep=huge]
					A \arrow{rd}{\sim}\arrow{r}{\Delta_A} \arrow{d}{ \Delta_A} &  A\otimes A\arrow{d}{\varepsilon_A\otimes \Id_A}\\
					A\otimes A \arrow{r}{\Id_A \otimes \Delta_A} &  \mathbb 1\otimes A.
				\end{tikzcd}
			\end{equation}
			A \emph{morphism of coalgebras} $(A, \Delta_A, \varepsilon_A)$ and $(B, \Delta_B, \varepsilon_B)$ is a morphism $f : A \to B$ in $\mathcal C$ so that $\Delta_Bf = (f \otimes  f)\Delta_A$ and $\varepsilon_Bf = \varepsilon_A$.
			
			\item A \emph{Frobenius algebra} in $\mathcal C$ is a 5-tuple  $(A, m_A, u_A, \Delta_A, \varepsilon_A)$ where $(A, m_A, u_A)$ is an algebra in $\mathcal C$ and
			$(A, \Delta_A, \varepsilon_A)$ is a coalgebra in $\mathcal C$, so that the following diagram commutes
			\begin{equation}
				\begin{tikzcd}  [row sep=large,column sep=huge]
					A\otimes A\arrow{r}{\Id_A\otimes \Delta_A } \arrow{d}{\Delta_A\otimes \Id_A}\arrow{dr}{\Delta_A m_A} &  A\otimes A\otimes A \arrow{d}{ m_A\otimes \Id_A}\\
					A\otimes A\otimes A \arrow{r}{\Id_A\otimes m_A} &A \otimes A.
				\end{tikzcd}
			\end{equation}
			
			A \emph{morphism of Frobenius algebras} $f : A \to B$ is a both an algebra and coalgebra map. 
		\end{itemize}
	\end{definition}
	\begin{definition}
		Let $\mathcal C$ be a braided monoidal category with braiding $c_{X,Y}:X\otimes Y \xrightarrow{\sim}Y\otimes X$ for all $X,Y\in \mathcal C$.
		\begin{itemize}
			\item  A \emph{commutative algebra} in $\mathcal C$ is an algebra $(A, m_A , u_A)$ in $\mathcal C$ such that $m_A \ c_{A,A} = m_A$. 
			\item A \emph{commutative Frobenius algebra} in $\mathcal C$ is a Frobenius algebra that is commutative as an algebra. 
		\end{itemize}
		
	\end{definition}

	It is well known that commutative Frobenius algebras in a symmetric tensor category $\mathcal C$ are in  bijection with 2-dimensional oriented TQFTs $\text{Cob}_2 \to \mathcal C$ (see e.g. \cite[Theorem 0.1]{SP}). 
	An analogous correspondence takes place between extended Frobenius algebras and unoriented TQFTs. We give a precise statement in what follows.
	
	\begin{definition}\cite{TT}
		Let $\mathcal C$ be a braided monoidal category. An \emph{extended Frobenius algebra} in $\mathcal C$ is a 7-tuple $(A, m_A, u_A, \Delta_A, \varepsilon_A, \phi_A, \theta_A),$ where $(A, m_A, u_A, \Delta_A, \varepsilon_A)$ is a commutative Frobenius algebra in $\mathcal C,$  $\phi_A:A \to A$ and $\theta_A: \mathbb{1} \to A$ are morphisms of Frobenius algebras, $\phi_A^2=\text{id}_A,$ and the diagrams
		\begin{equation}\label{extended Frob alg} 
			\begin{tikzcd}  
				\mathbb{1}\otimes A \arrow{rr}{\theta_A\otimes \text{id}_A  } \arrow{d}{ \theta_A\otimes 1} &&  A\otimes A\arrow{d}{m_A} \\
				A\otimes A \arrow{r}{m_A} &A \arrow{r}{\phi_A} & A
			\end{tikzcd}
			,
			\begin{tikzcd}  
			\mathbb 	1 \arrow{r}{u_A} \arrow{d}{ \sim} &  A\arrow{r}{\Delta_A}&A\otimes A \arrow{r}{\phi_A \otimes \text{id}_A}& A\otimes A\arrow{d}{m_A}\\
			\mathbb 	1\otimes  \mathbb 1 \arrow{r}{\theta_A \otimes \theta_A} &  A\otimes A\arrow{rr}{m_A}&&A
			\end{tikzcd}
		\end{equation}
		commute. 
	\end{definition}

\begin{example} \label{algebra of functions}
	Let $n\geq 1$ and let $S_n\wr \mathbb Z_2$ denote the wreath product of $S_n$ and $\mathbb Z_2,$ where $S_n$ acts on $\mathbb Z_2^n$ by permutation of indices. 
	 Let $\mathcal C=\Rep(S_n\wr \mathbb Z_2)$. For $\lambda \in \mathbf k^{\times}$, we define $A=A_{\lambda}\in \mathcal C$ to be the algebra of functions on the set $\{x_1, x_{-1},\dots, x_n, x_{-n}\}$ over \textbf{k}, with unit and multiplication maps given by 
	\begin{align*}
		u_A : \mathbf k &\to A,& 	 m_A:A\otimes A &\to A,\\
		1&\mapsto \sum\limits_{i=1}^n (\delta_i+\delta_{-i}) &	f\otimes g &\mapsto fg
	\end{align*} 
	respectively, where $\delta_i(x_j)=\delta_{i,j}$ and $\delta_{-i}(x_{-j})=\delta_{i,j}$ for all $1\leq i,j\leq n$. 
		Note that $A$ is in fact in $\mathcal C$, since $S_n\wr \mathbb Z_2$ acts on $\{x_1, x_{-1}, \dots, x_{n}, x_{-n}\}$ by permuting indices, and thus also acts on its algebra of functions $A$.

	We show that $(A, m_A, u_A, \Delta_A, \varepsilon_A, \phi_A, \theta_A)$ is an extended Frobenius algebra in $\mathcal C$, where
	\begin{align*}
		\varepsilon_A: A &\to \mathbf k, &\Delta_A:A&\to A\otimes A, 	&\phi_A :A &\to A, &\theta_A : \mathbf k &\to A\\
		f&\mapsto \frac{1}{\lambda}\sum\limits_{i=1}^n(f(x_i)+f(x_{-i}))&\delta_i &\mapsto  \lambda\delta_i\otimes \delta_i, 	&\delta_{i}&\mapsto \delta_{-i}, & 1 &\mapsto 0\\
		&&\delta_{-i} &\mapsto  \lambda\delta_{-i}\otimes \delta_{-i}, &\delta_{-i} &\mapsto \delta_{i},
	\end{align*} 
	for all $1\leq i \leq n$ and for all $f,g : \{x_1, x_{-1}, \dots, x_{n},x_{-n}\} \to \textbf{k}$.

	It is easy to check that  $(A, u_A, m_A)$ is an algebra and $(A, \varepsilon_A, \Delta_A)$ is a coalgebra in $\mathcal C$. We compute
	\begin{align*}
		&(m_A\otimes \Id_A)(\Id_A\otimes \Delta_A)(\delta_i\otimes \delta_i)=\lambda (m_A\otimes \Id_A)(\delta_i\otimes \delta_i \otimes \delta_i)= \lambda \delta_i\otimes \delta_i = \Delta_A m_A(\delta_i \otimes \delta_i), \text{ and}\\
		&(m_A\otimes \Id_A)(\Id_A\otimes \Delta_A)(\delta_i\otimes \delta_j)=\lambda (m_A\otimes \Id_A)(\delta_i\otimes \delta_j \otimes \delta_j)= 0 = \Delta_A m_A(\delta_i \otimes \delta_i), \
	\end{align*}
	for  $ i \ne j \in\{1,-1, \dots, n,-n\}$.  Thus $(A, u_A, m_A,\varepsilon_A, \Delta_A)$ is a  Frobenius algebra in $\mathcal C$, which is clearly commutative. 
	On the other hand, $\phi(\theta_A) =\theta_A$ trivially, and 
	\begin{align*}
		m_A(\phi_A \otimes \Id)(\Delta_A(1)) &=  \lambda m_A(\phi_A \otimes \Id)\left(\sum\limits_{i=1}^n (\delta_i\otimes \delta_i)+(\delta_{-i}\otimes\delta_{-i})\right) \\
		&	= \lambda m_A\left(\sum\limits_{i=1}^n (\delta_{-i}\otimes \delta_i)+(\delta_{i}\otimes\delta_{-i})\right) \\&=0, \text{ and}\\
		m_A(\theta_A \otimes \theta_A)&=m_A(0)=0.
	\end{align*}
	Hence all the conditions for an extended Frobenius algebra are satisfied, see Definition \ref{extended Frob alg}. 
\end{example}
	
	\begin{proposition}\label{frob algebras and TQFTs}Let $\mathcal C$ be a symmetric monoidal category.
		Isomorphism classes of unoriented 2–dimensional $\mathcal C$-valued TQFTs are in bijective correspondence with isomorphism classes of extended Frobenius algebras in $\mathcal C$.
	\end{proposition}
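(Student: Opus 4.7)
The plan is to prove the bijection in both directions by exploiting a generators-and-relations presentation of $\UCob$, as given in \cite{TT}. In one direction, given an unoriented TQFT $Z:\UCob\to \mathcal C$, I would set $A:=Z(S^1)$ and define the structure maps by applying $Z$ to the basic generating cobordisms: the pair of pants yields $m_A$, the cup yields $u_A$, the reverse pair of pants yields $\Delta_A$, the cap yields $\varepsilon_A$, the orientation-reversing cylinder yields $\phi_A$, and the crosscap (Möbius band viewed as a cobordism $\emptyset\to S^1$) yields $\theta_A$. Symmetric monoidality of $Z$ together with the interpretation of the twist cobordism as the braiding in $\mathcal C$ forces commutativity of the resulting algebra.

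Next I would verify that this data forms an extended Frobenius algebra. The associativity, unit, coassociativity, counit, Frobenius and commutativity axioms follow by applying $Z$ to the well-known isotopy identities between compositions of the orientable generators, exactly as in the oriented case \cite[Theorem 0.1]{SP}. The two new axioms in \eqref{extended Frob alg} translate directly into two identities among unoriented surfaces with crosscaps: the first expresses that sliding a crosscap along a cylinder conjugates by the orientation reversal, and the second encodes the diffeomorphism $\Sigma_{0,1}\#\mathbb{RP}^2\#\mathbb{RP}^2 \cong \Sigma_{0,1}\#K$ (equivalently, that two crosscaps merged via the Frobenius structure are related to one crosscap twisted by $\phi_A$). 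The involution condition $\phi_A^2=\mathrm{id}_A$ follows since the orientation-reversing cylinder squares to the identity cylinder up to isotopy.

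For the reverse direction, starting from an extended Frobenius algebra $(A,m_A,u_A,\Delta_A,\varepsilon_A,\phi_A,\theta_A)$, I would construct $Z$ by defining $Z(n)=A^{\otimes n}$ on objects and $Z(g)$ on a generating cobordism $g$ to be the corresponding structure map, extended to arbitrary morphisms by composition and tensor product. The content is well-definedness: one must show that any two factorizations of the same unoriented cobordism as a composition of generators give the same morphism in $\mathcal C$. This is exactly where the generators-and-relations presentation of \cite{TT} is used: the relations in $\UCob$ are precisely the axioms of a commutative Frobenius algebra together with the two relations in \eqref{extended Frob alg} and the involutivity of $\phi_A$, so each relation in $\UCob$ maps to an identity that holds by assumption.

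The two constructions are mutually inverse up to isomorphism essentially tautologically: starting from $Z$ and re-extracting a TQFT from the associated extended Frobenius algebra reproduces $Z$ on generators and hence everywhere; starting from an extended Frobenius algebra and reading off the structure of the associated TQFT at $S^1$ recovers the original maps. The main obstacle is not the algebraic bookkeeping but ensuring a careful application of the Turaev--Turner presentation: one must confirm that the relations listed in \cite{TT} are in bijection with the defining axioms of an extended Frobenius algebra, with no redundancies and no missing relations from unoriented surfaces (in particular the Klein bottle, which must follow from the stated axioms rather than being an extra relation). Once this match is verified, the proposition follows in the same spirit as the classical oriented statement.
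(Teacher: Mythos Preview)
Your proposal is correct and follows essentially the same approach as the paper, which simply cites \cite[Proposition 2.9]{TT} and notes that their proof (via the generators-and-relations presentation of $\UCob$) works verbatim in a general symmetric monoidal category. Your outline fleshes out precisely that argument; the only minor slip is in the topological description of the second extended Frobenius relation---it equates the \emph{twisted handle} (the cobordism $m(\phi\otimes\mathrm{id})\Delta u$, containing no crosscap) with two crosscaps merged, rather than relating two crosscaps to a single crosscap twisted by $\phi$---but this does not affect the correctness of the strategy.
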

	
	A proof for the Proposition above is given in \cite[Proposition 2.9]{TT}, for the case where $\mathcal C$ is the category of modules over a commutative ring $R$. Their proof works verbatim for the general case.

	Explicitly, an extended Frobenius algebra $A$ in a symmetric tensor category $\mathcal C$ gives rise to a symmetric monoidal functor $F_A: \UCob\to \mathcal C$ by mapping the circle object in $\UCob$ to $A$ in $\mathcal C$.

	\subsection{The category $\Rep(S_t \wr G)$} \label{section: StC}
	For this section, we assume $\textbf k$ has characteristic zero. Fix a finite group $G$ and let $\textbf k[G]$ be the regular representation of $G$. 
	
	The symmetric tensor category $\Rep(S_t \wr G),$ introduced by Knop in \cite{Kn}, interpolates the category $\Rep(S_n\wr G)$ of representations of the wreath product $ S_n\wr G$ from $n\in \mathbb N$ to $t\in \mathbf k$. In \cite{M}, Mori introduced a 2-functor  which sends a tensor category $\mathcal C$ to a new tensor category $\StC$. When $\mathcal C=\Rep(G)$, we have that $S_t(\mathcal C)=\Rep(S_t\wr G)$ as defined by Knop, see \cite[Remark 4.14]{M}.
	
	
	We will give a brief summary of the construction and graphical description of $S_t(\mathcal C)$ as presented in \cite{M}. We will use this description later on to prove Theorems I and II. 
	\begin{definition}
		Let $I_1, \dots, I_l$ be finite sets. A \emph{recollement} of $ I_1, \dots , I_l$ is an equivalence relation $r$ on  $I_1\sqcup \dots· \sqcup I_l$ such that for any $k = 1, \dots , l $ and $i, i'  \in I_k$, if $i \sim_r i'$ then $i = i'$.
	\end{definition}
	We denote by $R(I_1, \dots , I_l)$ the set of recollements of $ I_1, \dots , I_l$. For example, any element in $R(I, J)$ is  of the form
	$$r = \{\{i, j\}, \dots , \{i'\}, \dots , \{j'\}, \dots \}$$
	where $i, i'\in I$ and $j, j' \in J$.  
	
	For $\{a_1,\dots, a_p\} \subseteq \{1, \dots , l\}$, let $\pi_{a_1\dots,a_p}: R(I_1, \dots , I_l) \to  R(I_{a_1}, \dots , I_{a_p})$ be the map that restricts the equivalence relation $R(I_1, \dots, I_l)$ to  $I_{a_1}\sqcup \dots \sqcup I_{a_p} \subset I_1 \sqcup \dots \sqcup I_l$. Given finite sets $I,J,K$ and $r\in R(I,J), s\in R(J,K)$, let
	\begin{align*}
		R(s \circ r) = \{u \in  R(I, J, K) \  |\  \pi_{1,2}(u) = r, \pi_{2,3}(u) = s\}.
	\end{align*}

	\begin{definition} \cite[Definition 2.13]{M}
		Let $\mathcal C$ be a \textbf{k}-linear category, and let $t\in \textbf k$. Then $\StC$ is the pseudo-abelian envelope of the category defined as follows:
		
		$\diamondsuit$ \textbf{Objects:} Finite families $U_I=(U_i)_{i\in I}$ of objects in $\mathcal C$. We denote them by $\langle U_I\rangle_t.$
		
		$\diamondsuit$ \textbf{Morphisms:} For objects $\langle U_I \rangle_t$ and $\langle V_J\rangle_t$,
		\begin{align}\label{morphisms in StC}
			\Hom_{\StC}(\langle U_I \rangle_t, \langle V_J \rangle_t) \simeq \bigoplus_{r\in R(I,J)} \left(    \bigotimes_{(i,j)\in r} \Hom_{\mathcal C}(U_i, V_j)\right),
		\end{align}
		where for each $\Phi$ on the right hand side, we denote by $\langle \Phi \rangle_t$ the corresponding morphism in $\StC$. 
		
		$\diamondsuit$  \textbf{Composition:} For $\Phi \in \bigotimes\limits_{(i,j)\in r} \Hom_{\mathcal C}(U_i, V_j)$ and $\Psi \in \bigotimes\limits_{(j,k)\in s} \Hom_{\mathcal C}(V_j, W_k)$, composition is given by
		\begin{align*}
			\langle \Psi \rangle_t \circ \langle \Phi \rangle_t  := \sum_{u\in R(s\circ r)} P_u(t) \langle \Psi \circ_u \Phi \rangle_t,
		\end{align*}
		where  we denote by $ \Psi \circ_u \Phi$
		the element obtained by composing terms of $\Phi \otimes \Psi$ using 
		$$\Hom_{\mathcal C}(U_i, V_j) \otimes \Hom_{\mathcal C}(V_j, W_k) \to \Hom_{\mathcal C}(U_i, W_k),$$
		for all $(i, j, k) \in u$, and $P_u(t)$ is the polynomial
		$$P_u(t) =\prod_{\# \pi_{1,3}(u)\leq a < \# u} (t -a).$$
	\end{definition}
	The unit object $ \mathbb 1_{\StC}$ of $\StC$ is the object corresponding to the empty family. 
	
	\begin{remark}
		In this language, Deligne’s category $\Rep(S_t, \mathbb C)$ as defined in \cite{D} is equivalent to $S_t(\text{Vec})$, see \cite[Remark 4.14]{M}.
	\end{remark}
	
	Having a monoidal structure in $\mathcal C$ induces a monoidal structure in $S_t(\mathcal C)$, with tensor products defined as follows.
	
	\begin{definition}\cite[Definition 4.16]{M}
		Let $\mathcal C$ be a tensor category. Define tensor products in $S_t(\mathcal C)$ by:
		\begin{itemize}
			\item For families $U_I=(U_i)_{i\in I}$ and $V_J=(V_j)_{j\in J}$ in $\mathcal C$, 
			\begin{align*}
				\langle U_I\rangle_t\otimes  \langle V_J\rangle_t :=\bigoplus_{r\in R(I,J)} \langle  (U_i\otimes V_j)_{(i,j)\in r} \rangle_t.
			\end{align*}
			\item For families $U_I=(U_i)_{i\in I}, V_J=(V_j)_{j\in J}, W_K=(W_k)_{k\in K}$ and $X_L=(X_l)_{l\in L}$, and morphisms $\Phi \in \bigotimes\limits_{(i,j)\in r} \Hom_{\mathcal C}(U_i, V_j)$ and $\Psi \in \bigotimes\limits_{(k,l)\in s} \Hom_{\mathcal C}(W_k, X_l)$,
			\begin{align*}
				\langle \Phi\rangle_t \otimes \langle \Psi \rangle_t :=\sum\limits_{u\in R(r\otimes s)} \langle \Phi \otimes_u \Psi\rangle_t,
			\end{align*}
			where $\Phi\otimes_u\Psi$ is obtained by composing terms of $\Phi \otimes \Psi$ using tensor products 
			\begin{align*}
				\Hom_{\mathcal C}(U_i, V_j)\otimes \Hom_{\mathcal C}(W_k, X_l)\to \Hom_{\mathcal C}(U_i\otimes W_k, V_j\otimes X_l),
			\end{align*}
			for all $(i,k,j,l)\in u$.
		\end{itemize}
	\end{definition}

	\subsubsection{Graphical description of $S_t(\mathcal C)$}
	When $\mathcal C$ is a braided tensor category, it induces a natural braiding in $\mathcal S_t(\mathcal C)$, see \cite[Section 4.5]{M}. In particular, when $\mathcal C$ is symmetric then so is $S_t(\mathcal C)$. 
	
	For $\mathcal C$ a braided tensor category, \cite[Section 4.5]{M} shows  a useful graphical description of  morphisms in $S_t(\mathcal C)$. We will use it later on for $\mathcal C=\Rep(G)$, as it is easier to work with than the definitions above. We give now a quick summary of this graphical description. We note however that we make a minor change, as we represent morphisms from left to right, rather than from top to bottom.

	$\diamondsuit$  Represent objects $\langle U_1\rangle_t  \otimes  \dots \otimes \langle U_l\rangle_t $  by labeled points placed vertically:
	\begin{figure}[H]
		\begin{center}
			\begin{tikzpicture}[scale=0.6]
				\begin{scope}[scale=0.8,shift={(0,0)}]
					\node at (-1,0) {$U_1$};
					\node at (-1,1) {$U_2$};
					\node at (0,2) {$\vdots$};
					\node at (-1,3) {$U_l$};
					\draw[fill=black,style=thick] (0,0) circle (0.05cm and 0.05cm);
					\draw[fill=black,style=thick] (0,1) circle (0.05cm and 0.05cm);
					\draw[fill=black,style=thick] (0,3) circle (0.05cm and 0.05cm);
				\end{scope}   
			\end{tikzpicture}
		\end{center}
	\end{figure} 
	
	The unit object $\mathbb 1_{\StC}$ is represented by  \textquotedblleft no points". Morphisms between objects of this form are represented by strings which connect points from left to right. Since objects of the form $\langle U_1\rangle_t  \otimes  \dots \otimes \langle U_l\rangle_t $ generate $S_t(\mathcal C)$ as a pseudo-abelian category, it is enough to consider morphisms between them \cite[Corollary 4.18]{M}.

	$\diamondsuit$ For each morphism $\Phi: U \to V$ in $\mathcal C$, represent the corresponding morphism in $\StC$ by a string with a label:
	\begin{align*}
		\begin{aligned}
			\begin{tikzpicture}[block/.style={draw, rectangle, minimum height=0.5cm}]
				\node (box) [block] {$\Phi$}
				node (in1)     [left=-.1cm and 1.1cm of box]     {$U$}
				node (out1)    [right=-.1cm and 1.1cm of box]    {$V$}
				node at (-3,0) {$\langle \Phi \rangle_t:=$}
				node at (2,0) {.}
				{(in1) edge[-] (box.west |- in1)
					(box.east |- out1) edge[-] (out1)};
				\draw[fill=black,style=thick] (-1.3,0) circle (0.05cm and 0.05cm);
				\draw[fill=black,style=thick] (1.3,0) circle (0.05cm and 0.05cm);
			\end{tikzpicture}
		\end{aligned}
	\end{align*}
	When $\Phi=\text{id}$, we will sometimes omit the label.

	$\diamondsuit$ Take morphisms $u_{\mathcal C}\in \Hom_{S_t(\mathcal C)}( \mathbb 1_{\StC}, \langle \mathbb 1_{\mathcal C}\rangle_t)$ and $ \varepsilon_{\mathcal C}\in \Hom_{S_t(\mathcal C)}(\langle \mathbb 1_{\mathcal C}\rangle_t, \mathbb 1_{\StC} )$  which correspond to $\Id_{\mathbb 1_{\mathcal C}}$ via their respective isomorphisms to $\Hom_{\mathcal C}(\mathbb 1_{\mathcal C}, \mathbb 1_{\mathcal C})$, and represent them by broken strings:
	\begin{figure}[H]
		\begin{center}
			\begin{tikzpicture}[block/.style={draw, rectangle, minimum height=0.5cm}]
				\node at (-2,0)  {$u_{\mathcal C} :=$};
				\node   at  (-1.3,0)    {$\figureXv$};
				\node   at  (1.5,0)    {$ \mathbb{1}_{\mathcal C}$,};
				\draw[fill=black,style=thick] (1.1,0) circle (0.05cm and 0.05cm);
				\draw[style=thick]  (-1.3,0) to (1.1,0);
			\end{tikzpicture}
			\begin{tikzpicture}[block/.style={draw, rectangle, minimum height=0.5cm}]
				\node at (-2,0)  {$\varepsilon_{\mathcal C} :=$};
				\node   at  (1.5,0)    {$\figureXv$};
				\node   at  (1.7,0)    {.};
				\node   at  (-1.2,0)    {$\mathbb{1}_{\mathcal C}$};
				\draw[fill=black,style=thick] (-0.8,0) circle (0.05cm and 0.05cm);
				\draw[style=thick]  (-0.8,0) to (1.5,0);
			\end{tikzpicture}
		\end{center}
	\end{figure}
	
	$\diamondsuit$ Since $\langle U \otimes V\rangle_t$ is a direct summand of $\langle U\rangle_t \otimes \langle V\rangle_t$ (see \cite[Corollary 4.18]{M}), we have retraction $\mu_{\mathcal C}(U, V): \langle U\rangle_t \otimes \langle V \rangle_t \to \langle U \otimes V\rangle_t$ and section $\Delta_{\mathcal C}(U, V): \langle U \otimes V\rangle_t \to \langle U\rangle_t \otimes \langle V\rangle_t$ morphisms, which we represent by ramifications of strings:
	\begin{figure}[H]
		\begin{center}\resizebox{170pt}{!}{%
				\begin{tikzpicture}[block/.style={draw, rectangle, minimum height=0.5cm}]
					\node at (-3,0.5)  {$\mu_{\mathcal C}(U,V) :=$};
					\node   at  (-1.7,0)    {$U$};
					\node   at  (-1.7,1)    {$V$};
					\node   at  (1.9,0.5)    {$U\otimes V$,};
					\draw[fill=black,style=thick] (-1.4,0) circle (0.05cm and 0.05cm);
					\draw[fill=black,style=thick] (-1.4,1) circle (0.05cm and 0.05cm);
					\draw[fill=black,style=thick] (1.1,0.5) circle (0.05cm and 0.05cm);
					\draw[style=thick]  (-0.1,0.5) to   (1.1,0.5);
					\draw[style=thick]  (-1.4,1) to [out=0, looseness= 0.8,in=90] (-0.1,0.5);
					\draw[style=thick]  (-1.4,0) to [out=0, looseness =0.8,in=-90](-0.1,0.5);
			\end{tikzpicture}}\resizebox{170pt}{!}{%
				\begin{tikzpicture}[block/.style={draw, rectangle, minimum height=0.5cm}]
					\node at (-4,0.5)  {$\Delta_{\mathcal C}(U,V) :=$};
					\node   at  (1.6,0)    {$U$};
					\node   at  (1.6,1)    {$V$};
					\node   at  (-2,0.5)    {$U\otimes V$};
					\node   at  (1.8,0.5)    {,};
					\draw[fill=black,style=thick] (-1.4,0.5) circle (0.05cm and 0.05cm);
					\draw[fill=black,style=thick] (1.2,1) circle (0.05cm and 0.05cm);
					\draw[fill=black,style=thick] (1.2,0) circle (0.05cm and 0.05cm);
					\draw[style=thick]  (-1.4,0.5) to   (-0.2,0.5);
					\draw[style=thick]  (-0.2,0.5) to [out=90, looseness= 1,in=0] (1.1,1);
					\draw[style=thick]  (-0.2,0.5) to [out=-90, looseness =1,in=0](1.1,0);
			\end{tikzpicture}}
		\end{center}
	\end{figure}
	
	for all $U,V\in \mathcal C$.
	
The	tensor product of these maps is represented by stackings of diagrams, and composition by connecting them from left to right. 
	
	\begin{proposition}\cite[Section 4.6]{M} \label{generators of StC}
		The morphisms $\langle \Phi \rangle_t$, $\mu_{\mathcal C}(U, V)$, $u_{\mathcal C}$, $\Delta_{\mathcal C}(U, V)$ and $\varepsilon_{\mathcal C}$, for $U,V \in \mathcal C$ and arbitrary $\Phi:U\to V,$ generate the category $\StC$ as a pseudo-abelian braided tensor category. 
	\end{proposition}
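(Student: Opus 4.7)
The plan is to reduce the claim to two assertions: every object of $\StC$ is obtained from the single-point families $\langle U\rangle_t$ ($U\in\mathcal C$) via tensor products and direct summands, and every morphism between such families is a composition of the five listed classes, together with tensor products, braiding, identities, and idempotent projections.

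For the object part, I would use the tensor product rule
\[
\langle U\rangle_t \otimes \langle V\rangle_t \;\simeq\; \langle U,V\rangle_t \,\oplus\, \langle U\otimes V\rangle_t,
\]
which identifies $\mu_{\mathcal C}(U,V)$ with the projection onto the second summand and $\Delta_{\mathcal C}(U,V)$ with its section. The complementary idempotent $\operatorname{id}-\Delta_{\mathcal C}(U,V)\mu_{\mathcal C}(U,V)$ therefore cuts out $\langle U,V\rangle_t$, and iterating realizes any family object $\langle U_I\rangle_t$ as the image of an explicit idempotent on $\bigotimes_{i\in I}\langle U_i\rangle_t$, which exists in the pseudo-abelian closure.

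For the morphism part, I would use the direct sum decomposition of $\Hom_{\StC}(\langle U_I\rangle_t,\langle V_J\rangle_t)$ over recollements, reducing to producing each basis element $\langle\Phi\rangle_t$ for $\Phi\in\bigotimes_{(i,j)\in r}\Hom_{\mathcal C}(U_i,V_j)$ from the generators. I would decompose such a morphism in four moves: (a) embed $\langle U_I\rangle_t\hookrightarrow\bigotimes_i\langle U_i\rangle_t$ via iterated $\Delta_{\mathcal C}$; (b) on each matched pair $(i,j)\in r$ apply the atomic generator $\langle\phi_{ij}\rangle_t$; (c) on each unmatched singleton of $I$ (resp. $J$) attach a broken string via $\varepsilon_{\mathcal C}$ (resp. $u_{\mathcal C}$), first reducing a non-unit label to $\mathbb 1_{\mathcal C}$ by tensoring with $\langle\mathbb 1_{\mathcal C}\rangle_t$ and invoking the decomposition $\langle W\rangle_t\otimes\langle\mathbb 1_{\mathcal C}\rangle_t\simeq\langle W,\mathbb 1_{\mathcal C}\rangle_t\oplus\langle W\rangle_t$; and (d) project back onto $\langle V_J\rangle_t\subseteq\bigotimes_j\langle V_j\rangle_t$ via iterated $\mu_{\mathcal C}$, using the braiding to route the strings according to the recollement $r$.

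The main obstacle is verifying that the composition rule of $\StC$, with its polynomial coefficients $P_u(t)=\prod_{\#\pi_{1,3}(u)\leq a<\#u}(t-a)$, is correctly reproduced from these generators. The factors of $(t-a)$ must emerge from ``bubble'' composites such as $\mu_{\mathcal C}(U,U)\circ\Delta_{\mathcal C}(U,U)$ and their iterations: when two ramified diagrams are composed and a matched cycle collapses, one scalar $(t-a)$ should be emitted. A careful induction on the structure of $u\in R(s\circ r)$, organizing the collapses by the size of $\pi_{1,3}(u)$ relative to $\#u$, should show that these scalars assemble precisely into $P_u(t)$; this bookkeeping is the content of the graphical calculus developed in \cite[Section~4.5]{M}, and once it is in place the generation statement follows at once.
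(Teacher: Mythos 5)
First, note that the paper does not actually prove this proposition: it is quoted from \cite[Section 4.6]{M}, and the surrounding text merely records the generators and the defining relations for later use. So there is no in-paper argument to compare yours against; I am assessing the proposal on its own terms. Your skeleton is the right one and is the route Mori takes: the decomposition $\langle U\rangle_t\otimes\langle V\rangle_t\simeq\langle U,V\rangle_t\oplus\langle U\otimes V\rangle_t$ realizes every family object as the image of an explicit idempotent on a tensor product of single-point objects (this is \cite[Corollary 4.18]{M}, which the paper already invokes), and morphisms are then handled recollement by recollement (this is essentially \cite[Proposition 4.24]{M}, invoked in Lemma \ref{standard form}).

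There are two genuine gaps. The first is your step (c). The generators $u_{\mathcal C}$ and $\varepsilon_{\mathcal C}$ only create or terminate a string labelled $\mathbb 1_{\mathcal C}$, so to realize the summand of $\Hom_{\StC}(\langle U_I\rangle_t,\langle V_J\rangle_t)$ attached to a recollement with an unmatched index $j\in J$ you must produce a ``free end'' on a string labelled $V_j\neq\mathbb 1_{\mathcal C}$. Your manoeuvre of tensoring with $\langle\mathbb 1_{\mathcal C}\rangle_t$ and decomposing $\langle W\rangle_t\otimes\langle\mathbb 1_{\mathcal C}\rangle_t$ only manufactures objects; it does not produce the required morphism. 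The only available composite is $\langle g\rangle_t\circ u_{\mathcal C}$ for some $g\in\Hom_{\mathcal C}(\mathbb 1_{\mathcal C},V_j)$, and whether that suffices hinges on what a singleton part of $r$ contributes in \eqref{morphisms in StC} -- a point the transcription of the formula leaves ambiguous (if it contributes $\mathbf k$ rather than $\Hom_{\mathcal C}(\mathbb 1_{\mathcal C},V_j)$, no composite of the listed generators reaches that summand when $\Hom_{\mathcal C}(\mathbb 1_{\mathcal C},V_j)=0$). You must pin this down before (c) can be carried out. The second and larger gap is your closing paragraph: showing that the candidate composite $\mu^{(J)}\circ(\cdots)\circ\Delta^{(I)}$ equals the target basis element up to a triangular system of correction terms indexed by coarser recollements -- with invertible diagonal coefficients, so that a downward induction on the number of parts closes the argument -- is the entire mathematical content of the generation claim, and you defer it wholesale to the graphical calculus of \cite{M}, i.e., to the source being reproved. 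You also slightly misidentify what must be checked there: the $(t-a)$ factors emitted by collapsing bubbles are what one verifies to establish the \emph{relations}; for \emph{generation} what is needed is the upper-triangularity of the expansion of your composites with respect to refinement of recollements, together with nonvanishing of the leading coefficient (a product of empty $P_u(t)$'s, hence $1$).
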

	
	It is shown in \cite[Section 4.6]{M} that $S_t(\mathcal C)$ is defined by generators and relations, with the generators given in the proposition above. We include below the relations, which we will use throughout this work.

	\underline{Relations:}
	
	$\diamondsuit$ $\langle \cdot \rangle_t : \mathcal C \to S_t(\mathcal C)$ is \textbf{k}-linear and compatible with the composition in $\mathcal C$:
	\begin{align*}
		\begin{aligned}\resizebox{100pt}{!}{%
				\begin{tikzpicture}[block/.style={draw, rectangle, minimum height=0.5cm}]
					\draw (-2,0) to (2,0);
					\draw[fill=black,style=thick] (-2,0) circle (0.05cm and 0.05cm);
					\draw[fill=black,style=thick] (2,0) circle (0.05cm and 0.05cm);
					\node (box) [block, fill=white] {$a\Phi+b\Psi$};
			\end{tikzpicture}}
		\end{aligned}&=
		\begin{aligned}
			\resizebox{90pt}{!}{%
				\begin{tikzpicture}[block/.style={draw, rectangle, minimum height=0.5cm}]
					\node   at  (-1.5,0)    {$a$};
					\draw (-1,0) to (1,0);
					\draw[fill=black,style=thick] (-1,0) circle (0.05cm and 0.05cm);
					\draw[fill=black,style=thick] (1,0) circle (0.05cm and 0.05cm);
					\node (box) [block, fill=white] {$\Phi$};
					\node   at  (1.5,0)    {$+$};
			\end{tikzpicture}}
		\end{aligned}
		\begin{aligned}
			\resizebox{80pt}{!}{%
				\begin{tikzpicture}[block/.style={draw, rectangle, minimum height=0.5cm}]
					\node   at  (-1.5,0)    {$b$};
					\draw (-1,0) to (1,0);
					\draw[fill=black,style=thick] (-1,0) circle (0.05cm and 0.05cm);
					\draw[fill=black,style=thick] (1,0) circle (0.05cm and 0.05cm);
					\node (box) [block, fill=white] {$\Psi$};
					\node   at  (1.5,0)    {$,$};
			\end{tikzpicture}}
		\end{aligned}\\
		\begin{aligned}\resizebox{100pt}{!}{%
				\begin{tikzpicture}[block/.style={draw, rectangle, minimum height=0.5cm}]
					\draw (-2,0) to (2,0);
					\draw[fill=black,style=thick] (-2,0) circle (0.05cm and 0.05cm);
					\draw[fill=black,style=thick] (2,0) circle (0.05cm and 0.05cm);
					\node (box) at (-0.5,0)  [block, fill=white] {$\Phi$};
					\node (box) at (0.5,0)  [block, fill=white] {$\Psi$};
			\end{tikzpicture}}
		\end{aligned}&=
		\begin{aligned}
			\resizebox{60pt}{!}{%
				\begin{tikzpicture}[block/.style={draw, rectangle, minimum height=0.5cm}]
					\node   at  (1.2,0)    {$.$};
					\draw (-1,0) to (1,0);
					\draw[fill=black,style=thick] (-1,0) circle (0.05cm and 0.05cm);
					\draw[fill=black,style=thick] (1,0) circle (0.05cm and 0.05cm);
					\node (box) [block, fill=white] {$\Psi\Phi$};
			\end{tikzpicture}}
		\end{aligned}
	\end{align*}

	$\diamondsuit$ $\mu_{\mathcal C} : \langle \cdot \rangle_t \otimes \langle \cdot \rangle_t \to \langle \cdot \otimes \cdot  \rangle_t$ and $\Delta_{\mathcal C} : \langle \cdot \otimes \cdot  \rangle_t \to \langle \cdot \rangle_t \otimes \langle \cdot \rangle_t$  are \textbf{k}-linear and compatible with the  tensor product in $\mathcal C$:
	\begin{align}\label{relation for mu}&&\begin{aligned}
			\resizebox{80pt}{!}{%
				\begin{tikzpicture}[block/.style={draw, rectangle, minimum height=0.5cm}]
					\draw[fill=black,style=thick] (-1.4,-0.5) circle (0.05cm and 0.05cm);
					\draw[fill=black,style=thick] (-1.4,0.5) circle (0.05cm and 0.05cm);
					\draw[style=thick]  (-0.1,0) to   (1,0);
					\draw[style=thick]  (-1.4,0.5) to [out=0, looseness= 0.8,in=90] (-0.1,0);
					\draw[fill=black,style=thick] (1,0) circle (0.05cm and 0.05cm);
					\draw[style=thick]  (-1.4,-0.5) to [out=0, looseness =0.8,in=-90](-0.1,0);
					\node (box) at (-0.8,-0.5)  [block, fill=white] {$\Psi$};
					\node (box) at (-0.8,0.5)  [block, fill=white] {$\Phi$};
					\node   at  (1.5,0)    {$=$};
			\end{tikzpicture}}
		\end{aligned}
		\begin{aligned}
			\resizebox{90pt}{!}{%
				\begin{tikzpicture}[block/.style={draw, rectangle, minimum height=0.5cm}]
					\draw[fill=black,style=thick] (-1.4,0) circle (0.05cm and 0.05cm);
					\draw[fill=black,style=thick] (-1.4,1) circle (0.05cm and 0.05cm);
					\draw[style=thick]  (-0.1,0.5) to   (1.1,0.5);
					\draw[style=thick]  (1.1,0.5) to   (2.5,0.5);
					\draw[style=thick]  (-1.4,1) to [out=0, looseness= 0.8,in=90] (-0.1,0.5);
					\draw[fill=black,style=thick] (2.5,0.5) circle (0.05cm and 0.05cm);
					\draw[style=thick]  (-1.4,0) to [out=0, looseness =0.8,in=-90](-0.1,0.5);
					\node (box) at (1,0.5)  [block, fill=white] {$\Phi\otimes \Psi$};
			\end{tikzpicture}}
		\end{aligned}\ \ ,
		&&\begin{aligned}
			\resizebox{90pt}{!}{%
				\begin{tikzpicture}[block/.style={draw, rectangle, minimum height=0.5cm}]
					\draw[fill=black,style=thick] (1.2,1) circle (0.05cm and 0.05cm);
					\draw[fill=black,style=thick] (-2.5,0.5) circle (0.05cm and 0.05cm);
					\draw[fill=black,style=thick] (1.2,0) circle (0.05cm and 0.05cm);
					\draw[style=thick]  (-2.5,0.5) to   (-0.2,0.5);
					\draw[style=thick]  (-1.7,0.5) to  (-1.1,0.5);
					\draw[style=thick]  (-0.2,0.5) to [out=90, looseness= 1,in=0] (1.1,1);
					\draw[style=thick]  (-0.2,0.5) to [out=-90, looseness =1,in=0](1.1,0);
					\node (box) at (-1.2,0.5)  [block, fill=white] {$\Phi\otimes \Psi$};
			\end{tikzpicture}}
		\end{aligned}=
		\begin{aligned}
			\resizebox{70pt}{!}{%
				\begin{tikzpicture}[block/.style={draw, rectangle, minimum height=0.5cm}]
					\draw[fill=black,style=thick] (1.2,1) circle (0.05cm and 0.05cm);
					\draw[fill=black,style=thick] (-1.5,0.5) circle (0.05cm and 0.05cm);
					\draw[fill=black,style=thick] (1.2,0) circle (0.05cm and 0.05cm);
					\draw[style=thick]  (-1.5,0.5) to   (-0.2,0.5);
					\draw[style=thick]  (-0.2,0.5) to [out=90, looseness= 1,in=0] (1.1,1);
					\draw[style=thick]  (-0.2,0.5) to [out=-90, looseness =1,in=0](1.1,0);
					\node (box) at (0.7,1)  [block, fill=white] {$\Phi$};
					\node (box) at (0.7,0)  [block, fill=white] {$\Psi$};
			\end{tikzpicture}}
		\end{aligned}.
	\end{align}
	
	$\diamondsuit$ Associativity and coassociativity:
	\begin{align*}
		\begin{aligned}
			\resizebox{85pt}{!}{%
				\begin{tikzpicture}[block/.style={draw, rectangle, minimum height=0.5cm},scale=0.7]
					\draw[fill=black,style=thick] (-1.4,0) circle (0.05cm and 0.05cm);
					\draw[fill=black,style=thick] (-1.4,1) circle (0.05cm and 0.05cm);
					\draw[style=thick]  (-0.1,0.5) to   (1,0.5);
					\draw[style=thick]  (-1.4,1) to [out=0, looseness= 0.8,in=90] (-0.1,0.5);
					\draw[style=thick]  (-1.4,0) to [out=0, looseness =0.8,in=-90](-0.1,0.5);
					\draw[fill=black,style=thick] (-1.4,2) circle (0.05cm and 0.05cm);
					\draw[style=thick]  (-1.4,2) to   (1,2);
					\draw[style=thick]  (2.3,1.25) to   (4,1.25);
					\draw[style=thick]  (1,2) to [out=0, looseness= 0.8,in=90] (2.3,1.25);
					\draw[fill=black,style=thick] (4,1.25) circle (0.05cm and 0.05cm);
					\draw[style=thick]  (1,0.5) to [out=0, looseness =0.8,in=-90](2.3,1.25);				
			\end{tikzpicture}}
		\end{aligned}
		& \ \ \ 	\text{=}\ \ \ 
		\begin{aligned}	
			\resizebox{85pt}{!}{%
				\begin{tikzpicture}[block/.style={draw, rectangle, minimum height=0.5cm},scale=0.7]
					\draw[fill=black,style=thick] (-1.4,0) circle (0.05cm and 0.05cm);
					\draw[fill=black,style=thick] (-1.4,1) circle (0.05cm and 0.05cm);
					\draw[style=thick]  (-0.1,1.5) to   (1,1.5);
					\draw[style=thick]  (-1.4,2) to [out=0, looseness= 0.8,in=90] (-0.1,1.5);
					\draw[style=thick]  (-1.4,1) to [out=0, looseness =0.8,in=-90](-0.1,1.5);
					\draw[fill=black,style=thick] (-1.4,2) circle (0.05cm and 0.05cm);
					\draw[style=thick]  (-1.4,0) to   (0.8,0);			
					\draw[style=thick]  (2.1,0.75) to   (4,0.75);
					\draw[style=thick]  (0.8,1.5) to [out=0, looseness= 0.8,in=90] (2.1,0.75);
					\draw[fill=black,style=thick] (4,0.75) circle (0.05cm and 0.05cm);
					\draw[style=thick]  (0.8,0) to [out=0, looseness =0.8,in=-90](2.1,0.75);				
			\end{tikzpicture}}
		\end{aligned},
		\begin{aligned}
			\resizebox{100pt}{!}{%
				\begin{tikzpicture}[block/.style={draw, rectangle, minimum height=0.5cm}]
					\draw[fill=black,style=thick] (1.2,1) circle (0.05cm and 0.05cm);
					\draw[fill=black,style=thick] (1.2,0) circle (0.05cm and 0.05cm);
					\draw[style=thick]  (-0.4,0.5) to   (-0.2,0.5);
					\draw[style=thick]  (-0.2,0.5) to [out=90, looseness= 1,in=0] (1.1,1);
					\draw[style=thick]  (-0.2,0.5) to [out=-90, looseness =1,in=0](1.1,0);
					\draw[fill=black,style=thick] (1.2,1.5) circle (0.05cm and 0.05cm);
					\draw[fill=black,style=thick] (-3.5,1) circle (0.05cm and 0.05cm);
					\draw[style=thick]  (-0.5,1.5) to   (1.2,1.5);
					\draw[style=thick]  (-2,1) to   (-3.5,1);
					\draw[style=thick]  ( -2,1) to [out=90, looseness= 0.8,in=0] (-0.3,1.5);
					\draw[style=thick]  (-2,1) to [out=-90, looseness =0.8,in=0](-0.3,0.5);
			\end{tikzpicture}}
		\end{aligned}=
		\begin{aligned}
			\resizebox{100pt}{!}{%
				\begin{tikzpicture}[block/.style={draw, rectangle, minimum height=0.5cm}]
					\draw[fill=black,style=thick] (1.2,1.5) circle (0.05cm and 0.05cm);
					\draw[fill=black,style=thick] (1.2,0.5) circle (0.05cm and 0.05cm);
					\draw[style=thick]  (-0.2,1) to [out=90, looseness= 1,in=0] (1.1,1.5);
					\draw[style=thick]  (-0.2,1) to [out=-90, looseness =1,in=0](1.1,0.5);
					\draw[fill=black,style=thick] (1.2,0) circle (0.05cm and 0.05cm);
					\draw[fill=black,style=thick] (-3.5,0.5) circle (0.05cm and 0.05cm);
					\draw[style=thick]  (-0.2,0) to   (1.2,0);
					\draw[style=thick]  (-2,0.5) to   (-3.5,0.5);
					\draw[style=thick]  ( -2,0.5) to [out=90, looseness= 0.8,in=0] (-0.3,1);
					\draw[style=thick]  (-2,0.5) to [out=-90, looseness =0.8,in=0](-0.3,0);
			\end{tikzpicture}}
		\end{aligned}.
	\end{align*}
	
	$\diamondsuit$ Unitality and counitality:
	\begin{align*}&&\begin{aligned}
			\resizebox{60pt}{!}{%
				\begin{tikzpicture}[block/.style={draw, rectangle, minimum height=0.5cm}]
					\draw[fill=black,style=thick] (-1.4,-0.5) circle (0.05cm and 0.05cm);
					\draw[style=thick]  (-0.1,0) to   (1,0);
					\draw[style=thick]  (-1.4,0.5) to [out=0, looseness= 0.8,in=90] (-0.1,0);
					\draw[fill=black,style=thick] (1,0) circle (0.05cm and 0.05cm);
					\draw[style=thick]  (-1.4,-0.5) to [out=0, looseness =0.8,in=-90](-0.1,0);
					\node at (-1.4,0.5) {$\figureXv$};
			\end{tikzpicture}}
		\end{aligned}=
		\begin{aligned}
			\resizebox{30pt}{!}{%
				\begin{tikzpicture}[block/.style={draw, rectangle, minimum height=0.5cm,fill=white}]
					\draw[fill=black,style=thick] (-4,1.2) circle (0.05cm and 0.05cm);
					\draw[fill=black,style=thick] (-5,1.2) circle (0.05cm and 0.05cm);
					\draw[style=thick]  (-4,1.2) to   (-5,1.2);
			\end{tikzpicture}}
		\end{aligned}=
		\begin{aligned}
			\resizebox{60pt}{!}{%
				\begin{tikzpicture}[block/.style={draw, rectangle, minimum height=0.5cm}]
					\node at (-1,0.5) {};
					\draw[fill=black,style=thick] (-1.4,0) circle (0.05cm and 0.05cm);
					\draw[style=thick]  (-0.1,-0.5) to   (1,-0.5);
					\draw[style=thick]  (-1.4,0) to [out=0, looseness= 0.8,in=90] (-0.1,-0.5);
					\draw[fill=black,style=thick] (1,-0.5) circle (0.05cm and 0.05cm);
					\draw[style=thick]  (-1.4,-1) to [out=0, looseness =0.8,in=-90](-0.1,-0.5);
					\node at (-1.4,-1) {$\figureXv$};
			\end{tikzpicture}}
		\end{aligned}\ ,\ 
		\begin{aligned}
			\resizebox{70pt}{!}{%
				\begin{tikzpicture}[block/.style={draw, rectangle, minimum height=0.5cm}]
					\node at (1.2,1) {$\figureXv$};
					\draw[fill=black,style=thick] (-1.5,0.5) circle (0.05cm and 0.05cm);
					\draw[fill=black,style=thick] (1.2,0) circle (0.05cm and 0.05cm);
					\draw[style=thick]  (-1.5,0.5) to   (-0.2,0.5);
					\draw[style=thick]  (-0.2,0.5) to [out=90, looseness= 1,in=0] (1.1,1);
					\draw[style=thick]  (-0.2,0.5) to [out=-90, looseness =1,in=0](1.1,0);
			\end{tikzpicture}}
		\end{aligned}= 
		\begin{aligned}
			\resizebox{30pt}{!}{%
				\begin{tikzpicture}[block/.style={draw, rectangle, minimum height=0.5cm,fill=white}]
					\draw[fill=black,style=thick] (-4,1.2) circle (0.05cm and 0.05cm);
					\draw[fill=black,style=thick] (-5,1.2) circle (0.05cm and 0.05cm);
					\draw[style=thick]  (-4,1.2) to   (-5,1.2);
			\end{tikzpicture}}
		\end{aligned}=
		\begin{aligned}
			\resizebox{70pt}{!}{%
				\begin{tikzpicture}[block/.style={draw, rectangle, minimum height=0.5cm}]
					\node at (1.2,0) {$\figureXv$};
					\node at (1.2,1.5) {};
					\draw[fill=black,style=thick] (-1.5,0.5) circle (0.05cm and 0.05cm);
					\draw[fill=black,style=thick] (1.2,1) circle (0.05cm and 0.05cm);
					\draw[style=thick]  (-1.5,0.5) to   (-0.2,0.5);
					\draw[style=thick]  (-0.2,0.5) to [out=90, looseness= 1,in=0] (1.1,1);
					\draw[style=thick]  (-0.2,0.5) to [out=-90, looseness =1,in=0](1.1,0);
			\end{tikzpicture}}
		\end{aligned}.
	\end{align*}
	
	$\diamondsuit$ Compatibility between $\mu_{\mathcal C}$ and $\Delta_{\mathcal C}$:
	\begin{align*}
		\begin{aligned}
			\resizebox{100pt}{!}{%
				\begin{tikzpicture}[block/.style={draw, rectangle},scale=0.7]
					\draw[fill=black,style=thick] (-3.4,0) circle (0.05cm and 0.05cm);
					\draw[style=thick]  (-3.4,0) to   (-1.5,0);
					\draw[style=thick]  (-1.5,0) to [out=90, looseness= 1,in=0] (-0.2,0.5);
					\draw[style=thick]  (-1.5,0) to [out=-90, looseness =1,in=0](-0.2,-0.5);
					\draw[style=thick]  (-0.2,1) to [out=0, looseness= 0.4,in=45] (1.1,0.75);
					\draw[style=thick]  (-0.2,0.5) to [out=0, looseness =0.4,in=-45](1.1,0.75);
					\draw[style=thick]  (1.1,0.75) to   (3,0.75);
					\draw[fill=black,style=thick] (3,0.75) circle (0.05cm and 0.05cm);
					\draw[style=thick]  (-0.2,-0.5) to   (1,-0.5);
					\draw[style=thick]  (1,-0.5) to   (3,-0.5);
					\draw[fill=black,style=thick] (3,-0.5) circle (0.05cm and 0.05cm);
					\draw[fill=black,style=thick] (-3.4,1) circle (0.05cm and 0.05cm);
					\draw[style=thick]  (-3.4,1) to   (-0.2,1);
			\end{tikzpicture}}
		\end{aligned}=
		\begin{aligned}
			\resizebox{90pt}{!}{%
				\begin{tikzpicture}[block/.style={draw, rectangle},scale=0.7]
					\draw[fill=black,style=thick] (-1.4,0) circle (0.05cm and 0.05cm);
					\draw[fill=black,style=thick] (-1.4,1) circle (0.05cm and 0.05cm);
					\draw[style=thick]  (-0.1,0.5) to   (1.1,0.5);
					\draw[style=thick]  (1.1,0.5) to   (1.4,0.5);
					\draw[style=thick]  (1.4,0.5) to [out=90, looseness= 1,in=0] (2.7,1);
					\draw[style=thick]  (1.4,0.5) to [out=-90, looseness =1,in=0](2.7,0);
					\draw[style=thick]  (-1.4,1) to [out=0, looseness= 0.8,in=90] (-0.1,0.5);
					\draw[style=thick]  (-1.4,0) to [out=0, looseness =0.8,in=-90](-0.1,0.5);
					\draw[fill=black,style=thick] (2.8,1) circle (0.05cm and 0.05cm);
					\draw[fill=black,style=thick] (2.8,0) circle (0.05cm and 0.05cm);
			\end{tikzpicture}}
		\end{aligned}, \ 
		\begin{aligned}
			\resizebox{100pt}{!}{%
				\begin{tikzpicture}[block/.style={draw, rectangle},scale=0.7]
					\draw[fill=black,style=thick] (-3.4,2) circle (0.05cm and 0.05cm);
					\draw[style=thick]  (-3.4,2) to   (-1.5,2);
					\draw[style=thick]  (-1.5,2) to [out=90, looseness= 1,in=0] (-0.2,2.5);
					\draw[style=thick]  (-1.5,2) to [out=-90, looseness =1,in=0](-0.2,1.5);
					\draw[style=thick]  (-0.2,1.5) to [out=0, looseness= 0.4,in=45] (1.1,1.25);
					\draw[style=thick]  (-0.2,1) to [out=0, looseness =0.4,in=-45](1.1,1.25);
					\draw[style=thick]  (1.1,1.25) to   (3,1.25);
					\draw[fill=black,style=thick] (3,1.25) circle (0.05cm and 0.05cm);
					\draw[style=thick]  (-0.2,2.5) to   (3,2.5);
					\draw[fill=black,style=thick] (-3.4,1) circle (0.05cm and 0.05cm);
					\draw[fill=black,style=thick] (3,2.5) circle (0.05cm and 0.05cm);
					\draw[style=thick]  (-3.4,1) to   (-0.2,1);
			\end{tikzpicture}}
		\end{aligned}=
		\begin{aligned}
			\resizebox{90pt}{!}{%
				\begin{tikzpicture}[block/.style={draw, rectangle},scale=0.7]
					\draw[fill=black,style=thick] (-1.4,0) circle (0.05cm and 0.05cm);
					\draw[fill=black,style=thick] (-1.4,1) circle (0.05cm and 0.05cm);
					\draw[style=thick]  (-0.1,0.5) to   (1.1,0.5);
					\draw[style=thick]  (1.1,0.5) to   (1.4,0.5);
					\draw[style=thick]  (1.4,0.5) to [out=90, looseness= 1,in=0] (2.7,1);
					\draw[style=thick]  (1.4,0.5) to [out=-90, looseness =1,in=0](2.7,0);
					\draw[style=thick]  (-1.4,1) to [out=0, looseness= 0.8,in=90] (-0.1,0.5);
					\draw[style=thick]  (-1.4,0) to [out=0, looseness =0.8,in=-90](-0.1,0.5);
					\draw[fill=black,style=thick] (2.8,1) circle (0.05cm and 0.05cm);
					\draw[fill=black,style=thick] (2.8,0) circle (0.05cm and 0.05cm);
			\end{tikzpicture}}
		\end{aligned}.
	\end{align*}
	
	$\diamondsuit$ $\mu_{\mathcal C}$ is a retraction and $\Delta_{\mathcal C}$ is a section:
	\begin{align*}
		\begin{aligned}
			\resizebox{90pt}{!}{%
				\begin{tikzpicture}[block/.style={draw, rectangle, minimum height=0.5cm,fill=white}]
					\draw[fill=black,style=thick] (-3,0.5) circle (0.05cm and 0.05cm);
					\draw[fill=black,style=thick] (1.5,0.5) circle (0.05cm and 0.05cm);
					\draw[style=thick]  (-2,0.5) to [out=90, looseness= 1,in=0] (-0.7,1);
					\draw[style=thick]  (-2,0.5) to [out=-90, looseness =1,in=0](-0.7,0);
					\draw[style=thick]  (-0.7,1) to [out=0, looseness= 0.9,in=90] (0.6,0.5);
					\draw[style=thick]  (-0.7,0) to [out=0, looseness =0.9,in=-90](0.6,0.5);
					\draw[style=thick]  (-3,0.5) to   (-2,0.5);
					\draw[style=thick]  (0.6,0.5) to   (1.5,0.5);
			\end{tikzpicture}}
		\end{aligned} =
		\begin{aligned}
			\resizebox{45pt}{!}{%
				\begin{tikzpicture}[block/.style={draw, rectangle, minimum height=0.5cm,fill=white}]
					\draw[fill=black,style=thick] (-4,1.2) circle (0.05cm and 0.05cm);
					\draw[fill=black,style=thick] (-6,1.2) circle (0.05cm and 0.05cm);
					\draw[style=thick]  (-4,1.2) to   (-6,1.2);
			\end{tikzpicture}}
		\end{aligned}\ \  .
	\end{align*}
	
	$\diamondsuit$ The object $\langle \mathbb 1_{\mathcal C}\rangle_t$ is of dimension $t$:
	\begin{align*}
		&\begin{aligned}
			\resizebox{50pt}{!}{%
				\begin{tikzpicture}[block/.style={draw, rectangle, minimum height=0.5cm,fill=white}]
					\node at (-6,0.5) {$\figureXv$};
					\node at (-4,0.5) {$\figureXv$};
					\node at (-5,1) {$\mathbb{1}_{\mathcal C}$};
					\draw[fill=black,style=thick] (-5,0.5) circle (0.05cm and 0.05cm);
					\draw[style=thick]  (-4,0.5) to   (-6,0.5);
			\end{tikzpicture}}
		\end{aligned} 
		= t \ \text{id}_{\mathbb{1}}.
	\end{align*}
	
	$\diamondsuit$ Relations concerning the braiding (see relations (5) and (8) in \cite[Section 4.6]{M}).

	\medspace
In \cite[Section 4.6]{M}, it is discussed that $C \to \StC$ is a Frobenius functor \cite[Definition 4.28]{M}. The following proposition shows (graphically) how to get extended Frobenius algebras in $\StC$ from extended Frobenius algebras in $\mathcal C$.

	\begin{proposition}\label{extended in StC}
		\begin{enumerate}
			\item	\label{frob in C to frob in StC} 	
			A Frobenius algebra $A \in \mathcal C$ induces a Frobenius algebra $\langle A \rangle_t $  in $\StC$, with structure maps
			\begin{figure}[H]
				\begin{center}	\resizebox{170pt}{!}{%
}
			\end{aligned}
			&	\tiny{\text{compatibility of }} {\scriptstyle \mu_{\mathcal C}} \tiny\text{ and } {\scriptstyle \Delta_{\mathcal C}}.
		\end{align*}
		multiplication and comultiplication satisfy diagram \eqref{frob algebra diagram} and thus we have a Frobenius algebra structure in $\langle A \rangle_t$.
		
		We show now that if $A \in \mathcal C$ is an extended Frobenius algebra, then $\langle A \rangle_t$ with maps as in \eqref{extended frob in C to frob in StC} defines an extended Frobenius algebra in $\StC$.
		Commutativity follows from commutativity of $m_A$ and $\mu_{\mathcal C}$.  Since $\phi_A$ is an involution in $\mathcal C,$ then  $\phi_{\langle A\rangle_t}$ is an involution in $\StC$.  We show below that the first diagram in Equation \eqref{extended Frob alg} commutes. In fact, we arrive at the same equality when computing
		\begin{align*}
			\begin{aligned}
				\resizebox{130pt}{!}{%
					\begin{tikzpicture}[block/.style={draw, rectangle, minimum height=0.5cm},scale=0.7]
						\draw[fill=black,style=thick] (-4.2,0) circle (0.05cm and 0.05cm);
						\node at (-4.2, 0.4) {$A$};
						\draw[style=thick]  (-4.2,0) to   (-1.4,0);
						\node at (-4.2,1) {$\figureXv$};
						\draw[style=thick]  (-4.2,1) to   (-1.4,1);
						\draw[fill=black,style=thick] (-3.4,1) circle (0.05cm and 0.05cm);
						\node at (-3.4,1.5) {$ \mathbb 1_{\mathcal C}$};
						\node (box) at (-2.4,1)  [block, fill=white] {$\theta_A$};
						\node (box) at (-2.4,0)  [block, fill=white] {$\text{id}_A$};
						\draw[style=thick]  (-1.8,1) to   (-1.4,1);
						\node   at  (-1.4,0.4)    {$A$};
						\node   at  (-1.4,1.4)    {$A$};
						\node   at  (1.1,1)    {$A^{\otimes 2}$};
						\draw[fill=black,style=thick] (-1.3,0) circle (0.05cm and 0.05cm);
						\draw[fill=black,style=thick] (-1.3,1) circle (0.05cm and 0.05cm);
						\draw[fill=black,style=thick] (1.1,0.5) circle (0.05cm and 0.05cm);
						\draw[style=thick]  (-0.1,0.5) to   (1.1,0.5);
						\draw[style=thick]  (-1.4,1) to [out=0, looseness= 0.8,in=90] (-0.1,0.5);
						\draw[style=thick]  (-1.4,0) to [out=0, looseness =0.8,in=-90](-0.1,0.5);
						\draw[style=thick]  (1.1,0.5) to   (1.5,0.5);
						\node (box) at (2.1,0.5)  [block] {$m_A$};
						\draw[style=thick]  (2.7,0.5) to   (3,0.5);
						\draw[fill=black,style=thick] (3,0.5) circle (0.05cm and 0.05cm);
						\node at (3,1) {$A$};
				\end{tikzpicture}}
			\end{aligned}
			&	\text{=}
			\begin{aligned}
				\resizebox{155pt}{!}{%
					\begin{tikzpicture}[block/.style={draw, rectangle}, scale=0.7]
						\draw[style=thick]  (-2.5,0) to   (-1.4,0);
						\draw[style=thick]  (-2.5,1) to   (-1.4,1);
						\node   at  (-2.5,0.4)    {$A$};
						\node   at  (-1.4,0.4)    {$A$};
						\node at (-2.5,1) {$\figureXv$};
						\draw[style=thick]  (-2.5,1) to   (-1.4,1);
						\node   at  (-1.4,1.4)    {$\mathbb 1_{\mathcal C}$};
						\node   at  (1.1,1)    {$ \mathbb 1_{\mathcal C}\otimes A$};
						\draw[fill=black,style=thick] (-2.5,0) circle (0.05cm and 0.05cm);
						\draw[fill=black,style=thick] (-1.3,0) circle (0.05cm and 0.05cm);
						\draw[fill=black,style=thick] (-1.3,1) circle (0.05cm and 0.05cm);
						\draw[fill=black,style=thick] (1.1,0.5) circle (0.05cm and 0.05cm);
						\draw[style=thick]  (-0.1,0.5) to   (1.1,0.5);
						\draw[style=thick]  (-1.4,1) to [out=0, looseness= 0.8,in=90] (-0.1,0.5);
						\draw[style=thick]  (-1.4,0) to [out=0, looseness =0.8,in=-90](-0.1,0.5);
						\draw[style=thick]  (1.1,0.5) to   (1.5,0.5);
						\draw[style=thick]  (0,0.5) to   (6.5,0.5);
						\node (box) at (3.9,0.5)  [block, fill=white] {$m_A(\theta_A\otimes \text{id}_A)$};
						\draw[fill=black,style=thick] (6.5,0.5) circle (0.05cm and 0.05cm);
						\node at (6.5,1) {$A$};
				\end{tikzpicture}}
			\end{aligned}
			&	\tiny{\text{linearity of }} {\scriptstyle \mu_{\mathcal C}},	\\
			&=	\begin{aligned}
				\resizebox{120pt}{!}{%
					\begin{tikzpicture}[block/.style={draw, rectangle, minimum height=0.5cm,fill=white}]
						\node   at  (-1,1)    {$A$};
						\node at (-6,0.5) {$\figureXv$};
						\node at (-5,1) {$\mathbb{1}_{\mathcal C}$};
						\draw[fill=black,style=thick] (-5,0.5) circle (0.05cm and 0.05cm);
						\draw[fill=black,style=thick] (-1,0.5) circle (0.05cm and 0.05cm);
						\draw[style=thick]  (-6,0.5) to   (-1,0.5);
						\node (box) at (-3,0.5)  [block, fill=white] {$m_A(\theta_A \otimes \text{id}_A)$};
				\end{tikzpicture}}
			\end{aligned} 
			&	\tiny{\text{unitality of }} {\scriptstyle \mu_{\mathcal C}}.
		\end{align*}
		and
		\begin{align*}
			\begin{aligned}
				\resizebox{160pt}{!}{%
					\begin{tikzpicture}[block/.style={draw, rectangle, minimum height=0.5cm},scale=0.7]
						\draw[fill=black,style=thick] (-4.2,0) circle (0.05cm and 0.05cm);
						\node at (-4.2, 0.4) {$A$};
						\draw[style=thick]  (-4.2,0) to   (-1.4,0);
						\node at (-4.2,1) {$\figureXv$};
						\draw[style=thick]  (-4.2,1) to   (-1.4,1);
						\draw[fill=black,style=thick] (-3.4,1) circle (0.05cm and 0.05cm);
						\node at (-3.4,1.5) {$ \mathbb 1_{\mathcal C}$};
						\node (box) at (-2.4,1)  [block, fill=white] {$\theta_A$};
						\node (box) at (-2.4,0)  [block, fill=white] {$\text{id}_A$};
						\draw[style=thick]  (-1.8,1) to   (-1.4,1);
						\node   at  (-1.4,0.4)    {$A$};
						\node   at  (-1.4,1.4)    {$A$};
						\node   at  (1.1,1)    {$A^{\otimes 2}$};
						\draw[fill=black,style=thick] (-1.3,0) circle (0.05cm and 0.05cm);
						\draw[fill=black,style=thick] (-1.3,1) circle (0.05cm and 0.05cm);
						\draw[fill=black,style=thick] (1.1,0.5) circle (0.05cm and 0.05cm);
						\draw[style=thick]  (-0.1,0.5) to   (1.1,0.5);
						\draw[style=thick]  (-1.4,1) to [out=0, looseness= 0.8,in=90] (-0.1,0.5);
						\draw[style=thick]  (-1.4,0) to [out=0, looseness =0.8,in=-90](-0.1,0.5);
						\draw[style=thick]  (1.1,0.5) to   (1.5,0.5);
						\node (box) at (2.1,0.5)  [block] {$m_A$};
						\draw[style=thick]  (2.7,0.5) to   (5,0.5);
						\draw[fill=black,style=thick] (3,0.5) circle (0.05cm and 0.05cm);
						\node at (3,1) {$A$};
						\draw[fill=black,style=thick] (5,0.5) circle (0.05cm and 0.05cm);
						\node at (5,1) {$A$};
						\node (box) at (4,0.5)  [block, fill=white] {$\phi_A$};
				\end{tikzpicture}}
			\end{aligned}
			&	\text{=}
			\begin{aligned}
				\resizebox{160pt}{!}{%
					\begin{tikzpicture}[block/.style={draw, rectangle}, scale=0.7]
						\draw[style=thick]  (-2.5,0) to   (-1.4,0);
						\draw[style=thick]  (-2.5,1) to   (-1.4,1);
						\node   at  (-2.5,0.4)    {$A$};
						\node   at  (-1.4,0.4)    {$A$};
						\node at (-2.5,1) {$\figureXv$};
						\draw[style=thick]  (-2.5,1) to   (-1.4,1);
						\node   at  (-1.4,1.4)    {$\mathbb 1_{\mathcal C}$};
						\node   at  (1.1,1)    {$ \mathbb 1_{\mathcal C}\otimes A$};
						\draw[fill=black,style=thick] (-2.5,0) circle (0.05cm and 0.05cm);
						\draw[fill=black,style=thick] (-1.3,0) circle (0.05cm and 0.05cm);
						\draw[fill=black,style=thick] (-1.3,1) circle (0.05cm and 0.05cm);
						\draw[fill=black,style=thick] (1.1,0.5) circle (0.05cm and 0.05cm);
						\draw[style=thick]  (-0.1,0.5) to   (1.1,0.5);
						\draw[style=thick]  (-1.4,1) to [out=0, looseness= 0.8,in=90] (-0.1,0.5);
						\draw[style=thick]  (-1.4,0) to [out=0, looseness =0.8,in=-90](-0.1,0.5);
						\draw[style=thick]  (1.1,0.5) to   (1.5,0.5);
						\draw[style=thick]  (0,0.5) to   (6.5,0.5);
						\node (box) at (3.9,0.5)  [block, fill=white] {$\phi_A m_A(\theta_A\otimes \text{id}_A)$};
						\draw[fill=black,style=thick] (6.5,0.5) circle (0.05cm and 0.05cm);
						\node at (6.5,1) {$A$};
				\end{tikzpicture}}
			\end{aligned}
			&	\tiny{\text{linearity of }} {\scriptstyle \mu_{\mathcal C}},	\\
			&=	\begin{aligned}
				\resizebox{130pt}{!}{%
					\begin{tikzpicture}[block/.style={draw, rectangle, minimum height=0.5cm,fill=white}]
						\node   at  (0,1)    {$A$};
						\node at (-6,0.5) {$\figureXv$};
						\node at (-5,1) {$\mathbb{1}_{\mathcal C}$};
						\draw[fill=black,style=thick] (-5,0.5) circle (0.05cm and 0.05cm);
						\draw[fill=black,style=thick] (0,0.5) circle (0.05cm and 0.05cm);
						\draw[style=thick]  (-6,0.5) to   (0,0.5);
						\node (box) at (-2.5,0.5)  [block, fill=white] {$m_A(\theta_A \otimes \text{id}_A)$};
				\end{tikzpicture}}
			\end{aligned} 
			&	\tiny{\text{unitality of }} {\scriptstyle \mu_{\mathcal C}}.
		\end{align*}
		Lastly, the second diagram in Equation \eqref{extended Frob alg} also commutes since
		\begin{align*}
			\begin{aligned}
				\resizebox{210pt}{!}{%
					\begin{tikzpicture}[block/.style={draw, rectangle, minimum height=0.5cm,fill=white}]
						\node   at  (1.1,0.5)    {$A$};
						\node   at  (1.1,1.5)    {$A$};
						\node   at  (-1.3,1)    {$A^{\otimes 2}$};
						\node   at  (-3,1)    {$A$};
						\node   at  (2.3,1.5)    {$A$};
						\node   at  (2.3,0.5)    {$A$};
						\node   at  (6,1)    {$A$};
						\node at (-5.5,0.5) {$\figureXv$};
						\node at (-5,1) {$\mathbb{1}_{\mathcal C}$};
						\node   at  (-1.3,1)    {$A^{\otimes 2}$};
						\node   at  (4,1)    {$A^{\otimes 2}$};
						\node (box) at (-2.1,0.5)  [block] {$\Delta_A$};
						\node (box) at (-4,0.5)  [block] {$u_A$};
						\draw[fill=black,style=thick] (-5,0.5) circle (0.05cm and 0.05cm);
						\draw[fill=black,style=thick] (-1.3,0.5) circle (0.05cm and 0.05cm);
						\draw[fill=black,style=thick] (1.1,1) circle (0.05cm and 0.05cm);
						\draw[fill=black,style=thick] (-3,0.5) circle (0.05cm and 0.05cm);
						\draw[fill=black,style=thick] (1.1,0) circle (0.05cm and 0.05cm);
						\draw[fill=black,style=thick] (2.3,1) circle (0.05cm and 0.05cm);
						\draw[fill=black,style=thick] (2.3,0) circle (0.05cm and 0.05cm);
						\draw[fill=black,style=thick] (4,0.5) circle (0.05cm and 0.05cm);
						\draw[fill=black,style=thick] (6,0.5) circle (0.05cm and 0.05cm);
						\draw[style=thick]  (-4.4,0.5) to   (-5.5,0.5);
						\draw[style=thick]  (-3.6,0.5) to   (-3,0.5);
						\draw[style=thick]  (-3,0.5) to   (-2.5,0.5);
						\draw[style=thick]  (-1.3,0.5) to   (-0.2,0.5);
						\draw[style=thick]  (-1.7,0.5) to  (-1.1,0.5);
						\draw[style=thick]  (-0.2,0.5) to [out=90, looseness= 1,in=0] (1.1,1);
						\draw[style=thick]  (-0.2,0.5) to [out=-90, looseness =1,in=0](1.1,0);
						\draw[style=thick]  (1.1,1) to  (2,1);
						\draw[style=thick]  (1.1,0) to  (2,0);
						\draw[style=thick]  (2,1) to [out=0, looseness= 0.8,in=90] (3.3,0.5);
						\draw[style=thick]  (2,0) to [out=0, looseness =0.8,in=-90](3.3,0.5);	
						\draw[style=thick]  (3.3,0.5) to  (6,0.5);
						\node [fill=white](box) at (1.7,1)  [block,fill=white] {$\phi_A$};
						\node (box) at (5,0.5)  [block] {$m_A$};
				\end{tikzpicture}}
			\end{aligned}
			&=
			\begin{aligned}
				\resizebox{210pt}{!}{%
					\begin{tikzpicture}[block/.style={draw, rectangle, minimum height=0.5cm,fill=white}]
						\node   at  (1.1,0.5)    {$A$};
						\node   at  (1.1,1.5)    {$A$};
						\node   at  (5,1)    {$A$};
						\node at (-5.5,0.5) {$\figureXv$};
						\node at (-5,1) {$\mathbb{1}_{\mathcal C}$};
						\node   at  (-1,1)    {$A^{\otimes 2}$};
						\node   at  (3,1)    {$A^{\otimes 2}$};
						\draw[fill=black,style=thick] (-5,0.5) circle (0.05cm and 0.05cm);
						\draw[fill=black,style=thick] (-1,0.5) circle (0.05cm and 0.05cm);
						\draw[fill=black,style=thick] (1.1,1) circle (0.05cm and 0.05cm);
						\draw[fill=black,style=thick] (-3,0.5) circle (0.05cm and 0.05cm);
						\draw[fill=black,style=thick] (1.1,0) circle (0.05cm and 0.05cm);
						\draw[fill=black,style=thick] (3,0.5) circle (0.05cm and 0.05cm);
						\draw[fill=black,style=thick] (5,0.5) circle (0.05cm and 0.05cm);
						\draw[style=thick]  (-4.4,0.5) to   (-5.5,0.5);
						\draw[style=thick]  (-3.6,0.5) to   (-3,0.5);
						\draw[style=thick]  (-3,0.5) to   (-2.5,0.5);
						\draw[style=thick]  (-1.3,0.5) to   (-0.2,0.5);
						\draw[style=thick]  (-1.7,0.5) to  (-1.1,0.5);
						\draw[style=thick]  (-0.2,0.5) to [out=90, looseness= 1,in=0] (1.1,1);
						\draw[style=thick]  (-0.2,0.5) to [out=-90, looseness =1,in=0](1.1,0);
						\draw[style=thick]  (1.1,1) to [out=0, looseness= 0.8,in=90] (2.4,0.5);
						\draw[style=thick]  (1.1,0) to [out=0, looseness =0.8,in=-90](2.4,0.5);	
						\draw[style=thick]  (2.4,0.5) to  (5,0.5);
						\node (box) at (4,0.5)  [block] {$m_A$};
						\node (box) at (-3,0.5)  [block] {$(\phi_A \otimes \Id_A)\Delta_Au_A$};
				\end{tikzpicture}}
			\end{aligned}	\\
			&=	\begin{aligned}
				\resizebox{170pt}{!}{%
					\begin{tikzpicture}[block/.style={draw, rectangle, minimum height=0.5cm,fill=white}]
						\node   at  (2,1)    {$A$};
						\node at (-6,0.5) {$\figureXv$};
						\node at (-5,1) {$\mathbb{1}_{\mathcal C}$};
						\draw[fill=black,style=thick] (-5,0.5) circle (0.05cm and 0.05cm);
						\draw[fill=black,style=thick] (-1,0.5) circle (0.05cm and 0.05cm);
						\draw[fill=black,style=thick] (2,0.5) circle (0.05cm and 0.05cm);
						\draw[style=thick]  (-6,0.5) to   (2,0.5);
						\node (box) at (-2,0.5)  [block] {$m_A(\phi_A \otimes \Id_A)\Delta_Au_A$};
				\end{tikzpicture}}
			\end{aligned}\\
			&=	\begin{aligned}
				\resizebox{170pt}{!}{%
					\begin{tikzpicture}[block/.style={draw, rectangle, minimum height=0.5cm,fill=white}]
						\node   at  (2,1)    {$A$};
						\node at (-6,0.5) {$\figureXv$};
						\node at (-5,1) {$\mathbb{1}_{\mathcal C}$};
						\draw[fill=black,style=thick] (-5,0.5) circle (0.05cm and 0.05cm);
						\draw[fill=black,style=thick] (-1,0.5) circle (0.05cm and 0.05cm);
						\draw[fill=black,style=thick] (2,0.5) circle (0.05cm and 0.05cm);
						\draw[style=thick]  (-6,0.5) to   (2,0.5);
						\node (box) at (-2,0.5)  [block] {$m_A(\theta_A \otimes \theta_A)$};
				\end{tikzpicture}}
			\end{aligned}
		\end{align*}
		is equal to
		\begin{align*}
		\begin{aligned}
			\resizebox{130pt}{!}{%
				\begin{tikzpicture}[block/.style={draw, rectangle, minimum height=0.5cm},scale=0.7]
					\draw[fill=black,style=thick] (-4.2,0) circle (0.05cm and 0.05cm);
					\draw[style=thick]  (-4.2,0) to   (-1.4,0);
					\node at (-4.2,1) {$\figureXv$};
					\node at (-4.2,0) {$\figureXv$};
					\draw[style=thick]  (-4.2,1) to   (-1.4,1);
					\draw[fill=black,style=thick] (-3.4,1) circle (0.05cm and 0.05cm);
					\draw[fill=black,style=thick] (-3.4,0) circle (0.05cm and 0.05cm);
					\node at (-3.4,1.5) {$\mathbb{1}_{\mathcal C}$};
					\node at (-3.4,0.5) {$\mathbb{1}_{\mathcal C}$};
					\node (box) at (-2.4,1)  [block, fill=white] {$\theta_A$};
					\node (box) at (-2.4,0)  [block, fill=white] {$\theta_A$};
					\draw[style=thick]  (-1.8,1) to   (-1.4,1);
					\node   at  (-1.4,0.4)    {$A$};
					\node   at  (-1.4,1.4)    {$A$};
					\node   at  (1.1,1)    {$A^{\otimes 2}$};
					\draw[fill=black,style=thick] (-1.3,0) circle (0.05cm and 0.05cm);
					\draw[fill=black,style=thick] (-1.3,1) circle (0.05cm and 0.05cm);
					\draw[fill=black,style=thick] (1.1,0.5) circle (0.05cm and 0.05cm);
					\draw[style=thick]  (-0.1,0.5) to   (1.1,0.5);
					\draw[style=thick]  (-1.4,1) to [out=0, looseness= 0.8,in=90] (-0.1,0.5);
					\draw[style=thick]  (-1.4,0) to [out=0, looseness =0.8,in=-90](-0.1,0.5);
					\draw[style=thick]  (1.1,0.5) to   (1.5,0.5);
					\node (box) at (2.1,0.5)  [block] {$m_A$};
					\draw[style=thick]  (2.7,0.5) to   (3,0.5);
					\draw[fill=black,style=thick] (3,0.5) circle (0.05cm and 0.05cm);
					\node at (3,1) {$A$};
			\end{tikzpicture}}
		\end{aligned}
		&	\text{=}
		\begin{aligned}
			\resizebox{150pt}{!}{%
				\begin{tikzpicture}[block/.style={draw, rectangle}, scale=0.7]
					\draw[style=thick]  (-2.5,0) to   (-1.4,0);
					\draw[style=thick]  (-2.5,1) to   (-1.4,1);
					\node   at  (-1.4,0.5)    {	$\mathbb{1}_{\mathcal C}$};
					\node at (-2.5,1) {$\figureXv$};
					\node at (-2.5,0) {$\figureXv$};
					\draw[style=thick]  (-2.5,1) to   (-1.4,1);
					\node   at  (-1.4,1.4)    {$\mathbb{1}_{\mathcal C}$};
					\node   at  (1.1,1)    {$ \mathbb{1}_{\mathcal C}\otimes A$};
					\draw[fill=black,style=thick] (-2.5,0) circle (0.05cm and 0.05cm);
					\draw[fill=black,style=thick] (-1.3,0) circle (0.05cm and 0.05cm);
					\draw[fill=black,style=thick] (-1.3,1) circle (0.05cm and 0.05cm);
					\draw[fill=black,style=thick] (1.1,0.5) circle (0.05cm and 0.05cm);
					\draw[style=thick]  (-0.1,0.5) to   (1.1,0.5);
					\draw[style=thick]  (-1.4,1) to [out=0, looseness= 0.8,in=90] (-0.1,0.5);
					\draw[style=thick]  (-1.4,0) to [out=0, looseness =0.8,in=-90](-0.1,0.5);
					\draw[style=thick]  (1.1,0.5) to   (1.5,0.5);
					\draw[style=thick]  (0,0.5) to   (6.5,0.5);
					\node (box) at (3.9,0.5)  [block, fill=white] {$m_A(\theta_A\otimes \theta_A)$};
					\draw[fill=black,style=thick] (6.5,0.5) circle (0.05cm and 0.05cm);
					\node at (6.5,1) {$A$};
			\end{tikzpicture}}
		\end{aligned}
		&	\tiny{\text{linearity of }} {\scriptstyle \mu_{\mathcal C}},	\\
		&=	\begin{aligned}
			\resizebox{120pt}{!}{%
				\begin{tikzpicture}[block/.style={draw, rectangle, minimum height=0.5cm,fill=white}]
					\node   at  (-1,1)    {$A$};
					\node at (-6,0.5) {$\figureXv$};
					\node at (-5,1) {$\mathbb{1}_{\mathcal C}$};
					\draw[fill=black,style=thick] (-5,0.5) circle (0.05cm and 0.05cm);
					\draw[fill=black,style=thick] (-1,0.5) circle (0.05cm and 0.05cm);
					\draw[fill=black,style=thick] (-1,0.5) circle (0.05cm and 0.05cm);
					\draw[style=thick]  (-6,0.5) to   (-1,0.5);
					\node (box) at (-3,0.5)  [block] {$m_A(\theta_A \otimes \theta_A)$};
			\end{tikzpicture}}
		\end{aligned} 
		&	\tiny{\text{unitality of }} {\scriptstyle \mu_{\mathcal C}}.
	\end{align*}
		
		Hence $\langle A \rangle_t$ is an extended Frobenius algebra in $\StC$.
	\end{proof}
	
	\section{Unoriented 2-dimensional cobordisms} \label{section: unoriented 2dim}
	In this section, we will follow \cite{T, TT} to describe the category $\UCob$ of $2$-dimensional unoriented cobordisms, introduced in Definition \ref{def of UCobn}. More precisely, we describe the skeletal version of this category: objects are non-negative integers $n\in \mathbb Z_{\geq 0}$, where $0$ correponds to the empty space and $1$ to the isomorphism class of the circle. A morphism in $\Hom_{\UCob}(n,m)$ corresponds to an unoriented cobordism from $n$ disjoint circles to $m$ disjoint circles.    
	
	Similarly to the oriented case, $\UCob$ is a symmetric monoidal category, with tensor product given by disjoint union, and braiding given by the cobordism $\tau=$
	\resizebox{30pt}{!}{%
		\begin{tikzpicture}[tqft/cobordism/.style={draw,thick},
			tqft/view from=outgoing, tqft/boundary separation=30pt,
			tqft/cobordism height=40pt, tqft/circle x radius=8pt,
			tqft/circle y radius=4.5pt, tqft/every boundary component/.style={rotate=90}
			]
			\pic[tqft/cylinder to next,rotate=90, name=i, anchor=incoming boundary 1,boundary separation=55pt, anchor={(1.3,-0.5)},every incoming
			boundary component/.style={draw,dotted,thick},every outgoing
			boundary component/.style={draw,thick}];
			\pic[tqft/cylinder to prior,rotate=90, anchor={(0.8,-0.5)},boundary separation=55pt,every incoming
			boundary component/.style={draw,dotted,thick},every outgoing
			boundary component/.style={draw,thick},name=h];
	\end{tikzpicture}} and its generalization.

	\subsection{Generators and relations for $\text{UCob}_2$}\label{section:gens and rels}
	Consider the category $\UCob$ of unoriented 2-dimensional cobordisms, as defined in Definition \ref{def of UCobn}. In \cite[Section 2.2]{T}, Tubbenhauer defines a category $\operatorname{uCob^2}_R(\emptyset)^*$ by generators and relations, which has  an obvious functor to $\UCob$. Arguments in \cite[Section 2.2]{TT} essentially prove that this functor is an equivalence, giving a description by generators and relations of the category of 2-dimensional unoriented cobordisms. 
	
	Specifically, every morphism can be obtained by composition (from left to right) and disjoint union (vertical stacking) of the following 8 cobordisms:
	\begin{align}\label{unoriented generators}
		\begin{tikzpicture}[tqft/cobordism/.style={draw,thick},
			tqft/view from=outgoing, tqft/boundary separation=30pt,
			tqft/cobordism height=40pt, tqft/circle x radius=8pt,
			tqft/circle y radius=4.5pt, tqft/every boundary component/.style={rotate=90}
			]
			\pic[tqft/cylinder,rotate=90,name=a,anchor={(1,0)}, every incoming
			boundary component/.style={draw,dotted,thick},every outgoing
			boundary component/.style={draw,thick}
			];
			\node at ([yshift=-20pt,xshift=-20pt]a-outgoing boundary 1){\small{Id}};
			\pic[tqft/pair of pants,
			rotate=90,name=b,at=(a-outgoing boundary),anchor={(1,-0.5)}, every incoming
			boundary component/.style={draw,dotted,thick},every outgoing
			boundary component/.style={draw,thick}];
			\node at ([yshift=-15pt,xshift=-20pt]b-outgoing boundary 1){\small{$\Delta$}};
			\pic[tqft/reverse pair of pants, name=c,rotate=90,at=(b-outgoing boundary),anchor={(1,-0.5)},every incoming
			boundary component/.style={draw,thick,dotted},every outgoing
			boundary component/.style={draw,thick}];
			\node at ([yshift=-18pt,xshift=20pt]c-incoming boundary 1){\small{$m$}};
			\pic[tqft/cup, 
			rotate=90,name=d,at=(c-outgoing boundary),anchor={(1,-0.5)},every incoming
			boundary component/.style={draw,dotted,thick},every outgoing
			boundary component/.style={draw,thick}];
			\node at ([yshift=-18pt,xshift=2pt]d-incoming boundary 1){\small{$\varepsilon$}};
			\pic[tqft/cap, 
			rotate=90,name=e,anchor={(1,-4.3)}, every incoming
			boundary component/.style={draw,dotted,thick},every outgoing
			boundary component/.style={draw,thick}];
			\node at ([yshift=-18pt,xshift=-2pt]e-outgoing boundary 1){\small{$u$}};
			\pic[tqft/cylinder to next,rotate=90, anchor=incoming boundary 1,boundary separation=60pt, anchor={(1.07,-5.8)},every incoming
			boundary component/.style={draw,dotted,thick},every outgoing
			boundary component/.style={draw,thick}];
			\pic[tqft/cylinder to prior,rotate=90, anchor={(0.6,-5.8)},boundary separation=60pt,every incoming
			boundary component/.style={draw,dotted,thick},every outgoing
			boundary component/.style={draw,thick}];
			\pic[tqft/cylinder,rotate=90,name=k,anchor={(1,-7.3)},every incoming
			boundary component/.style={draw,dotted,thick},every outgoing
			boundary component/.style={draw,thick}];
			\node at ([yshift=-20pt,xshift=-80pt]k-outgoing boundary 1){\small{$\tau$}};
			\node at ([yshift=-18pt,xshift=-20pt]k-outgoing boundary 1){\small{$\phi$}};
			\node at ([xshift=20pt]k-incoming boundary 1){$\leftrightarrow$};
			\pic[tqft/cap, 
			rotate=90,name=d,at=(a-outgoing boundary),anchor={(1,-7)},every incoming
			boundary component/.style={draw,dotted,thick},every outgoing
			boundary component/.style={draw,thick}];
			\node at ([xshift=-7pt]d-outgoing boundary 1) {$\figureXv$};
			\node at ([xshift=10pt]d-outgoing boundary 1) {.};
			\node at ([yshift=-18pt,xshift=-2pt]d-outgoing boundary 1){\small{$\theta$}};
		\end{tikzpicture}
	\end{align}
	The first 6 cobordisms are the usual generators for $\text{Cob}_2$, the category of oriented 2-dimensional cobordisms. As is traditional, we will refer to these morphisms,  from left to right, in the following way: identity, pair of pants, reverse pair of pants, cap, cup and twist. The last 2 cobordisms are new:  $\phi:1 \to 1$ denotes  the orientation reversing diffeomorphism of the circle, see Example \ref{example}, and $\theta: 0 \to 1$ the once punctured projective plane, also called the Möbius or \emph{crosscap} cobordism, which is non-orientable.
	
	We include below the relations, as described in \cite{T, TT}.

	$\diamondsuit$ Associativity and coassociativity:
	\begin{align*}
		&\begin{aligned}
			\resizebox{140pt}{!}{%
 }
		\end{aligned}
	\end{align}
	which allows us to identify a triple crosscap with a handle with a unique crosscap. 
	
	Next we work towards a description of $\Hom_{\text{UCob}_2}(m,n)$ for $m,n\geq 0$. A morphism from $m$ to $n$ has a finite number of closed connected components,  and a finite number of connected components with boundary. Each of these can be orientable or unorientable. We consider orientable and unorientable connected components separately in what follows. 
	
	\begin{remark}
		Note that $\Hom_{\UCob}(m,n) \simeq \Hom_{\UCob}(0,n+m)$. This isomorphism is given by bending the left $n$ circles to the rigth, see the following figure:
	\end{remark}
	
	\begin{align*}
		\resizebox{270pt}{!}{%
			\begin{tikzpicture}[scale=0.6]
				\begin{scope}[scale=0.8]
					\node at (5.5,-4) {\figureA};
					\node at (10,0) [font=\Huge]{$\leadsto$};
					\draw[fill=white,style=thick] (5, 5) ellipse (2.5cm and 0.8cm);
					\draw[style=thick] (3.7,5) to[out=90, looseness = 0.7,in=90] (4.4,5); 
					\draw[style=thick] (3.5,5.1) to[out=270, looseness = 0.4,in=270] (4.6,5.1);
					\draw[style=thick] (5.7,5) to[out=90, looseness = 0.7,in=90] (6.4,5); 
					\draw[style=thick] (5.5,5.1) to[out=270, looseness = 0.4,in=270] (6.6,5.1);
					\node[above, font={\small}] at (3,4.2)  {\figureXv};
					\draw[fill=white,style=thick] (1, 5) ellipse (1cm and 1cm);
					\draw[style=thick] (0,5) to[out=270, looseness = 0.4,in=270] (2,5);
					\draw[style=thick,dashed] (0,5) to[out=90, looseness = 0.2,in=90] (2,5); 
					\draw[style=thick] (0,-0.5) to (0,0.5) to[out=20, looseness =1.3,in=0] (0,2) ;
					\draw[style=thick] (0,-0.5) to[out=20, looseness =1.3,in=0] (0,-2);
					\draw[style=thick] (6,-3)  to [out=170, looseness =1.3,in=0] (0,-5.5);
					\draw[style=thick] (0,-3) to[out=20, looseness =1.1,in=0] (0,-4.5);
					\draw[style=thick] (6,-2) to[out=-150, looseness =0.5,in=-150] (6,0);
					\draw[style=thick] (6,2.5) to[out=-150, looseness =0.5,in=-150] (6,0.5);
					\draw[style=thick] (0,3)to [out=0, looseness =1.3,in=-160] (6,3);
					\draw[style=thick] (2.4,1.5) to[out=120, looseness = 1.1,in=90] (2,1.5); 
					\draw[style=thick] (2.5,1.6) to[out=270, looseness = 0.9,in=270] (1.9,1.6); 
					\draw[style=thick] (4.4,-1.6) to[out=90, looseness = 1.1,in=90] (4,-1.6); 
					\draw[style=thick] (4.5,-1.5) to[out=270, looseness = 0.9,in=270] (3.9,-1.5); 
					\draw[style=thick] (6,-4.5)  to[out=190, looseness =4,in=190] (6,-3.5);
					\draw[fill=white,style=thick](6,-4) ellipse (0.2cm and 0.5cm);
					\draw[fill=white,style=thick](6,-2.5) ellipse (0.2cm and 0.5cm);
					\draw[fill=white,style=thick](0,-2.5) ellipse (0.2cm and 0.5cm);
					\draw[fill=white,style=thick](0,-5) ellipse (0.2cm and 0.5cm);
					\draw[fill=white,style=thick] (0,0) ellipse (0.2cm and 0.5cm);
					\draw[fill=white,style=thick](0,2.5) ellipse (0.2cm and 0.5cm);
					\draw[fill=white,style=thick] (6,2.5)ellipse (0.2cm and 0.5cm);
					\draw[fill=white,style=thick] (6,0)ellipse (0.2cm and 0.5cm);
				\end{scope}   
				\begin{scope}[scale=0.8,shift={(21.5,5)}]
					\node at (5.5,-4) {\figureA};
					\node at (7.5,-4) {.};
					\draw[fill=white,style=thick] (5, 5) ellipse (2.5cm and 0.8cm);
					\draw[style=thick] (3.7,5) to[out=90, looseness = 0.7,in=90] (4.4,5); 
					\draw[style=thick] (3.5,5.1) to[out=270, looseness = 0.4,in=270] (4.6,5.1);
					\draw[style=thick] (5.7,5) to[out=90, looseness = 0.7,in=90] (6.4,5); 
					\draw[style=thick] (5.5,5.1) to[out=270, looseness = 0.4,in=270] (6.6,5.1);
					\node[above, font={\small}] at (3,4.2)  {\figureXv};
					\draw[fill=white,style=thick] (1, 5) ellipse (1cm and 1cm);
					\draw[style=thick] (0,5) to[out=270, looseness = 0.4,in=270] (2,5);
					\draw[style=thick,dashed] (0,5) to[out=90, looseness = 0.2,in=90] (2,5); 
					\draw[style=thick] (0,-0.5) to (0,0.5) to[out=20, looseness =1.3,in=0] (0,2) ;
					\draw[style=thick] (0,-0.5) to[out=20, looseness =1.3,in=0] (0,-2);
					\draw[style=thick] (6,-3)  to [out=170, looseness =1.3,in=0] (0,-5.5);
					\draw[style=thick] (0,-3) to[out=20, looseness =1.3,in=0] (0,-4.5);
					\draw[style=thick] (6,-2) to[out=-150, looseness =0.5,in=-150] (6,0);
					\draw[style=thick] (6,2.5) to[out=-150, looseness =0.5,in=-150] (6,0.5);
					\draw[style=thick] (0,3)to [out=0, looseness =1.3,in=-160] (6,3);
					\draw[style=thick] (2.4,1.5) to[out=120, looseness = 1.1,in=90] (2,1.5); 
					\draw[style=thick] (2.5,1.6) to[out=270, looseness = 0.9,in=270] (1.9,1.6); 
					\draw[style=thick] (4.4,-1.6) to[out=90, looseness = 1.1,in=90] (4,-1.6); 
					\draw[style=thick] (4.5,-1.5) to[out=270, looseness = 0.9,in=270] (3.9,-1.5); 
					\draw[style=thick] (6,-4.5)  to[out=190, looseness =4,in=190] (6,-3.5);
					\draw[fill=white,style=thick](6,-4) ellipse (0.2cm and 0.5cm);
					\draw[fill=white,style=thick](6,-2.5) ellipse (0.2cm and 0.5cm);
					\draw[fill=white,style=thick](0,-2.5) ellipse (0.2cm and 0.5cm);
					\draw[fill=white,style=thick](0,-5) ellipse (0.2cm and 0.5cm);
					\draw[fill=white,style=thick] (0,0) ellipse (0.2cm and 0.5cm);
					\draw[fill=white,style=thick](0,2.5) ellipse (0.2cm and 0.5cm);
					\draw[fill=white,style=thick] (6,2.5)ellipse (0.2cm and 0.5cm);
					\draw[fill=white,style=thick] (6,0)ellipse (0.2cm and 0.5cm);
					\draw[style=thick] (0,-4.5)  to[out=-150, looseness =2,in=-150] (6,-6);
					\draw[style=thick] (0,-5.5)  to[out=-150, looseness =1.3,in=-140] (6,-5);	
					\draw[fill=white,style=thick] (6,-5.5)ellipse (0.2cm and 0.5cm);
					\draw[style=thick] (0,-2)  to[out=-150, looseness =2.4,in=-150] (6,-8);
					\draw[style=thick] (0,-3)  to[out=-150, looseness =2.3,in=-140] (6,-7);	
					\draw[fill=white,style=thick] (6,-7.5)ellipse (0.2cm and 0.5cm);
					\draw[style=thick] (0,0.5)  to[out=-150, looseness =2.3,in=-150] (6,-10);
					\draw[style=thick] (0,-0.5)  to[out=-150, looseness =2.4,in=-150] (6,-9);	
					\draw[fill=white,style=thick] (6,-9.5)ellipse (0.2cm and 0.5cm);
					\draw[style=thick] (0,3)  to[out=-150, looseness =2.4,in=-150] (6,-12);
					\draw[style=thick] (0,2)  to[out=-150, looseness =2.5,in=-150] (6,-11);	
					\draw[fill=white,style=thick] (6,-11.5)ellipse (0.2cm and 0.5cm);		
		\end{scope}   \end{tikzpicture}}
	\end{align*} 	
	
	\vspace{-2.3cm}
	\subsection{Orientable connected cobordisms with boundary}\label{Orientable connected cobordisms with boundary}
	The goal of this section is to give a graphical description of orientable connected morphisms, which will be useful later on. We note that orientable morphisms in $\UCob$ do not contain any crosscaps. 
	
	What we will be doing in the following Proposition is similar to what is done in \cite[Section 1.4.16]{Ko}.
	
	\begin{proposition}\label{prop:orientable with boundary}
		Any orientable connected cobordism  with boundary $m\to n$ in $\UCob$ can be decomposed into three parts, which we will call the \emph{in, mid} and \emph{out} parts:
		
		$\diamondsuit$ The \emph{in} part consists of:
		\begin{itemize}
			\item If $m=0$, a cup.
			\item If $m>0$, a composition of cylinders and reverse pairs of pants. The cobordism starts with $m$ stacked cylinders, each of which is either the identity or the involution $\phi$. Following the cylinders we have a composition of multiplication maps $m\to 1$. 
		\end{itemize}

		$\diamondsuit$ The \emph{mid} part consists of a composition of \emph{handles} \resizebox{40pt}{!}{%
			\begin{tikzpicture}[tqft/cobordism/.style={draw,thick},
				tqft/view from=outgoing, tqft/boundary separation=30pt,
				tqft/cobordism height=40pt, tqft/circle x radius=8pt,
				tqft/circle y radius=4.5pt, tqft/every boundary component/.style={draw,rotate=90},tqft/every incoming
				boundary component/.style={draw,dotted,thick},tqft/every outgoing
				boundary component/.style={draw,thick}]
				\pic[tqft/pair of pants, 
				rotate=90,name=a,];
				\pic[tqft/reverse pair of pants,
				rotate=90,name=j,at=(a-outgoing boundary)]; 
		\end{tikzpicture}}. The number of handles is unique and gives the genus of the cobordism. When the genus is zero, the  \emph{mid} part is empty.
		
		$\diamondsuit$  The \emph{out} part consists of:
		\begin{itemize}
			\item If $n=0$, a cap.
			\item If $n>0$, a composition of cylinders and pairs of pants. The cobordisms ends with a composition of pairs of pants $1 \to n,$ followed by $n$ stacked cylinders, each of which can be either the identity or $\phi$.
		\end{itemize}
		
	\end{proposition}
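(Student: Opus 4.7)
Since $M \colon m \to n$ is orientable and connected, the crosscap generator $\theta$ cannot appear in any presentation of $M$ by the generators \eqref{unoriented generators}. Hence $M$ is built only from the seven orientable generators (identity, pair of pants, reverse pair of pants, cup, cap, twist, and $\phi$). The plan is in two stages: first push every occurrence of $\phi$ out to the boundary, and then put the resulting purely oriented cobordism into the classical normal form.

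For the first stage, relations \eqref{phi is mult and comult} give $\phi \circ m = m \circ (\phi \otimes \phi)$ and $\Delta \circ \phi = (\phi \otimes \phi) \circ \Delta$, so an interior $\phi$ can be commuted past any reverse pair of pants (absorbing it into two $\phi$'s on the incoming legs) and past any pair of pants (splitting it across the outgoing legs). Relations \eqref{phi is unitual and counital} give $\phi \circ u = u$ and $\varepsilon \circ \phi = \varepsilon$, so any $\phi$ adjacent to a cup or cap is absorbed. Together with the involutivity $\phi^2 = \id$ and the compatibility of $\phi$ with the twist, iterating these rewriting rules pushes every $\phi$ out to the input or output boundary, leaving at most one $\phi$ attached to each boundary circle.

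After this reduction, the interior of $M$ is a connected oriented $2$-cobordism built only from identity, pair of pants, reverse pair of pants, cup, cap, and twist. The classical normal form theorem for $\Cob$ (see e.g.\ \cite[\S1.4]{Ko}) then asserts that, using associativity, commutativity, unitality, counitality, the Frobenius relation, and the permutation relations from Section \ref{section:gens and rels}, any such cobordism can be rewritten as a composition of reverse pairs of pants multiplying the $m$ incoming circles down to one (or a single cup when $m=0$), followed by $g$ handles, followed by a composition of pairs of pants splitting one circle into $n$ outgoing circles (or a single cap when $n=0$). Combined with the $\phi$'s collected at the boundary in the first stage, this yields the claimed \emph{in--mid--out} decomposition. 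Uniqueness of $g$ follows from the fact that $g$ equals the topological genus of the orientable surface $M$, which is a diffeomorphism invariant.

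The main bookkeeping difficulty is the first stage: one must verify that the $\phi$-rewriting procedure is well-defined and terminates on an arbitrary presentation of $M$, in particular confirming that $\phi$ can be consistently commuted across the twist generator and that splitting $\phi$ across a pair of pants respects the commutativity and cocommutativity relations. This is essentially a confluence check for the rewriting system determined by \eqref{phi is mult and comult} and \eqref{phi is unitual and counital}, after which the remaining classification reduces to the well-known oriented case invoked above.
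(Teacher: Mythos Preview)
Your overall strategy---push every $\phi$ to the boundary using \eqref{phi is mult and comult} and \eqref{phi is unitual and counital}, then invoke the oriented normal form from \cite[\S1.4]{Ko}---is the same idea as the paper's proof, and the second stage is fine. The gap is in the first stage. The rules you invoke move a $\phi$ sitting on the \emph{single-strand side} of $m$ or $\Delta$: from $\phi\circ m$ to $m\circ(\phi\otimes\phi)$, or from $\Delta\circ\phi$ to $(\phi\otimes\phi)\circ\Delta$. They say nothing about a lone $\phi$ on one \emph{leg}, e.g.\ $m\circ(\phi\otimes\id)$ or $(\phi\otimes\id)\circ\Delta$. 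In particular, when a $\phi$ sits on one leg of a handle, as in $m\circ(\phi\otimes\id)\circ\Delta$, your rewriting system is stuck: neither rule applies, and inserting $\phi^2=\id$ on the other leg only shuffles the problem around.

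This is precisely where orientability is used beyond merely excluding $\theta$. By the second relation in \eqref{crosscap and involution}, $m\circ(\phi\otimes\id)\circ\Delta\circ u = m\circ(\theta\otimes\theta)$, so a single $\phi$ on one leg of a handle produces an unorientable piece; hence this configuration cannot occur in a presentation of an orientable $M$. The paper makes this explicit: rather than pushing bare $\phi$'s first, it moves the \emph{shapes} (reverse pairs of pants with all possible $\phi$-decorations on their legs) leftward by case analysis, and when such a shape meets a pair of pants to form a handle, the single-$\phi$ case is ruled out by orientability while the double-$\phi$ case reduces via $m(\phi\otimes\phi)=\phi m$. Your ``confluence check'' remark gestures at the difficulty but does not supply this argument; once you add it, your two-stage approach becomes a valid alternative to the paper's interleaved case analysis.
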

	
	\begin{example}
		The following is an  orientable cobordism $4\to 3$ of genus 3, 
		\begin{align*}
			&	\begin{aligned}
				\resizebox{320pt}{!}{%
					\begin{tikzpicture}[tqft/cobordism/.style={draw,thick},
						tqft/view from=outgoing, tqft/boundary separation=30pt,
						tqft/cobordism height=40pt, tqft/circle x radius=8pt,
						tqft/circle y radius=4.5pt, tqft/every boundary component/.style={draw,rotate=90},tqft/every incoming
						boundary component/.style={draw,dotted,thick},tqft/every outgoing
						boundary component/.style={draw,thick}]
						\pic[tqft/cylinder,rotate=90,name=a,anchor={(0,0)}];
						\node at ([xshift=20pt]a-incoming boundary 1){$\leftrightarrow$};
						\pic[tqft/cylinder,rotate=90,name=b,anchor={(1,0)}];
						\pic[tqft/cylinder,rotate=90,name=c,anchor={(2,0)}];
						\node at ([xshift=20pt]c-incoming boundary 1){$\leftrightarrow$};
						\pic[tqft/cylinder,rotate=90,name=d,anchor={(3,0)}];
						\pic[tqft/reverse pair of pants,
						rotate=90,name=e,at=(b-outgoing boundary)]; 
						\pic[tqft/cylinder to next,rotate=90,name=f, at=(c-outgoing boundary)];
						\pic[tqft/cylinder to next,rotate=90,name=g, at=(d-outgoing boundary)];
						\pic[tqft/reverse pair of pants,
						rotate=90,name=h,at=(f-outgoing boundary)]; 
						\pic[tqft/cylinder to next,rotate=90,name=i, at=(g-outgoing boundary)];
						\pic[tqft/reverse pair of pants,
						rotate=90,name=j,at=(i-outgoing boundary)]; 
						\pic[tqft/pair of pants,
						rotate=90,name=k,at=(j-outgoing boundary)]; 
						\pic[tqft/reverse pair of pants,
						rotate=90,name=l,at=(k-outgoing boundary)]; 
						\pic[tqft/pair of pants,
						rotate=90,name=m,at=(l-outgoing boundary)]; 
						\pic[tqft/reverse pair of pants,
						rotate=90,name=n,at=(m-outgoing boundary)]; 
						\pic[tqft/pair of pants,
						rotate=90,name=o,at=(n-outgoing boundary)]; 
						\pic[tqft/reverse pair of pants,
						rotate=90,name=p,at=(o-outgoing boundary)]; 
						\pic[tqft/pair of pants,
						rotate=90,name=r,at=(p-outgoing boundary)]; 
						\pic[tqft/cylinder to next,rotate=90,name=u,   at=(r-outgoing boundary 2)];
						\pic[tqft/pair of pants,
						rotate=90,name=s,at=(r-outgoing boundary)]; 
						\pic[tqft/cylinder,rotate=90,name=x, at=(u-outgoing boundary)];
						\pic[tqft/cylinder,rotate=90,name=y, at=(s-outgoing boundary)];
						\pic[tqft/cylinder,rotate=90,name=z, at=(s-outgoing boundary 2)];
						\node at ([xshift=20pt]z-incoming boundary 1){$\leftrightarrow$};
						\node at ([xshift=50pt]z-incoming boundary 1){.};
				\end{tikzpicture} }
			\end{aligned}\\
			&\underbrace{\ \ \ \ \ \ \ \ \ \ \ \ \ \ \ \ \ \ \ \ \ \ \ \ \ \ \ }_\text{\emph{in}} 
			\underbrace{\ \ \ \ \ \ \ \ \ \ \ \ \ \ \ \ \ \ \ \ \ \ \ \ \ \ \ \ \ \ \ \ \ \ \ \ \ }_\text{\emph{mid}} 
			\underbrace{\ \ \ \ \ \ \ \ \ \ \ \ \ \ \ \ \ \ \ \ }_\text{\emph{out}} 
		\end{align*}
	\end{example}

	\begin{proof}
We note that by the realtions involving the twist in $\UCob$, the twist $\tau$ can always be vanished from connected cobordisms, see also \cite[Section 1.4.16]{Ko}.	Start with any connected orientable cobordism $m\to n$. Our first step will be to move all shapes of the form
		\begin{align}\label{shape 1}
			&	\begin{aligned}
				\resizebox{30pt}{!}{%
					\begin{tikzpicture}[tqft/cobordism/.style={draw,thick},
						tqft/view from=outgoing, tqft/boundary separation=30pt,
						tqft/cobordism height=40pt, tqft/circle x radius=8pt,
						tqft/circle y radius=4.5pt, tqft/every boundary component/.style={draw,rotate=90},tqft/every incoming
						boundary component/.style={draw,dotted,thick},tqft/every outgoing
						boundary component/.style={draw,thick}]
						\pic[tqft/reverse pair of pants,
						rotate=90]; 
				\end{tikzpicture} }
			\end{aligned}
			&&	&	\begin{aligned}
				\resizebox{50pt}{!}{%
					\begin{tikzpicture}[tqft/cobordism/.style={draw,thick},
						tqft/view from=outgoing, tqft/boundary separation=30pt,
						tqft/cobordism height=40pt, tqft/circle x radius=8pt,
						tqft/circle y radius=4.5pt, tqft/every boundary component/.style={draw,rotate=90},tqft/every incoming
						boundary component/.style={draw,dotted,thick},tqft/every outgoing
						boundary component/.style={draw,thick}]
						\pic[tqft/cylinder,rotate=90,name=a,anchor={(0,0)}];
						\node at ([xshift=20pt]a-incoming boundary 1){$   \leftrightarrow$};
						\pic[tqft/cylinder,rotate=90,name=b,anchor={(1,0)}];
						\pic[tqft/reverse pair of pants,
						rotate=90,name=e,at=(b-outgoing boundary)]; 
				\end{tikzpicture} }
			\end{aligned}
			&&&&	\begin{aligned}
				\resizebox{50pt}{!}{%
					\begin{tikzpicture}[tqft/cobordism/.style={draw,thick},
						tqft/view from=outgoing, tqft/boundary separation=30pt,
						tqft/cobordism height=40pt, tqft/circle x radius=8pt,
						tqft/circle y radius=4.5pt, tqft/every boundary component/.style={draw,rotate=90},tqft/every incoming
						boundary component/.style={draw,dotted,thick},tqft/every outgoing
						boundary component/.style={draw,thick}]
						\pic[tqft/cylinder,rotate=90,name=a,anchor={(0,0)}];
						\node at ([xshift=20pt]b-incoming boundary 1){$   \leftrightarrow$};
						\pic[tqft/cylinder,rotate=90,name=b,anchor={(1,0)}];
						\pic[tqft/reverse pair of pants,
						rotate=90,name=e,at=(b-outgoing boundary)]; 
				\end{tikzpicture} }
			\end{aligned}
			&&&&\begin{aligned}
				\resizebox{50pt}{!}{%
					\begin{tikzpicture}[tqft/cobordism/.style={draw,thick},
						tqft/view from=outgoing, tqft/boundary separation=30pt,
						tqft/cobordism height=40pt, tqft/circle x radius=8pt,
						tqft/circle y radius=4.5pt, tqft/every boundary component/.style={draw,rotate=90},tqft/every incoming
						boundary component/.style={draw,dotted,thick},tqft/every outgoing
						boundary component/.style={draw,thick}]
						\pic[tqft/cylinder,rotate=90,name=a,anchor={(0,0)}];
						\node at ([xshift=20pt]a-incoming boundary 1){$   \leftrightarrow$};
						\node at ([xshift=20pt]b-incoming boundary 1){$   \leftrightarrow$};
						\pic[tqft/cylinder,rotate=90,name=b,anchor={(1,0)}];
						\pic[tqft/reverse pair of pants,
						rotate=90,name=e,at=(b-outgoing boundary)]; 
				\end{tikzpicture} }
			\end{aligned},
		\end{align}
		if any, to the left of the cobordism, so that they consitute the \emph{in} part. In order to do so, we must compute the composition of these shapes with the ones we may have to their left, that is, a cup, a pair of pants, or an orientation reversing cylinder (recall that this cobordism has no crosscaps, as it is orientable). We list below all possible cases.
		
		$\star$ Precomposition with a cup:
		\begin{align*}
			&	\begin{aligned}
				\resizebox{70pt}{!}{%
					\begin{tikzpicture}[tqft/cobordism/.style={draw,thick},
						tqft/view from=outgoing, tqft/boundary separation=30pt,
						tqft/cobordism height=40pt, tqft/circle x radius=8pt,
						tqft/circle y radius=4.5pt, tqft/every boundary component/.style={draw,rotate=90},tqft/every incoming
						boundary component/.style={draw,dotted,thick},tqft/every outgoing
						boundary component/.style={draw,thick}]
						\pic[tqft/reverse pair of pants,
						rotate=90,name=e]; 
						\pic[tqft/cap,
						rotate=90,name=d,at=(e-incoming boundary), anchor={(0,1)}]; 
						\node at ([xshift=20pt]e-outgoing boundary 1) [font=\huge] {\(=\)};
						\pic[tqft/cylinder,
						rotate=90,name=f, anchor={(0.5,-2)}]; 
				\end{tikzpicture} }
			\end{aligned}
			&&&	\begin{aligned}
				\resizebox{100pt}{!}{%
					\begin{tikzpicture}[tqft/cobordism/.style={draw,thick},
						tqft/view from=outgoing, tqft/boundary separation=30pt,
						tqft/cobordism height=40pt, tqft/circle x radius=8pt,
						tqft/circle y radius=4.5pt, tqft/every boundary component/.style={draw,rotate=90},tqft/every incoming
						boundary component/.style={draw,dotted,thick},tqft/every outgoing
						boundary component/.style={draw,thick}]
						\pic[tqft/cylinder,rotate=90,name=a,anchor={(0,0)}];
						\node at ([xshift=20pt]a-incoming boundary 1){$   \leftrightarrow$};
						\pic[tqft/cylinder,rotate=90,name=b,anchor={(1,0)}];
						\pic[tqft/reverse pair of pants,
						rotate=90,name=e,at=(b-outgoing boundary)]; 
						\pic[tqft/cap,
						rotate=90,name=d,at=(a-outgoing boundary),anchor={(1,2)}]; 
						\node at ([xshift=20pt]e-outgoing boundary 1) [font=\huge] {\(=\)};
						\pic[tqft/cylinder,
						rotate=90,name=f,at=(e-outgoing boundary), anchor={(1,-1)}]; 
				\end{tikzpicture} }
			\end{aligned}
			&&&&	\begin{aligned}
				\resizebox{100pt}{!}{%
					\begin{tikzpicture}[tqft/cobordism/.style={draw,thick},
						tqft/view from=outgoing, tqft/boundary separation=30pt,
						tqft/cobordism height=40pt, tqft/circle x radius=8pt,
						tqft/circle y radius=4.5pt, tqft/every boundary component/.style={draw,rotate=90},tqft/every incoming
						boundary component/.style={draw,dotted,thick},tqft/every outgoing
						boundary component/.style={draw,thick}]
						\node at ([xshift=20pt]b-incoming boundary 1){$   \leftrightarrow$};
						\pic[tqft/cylinder,rotate=90,name=b,anchor={(1,0)}];
						\pic[tqft/reverse pair of pants,
						rotate=90,name=e,at=(b-outgoing boundary)]; 
						\pic[tqft/cap,
						rotate=90,name=d,at=(e-outgoing boundary),anchor={(0.5,2)}]; 
						\node at ([xshift=20pt]e-outgoing boundary 1) [font=\huge] {\(=\)};
						\pic[tqft/cylinder,
						rotate=90,name=f,at=(e-outgoing boundary), anchor={(1,-1)}]; 
						\node at ([xshift=20pt]f-incoming boundary 1){$   \leftrightarrow$};
				\end{tikzpicture} }
			\end{aligned}
			&&&	\begin{aligned}
				\resizebox{100pt}{!}{%
					\begin{tikzpicture}[tqft/cobordism/.style={draw,thick},
						tqft/view from=outgoing, tqft/boundary separation=30pt,
						tqft/cobordism height=40pt, tqft/circle x radius=8pt,
						tqft/circle y radius=4.5pt, tqft/every boundary component/.style={draw,rotate=90},tqft/every incoming
						boundary component/.style={draw,dotted,thick},tqft/every outgoing
						boundary component/.style={draw,thick}]
						\pic[tqft/cylinder,rotate=90,name=a,anchor={(0,0)}];
						\node at ([xshift=20pt]a-incoming boundary 1){$   \leftrightarrow$};
						\node at ([xshift=20pt]b-incoming boundary 1){$   \leftrightarrow$};
						\pic[tqft/cylinder,rotate=90,name=b,anchor={(1,0)}];
						\pic[tqft/reverse pair of pants,
						rotate=90,name=e,at=(b-outgoing boundary)]; 
						\pic[tqft/cap,
						rotate=90,name=d,at=(a-outgoing boundary),anchor={(1,2)}]; 
						\node at ([xshift=20pt]e-outgoing boundary 1) [font=\huge] {\(=\)};
						\pic[tqft/cylinder,
						rotate=90,name=f,at=(e-outgoing boundary), anchor={(1,-1)}]; 
						\node at ([xshift=20pt]f-incoming boundary 1){$   \leftrightarrow$};
				\end{tikzpicture} }
			\end{aligned}.
		\end{align*}
		We get cylinders in every case, which we will move to the outmost left in the last step. 
		
		$\star$ Precomposition with a pair of pants, forming a handle:
		
		If  the shape \eqref{shape 1} has one orientation reversing cylinder, the composition would result in a non-orientable surface:
		\begin{align*}
			&\begin{aligned}
				\resizebox{100pt}{!}{%
					\begin{tikzpicture}[tqft/cobordism/.style={draw,thick},
						tqft/view from=outgoing, tqft/boundary separation=30pt,
						tqft/cobordism height=40pt, tqft/circle x radius=8pt,
						tqft/circle y radius=4.5pt, tqft/every boundary component/.style={draw,rotate=90},tqft/every incoming
						boundary component/.style={draw,dotted,thick},tqft/every outgoing
						boundary component/.style={draw,thick}]
						\pic[tqft/cylinder,rotate=90,name=a,anchor={(0,0)}];
						\node at ([xshift=20pt]a-incoming boundary 1)[color=black]{$   \leftrightarrow$};
						\pic[tqft/cylinder,rotate=90,name=b,anchor={(-1,0)}];
						\pic[tqft/reverse pair of pants,
						rotate=90,name=e,at=(a-outgoing boundary)]; 
						\pic[tqft/pair of pants,
						rotate=90,name=d,at=(a-outgoing boundary),anchor={(1,2)}]; 
						\node at ([xshift=20pt]e-outgoing boundary 1) [font=\huge] {\(=\)};
						\pic[tqft/cap, 
						rotate=90,name=z,anchor={(1.5,0)}, at=(e-outgoing boundary)];
						\node at ([xshift=-7pt]z-outgoing boundary 1) {$\figureXv$};
						\pic[tqft/reverse pair of pants,
						rotate=90,name=f,at=(z-outgoing boundary)]; 
						\pic[tqft/cap, 
						rotate=90,name=g,at=(f-incoming boundary 2),anchor={(1,1)}];
						\node at ([xshift=-7pt]g-outgoing boundary 1) {$\figureXv$};
				\end{tikzpicture} }
			\end{aligned}
			&&	\begin{aligned}
				\resizebox{100pt}{!}{%
					\begin{tikzpicture}[tqft/cobordism/.style={draw,thick},
						tqft/view from=outgoing, tqft/boundary separation=30pt,
						tqft/cobordism height=40pt, tqft/circle x radius=8pt,
						tqft/circle y radius=4.5pt, tqft/every boundary component/.style={draw,rotate=90},tqft/every incoming
						boundary component/.style={draw,dotted,thick},tqft/every outgoing
						boundary component/.style={draw,thick}]
						\pic[tqft/cylinder,rotate=90,name=a,anchor={(0,0)}];
						\node at ([xshift=20pt]b-incoming boundary 1)[color=black]{$   \leftrightarrow$};
						\pic[tqft/cylinder,rotate=90,name=b,anchor={(-1,0)}];
						\pic[tqft/reverse pair of pants,
						rotate=90,name=e,at=(a-outgoing boundary)]; 
						\pic[tqft/pair of pants,
						rotate=90,name=d,at=(a-outgoing boundary),anchor={(1,2)}]; 
						\node at ([xshift=20pt]e-outgoing boundary 1) [font=\huge] {\(=\)};
						\node at ([xshift=90pt]e-outgoing boundary 1)  {.};
						\pic[tqft/cap, 
						rotate=90,name=z,anchor={(1.5,0)}, at=(e-outgoing boundary)];
						\node at ([xshift=-7pt]z-outgoing boundary 1) {$\figureXv$};
						\pic[tqft/reverse pair of pants,
						rotate=90,name=f,at=(z-outgoing boundary)]; 
						\pic[tqft/cap, 
						rotate=90,name=g,at=(f-incoming boundary 2),anchor={(1,1)}];
						\node at ([xshift=-7pt]g-outgoing boundary 1) {$\figureXv$};
				\end{tikzpicture} }
			\end{aligned},
		\end{align*}
		So this case is not possible.

		If the shape \eqref{shape 1} has two orientation reversing cylinders, we can use comultiplicativity of $\phi$ to get an involution composed with a handle,
		\begin{align*}
			\begin{aligned}
				\resizebox{150pt}{!}{%
					\begin{tikzpicture}[tqft/cobordism/.style={draw,thick},
						tqft/view from=outgoing, tqft/boundary separation=30pt,
						tqft/cobordism height=40pt, tqft/circle x radius=8pt,
						tqft/circle y radius=4.5pt, tqft/every boundary component/.style={draw,rotate=90},tqft/every incoming
						boundary component/.style={draw,dotted,thick},tqft/every outgoing
						boundary component/.style={draw,thick}]
						\pic[tqft/cylinder,rotate=90,name=a,anchor={(0,0)}];
						\node at ([xshift=20pt]a-incoming boundary 1)[color=black]{$   \leftrightarrow$};
						\pic[tqft/cylinder,rotate=90,name=b,anchor={(-1,0)}];
						\node at ([xshift=20pt]b-incoming boundary 1)[color=black]{$   \leftrightarrow$};
						\pic[tqft/reverse pair of pants,
						rotate=90,name=e,at=(a-outgoing boundary)]; 
						\pic[tqft/pair of pants,
						rotate=90,name=d,at=(a-outgoing boundary),anchor={(1,2)}]; 
						\node at ([xshift=20pt]e-outgoing boundary 1) [font=\huge] {\(=\)};
						\pic[tqft/cylinder,rotate=90,name=g, at=(e-outgoing boundary),anchor={(1,-1)}];
						\node at ([xshift=20pt]g-incoming boundary 1)[color=black]{$   \leftrightarrow$};
						\pic[tqft/pair of pants,
						rotate=90,name=h,at=(g-outgoing boundary)]; 
						\pic[tqft/reverse pair of pants,
						rotate=90,name=i,at=(h-outgoing boundary)];  
				\end{tikzpicture} }
			\end{aligned},
		\end{align*}
	see relations \eqref{phi is mult and comult}. The handle will become part of the \emph{mid} part of the cobordism, and we will move the orientation reversing cylinder to the outmost left in the last step. 
		
		If the shape \eqref{shape 1} has no orientation reversing cylinders, similarly to the previous case, precomposition with a pair of pants forms a handle, which stays in the \emph{mid} part of the cobordism. 
		
		$\star$ Precomposition with a pair of pants, without forming a handle: it will look like one of the following,
		\begin{align*}
			&	\begin{aligned}
				\resizebox{120pt}{!}{%
					\begin{tikzpicture}[tqft/cobordism/.style={draw,thick},
						tqft/view from=outgoing, tqft/boundary separation=30pt,
						tqft/cobordism height=40pt, tqft/circle x radius=8pt,
						tqft/circle y radius=4.5pt, tqft/every boundary component/.style={draw,rotate=90},tqft/every incoming
						boundary component/.style={draw,dotted,thick},tqft/every outgoing
						boundary component/.style={draw,thick}]
						\pic[tqft/reverse pair of pants,
						rotate=90,name=e]; 
						\pic[tqft/pair of pants,
						rotate=90,name=d,at=(e-incoming boundary 2),anchor={(1,1)}]; 
						\node at ([xshift=30pt,yshift=15pt]e-outgoing boundary 1) [font=\huge] {\(=\)};
						\pic[tqft/reverse pair of pants,
						rotate=90,name=h,at=(d-outgoing boundary),anchor={(1.5,-3)}];				
						\pic[tqft/pair of pants,
						rotate=90,name=i,at=(h-outgoing boundary)];		
				\end{tikzpicture} }
			\end{aligned}
			&&&\begin{aligned}
				\resizebox{180pt}{!}{%
					\begin{tikzpicture}[tqft/cobordism/.style={draw,thick},
						tqft/view from=outgoing, tqft/boundary separation=30pt,
						tqft/cobordism height=40pt, tqft/circle x radius=8pt,
						tqft/circle y radius=4.5pt, tqft/every boundary component/.style={draw,rotate=90},tqft/every incoming
						boundary component/.style={draw,dotted,thick},tqft/every outgoing
						boundary component/.style={draw,thick}]
						\pic[tqft/cylinder,rotate=90,name=b,anchor={(-1,0)}];
						\node at ([xshift=20pt]b-incoming boundary 1)[color=black]{$   \leftrightarrow$};
						\pic[tqft/reverse pair of pants,
						rotate=90,name=e,at=(a-outgoing boundary)]; 
						\pic[tqft/pair of pants,
						rotate=90,name=d,at=(b-outgoing boundary),anchor={(1,2)}]; 
						\node at ([xshift=20pt,yshift=15pt]e-outgoing boundary 1) [font=\huge] {\(=\)};
						\node at ([xshift=230pt,yshift=15pt]e-outgoing boundary 1) [font=\huge] {\(.\)};
						\pic[tqft/cylinder,rotate=90,name=g, at=(e-outgoing boundary),anchor={(0,-1)}];
						\node at ([xshift=20pt]g-incoming boundary 1)[color=black]{$   \leftrightarrow$};
						\pic[tqft/reverse pair of pants,
						rotate=90,name=h,at=(g-outgoing boundary),anchor={(2,0)}];				
						\pic[tqft/pair of pants,
						rotate=90,name=i,at=(h-outgoing boundary)];		
						\pic[tqft/cylinder,rotate=90,name=l, at=(i-outgoing boundary 2)];
						\node at ([xshift=20pt]l-incoming boundary)[color=black]{$   \leftrightarrow$};
				\end{tikzpicture} }
			\end{aligned}
		\end{align*}	
		The picture above shows how we can move shape $\eqref{shape 1}$ to the left in each case. 
		
		$\star$ Precomposition with an orientation reversing cylinder: this just changes the shape from $\eqref{shape 1}$ into one of the other possible ones shown in $\eqref{shape 1}$.

		We have covered all the possible cases for precomposition, and so we are done with moving the shapes $\eqref{shape 1}$ to the left of the cobordism, which completes the first step.That is, all reverse pair of pants are now to the left of the cobordism.
		
		Our second step is to move all figures of the form 
		\begin{align*}
			&	\begin{aligned}
				\resizebox{30pt}{!}{%
					\begin{tikzpicture}[tqft/cobordism/.style={draw,thick},
						tqft/view from=outgoing, tqft/boundary separation=30pt,
						tqft/cobordism height=40pt, tqft/circle x radius=8pt,
						tqft/circle y radius=4.5pt, tqft/every boundary component/.style={draw,rotate=90},tqft/every incoming
						boundary component/.style={draw,dotted,thick},tqft/every outgoing
						boundary component/.style={draw,thick}]
						\pic[tqft/pair of pants,
						rotate=90]; 
				\end{tikzpicture} }
			\end{aligned}
			&&	&	\begin{aligned}
				\resizebox{50pt}{!}{%
					\begin{tikzpicture}[tqft/cobordism/.style={draw,thick},
						tqft/view from=outgoing, tqft/boundary separation=30pt,
						tqft/cobordism height=40pt, tqft/circle x radius=8pt,
						tqft/circle y radius=4.5pt, tqft/every boundary component/.style={draw,rotate=90},tqft/every incoming
						boundary component/.style={draw,dotted,thick},tqft/every outgoing
						boundary component/.style={draw,thick}]
						\pic[tqft/pair of pants,
						rotate=90,name=e]; 
						\pic[tqft/cylinder,rotate=90,name=a,at=(e-outgoing boundary 1)];
						\node at ([xshift=20pt]a-incoming boundary 1){$   \leftrightarrow$};
						\pic[tqft/cylinder,rotate=90,name=b,at=(e-outgoing boundary 2)];
				\end{tikzpicture} }
			\end{aligned}
			&&&&	\begin{aligned}
				\resizebox{50pt}{!}{%
					\begin{tikzpicture}[tqft/cobordism/.style={draw,thick},
						tqft/view from=outgoing, tqft/boundary separation=30pt,
						tqft/cobordism height=40pt, tqft/circle x radius=8pt,
						tqft/circle y radius=4.5pt, tqft/every boundary component/.style={draw,rotate=90},tqft/every incoming
						boundary component/.style={draw,dotted,thick},tqft/every outgoing
						boundary component/.style={draw,thick}]
						\pic[tqft/pair of pants,
						rotate=90,name=e]; 
						\pic[tqft/cylinder,rotate=90,name=a,at=(e-outgoing boundary 1)];
						\node at ([xshift=20pt]b-incoming boundary 1){$   \leftrightarrow$};
						\pic[tqft/cylinder,rotate=90,name=b,at=(e-outgoing boundary 2)];
				\end{tikzpicture} }
			\end{aligned}
			&&&&	\begin{aligned}
				\resizebox{50pt}{!}{%
					\begin{tikzpicture}[tqft/cobordism/.style={draw,thick},
						tqft/view from=outgoing, tqft/boundary separation=30pt,
						tqft/cobordism height=40pt, tqft/circle x radius=8pt,
						tqft/circle y radius=4.5pt, tqft/every boundary component/.style={draw,rotate=90},tqft/every incoming
						boundary component/.style={draw,dotted,thick},tqft/every outgoing
						boundary component/.style={draw,thick}]
						\pic[tqft/pair of pants,
						rotate=90,name=e]; 
						\pic[tqft/cylinder,rotate=90,name=a,at=(e-outgoing boundary 1)];
						\node at ([xshift=20pt]a-incoming boundary 1){$   \leftrightarrow$};
						\node at ([xshift=20pt]b-incoming boundary 1){$   \leftrightarrow$};
						\pic[tqft/cylinder,rotate=90,name=b,at=(e-outgoing boundary 2)];
				\end{tikzpicture} }
			\end{aligned}
		\end{align*}
		to the right. This can be achieved analogously to the previous step.
		
		Finally, we move any orientation reversing cylinder to either the outmost left or right of the cobordism, as follows. 
		
		$\star$ We move any involution precomposed with a handle to its left as in the figure below,
		\begin{align*}
			\begin{aligned}
				\resizebox{350pt}{!}{%
					\begin{tikzpicture}[tqft/cobordism/.style={draw,thick},
						tqft/view from=outgoing, tqft/boundary separation=30pt,
						tqft/cobordism height=40pt, tqft/circle x radius=8pt,
						tqft/circle y radius=4.5pt, tqft/every boundary component/.style={draw,rotate=90},tqft/every incoming
						boundary component/.style={draw,dotted,thick},tqft/every outgoing
						boundary component/.style={draw,thick}]
						\pic[tqft/pair of pants,rotate=90,name=x,anchor={(0,7)}];
						\pic[tqft/reverse pair of pants,rotate=90,name=y,at=(x-outgoing boundary),];
						\pic[tqft/cylinder,rotate=90,name=z,at=(y-outgoing boundary)];
						\node at ([xshift=20pt]z-incoming boundary 1)[color=black]{$   \leftrightarrow$};
						\pic[tqft/pair of pants,rotate=90,name=xx,at=(z-outgoing boundary)];
						\pic[tqft/reverse pair of pants,rotate=90,name=yy,at=(xx-outgoing boundary),];
						\node at ([xshift=20pt]yy-outgoing boundary 1) [font=\huge] {\(=\)};
						\pic[tqft/cylinder,rotate=90,name=a,anchor={(0,0)}];
						\node at ([xshift=20pt]a-incoming boundary 1)[color=black]{$   \leftrightarrow$};
						\pic[tqft/cylinder,rotate=90,name=b,anchor={(-1,0)}];
						\node at ([xshift=20pt]b-incoming boundary 1)[color=black]{$   \leftrightarrow$};
						\pic[tqft/reverse pair of pants,
						rotate=90,name=e,at=(a-outgoing boundary)]; 
						\pic[tqft/pair of pants,
						rotate=90,name=d,at=(a-outgoing boundary),anchor={(1,2)}]; 
						\pic[tqft/pair of pants,rotate=90,name=aa,at=(e-outgoing boundary)];
						\pic[tqft/reverse pair of pants,rotate=90,name=bb,at=(aa-outgoing boundary),];
						\node at ([xshift=20pt]bb-outgoing boundary 1) [font=\huge] {\(=\)};
						\pic[tqft/cylinder,rotate=90,name=g, at=(bb-outgoing boundary),anchor={(1,-1)}];
						\node at ([xshift=20pt]g-incoming boundary 1)[color=black]{$   \leftrightarrow$};
						\pic[tqft/pair of pants,
						rotate=90,name=h,at=(g-outgoing boundary)]; 
						\pic[tqft/reverse pair of pants,
						rotate=90,name=i,at=(h-outgoing boundary)];  
						\pic[tqft/pair of pants,rotate=90,name=cc,at=(i-outgoing boundary)];
						\pic[tqft/reverse pair of pants,rotate=90,name=dd,at=(cc-outgoing boundary),];
				\end{tikzpicture} }
			\end{aligned}
			.
		\end{align*}
		
		$\star$ Then, we move any involution precomposed by a reverse pair of pants (but not a handle) to its left as in the figure below, 
		\begin{align}
			\begin{aligned}
				\resizebox{220pt}{!}{%
					\begin{tikzpicture}[tqft/cobordism/.style={draw,thick},
						tqft/view from=outgoing, tqft/boundary separation=30pt,
						tqft/cobordism height=40pt, tqft/circle x radius=8pt,
						tqft/circle y radius=4.5pt, tqft/every boundary component/.style={draw,rotate=90},tqft/every incoming
						boundary component/.style={draw,dotted,thick},tqft/every outgoing
						boundary component/.style={draw,thick}]
						\pic[tqft/reverse pair of pants,
						rotate=90,name=h,,anchor={(0,0)}]; 
						\pic[tqft/cylinder to next,rotate=90,name=i];
						\pic[tqft/reverse pair of pants,
						rotate=90,name=j,at=(i-outgoing boundary)]; 
						\pic[tqft/cylinder,
						rotate=90,name=l,at=(j-outgoing boundary)]; 
						\node at ([xshift=20pt]l-incoming boundary 1)[color=black]{$   \leftrightarrow$};
						\node at ([xshift=20pt]l-outgoing boundary 1) [font=\huge] {\(=\)};
						\pic[tqft/reverse pair of pants,rotate=90,name=g, at=(l-outgoing boundary),anchor={(1,-1)}];
						\pic[tqft/cylinder to next, rotate=90,name=m,at=(l-outgoing boundary),anchor={(2,-1)}];
						\pic[tqft/cylinder, rotate=90,name=n, at=(m-outgoing boundary)];
						\node at ([xshift=20pt]n-incoming boundary 1)[color=black]{$   \leftrightarrow$};
						\pic[tqft/cylinder, rotate=90,name=g, at=(g-outgoing boundary)];
						\node at ([xshift=20pt]g-incoming boundary 1)[color=black]{$   \leftrightarrow$};
						\pic[tqft/reverse pair of pants,rotate=90,name=k, at=(g-outgoing boundary),,anchor={(2,0)}];
						\node at ([xshift=20pt]k-outgoing boundary 1) [font=\huge] {\(=\)};
						\node at ([xshift=200pt]k-outgoing boundary 1) [font=\huge] {\(.\)};
						\pic[tqft/cylinder,rotate=90,name=a,at=(k-outgoing boundary),anchor={(1,-1)}];
						\node at ([xshift=20pt]a-incoming boundary 1){$\leftrightarrow$};
						\pic[tqft/cylinder,rotate=90,name=b,at=(k-outgoing boundary),anchor={(0,-1)}];
						\node at ([xshift=20pt]b-incoming boundary 1){$\leftrightarrow$};
						\pic[tqft/cylinder,rotate=90,name=c,at=(k-outgoing boundary),anchor={(2,-1)}];
						\node at ([xshift=20pt]c-incoming boundary 1){$\leftrightarrow$};
						\pic[tqft/reverse pair of pants,rotate=90,name=x,at=(a-outgoing boundary)];
						\pic[tqft/cylinder to next,rotate=90,name=z,at=(c-outgoing boundary)];
						\pic[tqft/reverse pair of pants,rotate=90,name=y,at=(z-outgoing boundary)];
				\end{tikzpicture} }
			\end{aligned}
		\end{align}

		$\star$ Lastly, we move any remaining involution composed with a pair of pants to its right as in the figure below, 
		
		\begin{align}
			\begin{aligned}
				\resizebox{220pt}{!}{%
					\begin{tikzpicture}[tqft/cobordism/.style={draw,thick},
						tqft/view from=outgoing, tqft/boundary separation=30pt,
						tqft/cobordism height=40pt, tqft/circle x radius=8pt,
						tqft/circle y radius=4.5pt, tqft/every boundary component/.style={draw,rotate=90},tqft/every incoming
						boundary component/.style={draw,dotted,thick},tqft/every outgoing
						boundary component/.style={draw,thick}]
						\pic[tqft/cylinder,
						rotate=90,name=l]; 
						\node at ([xshift=20pt]l-incoming boundary 1)[color=black]{$   \leftrightarrow$};
						\pic[tqft/pair of pants, rotate=90,name=j,at=(l-outgoing boundary)]; 
						\pic[tqft/pair of pants, rotate=90,name=a,at=(j-outgoing boundary)]; 
						\pic[tqft/cylinder to next,rotate=90,name=b,at=(j-outgoing boundary 2)];
						\node at ([xshift=20pt]a-outgoing boundary 2) [font=\huge] {\(=\)};
						\pic[tqft/pair of pants,rotate=90,name=c, at=(b-outgoing boundary),anchor={(2,-1)}];
						\pic[tqft/cylinder, rotate=90,name=d, at=(c-outgoing boundary)]; 
						\node at ([xshift=20pt]d-incoming boundary 1)[color=black]{$   \leftrightarrow$};
						\pic[tqft/cylinder, rotate=90,name=e, at=(c-outgoing boundary 2)]; 
						\node at ([xshift=20pt]e-incoming boundary 1)[color=black]{$   \leftrightarrow$};
						\pic[tqft/cylinder to next,rotate=90,name=f,at=(e-outgoing boundary)];
						\pic[tqft/pair of pants, rotate=90,name=g,at=(d-outgoing boundary)]; 
						\node at ([xshift=20pt]g-outgoing boundary 2) [font=\huge] {\(=\)};
						\pic[tqft/pair of pants,rotate=90,name=h, at=(g-outgoing boundary),anchor={(0,-1)}];
						\pic[tqft/cylinder to next,rotate=90,name=m,at=(h-outgoing boundary 2)];
						\pic[tqft/pair of pants,rotate=90,name=n, at=(h-outgoing boundary)];
						\pic[tqft/cylinder, rotate=90,name=p, at=(n-outgoing boundary)]; 
						\node at ([xshift=20pt]p-incoming boundary 1)[color=black]{$   \leftrightarrow$};
						\pic[tqft/cylinder, rotate=90,name=q, at=(n-outgoing boundary 2)]; 
						\node at ([xshift=20pt]q-incoming boundary 1)[color=black]{$   \leftrightarrow$};
						\pic[tqft/cylinder, rotate=90,name=r, at=(m-outgoing boundary)]; 
						\node at ([xshift=20pt]r-incoming boundary 1)[color=black]{$   \leftrightarrow$};
				\end{tikzpicture} }
			\end{aligned}.
		\end{align}	
		
		If $m,n\ne 0$, we are done. If $m=0$, then every orientation reversing cylinder disappears using relation 
		$\eqref{phi is unitual and counital}$. Moreover, by  the relation \eqref{unit and counit}, which concerns the cup and reverse pair of pants,  we are left with just one cup on the \emph{in} part. Analogously, if $n=0$, using relations \eqref{phi is unitual and counital} and \eqref{unit and counit} concerning the cap, we are left with just one cap on the \emph{out} part.
	\end{proof}

	\begin{remark}
		The decomposition described in Proposition \ref{prop:orientable with boundary} is not unique. For example, the following two morphisms are equal in $\UCob$,
		\begin{align*}
			&	\begin{aligned}
				\resizebox{100pt}{!}{%
					\begin{tikzpicture}[tqft/cobordism/.style={draw,thick},
						tqft/view from=outgoing, tqft/boundary separation=30pt,
						tqft/cobordism height=40pt, tqft/circle x radius=8pt,
						tqft/circle y radius=4.5pt, tqft/every boundary component/.style={draw,rotate=90},tqft/every incoming
						boundary component/.style={draw,dotted,thick},tqft/every outgoing
						boundary component/.style={draw,thick}]
						\pic[tqft/pair of pants,
						rotate=90,name=k]; 
						\pic[tqft/reverse pair of pants,
						rotate=90,name=l,at=(k-outgoing boundary)]; 
						\pic[tqft/cylinder,rotate=90,name=x, at=(l-outgoing boundary)];
						\node at ([xshift=20pt]x-incoming boundary 1){$\leftrightarrow$};
				\end{tikzpicture} }
			\end{aligned}=
			\begin{aligned}
				\resizebox{100pt}{!}{%
					\begin{tikzpicture}[tqft/cobordism/.style={draw,thick},
						tqft/view from=outgoing, tqft/boundary separation=30pt,
						tqft/cobordism height=40pt, tqft/circle x radius=8pt,
						tqft/circle y radius=4.5pt, tqft/every boundary component/.style={draw,rotate=90},tqft/every incoming
						boundary component/.style={draw,dotted,thick},tqft/every outgoing
						boundary component/.style={draw,thick}]
						\pic[tqft/cylinder,rotate=90,name=x];
						\pic[tqft/pair of pants,
						rotate=90,name=k,at=(x-outgoing boundary)]; 
						\pic[tqft/reverse pair of pants,
						rotate=90,name=l,at=(k-outgoing boundary)]; 
						\node at ([xshift=20pt]x-incoming boundary 1){$\leftrightarrow$};
				\end{tikzpicture} }.
			\end{aligned}\\
			&\underbrace{\ \ \ \ \ \ \ \ \ \ \ \ \ \ \ \  \  }_\text{\emph{mid}} 
			\underbrace{\ \ \ \ \ \ \ \ \ }_\text{\emph{out}} \ \ \ \  \ 
			\underbrace{\ \ \ \ \ \ \ \ \ }_\text{\emph{in}} 
			\underbrace{\ \ \ \ \ \ \ \ \ \ \ \ \ \ \ \  \  }_\text{\emph{mid}} 
		\end{align*}
	\end{remark}

	\begin{remark}
		In the case $m = 0$ (respectively, $n=0$), the  \emph{in} part (respectively, the  \emph{out} part) consists of just a cap (respectively, just a cup).
	\end{remark}
	
	\begin{example}
		Pictured below is a connected orientable cobordism $0\to 4$,
		\begin{align*}
			& \ \	\begin{aligned}
				\resizebox{140pt}{!}{%
					\begin{tikzpicture}[tqft/cobordism/.style={draw,thick},
						tqft/view from=outgoing, tqft/boundary separation=30pt,
						tqft/cobordism height=40pt, tqft/circle x radius=8pt,
						tqft/circle y radius=4.5pt, tqft/every boundary component/.style={draw,rotate=90},tqft/every incoming
						boundary component/.style={draw,dotted,thick},tqft/every outgoing
						boundary component/.style={draw,thick}]
						\pic[tqft/pair of pants,
						rotate=90,name=q]; 
						\pic[tqft/cylinder to next,rotate=90,name=t, at=(q-outgoing boundary 2)];
						\pic[tqft/cylinder to next,rotate=90,name=v, at=(t-outgoing boundary)];
						\pic[tqft/pair of pants,
						rotate=90,name=r,at=(q-outgoing boundary)]; 
						\pic[tqft/cylinder to next,rotate=90,name=u,   at=(r-outgoing boundary 2)];
						\pic[tqft/pair of pants,
						rotate=90,name=s,at=(r-outgoing boundary)]; 
						\pic[tqft/cap,
						rotate=90,at=(q-outgoing boundary),anchor={(0.5,2)}]; 
						\pic[tqft/cylinder,rotate=90,name=x, at=(u-outgoing boundary)];
						\pic[tqft/cylinder,rotate=90,name=y, at=(s-outgoing boundary)];
						\pic[tqft/cylinder,rotate=90,name=z, at=(s-outgoing boundary 2)];
						\node at ([xshift=20pt]z-incoming boundary 1){$\leftrightarrow$};	\pic[tqft/cylinder,rotate=90,name=u, at=(v-outgoing boundary)];
						\node at ([xshift=20pt]u-incoming boundary 1){$\leftrightarrow$};
				\end{tikzpicture} }
			\end{aligned}.\\
			&\underbrace{ }_\text{\emph{in}} 
			\underbrace{\ \ \ \ \ \ \ \ \ \ \ \ \ \ \ \ \ \ \ \ \ \ \ \ \ \ \ \ \ \ \ \ \ \ \ }_\text{\emph{out}} 
		\end{align*}
	\end{example}

\begin{definition}\label{xi 1st def}
		Let  $\xi^m_{\{i_1, \dots, i_l\}}$ denote the connected cobordism in $\Hom_{\UCob}(0,m)$ that has genus zero, and orientation reversing cylinders in its \emph{out} part exactly in the positions $1\leq i_1< \dots < i_l\leq m$, where $1\leq l \leq m$. That is, if we denote by $\Delta^m$ the composition 
	\begin{align*}
		\Delta^{m-1}:=(\text{id}^{\otimes (m-2)}\otimes \Delta)\dots (\text{id}\otimes \Delta)\Delta :1 \to m,
	\end{align*}
	and by $\phi_{i_1, \dots, i_l}$ the cobordism $m\to m$ given by
	\begin{align*}
		&\phi_{i_1, \dots, i_l} = c_1 \otimes \dots \otimes c_m, \ \ \text{where } c_j = \begin{cases}
			\text{id} &\text{ if } j\not \in \{i_1, \dots, i_l\}\\
			\phi &\text{ if } j\in \{i_1, \dots, i_l\},
		\end{cases}
	\end{align*}
	then 
	\begin{align*}
		\xi^m_{\{i_1, \dots, i_l\}}:= \phi_{i_1, \dots, i_l} \Delta^{m-1}u.
	\end{align*}
\end{definition}
	
For instance, the cobordism in the previous example is $\xi_{\{1,3\}}^4$.

	\begin{lemma}\label{xi=xj}
		If we have a partition $\{1,\dots, m\}=\{i_1, \dots, i_l \} \sqcup \{j_1,\dots, j_s \},$   then  $$\xi^m_{\{i_1, \dots, i_l\}}=\xi^m_{\{j_1, \dots, j_s\}}.$$
	\end{lemma}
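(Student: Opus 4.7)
The plan is to reduce the equality $\xi^m_{\{i_1, \dots, i_l\}} = \xi^m_{\{j_1, \dots, j_s\}}$ to the special case where one of the two sides has no involutions at all, and then push all the $\phi$'s back through the tree of comultiplications onto the cup, where they vanish.

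More precisely, since $\phi^2 = \id$ and the operators $\phi_i$ at distinct positions commute, the composition $\phi_{\{i_1, \dots, i_l\}} \circ \phi_{\{j_1, \dots, j_s\}}$ equals $\phi_{\{1, \dots, m\}}$ (because $\{i_1, \dots, i_l\} \sqcup \{j_1, \dots, j_s\} = \{1, \dots, m\}$). Applying $\phi_{\{i_1, \dots, i_l\}}$ to both sides of the desired equation therefore reduces it to the single identity
\begin{align*}
\phi_{\{1, \dots, m\}} \, \Delta^{m-1} u = \Delta^{m-1} u.
\end{align*}

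To establish this, I would use the comultiplicativity of $\phi$ from relation \eqref{phi is mult and comult}, which reads $(\phi \otimes \phi)\Delta = \Delta\phi$ in $\UCob$ (any twist that might appear is absorbed by cocommutativity of $\Delta$). Iterating this relation $m-1$ times along the binary tree $\Delta^{m-1}$ yields
\begin{align*}
\phi^{\otimes m} \circ \Delta^{m-1} = \Delta^{m-1} \circ \phi.
\end{align*}
Combining with the identity $\phi \circ u = u$ from relation \eqref{phi is unitual and counital} (which expresses that the cup is $\phi$-invariant), we obtain $\phi_{\{1, \dots, m\}} \Delta^{m-1} u = \Delta^{m-1} \phi u = \Delta^{m-1} u$, completing the proof.

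No serious obstacle is expected: this is a purely syntactic calculation using the extended Frobenius relations recorded in Section \ref{section:gens and rels}. The only point to mind is ensuring that the repeated application of $(\phi \otimes \phi)\Delta = \Delta\phi$ is unambiguous, which is guaranteed by associativity and cocommutativity of $\Delta$.
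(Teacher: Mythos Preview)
Your proof is correct and uses the same three relations as the paper ($\phi^2=\id$, $(\phi\otimes\phi)\Delta=\Delta\phi$, and $\phi u=u$); the only cosmetic difference is that you first reduce to the special case $\phi^{\otimes m}\Delta^{m-1}u=\Delta^{m-1}u$ by post-composing with $\phi_{\{i_1,\dots,i_l\}}$, whereas the paper runs the same computation as a single chain starting from $\xi^m_{\{i_1,\dots,i_l\}}=\phi_{i_1,\dots,i_l}\Delta^{m-1}\phi^2 u$.
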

	
	\begin{proof}
		Let $\{1,\dots, m\}=\{i_1, \dots, i_l \} \sqcup \{j_1,\dots, j_s \}$. Recall that by relation \eqref{phi is mult and comult}, $\Delta\phi = (\phi\otimes \phi) \Delta$. In general, this implies that $\Delta^{m-1}\phi= \phi^{\otimes m}\Delta^{m-1}$. We compute
		\begin{align*}
			\xi_{i_1, \dots, i_l}^m&= \phi_{i_1, \dots, i_l} \Delta^{m-1}u\\
			&= \phi_{i_1, \dots, i_l} \Delta^{m-1}\phi^2u\\
			&= \phi_{i_1, \dots, i_l}\phi^{\otimes m} \Delta^{m-1}\phi u\\
			&= (c_1\phi \otimes \dots c_m\phi) \Delta^{m-1}u,\\
			&=\xi_{j_1, \dots, j_s}^m, 
		\end{align*}
		where we are also using the relations $\phi^2=\text{id}$, $\phi u=u$, and that $c_k \phi=\phi$ if $k \not \in \{i_1, \dots, i_l\}$ and $c_k\phi=\text{id}$ if $k\in \{i_1, \dots, i_l\}$.
	\end{proof}
	
	\begin{example}
		To illustrate the previous Lemma, we show that $\xi_{\{1,3\}}^4=\xi_{\{2,4\}}^4$ using graphical calculus:
		\begin{align*}
			\begin{aligned}
				\resizebox{140pt}{!}{%
					\begin{tikzpicture}[tqft/cobordism/.style={draw,thick},
						tqft/view from=outgoing, tqft/boundary separation=30pt,
						tqft/cobordism height=40pt, tqft/circle x radius=8pt,
						tqft/circle y radius=4.5pt, tqft/every boundary component/.style={draw,rotate=90},tqft/every incoming
						boundary component/.style={draw,dotted,thick},tqft/every outgoing
						boundary component/.style={draw,thick}]
						\pic[tqft/pair of pants,
						rotate=90,name=q,at=(p-outgoing boundary), anchor={(1,-2)}]; 
						\pic[tqft/cylinder to next,rotate=90,name=t, at=(q-outgoing boundary 2)];
						\pic[tqft/cylinder to next,rotate=90,name=v, at=(t-outgoing boundary)];
						\pic[tqft/pair of pants,
						rotate=90,name=r,at=(q-outgoing boundary)]; 
						\pic[tqft/cylinder to next,rotate=90,name=u,   at=(r-outgoing boundary 2)];
						\pic[tqft/pair of pants,
						rotate=90,name=s,at=(r-outgoing boundary)]; 
						\pic[tqft/cap,
						rotate=90,name=z,at=(q-outgoing boundary),anchor={(0.5,2)},name=z]; 
						\node at ([xshift=-170pt,yshift=15pt]z-incoming boundary) [font=\huge] {	$\xi_{\{1,3\}}^4 $  \(=\) };
						\pic[tqft/cylinder,rotate=90,name=x, at=(u-outgoing boundary)];
						\pic[tqft/cylinder,rotate=90,name=y, at=(s-outgoing boundary)];
						\pic[tqft/cylinder,rotate=90,name=z, at=(s-outgoing boundary 2)];
						\node at ([xshift=20pt]z-incoming boundary 1){$\leftrightarrow$};	\pic[tqft/cylinder,rotate=90,name=u, at=(v-outgoing boundary)];
						\node at ([xshift=20pt]u-incoming boundary 1){$\leftrightarrow$};
						\node at ([xshift=20pt, yshift=-15pt]x-outgoing boundary 1) [font=\huge] {\(=\)};
				\end{tikzpicture} }
			\end{aligned}
			\begin{aligned}
				\resizebox{150pt}{!}{%
					\begin{tikzpicture}[tqft/cobordism/.style={draw,thick},
						tqft/view from=outgoing, tqft/boundary separation=30pt,
						tqft/cobordism height=40pt, tqft/circle x radius=8pt,
						tqft/circle y radius=4.5pt, tqft/every boundary component/.style={draw,rotate=90},tqft/every incoming
						boundary component/.style={draw,dotted,thick},tqft/every outgoing
						boundary component/.style={draw,thick}]
						\pic[tqft/cylinder,rotate=90,name=aa, anchor={(1,-2)}];
						\pic[tqft/cylinder,rotate=90,name=bb, at=(aa-outgoing boundary)];
						\node at ([xshift=20pt]aa-incoming boundary 1){$\leftrightarrow$};
						\pic[tqft/pair of pants,
						rotate=90,name=q,at=(bb-outgoing boundary)]; 
						\node at ([xshift=20pt]bb-incoming boundary 1){$\leftrightarrow$};
						\pic[tqft/cylinder to next,rotate=90,name=t, at=(q-outgoing boundary 2)];
						\pic[tqft/cylinder to next,rotate=90,name=v, at=(t-outgoing boundary)];
						\pic[tqft/pair of pants,
						rotate=90,name=r,at=(q-outgoing boundary)]; 
						\pic[tqft/cylinder to next,rotate=90,name=u,   at=(r-outgoing boundary 2)];
						\pic[tqft/pair of pants,
						rotate=90,name=s,at=(r-outgoing boundary)];
						\pic[tqft/cap,
						rotate=90,at=(q-outgoing boundary),anchor={(0.5,4)}]; 
						\pic[tqft/cylinder,rotate=90,name=x, at=(u-outgoing boundary)];
						\pic[tqft/cylinder,rotate=90,name=y, at=(s-outgoing boundary)];
						\pic[tqft/cylinder,rotate=90,name=z, at=(s-outgoing boundary 2)];
						\node at ([xshift=20pt]z-incoming boundary 1){$\leftrightarrow$};	\pic[tqft/cylinder,rotate=90,name=u, at=(v-outgoing boundary)];
						\node at ([xshift=20pt]u-incoming boundary 1){$\leftrightarrow$};
						\node at ([xshift=20pt, yshift=-15pt]x-outgoing boundary 1) [font=\huge] {\(=\)};
				\end{tikzpicture} }
			\end{aligned}
			\begin{aligned}
				\resizebox{120pt}{!}{%
					\begin{tikzpicture}[tqft/cobordism/.style={draw,thick},
						tqft/view from=outgoing, tqft/boundary separation=30pt,
						tqft/cobordism height=40pt, tqft/circle x radius=8pt,
						tqft/circle y radius=4.5pt, tqft/every boundary component/.style={draw,rotate=90},tqft/every incoming
						boundary component/.style={draw,dotted,thick},tqft/every outgoing
						boundary component/.style={draw,thick}]
						\pic[tqft/cylinder,rotate=90,name=aa, anchor={(1,-2)}];
						\node at ([xshift=20pt]aa-incoming boundary 1){$\leftrightarrow$};
						\pic[tqft/pair of pants,
						rotate=90,name=q,at=(aa-outgoing boundary)]; 
						\pic[tqft/cylinder,rotate=90,name=bb, at=(q-outgoing boundary)];
						\pic[tqft/cylinder,rotate=90,name=cc, at=(q-outgoing boundary 2)];
						\node at ([xshift=20pt]bb-incoming boundary 1){$\leftrightarrow$};
						\node at ([xshift=20pt]cc-incoming boundary 1){$\leftrightarrow$};
						\pic[tqft/cylinder to next,rotate=90,name=t, at=(cc-outgoing boundary)];
						\pic[tqft/pair of pants,
						rotate=90,name=r,at=(bb-outgoing boundary)]; 
						\pic[tqft/cylinder to next,rotate=90,name=v, at=(t-outgoing boundary)];
						\pic[tqft/cylinder to next,rotate=90,name=u,   at=(r-outgoing boundary 2)];
						\pic[tqft/pair of pants,
						rotate=90,name=s,at=(r-outgoing boundary)];
						\pic[tqft/cap,
						rotate=90,at=(q-outgoing boundary),anchor={(0.5,3)}]; 
						\pic[tqft/cylinder,rotate=90,name=x, at=(u-outgoing boundary)];
						\pic[tqft/cylinder,rotate=90,name=y, at=(s-outgoing boundary)];
						\pic[tqft/cylinder,rotate=90,name=z, at=(s-outgoing boundary 2)];
						\node at ([xshift=20pt]z-incoming boundary 1){$\leftrightarrow$};	\pic[tqft/cylinder,rotate=90,name=u, at=(v-outgoing boundary)];
						\node at ([xshift=20pt]u-incoming boundary 1){$\leftrightarrow$};
				\end{tikzpicture} }
			\end{aligned}\\
			\begin{aligned}
				\resizebox{130pt}{!}{%
					\begin{tikzpicture}[tqft/cobordism/.style={draw,thick},
						tqft/view from=outgoing, tqft/boundary separation=30pt,
						tqft/cobordism height=40pt, tqft/circle x radius=8pt,
						tqft/circle y radius=4.5pt, tqft/every boundary component/.style={draw,rotate=90},tqft/every incoming
						boundary component/.style={draw,dotted,thick},tqft/every outgoing
						boundary component/.style={draw,thick}]
						\pic[tqft/pair of pants,
						rotate=90,name=q,, anchor={(1,-2)}]; 
						\pic[tqft/cylinder to next,rotate=90,name=t, at=(q-outgoing boundary 2)];
						\pic[tqft/pair of pants,
						rotate=90,name=r,at=(q-outgoing boundary)]; 
						\pic[tqft/cylinder,rotate=90,name=bb, at=(r-outgoing boundary 2)];
						\pic[tqft/cylinder,rotate=90,name=cc, at=(t-outgoing boundary)];
						\node at ([xshift=20pt]bb-incoming boundary 1){$\leftrightarrow$};
						\node at ([xshift=20pt]cc-incoming boundary 1){$\leftrightarrow$};
						\pic[tqft/cylinder to next,rotate=90,name=v, at=(cc-outgoing boundary)];
						\pic[tqft/cylinder to next,rotate=90,name=u,   at=(bb-outgoing boundary)];
						\pic[tqft/cylinder,rotate=90,name=dd, at=(r-outgoing boundary)];
						\node at ([xshift=20pt]dd-incoming boundary 1){$\leftrightarrow$};
						\pic[tqft/pair of pants,
						rotate=90,name=s,at=(dd-outgoing boundary)];
						\pic[tqft/cap,
						rotate=90,at=(q-outgoing boundary),anchor={(0.5,2)}]; 
						\node at ([xshift=-70pt, yshift=15pt]q-outgoing boundary) [font=\huge] {\(=\)};
						\pic[tqft/cylinder,rotate=90,name=x, at=(u-outgoing boundary)];
						\pic[tqft/cylinder,rotate=90,name=y, at=(s-outgoing boundary)];
						\pic[tqft/cylinder,rotate=90,name=z, at=(s-outgoing boundary 2)];
						\node at ([xshift=20pt]z-incoming boundary 1){$\leftrightarrow$};	\pic[tqft/cylinder,rotate=90,name=u, at=(v-outgoing boundary)];
						\node at ([xshift=20pt]u-incoming boundary 1){$\leftrightarrow$};
						\node at ([xshift=30pt, yshift=-15pt]x-outgoing boundary 1) [font=\huge] {\(=\)};
				\end{tikzpicture} }
			\end{aligned}
			\begin{aligned}
				\resizebox{120pt}{!}{%
					\begin{tikzpicture}[tqft/cobordism/.style={draw,thick},
						tqft/view from=outgoing, tqft/boundary separation=30pt,
						tqft/cobordism height=40pt, tqft/circle x radius=8pt,
						tqft/circle y radius=4.5pt, tqft/every boundary component/.style={draw,rotate=90},tqft/every incoming
						boundary component/.style={draw,dotted,thick},tqft/every outgoing
						boundary component/.style={draw,thick}]
						\pic[tqft/pair of pants,
						rotate=90,name=q,at=(p-outgoing boundary), anchor={(1,-2)}]; 
						\pic[tqft/cylinder to next,rotate=90,name=t, at=(q-outgoing boundary 2)];
						\pic[tqft/cylinder to next,rotate=90,name=v, at=(t-outgoing boundary)];
						\pic[tqft/pair of pants,
						rotate=90,name=r,at=(q-outgoing boundary)]; 
						\pic[tqft/cylinder to next,rotate=90,name=u,   at=(r-outgoing boundary 2)];
						\pic[tqft/pair of pants,
						rotate=90,name=s,at=(r-outgoing boundary)]; 
						\pic[tqft/cap,
						rotate=90,at=(q-outgoing boundary),anchor={(0.5,2)}]; 
						\pic[tqft/cylinder,rotate=90,name=x, at=(u-outgoing boundary)];
						\pic[tqft/cylinder,rotate=90,name=xx, at=(x-outgoing boundary)];
						\pic[tqft/cylinder,rotate=90,name=y, at=(s-outgoing boundary)];
						\pic[tqft/cylinder,rotate=90,name=yy, at=(y-outgoing boundary)];
						\pic[tqft/cylinder,rotate=90,name=z, at=(s-outgoing boundary 2)];
						\pic[tqft/cylinder,rotate=90,name=zz, at=(z-outgoing boundary)];
						\node at ([xshift=20pt]z-incoming boundary 1){$\leftrightarrow$};	\pic[tqft/cylinder,rotate=90,name=u, at=(v-outgoing boundary)];
						\pic[tqft/cylinder,rotate=90,name=uu, at=(u-outgoing boundary)];
						\node at ([xshift=20pt]u-incoming boundary 1){$\leftrightarrow$};
						\node at ([xshift=20pt]xx-incoming boundary 1){$\leftrightarrow$};
						\node at ([xshift=20pt]yy-incoming boundary 1){$\leftrightarrow$};
						\node at ([xshift=20pt]zz-incoming boundary 1){$\leftrightarrow$};
						\node at ([xshift=20pt]uu-incoming boundary 1){$\leftrightarrow$};
						\node at ([xshift=70pt, yshift=-15pt]x-outgoing boundary 1) [font=\huge] {\(=\)};
				\end{tikzpicture} }
			\end{aligned}
			\begin{aligned}
				\resizebox{130pt}{!}{%
					\begin{tikzpicture}[tqft/cobordism/.style={draw,thick},
						tqft/view from=outgoing, tqft/boundary separation=30pt,
						tqft/cobordism height=40pt, tqft/circle x radius=8pt,
						tqft/circle y radius=4.5pt, tqft/every boundary component/.style={draw,rotate=90},tqft/every incoming
						boundary component/.style={draw,dotted,thick},tqft/every outgoing
						boundary component/.style={draw,thick}]
						\pic[tqft/pair of pants,
						rotate=90,name=q,at=(p-outgoing boundary), anchor={(1,-2)}]; 
						\pic[tqft/cylinder to next,rotate=90,name=t, at=(q-outgoing boundary 2)];
						\pic[tqft/cylinder to next,rotate=90,name=v, at=(t-outgoing boundary)];
						\pic[tqft/pair of pants,
						rotate=90,name=r,at=(q-outgoing boundary)]; 
						\pic[tqft/cylinder to next,rotate=90,name=u,   at=(r-outgoing boundary 2)];
						\pic[tqft/pair of pants,
						rotate=90,name=s,at=(r-outgoing boundary)]; 
						\pic[tqft/cap,
						rotate=90,at=(q-outgoing boundary),anchor={(0.5,2)}]; 
						\pic[tqft/cylinder,rotate=90,name=x, at=(u-outgoing boundary)];
						\pic[tqft/cylinder,rotate=90,name=y, at=(s-outgoing boundary)];
						\pic[tqft/cylinder,rotate=90,name=z, at=(s-outgoing boundary 2)];
						\node at ([xshift=20pt]x-incoming boundary 1){$\leftrightarrow$};	\pic[tqft/cylinder,rotate=90,name=u, at=(v-outgoing boundary)];
						\node at ([xshift=20pt]y-incoming boundary 1){$\leftrightarrow$};
						\node at ([xshift=40pt, yshift=-15pt]x-outgoing boundary 1) [font=\huge] {\(=\) $\xi_{\{2,4\}}^4$ .};
				\end{tikzpicture} }
			\end{aligned}
		\end{align*}
	\end{example}
	
	\begin{remark}\label{gen to all genus} We note that 
		Lemma \ref{xi=xj} generalizes to any genus.  That is, if	 $\xi^m_{\{i_1, \dots, i_l\},g}$ denotes the connected cobordism in $\Hom_{\UCob}(0,m)$ that has genus $g\geq 0,$ and orientation reversing cylinders in its \emph{out} part exactly in the positions $1\leq i_1<\dots <i_l\leq m$, then $$\xi^m_{\{i_1, \dots, i_l\},g}=\xi^m_{\{j_1, \dots, j_s\},g},$$
		where $\{1,\dots, m\}=\{i_1, \dots, i_l \} \sqcup \{j_1,\dots, j_s \}$. We leave the proof to the reader. 
	\end{remark}

	\subsection{Unorientable connected cobordisms with boundary}  \label{unorientable connected cobordisms with boundary}
	We describe now connected cobordisms with boundary in $\UCob$ that have at least one crosscap, i.e., unorientable ones.
	
	\begin{proposition}\label{prop unorientable with boundary}
		Any unorientable connected cobordism with bounday $m\to n$ in $\UCob$ can be decomposed in three parts, as follows:
		
		$\diamondsuit$ The \emph{in} part, which consists of one or two crosscaps and stacked (identity) cylinders, followed by a composition of reverse pairs of pants. 
		
		$\diamondsuit$ The \emph{mid} part, given by a composition of handles (this part is empty when the genus is zero). 
		
		$\diamondsuit$ The \emph{out} part, which is either:
		\begin{itemize}
			\item a cap, when $n=0$, or
			\item a composition of pairs of pants and (identity) cylinders $1\to n$.
		\end{itemize}
	\end{proposition}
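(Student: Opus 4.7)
The plan is to bootstrap Proposition~\ref{prop:orientable with boundary} using the extended Frobenius relations to treat the crosscaps separately. Let $W : m \to n$ be a connected unorientable cobordism, so that any decomposition of $W$ into the generators in~\eqref{unoriented generators} contains at least one crosscap $\theta$. Applying relation~\eqref{3 crosscaps} repeatedly, every group of three crosscaps can be replaced by a single crosscap together with an additional handle. After exhausting this reduction I may assume that $W$ contains exactly $k \in \{1,2\}$ crosscaps.

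Next I would view the $k$ crosscaps as morphisms $\theta : 0 \to 1$ precomposed with an orientable connected cobordism $W' : m + k \to n$, so that $W = W' \circ (\theta^{\otimes k} \otimes \mathrm{id}_m)$. By Proposition~\ref{prop:orientable with boundary}, $W'$ admits an \emph{in}/\emph{mid}/\emph{out} decomposition: the \emph{in} part is a sequence of cylinders (identities or involutions $\phi$) followed by a binary tree of reverse pairs of pants; the \emph{mid} part is a stack of handles; and the \emph{out} part is a tree of pairs of pants followed by cylinders. Inserting the $k$ crosscaps at the leftmost slots of the \emph{in} part produces a candidate decomposition for $W$ of the shape claimed by the proposition, except that $\phi$ cylinders may still appear in both the \emph{in} and the \emph{out} parts.

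The final step is to eliminate these involutions. Any $\phi$ cylinder attached directly to one of the $k$ crosscap inputs disappears because $\phi \theta = \theta$, by~\eqref{phi is unitual and counital}. For a $\phi$ lying on any other input line or output line, I would use that $\phi$ is multiplicative and comultiplicative, as recorded in~\eqref{phi is mult and comult}, to transport it through the multiplication and comultiplication trees of $W'$. Since $W$ is connected and contains at least one crosscap, by rearranging the trees using associativity, commutativity, and the Frobenius relation I can bring any such $\phi$ into a position of the form $\phi \circ m \circ (\theta \otimes \mathrm{id})$, at which point the first equation in~\eqref{crosscap and involution} removes it. Iterating this procedure strips every involution from both the \emph{in} and the \emph{out} parts, leaving the desired normal form.

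The main obstacle I anticipate is making precise the claim that every $\phi$ in $W'$ can be transported next to a crosscap. This amounts to a combinatorial argument on the tree underlying the multiplication and comultiplication structure of $W'$: connectedness guarantees a path from any edge carrying a $\phi$ to the distinguished crosscap leaf, and the algebraic relations must be used to traverse it without introducing new involutions elsewhere. In the $k=2$ case a little extra care is needed at the step where the two crosscap branches merge, which can be handled by first normalising the two crosscaps to lie on the same incoming branch using commutativity together with the relations in~\eqref{crosscap and involution}.
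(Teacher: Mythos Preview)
Your outline has the right ingredients, but there is a genuine gap at the point where you invoke Proposition~\ref{prop:orientable with boundary}. You assert that after pulling the $k$ crosscaps to the left, the remaining cobordism $W'$ is orientable; this is not true in general. Take for instance $W = m(\phi\otimes\mathrm{id})\Delta\,\theta : 0\to 1$. Your first step does nothing (there is only one $\theta$), and factoring it out gives $W' = m(\phi\otimes\mathrm{id})\Delta : 1\to 1$. By the second equation in~\eqref{crosscap and involution}, capping one boundary of $W'$ gives the nonorientable surface $m(\theta\otimes\theta)$, so $W'$ itself is nonorientable (it is a Klein bottle with two discs removed). Proposition~\ref{prop:orientable with boundary} therefore does not apply to $W'$; indeed the proof of that proposition explicitly excludes precisely the configuration $m(\phi\otimes\mathrm{id})\Delta$ as impossible in the orientable setting.

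The underlying issue is that a $\phi$ trapped on one strand of a handle cannot be transported through the tree to a crosscap leaf: your final step handles only $\phi$'s sitting on in/out boundary lines, not these internal ones. The paper's argument avoids this by reversing the order of your last two steps. It eliminates \emph{all} $\phi$'s first, using both relations in~\eqref{crosscap and involution}: a $\phi$ that can reach a crosscap is absorbed, while a $\phi$ trapped in a handle is traded for two new crosscaps. This may raise the crosscap count above two, so one reapplies~\eqref{3 crosscaps}. Only after every $\phi$ is gone does one rearrange the pairs of pants, and at that stage the cobordism is built solely from $m,\Delta,u,\varepsilon,\tau,\theta$, so the rearrangement is genuinely the orientable one.
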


	\begin{example}
		The following is an unorientable cobordism $2\to 3$ of genus $3$,
		\begin{align*}
			&\begin{aligned}
				\resizebox{280pt}{!}{%
					\begin{tikzpicture}[tqft/cobordism/.style={draw,thick},
						tqft/view from=outgoing, tqft/boundary separation=30pt,
						tqft/cobordism height=40pt, tqft/circle x radius=8pt,
						tqft/circle y radius=4.5pt, tqft/every boundary component/.style={draw,rotate=90},tqft/every incoming
						boundary component/.style={draw,dotted,thick},tqft/every outgoing
						boundary component/.style={draw,thick}]
						\pic[tqft/cap,rotate=90,name=c,anchor={(2,0)}];
						\pic[tqft/cap,rotate=90,name=x,anchor={(1,0)}];
						\node at ([xshift=-7pt]x-outgoing boundary 1) {$\figureXv$};
						\node at ([xshift=-7pt]c-outgoing boundary 1) {$\figureXv$};
						\pic[tqft/reverse pair of pants,
						rotate=90,name=e,anchor={(1,-1)}]; 
						\pic[tqft/cylinder to next,rotate=90,name=f, at=(c-outgoing boundary)];
						\pic[tqft/cylinder to next,rotate=90,name=g, anchor={(3,-1)}];
						\pic[tqft/reverse pair of pants,
						rotate=90,name=h,at=(f-outgoing boundary)]; 
						\pic[tqft/cylinder to next,rotate=90,name=i, at=(g-outgoing boundary)];
						\pic[tqft/reverse pair of pants,
						rotate=90,name=j,at=(i-outgoing boundary)]; 
						\pic[tqft/pair of pants,
						rotate=90,name=k,at=(j-outgoing boundary)]; 
						\pic[tqft/reverse pair of pants,
						rotate=90,name=l,at=(k-outgoing boundary)]; 
						\pic[tqft/pair of pants,
						rotate=90,name=m,at=(l-outgoing boundary)]; 
						\pic[tqft/reverse pair of pants,
						rotate=90,name=n,at=(m-outgoing boundary)]; 
						\pic[tqft/pair of pants,
						rotate=90,name=o,at=(n-outgoing boundary)]; 
						\pic[tqft/reverse pair of pants,
						rotate=90,name=p,at=(o-outgoing boundary)]; 
						\pic[tqft/pair of pants,
						rotate=90,name=r,at=(p-outgoing boundary)]; 
						\pic[tqft/cylinder to next,rotate=90,name=u,   at=(r-outgoing boundary 2)];
						\pic[tqft/pair of pants,
						rotate=90,name=s,at=(r-outgoing boundary)]; 
				\end{tikzpicture} }
			\end{aligned},\\
			&\underbrace{\ \ \ \ \ \ \ \ \ \ \ \ \ \ \ \ \ \ \ \ \ \ }_\text{\emph{in}} 
			\underbrace{\ \ \ \ \ \ \ \ \ \ \ \ \ \ \ \ \ \ \ \ \ \ \ \ \ \ \ \ \ \ \ \ \ \ \ \ \ \ \ }_\text{\emph{mid}} 
			\underbrace{\ \ \ \ \ \ \ \ \ \ \ \ \ \ \ \ \ }_\text{\emph{out}} 
		\end{align*}
		which has two crosscaps. 
	\end{example}

	\begin{proof}
		Unorientable cobordisms have at least one crosscap. By Relation $\eqref{3 crosscaps},$ if we have three crosscaps we can replace them by a handle with a crosscap. Hence, we can always reduce to having either one or two crosscaps. On the other hand, relations \eqref{crosscap and involution} can be used to get rid of orientation reversing cylinders, as they either disappear or get replaced by two crosscaps. Lastly, analogously to the orientable case in Proposition \ref{prop:orientable with boundary}, we can move pairs of pants (respectively, reverse pairs of pants) to the \emph{out} part (respectively, to the \emph{in} part) forming handles in the \emph{mid} part.
	\end{proof}
	\subsection{Closed connected components.}  \label{Closed connected components.} 
	It will be convenient to divide closed connected surfaces in three types, according to their number of crosscaps. Recall that three crosscaps can be replaced by a handle with a crosscap, see Equation $\eqref{3 crosscaps}.$ Hence we have three type of surfaces, described below. 
	
	$\diamondsuit$ Type 1: orientable surfaces of genus $g$, denoted $M_g$, for all $g\geq 0$. See below illustrations for $g=1, 2 $ and $3$, respectively:
	\begin{figure}[H]
		\begin{center}
			\begin{tikzpicture}
				\begin{scope}[scale=0.5,shift={(0,0)}]

					\draw[fill=white,style=thick] (-23, 1) ellipse (1cm and 1cm);
					
					\draw[style=thick] (-24,1) to[out=270, looseness = 0.4,in=270] (-22,1);
					\draw[style=thick,dashed] (-24,1) to[out=90, looseness = 0.2,in=90] (-22,1); 
					
					\draw[fill=white,style=thick] (-17, 1) ellipse (1.5cm and 0.8cm);
					
					\draw[style=thick] (-17.3,1) to[out=90, looseness = 0.7,in=90] (-16.6,1); 
					\draw[style=thick] (-17.5,1.1) to[out=270, looseness = 0.4,in=270] (-16.4,1.1);
					
					\node[above, font={\small}] at (-17,-1)  {$M_1$};
					
					\node[above, font={\small}] at (-23,-1)  {$M_0$};
					
					\draw[fill=white,style=thick] (-10, 1) ellipse (2.5cm and 0.8cm);
					\draw[style=thick] (-11.3,1) to[out=90, looseness = 0.7,in=90] (-10.6,1); 
					\draw[style=thick] (-11.5,1.1) to[out=270, looseness = 0.4,in=270] (-10.4,1.1);
					
					\draw[style=thick] (-9.3,1) to[out=90, looseness = 0.7,in=90] (-8.6,1); 
					\draw[style=thick] (-9.5,1.1) to[out=270, looseness = 0.4,in=270] (-8.4,1.1);
					\node[above, font={\small}] at (-10,-1)  {$M_2$};		
					\node[above, font={\small}] at (-17,-2)  {\emph{No crosscaps (orientable)}};			
					\node[above, font={\small}] at (-7,0)  {.};				  
				\end{scope}  
			\end{tikzpicture}
		\end{center}
	\end{figure}
	$\diamondsuit$ Type 2: unorientable surfaces with one crosscap and genus $g$, denoted $M_g^1$, for all $g\geq 0$. See below illustrations for $g=1, 2 $ and $3$, respectively:
	\begin{align*}
		\begin{aligned}
			\begin{tikzpicture}
				\begin{scope}[scale=0.5,shift={(0,0)}]
					\draw[fill=white,style=thick] (-23, 1) ellipse (1cm and 1cm);
					\node[above, font={\tiny}] at (-23.5,0.7)  {\tiny\figureXv};
					\node[above, font={\small}] at (-23,-1)  {\small{$M_0^1$}};
					\draw[style=thick] (-24,1) to[out=270, looseness = 0.4,in=270] (-22,1);
					\draw[style=thick,dashed] (-24,1) to[out=90, looseness = 0.15,in=90] (-22,1); 
					\draw[fill=white,style=thick] (-17, 1) ellipse (1.5cm and 0.8cm);
					\draw[style=thick] (-17.3,1) to[out=90, looseness = 0.7,in=90] (-16.6,1); 
					\draw[style=thick] (-17.5,1.1) to[out=270, looseness = 0.4,in=270] (-16.4,1.1);
					\node[above, font={\small}] at (-17,-1)  {$M_1^1$};
					\node[above, font={\small}] at (-18,0.3)  {\figureXv};
					\draw[fill=white,style=thick] (-10, 1) ellipse (2.5cm and 0.8cm);
					\draw[style=thick] (-11.3,1) to[out=90, looseness = 0.7,in=90] (-10.6,1); 
					\draw[style=thick] (-11.5,1.1) to[out=270, looseness = 0.4,in=270] (-10.4,1.1);
					\draw[style=thick] (-9.3,1) to[out=90, looseness = 0.7,in=90] (-8.6,1); 
					\draw[style=thick] (-9.5,1.1) to[out=270, looseness = 0.4,in=270] (-8.4,1.1);
					\node[above, font={\small}] at (-10,-1)  {$M_2^1$};						
					\node[above, font={\small}] at (-12,0.2)  {\figureXv};
					\node[above, font={\small}] at (-17,-2)  {\emph{One crosscap (non-orientable)}};					 
					\node[above, font={\small}] at (-7,0)  {.};				  
				\end{scope}  
			\end{tikzpicture}
		\end{aligned}
	\end{align*}
	$\diamondsuit$ Type 3: unorientable surfaces with two crosscaps and genus $g$, denoted $M_g^2$, for all $g\geq 0$. See below illustrations for $g=1, 2 $ and $3$, respectively:
	\begin{align*}
		\begin{aligned}
			\begin{tikzpicture}
				\begin{scope}[scale=0.5,shift={(0,0)}]
					\draw[fill=white,style=thick] (-23, 1) ellipse (1cm and 1cm);
					\node[above, font={\small}] at (-23.5,0.6)  {\figureXv};
					\node[above, font={\small}] at (-23,-1.2)  {$M_0^2$};
					\node[above, font={\small}] at (-22.5,0.6)  {\figureS};
					\draw[style=thick] (-24,1) to[out=270, looseness = 0.4,in=270] (-22,1);
					\draw[style=thick,dashed] (-24,1) to[out=90, looseness = 0.15,in=90] (-22,1); 
					\draw[fill=white,style=thick] (-17, 1) ellipse (1.5cm and 0.8cm);
					\draw[style=thick] (-17.3,1) to[out=90, looseness = 0.7,in=90] (-16.6,1); 
					\draw[style=thick] (-17.5,1.1) to[out=270, looseness = 0.4,in=270] (-16.4,1.1);
					\node[above, font={\small}] at (-17,-1.2)  {$M_1^2$};
					\node[above, font={\small}] at (-18,0.3)  {\figureXv};
					\node[above, font={\small}] at (-16,0.3)  {\figureS};
					\draw[fill=white,style=thick] (-10, 1) ellipse (2.5cm and 0.8cm);
					\draw[style=thick] (-11.3,1) to[out=90, looseness = 0.7,in=90] (-10.6,1); 
					\draw[style=thick] (-11.5,1.1) to[out=270, looseness = 0.4,in=270] (-10.4,1.1);
					\draw[style=thick] (-9.3,1) to[out=90, looseness = 0.7,in=90] (-8.6,1); 
					\draw[style=thick] (-9.5,1.1) to[out=270, looseness = 0.4,in=270] (-8.4,1.1);
					\node[above, font={\small}] at (-10,-1.2)  {$M_2^2$};						
					\node[above, font={\small}] at (-12,0.2)  {\figureXv};
					\node[above, font={\small}] at (-8,0.2)  {\figureS};
					\node[above, font={\small}] at (-17,-2)  {\emph{Two crosscaps (non-orientable)}};					
					\node[above, font={\small}] at (-7,0)  {.};				  
				\end{scope}  
			\end{tikzpicture}
		\end{aligned}
	\end{align*}
	
	\section{Constructions with rational sequences $\alpha, \beta, \gamma$}
	
	\subsection{The category $\VUCob$}\label{section:VUCob}
	Given three sequences $\alpha=(\alpha_0, \alpha_2, \dots)$, $\beta=(\beta_0, \beta_1, \dots )$ and $\gamma=(\gamma_0, \gamma_1, \dots)$ with $\alpha_i, \beta_i, \gamma_i\in \textbf{k}$, we define a linearization of the category $\text{UCob}_2$, denoted by $\VUCob$. This is the analogue of the linearization of $\Cob$ by a sequence $\alpha$ denoted $\text{VCob}_{\alpha}$ in \cite{KKO} and $\text{Cob}'_{\alpha}$ in \cite{KS}.
	
	\begin{definition}\label{def of VUCob}
		We define $\VUCob$ as the category with:
		\begin{itemize}
			\item Objects: Same as in $\UCob$, objects are non-negative integers.
			\item Morphisms: Morphisms $m\to n$ are \textbf k-linear combinations of unoriented $2$-cobordisms $m\to n$, modulo the following relations. For the connected closed cobordisms $M_g, M_g^1$ and $M_g^2$ as in Subsection  \ref{Closed connected components.}, we set 
			\begin{align*}
				&&M_g=\alpha_g, &&M_g^1=\beta_g  &&\text{ and } &&M_g^2=\gamma_g. 
			\end{align*}
			\item Composition: Given by glueing as induced from $\UCob,$ followed by evaluating closed components.
		\end{itemize}
	\end{definition}
	
	By our definition, a closed surface $M$ in $\UCob$ is evaluated to 
	\begin{align}\label{evaluation}
		M\mapsto {\displaystyle \prod_{g\geq 0}}\alpha_g^{a_g}  \beta_g^{b_g} \gamma_g^{c_g},
	\end{align}
	in $\VUCob$, where $a_g, b_g, c_g\in \mathbb Z_{\geq 0}$ denote the number of connected components of type $M_g, M_g^1$ and $M_g^2$ of $M$, respectively. 
	
	The category $\VUCob$  is a rigid symmetric tensor category, with tensor product and braiding induced from those on $\UCob$. We call a 2-dimensional cobordism \emph{viewable} if it has no closed components. Thus morphisms $m\to n$ in $\VUCob$ are linear combinations of unoriented viewable cobordisms  $m\to n$. Since a cobordism can have any number of handles, Hom spaces in this category are infinite-dimensional.
	
	\begin{definition}\label{trace}
		Let $n\geq 0$. We define a trace $$\tr_{\alpha,\beta, \gamma}:\Hom_{\VUCob}(n,n)\to \textbf k,$$ as follows. Closing an unoriented cobordism $M$ from $n\to n$ by connecting its $n$ source circles with its $n$ target circles via $n$ annuli results in an unoriented closed connected surface $M'$. Then $\tr=\operatorname{tr}_{\alpha,\beta, \gamma}(M)$  is  the evaluation of $M'$ as in equation \eqref{evaluation}.
	\end{definition}
	
	\begin{example}
		Consider the map $M'\in \Hom(2,2)$  given in the following picture,
		\begin{figure}[H]
			\begin{center}
				\resizebox{250pt}{!}{%
					\begin{tikzpicture}[scale=0.6]
						\begin{scope}[scale=0.8,shift={(0,0)}]
							\node at (3.5,0) {\figureA};
							\node at (-1,1) {$M':=$};
							\node at (6,1) {$\implies$};
							
							\draw[style=thick] (0,-0.5) to (0,0.5) to[out=20, looseness =1.3,in=0] (0,2)  to (0,3)to [out=0, looseness =1.3,in=-160] (4,3)to (4,2)  to [out=170, looseness =1.3,in=0] (0,-0.5);
							
							\draw[style=thick] (1.4,1.5) to[out=90, looseness = 1.1,in=90] (1,1.5); 
							\draw[style=thick] (1.5,1.6) to[out=270, looseness = 0.9,in=270] (0.9,1.6); 
							
							\draw[style=thick] (4,-0.5)  to[out=190, looseness =4,in=190] (4,0.5);

							\draw[fill=white,style=thick] (0,0) ellipse (0.2cm and 0.5cm);
							\draw[fill=white,style=thick](0,2.5) ellipse (0.2cm and 0.5cm);
							\draw[fill=white,style=thick] (4,2.5)ellipse (0.2cm and 0.5cm);
							\draw[fill=white,style=thick] (4,0)ellipse (0.2cm and 0.5cm);
						\end{scope}   
						\begin{scope}[scale=0.8,shift={(11.5,0)}]
							\node at (3.5,0) {\figureA};
							\node at (-3,1) {$M=$};
							
							\draw[style=thick] (0,-0.5) to (0,0.5) to[out=20, looseness =1.3,in=0] (0,2)  to (0,3)to [out=0, looseness =1.3,in=-160] (4,3)to (4,2)  to [out=170, looseness =1.3,in=0] (0,-0.5);
							
							\draw[style=thick] (1.4,1.5) to[out=90, looseness = 1.1,in=90] (1,1.5); 
							\draw[style=thick] (1.5,1.6) to[out=270, looseness = 0.9,in=270] (0.9,1.6); 
							
							\draw[style=thick] (4,-0.5)  to[out=190, looseness =4,in=190] (4,0.5);

							\draw[fill=white,style=thick] (0,0) ellipse (0.2cm and 0.5cm);
							\draw[fill=white,style=thick](0,2.5) ellipse (0.2cm and 0.5cm);
							\draw[fill=white,style=thick] (4,2.5)ellipse (0.2cm and 0.5cm);
							\draw[fill=white,style=thick] (4,0)ellipse (0.2cm and 0.5cm);
							
							\draw[style=thick] (0,3)  to[out=150, looseness =1,in=30] (4,3);
							\draw[style=thick] (0,2)  to[out=150, looseness =3.5,in=30] (4,2);
							
							\draw[style=thick] (0,-0.5)  to[out=-150, looseness =1,in=-30] (4,-0.5);
							\draw[style=thick] (0,0.5)  to[out=-150, looseness =3.5,in=-35] (4,0.5);				
						\end{scope}   
				\end{tikzpicture}}
			\end{center}
		\end{figure} 
		
		In this case, $\text{tr}(M')=\beta_2$.
	\end{example}

	\begin{definition}\label{inner product}
		The sequences $(\alpha, \beta, \gamma)$ determine a \textbf{k}-bilinear symmetric form $$(\cdot, \cdot)_{\alpha, \beta, \gamma}:\Hom_{\VUCob}(0,m)\times \Hom_{\VUCob}(0,m)\to \mathbf k,$$ 
		as follows. Given $S_1, S_2$ unoriented cobordisms $0\to m$, define
		\begin{align*}
			(S_1, S_2)_{\alpha, \beta, \gamma}:=	(S_1,S_2)=\operatorname{tr}_{\alpha,\beta,\gamma}((-S_1)\sqcup S_2),
		\end{align*}
		where $(-S_1)\sqcup S_2$ is the closed surface obtained by gluing the $m$ target circles of $S_1$ with the respective $m$ target circles of $S_2$, and $\operatorname{tr}_{\alpha,\beta,\gamma}$ is as  in Definition \ref{trace}. Then
		extend linearly to all of  $\Hom_{\VUCob}(0,m)$.
	\end{definition}
	
	The notion of trace above is the analogue of the one defined for oriented cobordisms in \cite[Section 2.1]{KKO}.
	
	\begin{example}
		Consider the cobordisms
		\begin{align*}&&M:=\begin{aligned}
				\resizebox{60pt}{!}{%
					\begin{tikzpicture}[tqft/cobordism/.style={draw,thick},
						tqft/view from=outgoing, tqft/boundary separation=30pt,
						tqft/cobordism height=40pt, tqft/circle x radius=8pt,
						tqft/circle y radius=4.5pt, tqft/every boundary component/.style={draw,rotate=90},tqft/every incoming
						boundary component/.style={draw,dotted,thick},tqft/every outgoing
						boundary component/.style={draw,thick}]
						\pic[tqft/cap,
						rotate=90,name=a,anchor={(0,0)}]; 
						\pic[tqft/pair of pants,
						rotate=90,name=b,at=(a-outgoing boundary)]; 
						\pic[tqft/reverse pair of pants,
						rotate=90,name=c,at=(b-outgoing boundary)]; 
				\end{tikzpicture} }
			\end{aligned}&&\text{and} &&N:=
			\begin{aligned}
				\resizebox{15pt}{!}{%
					\begin{tikzpicture}[tqft/cobordism/.style={draw,thick},
						tqft/view from=outgoing, tqft/boundary separation=30pt,
						tqft/cobordism height=40pt, tqft/circle x radius=8pt,
						tqft/circle y radius=4.5pt, tqft/every boundary component/.style={draw,rotate=90},tqft/every incoming
						boundary component/.style={draw,dotted,thick},tqft/every outgoing
						boundary component/.style={draw,thick}]
						\pic[tqft/cap,
						rotate=90,name=z,anchor={(1,0)}]; 
						\node at ([xshift=-7pt]z-outgoing boundary 1)[color=gray]{$   \figureXv$};
				\end{tikzpicture} }
			\end{aligned}.
		\end{align*}
		
		Then 
		\begin{align*}&&M\sqcup N=
			&&	\begin{aligned}
				\resizebox{65pt}{!}{%
					\begin{tikzpicture}[tqft/cobordism/.style={draw,thick},
						tqft/view from=outgoing, tqft/boundary separation=30pt,
						tqft/cobordism height=40pt, tqft/circle x radius=8pt,
						tqft/circle y radius=4.5pt, tqft/every boundary component/.style={draw,rotate=90},tqft/every incoming
						boundary component/.style={draw,dotted,thick},tqft/every outgoing
						boundary component/.style={draw,thick}]
						\pic[tqft/cap,
						rotate=90,name=a,anchor={(0,0)}]; 
						\pic[tqft/pair of pants,
						rotate=90,name=b,at=(a-outgoing boundary)]; 
						\pic[tqft/reverse pair of pants,
						rotate=90,name=c,at=(b-outgoing boundary)]; 
						\pic[tqft/cap,
						rotate=90,name=z,anchor={(1,-2)}]; 
						\node at ([xshift=-7pt]z-outgoing boundary 1)[color=gray]{$   \figureXv$};
				\end{tikzpicture} }
			\end{aligned}&&\leadsto 
			&&	\begin{aligned}
				\resizebox{80pt}{!}{%
					\begin{tikzpicture}[tqft/cobordism/.style={draw,thick},
						tqft/view from=outgoing, tqft/boundary separation=30pt,
						tqft/cobordism height=40pt, tqft/circle x radius=8pt,
						tqft/circle y radius=4.5pt, tqft/every boundary component/.style={draw,rotate=90},tqft/every incoming
						boundary component/.style={draw,dotted,thick},tqft/every outgoing
						boundary component/.style={draw,thick}]
						\pic[tqft/cap,
						rotate=90,name=a,anchor={(0,0)}]; 
						\pic[tqft/pair of pants,
						rotate=90,name=b,at=(a-outgoing boundary)]; 
						\pic[tqft/reverse pair of pants,
						rotate=90,name=c,at=(b-outgoing boundary)]; 
						\pic[tqft/cap,
						rotate=90,name=z,anchor={(1,-2)}]; 
						\node at ([xshift=-7pt]z-outgoing boundary 1)[color=gray]{$   \figureXv$};
						\pic[tqft,
						incoming boundary components=2,
						outgoing boundary components=0,
						rotate=90,name=j,anchor={(1,-3)}];
				\end{tikzpicture} }
			\end{aligned},
		\end{align*}
	and the last cobordism has trace $\beta_1$. Hence
		\begin{align*}
			(M,N)=\beta_1.
		\end{align*}
	\end{example}

	We distinguish the following morphisms in $\Hom_{\VUCob}(1,1)$:
\begin{align}\label{handle}
	&	\begin{aligned}
		\begin{tikzpicture}[scale=0.4]
			\begin{scope}[scale=0.8]
				\node at (0,4) {\underline{Handle}};
				\draw[style=thick]  (-3,2) to [out=0, looseness= 0.6,in=180](0,3)to [out=0, looseness= 0.6,in=180] (3,2);
				\draw[style=thick]  	(3,1)	to  [out=170, looseness= 0.8,in=0] (0,0)to  [out=170, looseness= 0.8,in=0] (-3,1);
				\draw[style=thick] (-0.3,1.5) to[out=90, looseness = 0.7,in=90] (0.4,1.5); 
				\draw[style=thick] (-0.5,1.6) to[out=270, looseness = 0.4,in=270] (0.6,1.6);
				\node at (0,-1.5) {$x: 1\xrightarrow{\Delta} 1\otimes 1\xrightarrow{m} 1$};
				\draw[fill=white,style=thick] (-3,1.5) ellipse (0.2cm and 0.5cm);
				\draw[fill=white,style=thick] (3,1.5) ellipse (0.2cm and 0.5cm);
			\end{scope}   
		\end{tikzpicture}
	\end{aligned} ,
	&\begin{aligned}
		\begin{tikzpicture}[scale=0.4]
			\begin{scope}[scale=0.8,shift={(0,0)}]
				\draw[style=thick]  (-5,4) to (-1,4);
				\draw[style=thick]  	(-5,5)	to (-1,5);
				\node at (-1,7) {\underline{Cross} };
				\node[above, font={\tiny}] at (-4,3.7)  {\smallfigureXv};
				\node at (0,1.5) {$	y: 1 \xrightarrow{\sim} 0\otimes 1 \xrightarrow{\theta\otimes 1}1\otimes 1 \xrightarrow{m}1$};
				\draw[fill=white,style=thick] (-5,4.5) ellipse (0.2cm and 0.5cm);
				\draw[fill=white,style=thick] (-1,4.5) ellipse (0.2cm and 0.5cm);
			\end{scope}   
		\end{tikzpicture}
	\end{aligned},
	&&\begin{aligned}
		\begin{tikzpicture}[scale=0.4]
			\begin{scope}[scale=0.8,shift={(0,0)}]
				\node at (-1,7) {\underline{Cup-Cap} };
				\node at (-0.5,1.5) {$	c: 1 \xrightarrow{\varepsilon} 0 \xrightarrow{u}1$};
				\draw[fill=white,style=thick] (-3,4.5) ellipse (0.2cm and 0.5cm);
				\draw[fill=white,style=thick] (1,4.5) ellipse (0.2cm and 0.5cm);
				\draw[style=thick]  (-3,5) to [out=10, looseness= 4,in=0](-3,4);
				\draw[style=thick]  (1,5) to [out=180, looseness= 4,in=180](1,4);
			\end{scope}   
		\end{tikzpicture}
	\end{aligned}.
\end{align}

\vspace{-2.5cm}
\begin{align*}
	\hspace{3.2cm}			\resizebox{55pt}{!}{%
		\begin{tikzpicture}[tqft/cobordism/.style={draw,thick},
			tqft/view from=outgoing, tqft/boundary separation=30pt,
			tqft/cobordism height=40pt, tqft/circle x radius=8pt,
			tqft/circle y radius=4.5pt, tqft/every boundary component/.style={draw,rotate=90},tqft/every incoming
			boundary component/.style={draw,dotted,thick},tqft/every outgoing
			boundary component/.style={draw,thick}]
			\pic[tqft/reverse pair of pants,
			rotate=90,name=c,anchor={(0,0)}]; 
			\node at (-1,1.5) { \textbf{:=} };
			\pic[tqft/cap,
			rotate=90,name=z,anchor={(-1,1)}]; 
			\node at ([xshift=-7pt]z-outgoing boundary 1)[color=gray]{$   \figureXv$};
	\end{tikzpicture} }
\end{align*}
\vspace{0.5cm}

	Note that $x^n, x^ny$ and $x^ny^2$ are connected cobordisms $1\to 1$ of genus n and $0, 1$ and $2$ crosscaps, respectively. To take trace, we close these cobordisms by an annulus connecting their in and out boundaries, obtaining a connected closed surface of genus $n+1$ and $0, 1$ and $2$ crosscaps, respectively, see Definition \ref{trace}. Hence
	\begin{align*}
		\tr(x^n)=\alpha_{n+1}, &&\tr(x^ny)=\beta_{n+1} &&\text{and} &&\tr(x^ny^2)=\gamma_{n+1}, &&\text{ for all } n\in \mathbb Z_{\geq 0}.
	\end{align*}

	\begin{example}\label{Hom 0 to 1}
		The cobordisms
		\begin{align*}
			&\resizebox{360pt}{!}{%
				\begin{tikzpicture}[tqft/cobordism/.style={draw,thick},
					tqft/view from=outgoing, tqft/boundary separation=30pt,
					tqft/cobordism height=40pt, tqft/circle x radius=8pt,
					tqft/circle y radius=4.5pt, tqft/every boundary component/.style={draw,rotate=90},tqft/every incoming
					boundary component/.style={draw,dotted,thick},tqft/every outgoing
					boundary component/.style={draw,thick}]
					\pic[tqft/cap,
					rotate=90,name=a,anchor={(0,-1)}]; 
					\node at ([xshift=-40pt]a-outgoing boundary 1)[color=black][font=\huge]{$	x^n u =$};
					\node at ([xshift=100pt, yshift=-40pt]a-outgoing boundary 1)[color=black][font=\huge]{$\underbrace{\ \ \ \ \ \ \ \ \ \  \ \ \ \ \ \ \ \ \ \ \ \ \ \ \ \ \ \ \ \ \ \ }_n$};
					\pic[tqft/pair of pants,
					rotate=90,name=b,at=(a-outgoing boundary)]; 
					\pic[tqft/reverse pair of pants,
					rotate=90,name=c,at=(b-outgoing boundary)]; 
					\node at ([xshift=20pt]c-outgoing boundary 1)[color=black]{$\dots$};
					\pic[tqft/pair of pants,
					rotate=90,name=d,anchor={(0,-5)}]; 
					\pic[tqft/reverse pair of pants,
					rotate=90,name=e,at=(d-outgoing boundary)]; 
					\node at ([xshift=20pt]e-outgoing boundary 1)[color=black][font=\huge]{$=$};
					\node at ([xshift=145pt, yshift=-40pt]e-outgoing boundary 1)[color=black][font=\huge]{$\underbrace{\ \ \ \ \ \ \ \ \ \ \ \ \ \ \ \ \ \ \ \ \ \ \ \ \ \ \ \ \ \ \ \ }_n$};
					\pic[tqft/cap,
					rotate=90,name=f,anchor={(0,-7)}]; 
					\pic[tqft/pair of pants,
					rotate=90,name=g,at=(f-outgoing boundary)]; 
					\pic[tqft/reverse pair of pants,
					rotate=90,name=h,at=(g-outgoing boundary)]; 
					\node at ([xshift=20pt]h-outgoing boundary)[color=black]{$\dots$};
					\pic[tqft/pair of pants,
					rotate=90,name=i,anchor={(0,-11)}]; 
					\pic[tqft/reverse pair of pants,
					rotate=90,name=j,at=(i-outgoing boundary)]; 
					\pic[tqft/cylinder,rotate=90,name=m,at=(j-outgoing boundary)];
					\node at ([xshift=20pt]m-incoming boundary 1)[color=black]{$  \leftrightarrow$};
					\node at ([xshift=20pt]m-outgoing boundary 1)[color=black][font=\huge]{$,$};
			\end{tikzpicture} }
		\end{align*}
		\begin{align*}
			\resizebox{230pt}{!}{%
				\begin{tikzpicture}[tqft/cobordism/.style={draw,thick},
					tqft/view from=outgoing, tqft/boundary separation=30pt,
					tqft/cobordism height=40pt, tqft/circle x radius=8pt,
					tqft/circle y radius=4.5pt, tqft/every boundary component/.style={draw,rotate=90},tqft/every incoming
					boundary component/.style={draw,dotted,thick},tqft/every outgoing
					boundary component/.style={draw,thick}]
					\pic[tqft/cylinder,rotate=90,name=g,anchor={(1,-1)}];
					\node at ([xshift=30pt]g-incoming boundary 1)[color=gray]{$   \figureXv$};
					\node at ([xshift=-90pt]g-outgoing boundary 1)[color=black][font=\huge]{$	x^n y u =$};
					\node at ([xshift=105pt, yshift=-40pt]g-outgoing boundary 1)[color=black][font=\huge]{$\underbrace{\ \ \ \ \ \ \ \ \ \ \ \  \ \ \ \ \ \ \ \ \ \ \ \ \ \ \ \ \ \ \ \ }_n$};
					\pic[tqft/cap,
					rotate=90,name=z,anchor={(1,0)}]; 
					\pic[tqft/pair of pants,
					rotate=90,name=h,at=(g-outgoing boundary)]; 
					\pic[tqft/reverse pair of pants,
					rotate=90,name=i,at=(h-outgoing boundary)];  
					\node at ([xshift=20pt]i-outgoing boundary 1)[color=black]{$\dots$};
					\pic[tqft/pair of pants,
					rotate=90,name=l,anchor={(1,-5)}]; 
					\pic[tqft/reverse pair of pants,
					rotate=90,name=j,at=(l-outgoing boundary)];  
					\node at ([xshift=20pt]j-outgoing boundary 1)[color=black][font=\huge]{$,$};
			\end{tikzpicture} }
		\end{align*}
		\begin{align*}
			\resizebox{230pt}{!}{%
				\begin{tikzpicture}[tqft/cobordism/.style={draw,thick},
					tqft/view from=outgoing, tqft/boundary separation=30pt,
					tqft/cobordism height=40pt, tqft/circle x radius=8pt,
					tqft/circle y radius=4.5pt, tqft/every boundary component/.style={draw,rotate=90},tqft/every incoming
					boundary component/.style={draw,dotted,thick},tqft/every outgoing
					boundary component/.style={draw,thick}]
					\pic[tqft/cylinder,rotate=90,name=g,anchor={(1,-1)}];
					\node at ([xshift=20pt]g-incoming boundary 1)[color=gray]{$   \figureXv$};
					\node at ([xshift=30pt]g-incoming boundary 1)[color=gray]{$   \figureXv$};
					\node at ([xshift=-90pt]g-outgoing boundary 1)[color=black][font=\huge]{$	x^n y^2 u =$};
					\node at ([xshift=100pt, yshift=-40pt]g-outgoing boundary 1)[color=black][font=\huge]{$\underbrace{\ \ \ \ \ \ \ \ \ \ \ \ \ \ \ \ \ \ \ \ \ \ \ \ \ \ \ \ \ \ \ \ }_n$};
					\pic[tqft/cap,
					rotate=90,name=z,anchor={(1,0)}]; 
					\pic[tqft/pair of pants,
					rotate=90,name=h,at=(g-outgoing boundary)]; 
					\pic[tqft/reverse pair of pants,
					rotate=90,name=i,at=(h-outgoing boundary)];  
					\node at ([xshift=20pt]i-outgoing boundary 1)[color=black]{$\dots$};
					\pic[tqft/pair of pants,
					rotate=90,name=l,anchor={(1,-5)}]; 
					\pic[tqft/reverse pair of pants,
					rotate=90,name=j,at=(l-outgoing boundary)];  
					\node at ([xshift=20pt]j-outgoing boundary 1)[color=black][font=\huge]{$,$};
			\end{tikzpicture} }
		\end{align*}
		for	$n\geq 0$, generate $\Hom_{\VUCob}(0,1)$ as a vector space, see Propositions  \ref{prop:orientable with boundary} and \ref{prop unorientable with boundary}.
	\end{example}

	\begin{example}\label{Hom 1 to 1}
		The cobordisms
		\begin{align*}
			\resizebox{170pt}{!}{%
				\begin{tikzpicture}[tqft/cobordism/.style={draw,thick},
					tqft/view from=outgoing, tqft/boundary separation=30pt,
					tqft/cobordism height=40pt, tqft/circle x radius=8pt,
					tqft/circle y radius=4.5pt, tqft/every boundary component/.style={draw,rotate=90},tqft/every incoming
					boundary component/.style={draw,dotted,thick},tqft/every outgoing
					boundary component/.style={draw,thick}]
					\pic[tqft/pair of pants,
					rotate=90,name=h,anchor={(1,0)}]; 
					\node at ([xshift=-30pt]h-incoming boundary 1)[color=black][font=\huge]{$	x^n=$};
					\node at ([xshift=60pt, yshift=-25pt]h-outgoing boundary 1)[color=black][font=\huge]{$\underbrace{\ \ \ \ \ \ \ \  \  \ \ \ \ \ \ \ \ \ \ \ \ \ \ \ \ \ \ \ \ \ \ \ \ }_n$};
					\pic[tqft/reverse pair of pants,
					rotate=90,name=i,at=(h-outgoing boundary)];  
					\node at ([xshift=20pt]i-outgoing boundary 1)[color=black]{$\dots$};
					\pic[tqft/pair of pants,
					rotate=90,name=l,anchor={(1,-3)}]; 
					\pic[tqft/reverse pair of pants,
					rotate=90,name=j,at=(l-outgoing boundary)];  
					\node at ([xshift=15pt, yshift=-5pt]j-outgoing boundary 1)[color=black][font=\huge]{$,$};
			\end{tikzpicture} }
			\resizebox{200pt}{!}{%
				\begin{tikzpicture}[tqft/cobordism/.style={draw,thick},
					tqft/view from=outgoing, tqft/boundary separation=30pt,
					tqft/cobordism height=40pt, tqft/circle x radius=8pt,
					tqft/circle y radius=4.5pt, tqft/every boundary component/.style={draw,rotate=90},tqft/every incoming
					boundary component/.style={draw,dotted,thick},tqft/every outgoing
					boundary component/.style={draw,thick}]
					\pic[tqft/cylinder,rotate=90,name=g,anchor={(1,-1)}];
					\node at ([xshift=20pt]g-incoming boundary 1)[color=black]{$   \leftrightarrow$};
					\node at ([xshift=-75pt]g-outgoing boundary 1)[color=black][font=\huge]{$	x^n\phi =$};
					\node at ([xshift=100pt, yshift=-40pt]g-outgoing boundary 1)[color=black][font=\huge]{$\underbrace{\ \ \ \ \ \ \ \ \ \ \ \ \ \ \ \ \ \ \ \ \ \ \ \ \ \ \ \ \ \ \ \ }_n$};
					\pic[tqft/pair of pants,
					rotate=90,name=h,at=(g-outgoing boundary)]; 
					\pic[tqft/reverse pair of pants,
					rotate=90,name=i,at=(h-outgoing boundary)];  
					\node at ([xshift=20pt]i-outgoing boundary 1)[color=black]{$\dots$};
					\pic[tqft/pair of pants,
					rotate=90,name=l,anchor={(1,-5)}]; 
					\pic[tqft/reverse pair of pants,
					rotate=90,name=j,at=(l-outgoing boundary)];  
					\node at ([xshift=15pt, yshift=-5pt]j-outgoing boundary 1)[color=black][font=\huge]{$,$};
			\end{tikzpicture} }
		\end{align*}
		\begin{align*}
			\resizebox{200pt}{!}{%
				\begin{tikzpicture}[tqft/cobordism/.style={draw,thick},
					tqft/view from=outgoing, tqft/boundary separation=30pt,
					tqft/cobordism height=40pt, tqft/circle x radius=8pt,
					tqft/circle y radius=4.5pt, tqft/every boundary component/.style={draw,rotate=90},tqft/every incoming
					boundary component/.style={draw,dotted,thick},tqft/every outgoing
					boundary component/.style={draw,thick}]
					\pic[tqft/cylinder,rotate=90,name=g,anchor={(1,-1)}];
					\node at ([xshift=30pt]g-incoming boundary 1)[color=gray]{$   \figureXv$};
					\node at ([xshift=-75pt]g-outgoing boundary 1)[color=black][font=\huge]{$	x^n y =$};
					\node at ([xshift=105pt, yshift=-40pt]g-outgoing boundary 1)[color=black][font=\huge]{$\underbrace{\ \ \ \ \ \ \ \ \ \ \ \ \ \ \ \ \ \ \ \ \ \ \ \ \ \ \ \ \ \ \ \ }_n$};
					\pic[tqft/pair of pants,
					rotate=90,name=h,at=(g-outgoing boundary)]; 
					\pic[tqft/reverse pair of pants,
					rotate=90,name=i,at=(h-outgoing boundary)];  
					\node at ([xshift=20pt]i-outgoing boundary 1)[color=black]{$\dots$};
					\pic[tqft/pair of pants,
					rotate=90,name=l,anchor={(1,-5)}]; 
					\pic[tqft/reverse pair of pants,
					rotate=90,name=j,at=(l-outgoing boundary)];  
					\node at ([xshift=15pt, yshift=-5pt]j-outgoing boundary 1)[color=black][font=\huge]{$,$};
			\end{tikzpicture} }
			\resizebox{210pt}{!}{%
				\begin{tikzpicture}[tqft/cobordism/.style={draw,thick},
					tqft/view from=outgoing, tqft/boundary separation=30pt,
					tqft/cobordism height=40pt, tqft/circle x radius=8pt,
					tqft/circle y radius=4.5pt, tqft/every boundary component/.style={draw,rotate=90},tqft/every incoming
					boundary component/.style={draw,dotted,thick},tqft/every outgoing
					boundary component/.style={draw,thick}]
					\pic[tqft/cylinder,rotate=90,name=g,anchor={(1,-1)}];
					\node at ([xshift=20pt]g-incoming boundary 1)[color=gray]{$   \figureXv$};
					\node at ([xshift=30pt]g-incoming boundary 1)[color=gray]{$   \figureXv$};
					\node at ([xshift=-75pt]g-outgoing boundary 1)[color=black][font=\huge]{$	x^n y^2 =$};
					\node at ([xshift=105pt, yshift=-40pt]g-outgoing boundary 1)[color=black][font=\huge]{$\underbrace{\ \ \ \ \ \ \ \ \ \ \ \ \ \ \ \ \ \ \ \ \ \ \ \ \ \ \ \ \ \ \ \ }_n$};
					\pic[tqft/pair of pants,
					rotate=90,name=h,at=(g-outgoing boundary)]; 
					\pic[tqft/reverse pair of pants,
					rotate=90,name=i,at=(h-outgoing boundary)];  
					\node at ([xshift=20pt]i-outgoing boundary 1)[color=black]{$\dots$};
					\pic[tqft/pair of pants,
					rotate=90,name=l,anchor={(1,-5)}]; 
					\pic[tqft/reverse pair of pants,
					rotate=90,name=j,at=(l-outgoing boundary)];  
					\node at ([xshift=15pt, yshift=-5pt]j-outgoing boundary 1)[color=black][font=\huge]{$,$};
			\end{tikzpicture} }
		\end{align*}
		\begin{align*}
			\resizebox{430pt}{!}{%
				\begin{tikzpicture}[tqft/cobordism/.style={draw,thick},
					tqft/view from=outgoing, tqft/boundary separation=30pt,
					tqft/cobordism height=40pt, tqft/circle x radius=8pt,
					tqft/circle y radius=4.5pt, tqft/every boundary component/.style={draw,rotate=90},tqft/every incoming
					boundary component/.style={draw,dotted,thick},tqft/every outgoing
					boundary component/.style={draw,thick}]
					\pic[tqft/pair of pants,
					rotate=90,name=a,anchor={(1,0)}]; 
					\node at ([xshift=-130pt]a-incoming boundary 1)[color=black][font=\huge]{$	y^i x^n c x^m y^j,$ where $x^n c x^m=$};
					\node at ([xshift=60pt, yshift=-25pt]a-outgoing boundary 1)[color=black][font=\huge]{$\underbrace{\ \ \ \ \ \ \ \ \ \ \ \ \ \ \ \ \ \ \ \ \ \ \ \ \ \ \ \ \ \ \ \ }_m$};
					\pic[tqft/reverse pair of pants,
					rotate=90,name=b,at=(a-outgoing boundary)];  
					\node at ([xshift=20pt]b-outgoing boundary 1)[color=black]{$\dots$};
					\pic[tqft/pair of pants,
					rotate=90,name=l,anchor={(1,-3)}]; 
					\pic[tqft/reverse pair of pants,
					rotate=90,name=j,at=(l-outgoing boundary)];  
					\pic[tqft/cup,
					rotate=90,name=k,at=(j-outgoing boundary)];  
					\pic[tqft/cap,
					rotate=90,name=l,at=(k-incoming boundary)];  
					\pic[tqft/pair of pants,
					rotate=90,name=m,at=(l-outgoing boundary)];  
					\pic[tqft/reverse pair of pants,
					rotate=90,name=n,at=(m-outgoing boundary)];  
					\node at ([xshift=20pt]n-outgoing boundary 1)[color=black]{$\dots$};
					\pic[tqft/pair of pants,
					rotate=90,name=l,anchor={(1,-9)}]; 
					\pic[tqft/reverse pair of pants,
					rotate=90,name=l1,at=(l-outgoing boundary)]; 
					\node at ([xshift=15pt, yshift=-5pt]l1-outgoing boundary 1)[color=black][font=\huge]{$,$};
					\node at ([xshift=300pt, yshift=-25pt]a-outgoing boundary 1)[color=black][font=\huge]{$\underbrace{\ \ \ \ \ \ \ \ \ \ \ \ \ \ \ \ \ \ \ \ \ \ \ \ \ \ \ \ \ \ \ \ }_n$};
			\end{tikzpicture} }
		\end{align*}
		for	$n\geq 0$ and $i,j=0,1,2$, generate $\End_{\VUCob}(1)$ as a vector space, see Propositions  \ref{prop:orientable with boundary} and \ref{prop unorientable with boundary}.
	\end{example}

	\subsection{A distinguished subset in $\Hom_{\VUCob}(0,m)$}\label{section: xi}
	
	We prove a technical result in Theorem \ref{xi are li}, describing a linearly independent subset of orientable maps in $\Hom_{\VUCob}(0,m)$ for all $m\in \mathbb Z_{\geq 0}$, which will be useful later. 
	
	Let $m\in \mathbb Z_{\geq 0}$ and let $\mathcal P_m$ denote the power set of $\{1, \dots, m\}$. We define an equivalence relation $\sim$ in $\mathcal P_m$ by 
	\begin{align*}
		J \sim I\text{ iff } J=I \text{ or } J= I^c, \ \ \ \ \ \text{ for all }  J\in \mathcal P_m.
	\end{align*}
	Let $\mathcal R_m$ denote the set of equivalence classes given by this relation. Note that the size of this set is $|\mathcal R_m|=2^{m-1}$ if $m>0$.

\begin{definition}\label{xi definition}
		Let  $\xi^m_{\{i_1, \dots, i_l\}}\in \Hom_{\VUCob}(0,m)$ denote the connected cobordism that has genus zero and orientation reversing cylinders exactly in the positions $1\leq i_1, \dots, i_l\leq m$, for $1\leq l \leq m$, see Definition \ref{xi 1st def}. By Lemma \ref{xi=xj}, for each $\overline{J}\in \mathcal R_m$ we have a well-defined connected cobordism $$\xi^m_{\overline J}:0 \to m,$$ where $\overline J$ denotes the class of $J\in \mathcal P_m$ in $\mathcal R_m$. 
\end{definition}

	We will need the following auxiliary Lemma. 
	\begin{lemma}\label{matrix determinant}
		Let $a,b\in \mathbf k^{\times}$, and let $A$ be the $n\times n$ matrix given by
		\begin{align*}
			A:=	\begin{bmatrix}
				a &  b &\dots &b\\
				b & a &\ddots &b \\
				\vdots &\ddots &\ddots & \vdots \\
				b &\dots &b&a
			\end{bmatrix}.
		\end{align*}
		That is, $A$ has $a$'s on the diagonal and $b$'s everywhere else. Then
		\begin{align*}
			\det(A)=(a-b)^{n-1} \cdot (a+(n-1)b).
		\end{align*} 
	\end{lemma}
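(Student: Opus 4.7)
My plan is to use the eigenvalue decomposition of $A$. Write
\begin{align*}
A = (a-b)I_n + bJ_n,
\end{align*}
where $I_n$ is the identity and $J_n$ is the $n\times n$ matrix with every entry equal to $1$. Since $J_n$ has rank one, its eigenvalues are easy to read off: the vector $(1,1,\dots,1)^T$ is an eigenvector with eigenvalue $n$, and the hyperplane $\{x : x_1+\dots+x_n=0\}$ is an $(n-1)$-dimensional eigenspace with eigenvalue $0$. Consequently $A$ is simultaneously diagonalizable with the same eigenvectors, having eigenvalue $(a-b)+bn = a+(n-1)b$ of multiplicity one and eigenvalue $a-b$ of multiplicity $n-1$. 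The determinant, being the product of eigenvalues, equals $(a-b)^{n-1}\bigl(a+(n-1)b\bigr)$.

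If one prefers a purely computational argument (which also avoids assuming $a\ne b$ at intermediate steps), an equivalent approach is to perform elementary row and column operations: add every column to the first column, so that the first column becomes constantly $a+(n-1)b$, factor this scalar out, and then subtract $b$ times the first row from each of the remaining rows. The resulting matrix is upper-triangular with $1$ in the $(1,1)$ entry and $a-b$ in each of the other diagonal entries, yielding the same formula.

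The computation is essentially routine; there is no real obstacle. The only mild subtlety is that the statement holds without restriction on $a$ and $b$ (in particular when $a=b$, both sides vanish for $n\ge 2$), and both approaches above handle this transparently. I would present the eigenvalue argument, as it is the most conceptual and makes clear why the factors $(a-b)$ and $a+(n-1)b$ appear.
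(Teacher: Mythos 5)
Your eigenvalue argument is correct and is genuinely different from the paper's proof, which is exactly your second, ``purely computational'' variant: the paper adds columns $2$ through $n$ to the first column and then subtracts the first row from each of the remaining rows, reading the determinant off the resulting triangular matrix (without ever factoring the scalar $a+(n-1)b$ out of the column). The eigenvalue route via $A=(a-b)I_n+bJ_n$ is more conceptual and explains the shape of the two factors, but note two small points. First, over a general algebraically closed field $\mathbf{k}$ (the lemma is stated before the characteristic-zero hypothesis is imposed), if $\operatorname{char}(\mathbf{k})$ divides $n$ then $(1,\dots,1)^T$ lies in the hyperplane $x_1+\dots+x_n=0$ and $J_n$ is not diagonalizable; the conclusion survives because the characteristic polynomial of $J_n$ is $t^{n-1}(t-n)$ in any characteristic (or because the claimed identity is a polynomial identity over $\mathbb{Z}$), but the phrase ``simultaneously diagonalizable'' should be weakened accordingly. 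Second, in your row-reduction sketch, after normalizing the first column to all $1$'s you should subtract the first row itself (not $b$ times it) from the remaining rows to clear that column; as written the $(i,1)$ entries would become $1-b$. Neither issue affects the validity of your main argument, and the paper's elementary manipulation is the safer choice precisely because it works verbatim in all characteristics.
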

	\begin{proof}
		We start by adding columns 2 to $n$ to the first column in the matrix, obtaining
		\begin{align*}
			\begin{bmatrix}
				a+(n-1)b &  b &\dots &b\\
				a+(n-1)b  & a &\ddots &b \\
				\vdots & &\ddots & \vdots \\
				a+(n-1)b  &b &\dots &a
			\end{bmatrix}.
		\end{align*}
		Now, substracting from rows 2 to n the first row, we get
		\begin{align*}
			\begin{bmatrix}
				a+(n-1)b &  b &\dots &&b\\
				0 & a-b &0 &\dots  &0 \\
				\vdots & 0&& \ddots &\vdots \\
				&&\ddots &\ddots &0	\\
				0 &0 &\dots &0 &a-b
			\end{bmatrix},
		\end{align*}
		which has determinant $(a-b)^{n-1} \cdot (a+(n-1)b)$, as desired. 
	\end{proof}
	
	\begin{theorem}\label{xi are li} Let $\alpha, \beta$ and $\gamma$ be sequences in $\mathbf{k},$ and let $m\geq 2$. Suppose that 
		\begin{align}
			\alpha_{m-1} \ne \gamma_{m-2},  (1- 2^{m-1})\gamma_{m-2}.
		\end{align}
		Then the set  $\{\xi^m_{\overline J}  : \overline J \in \mathcal R_m \},$ as given in Definition \ref{xi definition},  is a linearly independent subset of $\Hom_{\operatorname{VUCob}_{\alpha, \beta, \gamma}}(0,m)$.
	\end{theorem}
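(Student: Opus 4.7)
The plan is to establish linear independence by showing that the Gram matrix of $\{\xi^m_{\overline J}\}_{\overline J\in \mathcal R_m}$ with respect to the bilinear form $(\cdot,\cdot)_{\alpha,\beta,\gamma}$ from Definition \ref{inner product} is nondegenerate. First I would fix representatives $I, J \subseteq \{1,\dots,m\}$ of the classes $\overline I, \overline J \in \mathcal R_m$ and compute the pairing $(\xi^m_I, \xi^m_J)$. By definition this is the evaluation in $\VUCob$ of the closed surface obtained by gluing the $m$ outgoing boundary circles of $\xi^m_I$ to those of $\xi^m_J$ in order.

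Next I would identify this closed surface topologically. Since each $\xi^m_I$ is a sphere with $m$ open disks removed (Euler characteristic $2-m$), additivity of Euler characteristic under gluing along circles shows the glued closed surface $\Sigma_{I,J}$ has $\chi(\Sigma_{I,J})= 4-2m$. The key observation is that $\Sigma_{I,J}$ is orientable if and only if the boundary orientations can be made compatible under a single choice of orientations on the two pieces, and since each boundary circle of $\xi^m_I$ carries the sign $\epsilon_k^I = -1$ iff $k\in I$, orientability is equivalent to the product $\epsilon_k^I\epsilon_k^J$ being constant in $k$, i.e. $I=J$ or $I=J^c$. Equivalently, $\Sigma_{I,J}$ is orientable iff $\overline I = \overline J$ in $\mathcal R_m$. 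Combining with the Euler characteristic calculation and the classification of surfaces in Subsection~\ref{Closed connected components.}: in the orientable case $\Sigma_{I,J} \cong M_{m-1}$, evaluating to $\alpha_{m-1}$; in the non-orientable case the only surface with $\chi = 4-2m$ and an even number of crosscaps (since $\chi\equiv 0\pmod 2$ rules out $M_g^1$) is $M_{m-2}^2$, evaluating to $\gamma_{m-2}$.

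It follows that the Gram matrix $G$ indexed by $\mathcal R_m$ has $\alpha_{m-1}$ on the diagonal and $\gamma_{m-2}$ off the diagonal, and has size $N = |\mathcal R_m| = 2^{m-1}$. By Lemma \ref{matrix determinant},
\begin{align*}
\det G = (\alpha_{m-1}-\gamma_{m-2})^{N-1}\bigl(\alpha_{m-1}+(N-1)\gamma_{m-2}\bigr),
\end{align*}
which is nonzero precisely when $\alpha_{m-1}\ne \gamma_{m-2}$ and $\alpha_{m-1}\ne (1-2^{m-1})\gamma_{m-2}$ -- exactly the hypothesis. Nondegeneracy of the Gram matrix then forces linear independence.

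The main obstacle I expect is the careful topological identification of $\Sigma_{I,J}$: verifying rigorously that orientability of the glued surface is controlled by whether $\overline I = \overline J$, and ruling out the $M_g^1$ family by the parity of $\chi$. Once these topological facts are pinned down, the rest is a direct determinant calculation via the auxiliary lemma.
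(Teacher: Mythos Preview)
Your proposal is correct and follows essentially the same approach as the paper: compute the Gram matrix of the $\xi^m_{\overline J}$ with respect to the pairing $(\cdot,\cdot)_{\alpha,\beta,\gamma}$, identify the diagonal entries as $\alpha_{m-1}$ and the off-diagonal entries as $\gamma_{m-2}$, and then invoke Lemma~\ref{matrix determinant}. The only minor difference is in how the closed surface is identified: the paper appeals directly to the relations in $\UCob$ (that $\phi$ is an involution for the diagonal case, and relation~\eqref{crosscap and involution} for the off-diagonal case), whereas you argue via Euler characteristic and an orientability criterion to pin down $M_{m-1}$ versus $M_{m-2}^2$; both routes yield the same entries.
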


	\begin{proof}
		Fix $m\geq 2$. We want to compute the matrix of inner products (see Definition \ref{inner product}) for cobordisms in the set $\{\xi^m_{\overline J}  : \overline J \in \mathcal R_m \}$.
		
		Let $\overline J, \overline L \in \mathcal R_m$, with $\overline J\ne \overline L$. Since $\phi $ is an involution then the inner product of $\xi^m_{\overline{J}}$ with itself results in the surface of type $M_{m-1}$, see Subsection \ref{Closed connected components.}, which evaluates to $\alpha_{m-1}$. On the other hand, relations \eqref{crosscap and involution} imply that the inner product of  $\xi^m_{\overline{J}}$ and $\xi^m_{\overline{L}}$ results in the surface of type $M_{m-2}^2$, which evaluates to $\gamma_{m-2}$. Hence, the resulting matrix of inner products is the $2^{m-1}\times 2^{m-1}$ matrix given by
		\begin{align*}
			M:=	\begin{bmatrix}
				\alpha_{m-1} &  \gamma_{m-2} &\dots &\gamma_{m-2}\\
				\gamma_{m-2} & \alpha_{m-1}  &\ddots &\gamma_{m-2} \\
				\vdots &\ddots &\ddots & \vdots \\
				\gamma_{m-2}&\dots &\gamma_{m-2}&\alpha_{m-1} 
			\end{bmatrix}.
		\end{align*}
		By Lemma \ref{matrix determinant}, this matrix has determinant
		\begin{align*}
			\det(M)= (\alpha_{m-1} -\gamma_{m-2})^{2^{m-1}-1} \cdot (\alpha_{m-1} + (2^{m-1}-1)\gamma_{m-2}),
		\end{align*}
		and thus cobordisms in the set $\{\xi^m_{\overline J}  : \overline J \in \mathcal R_m \}$  are linearly independent, since by assumption
		\begin{align*}
			\alpha_{m-1} \ne \gamma_{m-2} && \text{and} & &\alpha_{m-1} \ne (1- 2^{m-1})\gamma_{m-2}.
		\end{align*}
	\end{proof}
	
	We generalize the previous result to any genus. For $g\in \mathbb N$. Consider the maps  $\xi^m_{\overline{\{i_1, \dots, i_l\}},g}\in \Hom_{\VUCob}(0,m)$, where $\xi^m_{\overline{\{i_1, \dots, i_l\}},g}$  represents the connected cobordism of genus $g$ that has orientation reversing cylinders exactly in the positions $1\leq i_1< \dots< i_l\leq m$, where $1\leq l \leq m$. Note that this is well defined by Remark \ref{gen to all genus}.
	
	\begin{theorem}\label{xi are lin gen} Let $\alpha, \beta$ and $\gamma$ be sequences in $\mathbf{k},$ and let $m\geq 2$, $g\geq 1$. Suppose that
		\begin{align}
			\alpha_{2g+m-1} \ne \gamma_{m-2},  (1- 2^{m-1})\gamma_{2g+m-2}.
		\end{align}
		Then the set  $\{\xi^m_{\overline J,g}  : \overline J \in \mathcal R_m \}$ is a linearly independent subset of $\Hom_{\operatorname{VUCob}_{\alpha, \beta, \gamma}}(0,m)$.
	\end{theorem}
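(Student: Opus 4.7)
The plan is to repeat the strategy used for Theorem \ref{xi are li}, tracking carefully how the genus affects the closed surfaces produced by the bilinear form $(\cdot,\cdot)_{\alpha,\beta,\gamma}$. That theorem computed the Gram matrix of $\{\xi^m_{\overline J}\}_{\overline J \in \mathcal R_m}$ and invoked Lemma \ref{matrix determinant}; the only thing that changes for $g\geq 1$ is the genus count of the resulting closed surfaces.

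First, I would fix $m\geq 2$ and $g\geq 1$ and organize the computation of the Gram matrix of the set $\{\xi^m_{\overline J,g}\}_{\overline J\in \mathcal R_m}$ with respect to the form of Definition \ref{inner product}. For the diagonal entries, gluing $\xi^m_{\overline J,g}$ to itself pairs each output circle with its twin. In each of the $m$ annular glueings, the two outgoing cylinders at position $i$ either agree (both identities or both involutions) or disagree; when computing $(\xi^m_{\overline J,g},\xi^m_{\overline J,g})$ they all agree, so all the involutions cancel via $\phi^2=\text{id}$, leaving a closed orientable connected surface. Its genus equals $2g$ from the two halves plus $m-1$ extra handles produced by the $m$ glueings of two pair-of-pants trees, exactly as in the $g=0$ case; so the diagonal evaluates to $\alpha_{2g+m-1}$.

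For off-diagonal entries, $\overline J \ne \overline L$ means that for at least one position $i$ the two glued cylinders disagree. Using relations \eqref{crosscap and involution}, an unmatched involution on a boundary circle becomes (equivalent to) a pair of crosscaps, and, as in the proof of Theorem \ref{xi are li}, relation \eqref{3 crosscaps} lets us reduce the crosscap count modulo the replacement of three crosscaps by a handle with one crosscap. After this reduction, the outcome does not depend on which positions disagree (this is the reason $\overline J$ and $\overline J^c$ are identified in $\mathcal R_m$), and the resulting closed surface has exactly two crosscaps and genus $2g+m-2$; hence its evaluation is $\gamma_{2g+m-2}$.

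Thus the Gram matrix has the same shape as in Theorem \ref{xi are li}, with $\alpha_{m-1}$ replaced by $\alpha_{2g+m-1}$ on the diagonal and $\gamma_{m-2}$ replaced by $\gamma_{2g+m-2}$ off the diagonal, so Lemma \ref{matrix determinant} yields determinant
\begin{equation*}
(\alpha_{2g+m-1}-\gamma_{2g+m-2})^{2^{m-1}-1}\bigl(\alpha_{2g+m-1}+(2^{m-1}-1)\gamma_{2g+m-2}\bigr),
\end{equation*}
which is nonzero under the stated hypotheses, giving the desired linear independence. The only delicate step is the off-diagonal genus-and-crosscap bookkeeping: verifying carefully that after applying \eqref{crosscap and involution} and \eqref{3 crosscaps} the result really is $M^{2}_{2g+m-2}$ regardless of how many, or which, positions in $\overline J$ and $\overline L$ disagree. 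This is essentially the same combinatorial check that underlies Lemma \ref{xi=xj} and Remark \ref{gen to all genus}, now applied to the glued surface rather than to $\xi^m$ itself.
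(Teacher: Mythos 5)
Your proposal is correct and follows essentially the same route as the paper: the paper's proof simply notes that the Gram matrix is the same as in Theorem \ref{xi are li} with $\alpha_{m-1}$ replaced by $\alpha_{2g+m-1}$ and $\gamma_{m-2}$ by $\gamma_{2g+m-2}$, then applies Lemma \ref{matrix determinant}. Your additional bookkeeping of the genus and crosscap counts (which the paper leaves implicit) is accurate, as one can confirm by an Euler characteristic computation for the glued closed surfaces.
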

	\begin{proof}
		Fix $m\geq 2, g\geq 1$. We want to compute the matrix of inner products of the set  $\{\xi^m_{\overline J,g}  : \overline J \in \mathcal R_m \}$. Same as in the proof of Proposition \ref{xi are li}, this matrix has size $2^{m-1}\times 2^{m-1}$ and is given by 
		\begin{align*}
			M:=	\begin{bmatrix}
				\alpha_{2g+m-1} &  \gamma_{2g+m-2} &\dots &\gamma_{2g+m-2}\\
				\gamma_{2g+m-2} & \alpha_{2g+m-1}  &\ddots &\gamma_{2g+m-2} \\
				\vdots &\ddots &\ddots & \vdots \\
				\gamma_{2g+m-2}&\dots &\gamma_{2g+m-2}&\alpha_{2g+m-1} 
			\end{bmatrix}.
		\end{align*}
		Thus by Lemma \ref{matrix determinant} it has determinant
		\begin{align*}
			\det(M)= (\alpha_{2g+m-1} -\gamma_{2g+m-2})^{2^{m-1}-1} \cdot (\alpha_{2g+m-1} + (2^{m-1}-1)\gamma_{2g+m-2}).
		\end{align*}
		The result follows. 
	\end{proof}

	\subsection{The skein category $\SUCob$}\label{SUCob} 
	Given three sequences $\alpha=(\alpha_0, \alpha_2, \dots)$, $\beta=(\beta_0, \beta_1, \dots ),$ and $\gamma=(\gamma_0, \gamma_1, \dots),$ with $\alpha_i, \beta_i, \gamma_i\in \textbf{k}$, we define a rigid symmetric tensor category with finite dimensional Hom spaces, denoted by $\SUCob$. This is the analogue of the category $\text{SCob}_{\alpha}$ in \cite{KKO}, and $\text{Cob}_{\alpha}$ in \cite{KS}.
	\begin{definition}
		We say that a sequence $\eta=(\eta_0, \eta_1, \dots)$ in $\textbf k$ satifies a  \emph{linear recurrence} (or is \emph{linearly recurrent})  if there exist  fixed $K\geq r$ and $a_1, \dots, a_{r}\in \textbf{k}$, such that 
		\begin{align}\label{equation for K}
			\eta_{l}= a_1\eta_{l-1} +\dots+ a_r\eta_{l-r}, \  \  \text{ for all } l\geq K.
		\end{align}
	\end{definition}

	A sequence $\eta$ is linearly recurrent if and only if it has a rational generating function 
	\begin{align*}
		Z_{\eta}(T)=\frac{p_{\eta}(T)}{q(T)} = \sum\limits_{k\geq 0} \eta_k T^k,  
	\end{align*}
	where  $p_{\eta}(T),  q(T)\in \textbf k[T]$ are relatively prime, see for example \cite{KP}.  Normalizing so that $q(0)=1$, the polynomials $p_{\eta}(T)$ and $q(T)$ are uniquely determined by the sequence $\eta$. Assume from now on that 
	\begin{align}\label{alpha gen function}
		Z_{\eta}(T)=\frac{p_{\eta}(T)}{q(T)} = \sum\limits_{k\geq 0} \eta_k T^k,  \ \ (p_{\eta}(T), q(T))=1, \ \ q(0)=1,
	\end{align}
	and let
	\begin{align}\label{M y N}
		&&N=\deg(p_{\eta}(T)), &&M=\deg(q(T)), &&\text{and} &&K=\max(N+1, M).
	\end{align}
	Here our convention is that a constant polynomial has degree zero. 
	Then, if
	\begin{align}\label{equation for q(t)}
		q(T)=1-a_1T+a_2T^2+ \dots + (-1)^M a_MT^M,
	\end{align}
	we have that
	\begin{align}\label{linear recurrence for alpha}
		\eta_{l}= a_1 \eta_{l-1} - a_{2} \eta_{l-2} + \dots + (-1)^{M-1} a_{M} \eta_{l-M}, \ \ \text{for all } l\geq K. 
	\end{align}

	Now we apply this to our sequences $(\alpha, \beta, \gamma)$. We want to take the category $\VUCob$ and quotient by relations defined by the sequence $\alpha$ in order to get finite dimensional Hom spaces. 
	
	Assume from now on that $\alpha$ is linearly recurrent, with generating function satisfying \eqref{alpha gen function} and \eqref{equation for q(t)}. Recall that in the category $\VUCob$ we have a trace, see Definition \ref{trace}. Let $x$ denote the handle cobordism, as defined on \eqref{handle}. 
	It follows from the linear recurrence on $\alpha$, see equation \eqref{linear recurrence for alpha}, that the trace of the map $\sigma$ given by
	\begin{align}\label{skein relation}
		\sigma:= x^K + \sum\limits_{i=1}^M (-1)^ia_ix^{K-i},
	\end{align}
where $K$ is as in \eqref{M y N},	is zero in $\VUCob$,  see also \cite[Section 2.4]{Kh}.  We call the equation $\sigma=0$ the \emph{handle relation}. 

We want to show that by imposing conditions on $\beta$ and $\gamma$, the quotient of $\VUCob$ by the tensor ideal generated by $\sigma$ will be non-trivial. We recall below the definition of a tensor ideal. 

	\begin{definition}
		A \emph{tensor ideal} $I$ in a tensor category $\mathcal C$ is a collection of subspaces $I(X,Y)\subset \Hom(X,Y),$ for every pair of objects $X,Y\in \mathcal C$, closed under composition and tensor product in the following way:
		\begin{enumerate}
			\item for $f \in I(X, Y ),$  $g_1\in \Hom(Y, Z)$ and $g_2 \in \Hom(Z, X)$,  the compositions $g_1\circ f$ and $f\circ g_2$ are in $ I(X,Z)$ and $I(Z, Y)$, respectively;
			\item  for $f \in I(X, Y )$ and $g \in \Hom(Z, W)$, the products $f\otimes g$ and $g\otimes f$ are in  $I(X \otimes Z, Y \otimes W)$ and $I(Z \otimes X, W \otimes Y )$, respectively;
		\end{enumerate}
		for all $X,Y,Z,W \in \mathcal C$.
	\end{definition}
	For a tensor ideal $I$ in $\mathcal C$, we can define the \emph{quotient} $\mathcal C'$ of $\mathcal C$ by $I$, see \cite[Section 2.1]{EO}. The new category $\mathcal C'$ is again a tensor category, with objects the objects of $\mathcal C$, and $\Hom$ spaces defined by $\Hom_{\mathcal C'}(X, Y )  :=\Hom_{\mathcal C}(X, Y )/I(X, Y )$. Composition of morphisms and tensor product are induced from $\mathcal C$.
	\begin{definition} \cite[Exercise 8.18.9]{EGNO} Let $\tr=\tr_{\alpha,\beta, \gamma}$ be as in Definition \ref{trace}. 
		We call a morphism $x\in \Hom_{\VUCob}(n,m)$  \emph{negligible} if, for any $y\in \Hom_{\VUCob}(m,n),$ we have that $\tr(yx)=0$. 
	\end{definition}
	
	The set of all negligible morphisms in  a spherical tensor category $\mathcal C$ is a proper tensor ideal, see \cite[Proposition 2.4]{EO}. Hence, if $\sigma$ is negligible, then the quotient  of $\VUCob$ by the tensor ideal generated by $\sigma$ is non-trivial. We prove in what follows that, under certain conditions on $\alpha, \beta$ and $\gamma$, the handle morphism $\sigma$ is negligible. We will need the following auxiliary Lemma.
	
	\begin{lemma}\label{lemma auxiliar gen functions}
		Fix an integer $B\geq 0$. Let $\eta$ be a linearly recurrent sequence in $\mathbf{k}$, with generating function 
		\begin{align*}
			Z_{\eta}(T)=\frac{p_{\eta}(T)}{q(T)}, \ \ \text{where} \ \ p_{\eta}(T), q(T)\in \mathbf{k}[T].
		\end{align*}
		 Let $q(T)=q_0+q_1T+\dots +q_LT^L,$ for $q_0, \dots, q_{L-1}\in \mathbf k$ and $q_L\in \mathbf k^{\times}$. A sequence $\mu$ in $\mathbf k$ satisfies
		\begin{align*}
			\sum\limits_{j=0}^l q_{l-j}\mu_j =0, \ \ \text{for all } l\geq B, 
		\end{align*}
		 if and only if it has a generating function
		\begin{align*}
			Z_{\mu}(T)=\frac{p_{\mu}(T)}{q(T)},
		\end{align*}
		for some $p_{\mu}(T)\in \mathbf k[T]$ with $\deg(p_{\mu}(T))<B$.
	\end{lemma}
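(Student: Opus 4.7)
The plan is to reduce the statement to the standard observation that the expression $\sum_{j=0}^{l} q_{l-j}\mu_j$ is precisely the coefficient of $T^l$ in the formal power series product $q(T)\cdot Z_\mu(T)$, where $Z_\mu(T)=\sum_{j\geq 0}\mu_j T^j$. Note that since $q(T)$ has degree $L$, the coefficient $q_{l-j}$ vanishes whenever $l-j>L$, so the finite sum $\sum_{j=0}^{l} q_{l-j}\mu_j$ genuinely agrees with the convolution defining $[T^l]\bigl(q(T)Z_\mu(T)\bigr)$. I would begin by making this identification explicit.

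Next, I would split into the two implications. For the ``only if'' direction, suppose $\sum_{j=0}^{l}q_{l-j}\mu_j=0$ for all $l\geq B$. Define $p_\mu(T):=q(T)Z_\mu(T)$ as a formal power series. By the identification above, $[T^l]p_\mu(T)=0$ for every $l\geq B$, so $p_\mu(T)$ is in fact a polynomial of degree strictly less than $B$. Dividing by $q(T)$ (which is a unit in $\mathbf k[[T]]$ since $q_0\neq 0$; note that the hypothesis $q(0)=1$ from \eqref{alpha gen function} guarantees this) yields the claimed representation $Z_\mu(T)=p_\mu(T)/q(T)$.

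For the ``if'' direction, assume $Z_\mu(T)=p_\mu(T)/q(T)$ with $\deg p_\mu(T)<B$. Then $q(T)Z_\mu(T)=p_\mu(T)$, so for any $l\geq B$ we have
\begin{equation*}
\sum_{j=0}^{l} q_{l-j}\mu_j \;=\; [T^l]\bigl(q(T)Z_\mu(T)\bigr) \;=\; [T^l]p_\mu(T) \;=\; 0,
\end{equation*}
which is exactly the recurrence we needed.

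I do not expect any real obstacle here: the lemma is essentially the dictionary between linear recurrences with ``initial data perturbation'' and rational generating functions with prescribed denominator, and all the hypotheses on $\eta$ (the linear recurrence, the form of its generating function) serve only to fix the denominator $q(T)$. The one small care-point is making sure that $q(T)$ is invertible in $\mathbf k[[T]]$, which follows from $q(0)\neq 0$ so that the division $p_\mu/q$ and multiplication $q\cdot Z_\mu$ are honest inverse operations in $\mathbf k[[T]]$.
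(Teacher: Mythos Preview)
Your proposal is correct and follows exactly the same approach as the paper: define $p_\mu(T)=q(T)\cdot\sum_{j\ge0}\mu_jT^j$ and observe that the recurrence condition is equivalent to $p_\mu$ being a polynomial of degree at most $B-1$. The paper's proof is a single terse sentence to this effect, while you have spelled out both implications and the invertibility of $q$ in $\mathbf{k}[[T]]$ explicitly.
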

	
	\begin{proof}
		Define
		\begin{align*}
			p_{\mu}(T):=	\left(	\sum\limits_{j\geq 0} \mu_j T^j\right) q_{\eta}(T).
		\end{align*}
		Then 	\begin{align*}
			\sum\limits_{j=0}^l q_{l-j}\mu_j =0, \ \ \text{for all } l\geq B ,
		\end{align*}
		is equivalent to $p_{\mu}(T)$ being a polynomial of degree at most $B-1$, as desired.  
	\end{proof}
	
	As a consequence, we get the following. 
	
	\begin{lemma}\label{handle is negligible}
		Consider a linearly recurrent sequence $\alpha$  in $\mathbf k$, with generating function satisfying \eqref{alpha gen function}, \eqref{M y N} and \eqref{equation for q(t)}. Let $\beta$ and $\gamma$ be sequences in $\mathbf{k}$ with generating functions  
		\begin{align*}
			&Z_{\beta}(T)=\frac{p_{\beta}(T)}{q(T)} = \sum\limits_{k\geq 0} \beta_k T^k  &&\text{and}
			&&Z_{\gamma}(T)=\frac{p_{\gamma}(T)}{q(T)} = \sum\limits_{k\geq 0} \gamma_k T^k,
		\end{align*}
		respectively, where   $p_{\beta}(T), p_{\gamma}(T)\in \mathbf  k[T]$ satisfy
		\begin{align*}
			\deg(p_{\beta}(T)), \deg(p_{\gamma}(T))< K=\max(N+1,M).
		\end{align*}
		Then the handle morphism $\sigma$ from \eqref{handle} is negligible in $\operatorname{VUCob}_{\alpha,\beta,\gamma}$. 
	\end{lemma}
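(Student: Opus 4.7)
The plan is to verify negligibility of $\sigma$ directly by checking that $\tr(z\sigma)=0$ for every $z\in\End_{\VUCob}(1)$, which by $\mathbf{k}$-linearity reduces to checking the five generating families exhibited in Example \ref{Hom 1 to 1}, namely $x^n$, $x^n\phi$, $x^n y$, $x^n y^2$, and $y^i x^n c x^m y^j$ (for $n,m\geq 0$ and $i,j\in\{0,1,2\}$). Once this is done, the fact that negligible morphisms form a tensor ideal makes $\sigma$ negligible in the stated sense.

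The first step is to translate the hypotheses on $\beta$ and $\gamma$ into a shared recurrence. Writing $q(T)=\sum_{k=0}^M q_k T^k$ with $q_0=1$ and $q_k=(-1)^k a_k$, I would apply Lemma \ref{lemma auxiliar gen functions} with $B=K$ to each of $\alpha,\beta,\gamma$. The hypotheses $\deg p_\beta,\deg p_\gamma<K$ then give that both $\beta$ and $\gamma$ satisfy exactly the same linear recurrence as $\alpha$:
\[
\eta_l - a_1\eta_{l-1} + a_2\eta_{l-2} - \cdots + (-1)^M a_M\eta_{l-M} = 0 \quad \text{for all } l\ge K,\ \eta\in\{\alpha,\beta,\gamma\}.
\]

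The second step is to compute $\tr(zx^l)$ for each generator $z$, by identifying, via Euler characteristics and the classification in Subsection \ref{Closed connected components.}, the connected closed surface produced by the annulus closure of Definition \ref{trace}. The four connected generators yield
\[
\tr(x^n\cdot x^l)=\alpha_{n+l+1},\quad \tr(x^n\phi\cdot x^l)=\gamma_{n+l},\quad \tr(x^n y\cdot x^l)=\beta_{n+l+1},\quad \tr(x^n y^2\cdot x^l)=\gamma_{n+l+1},
\]
where the $\gamma$ value for $x^n\phi$ arises from a Klein-bottle-type closure (two crosscaps) and the $\beta$ value from a single crosscap. For the generator $z=y^i x^n c x^m y^j$, the cap--cup $c=u\varepsilon$ pinches $zx^l$ into two decorated disks that are then reconnected by the closing annulus; I will check that one still obtains a single connected closed surface whose type depends only on $n+m+l$ and $i+j$, and which after the reduction \eqref{3 crosscaps} evaluates to a value of $\alpha$, $\beta$, or $\gamma$ at index $n+m+l$ (for $i+j=0,1,2$) or $n+m+l+1$ (for $i+j=3,4$). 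In every case, $\tr(zx^l)=\eta_{c(z)+l}$ for some $\eta\in\{\alpha,\beta,\gamma\}$ and some non-negative integer $c(z)$ depending only on $z$.

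The final step is mechanical. Expanding $\sigma=x^K+\sum_{i=1}^M(-1)^ia_i x^{K-i}$ and using linearity of the trace gives
\[
\tr(z\sigma)=\eta_{c(z)+K}+\sum_{i=1}^M(-1)^i a_i\,\eta_{c(z)+K-i},
\]
which is precisely the left-hand side of the recurrence for $\eta$ at index $c(z)+K\ge K$, and therefore vanishes. The main obstacle I anticipate is the $y^i x^n c x^m y^j$ case: since $c$ disconnects the cobordism, I must track carefully that the trace closure reconnects it into a single surface of the correct type, with the correct crosscap count after reduction via \eqref{3 crosscaps}. The other four families are essentially immediate from the Euler-characteristic bookkeeping.
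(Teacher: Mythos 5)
Your proposal is correct and follows essentially the same route as the paper: reduce to the generating families of $\End_{\VUCob}(1)$ from Example \ref{Hom 1 to 1}, use Lemma \ref{lemma auxiliar gen functions} with $B=K$ to transfer the recurrence $\eta_l+\sum_{s=1}^M(-1)^sa_s\eta_{l-s}=0$ ($l\ge K$) to $\beta$ and $\gamma$, and then check that each $\tr(zx^l)$ equals $\eta_{c(z)+l}$ for one of the three sequences, so that $\tr(\sigma z)$ is an instance of the recurrence at an index $\ge K$; your table of surface types, including the connectedness and crosscap count for the $y^ix^ncx^my^j$ case, matches the paper's. The one divergence is $\tr(x^n\phi)$: you identify the closure as the two-crosscap surface $M^2_{n}$ (value $\gamma_{n}$), whereas the paper identifies it with the orientable closure of $x^n$ (value $\alpha_{n+1}$); this discrepancy is immaterial since either recurrence kills $\tr(\sigma x^n\phi)$, and your identification (the mapping torus of the reflection of $S^1$ is the Klein bottle) is in fact the topologically correct one.
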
 
	\begin{proof}
		We want to show that the trace of $\sigma \circ z$ is zero for every $z\in \End_{\VUCob}(1,1)$. By Example \ref{Hom 1 to 1}, it is enough to show that the traces of  $\sigma x^n y^i$, $\sigma x^n \phi$ and $\sigma y^ix^mux^ny^j$ are zero for all $m,n\geq 0$ and $ i,j=0,1,2$. 
		
		As in Equation \eqref{linear recurrence for alpha}, we have that 
		\begin{align*}
			\alpha_{l}+ \sum\limits_{s=1}^M (-1)^s a_s \alpha_{l-s}=0, \ \ \text{for all } l\geq K. 
		\end{align*}
		On the other hand, since $\deg(p_{\beta}(T)), \deg(p_{\gamma}(T))\leq K,$ taking $B=K$  in Lemma \ref{lemma auxiliar gen functions} we get
		\begin{align*}
			&&	\beta_{l}+ \sum\limits_{s=1}^M (-1)^s a_s \beta_{l-s}=0 &&\text{ and}
			&&\gamma_{l}+ \sum\limits_{s=1}^M (-1)^s a_s \gamma_{l-s}=0, \ \ \text{for all } l\geq K.
		\end{align*}
		Using this, we compute
		\begin{align*}
			tr(\sigma y^ix^n)=\begin{cases}
				\alpha_{K+n+1}-\sum\limits_{s=1}^M  a_{s}\alpha_{K+n+1-s} =0 &\text{if } i=0, \\
				\beta_{K+n+1}-\sum\limits_{s=1}^M a_{s}\beta_{K+n+1-s} =0 &\text{if } i=1, \\
				\gamma_{K+n+1}-\sum\limits_{s=1}^M a_{s}\gamma_{K+n+1-s} =0 &\text{if } i=2.
			\end{cases}
		\end{align*}
		On the other hand
		\begin{align*}
			tr(\sigma y^i x^m u x^n y^j )=\begin{cases}
				\alpha_{K+n+m}-\sum\limits_{s=1}^M a_{s}\alpha_{K+n+m-s} =0 &\text{if } i=j=0, \\
				\beta_{K+n+m}-\sum\limits_{s=1}^M a_{s}\beta_{K+n+m-s} =0 &\text{if } i=0, j=1, \text{ or } i=1, j=0,\\
				\gamma_{K+n+m}-\sum\limits_{s=1}^M a_{s}\gamma_{K+n+m-s} =0 &\text{if } i=2, j=0,  \text{ or } i=0, j=2, \\&\text{or } i=j=1,\\
				\beta_{K+n+m+1}-\sum\limits_{s=1}^M a_{s}\beta_{K+n+m+1-s} =0 &\text{if } i=2, j=1, \text{ or } i=1, j=2 ,\\
				\gamma_{K+n+m+1}-\sum\limits_{s=1}^M a_{s}\gamma_{K+n+m+1-s} =0 &\text{if } i=2, j=2.
			\end{cases}
		\end{align*}
		Lastly, since closing the cobordism $x^n \phi$ via an annulus between its boundary circles results in the surface $u x^{n+1} \varepsilon$, then $\tr(\sigma x^n\phi)=\tr(\sigma x^n)=0$ by the equations above, for all $n\geq 0$.
		
		Thus $\sigma$ is negligible, as desired. 
	\end{proof}
	
	\begin{definition}\label{def of skein category}
		Let $(\alpha, \beta, \gamma)$ be as in Lemma \ref{handle is negligible}. We define the unoriented \emph{skein} category $\SUCob$ as the quotient of $\VUCob$ by the tensor ideal generated by $\sigma$. 
	\end{definition}
	The category $\SUCob$  is a non-trivial rigid symmetric tensor category, with tensor product and braiding induced from $\VUCob$.
	Hom spaces in $\SUCob$ consist of linear combinations of unoriented cobordisms whose connected components have genus strictly less than $K$. Thus they are finite dimensional.

	\begin{example}\label{example 0 to 1}
		Consider the sequences $\alpha, \beta, \gamma$ with generating functions 
		\begin{align*}
			&&	Z_{\alpha}(t)=\frac{\alpha_0}{1-\lambda t}, &&Z_{\beta}(t)=\frac{\beta_0}{1-\lambda t}, &&\text{and} &&Z_{\gamma}(t)=\frac{\gamma_0}{1-\lambda t},
		\end{align*}
		respectively, for $\alpha_0, \beta_0, \gamma_0, \lambda \in \textbf k^{\times}$. That is, 
		\begin{align*}
			&&\alpha=(\alpha_0, \lambda \alpha_0, \lambda^2\alpha_0, \dots), &&\beta=(\beta_0, \lambda\beta_0, \lambda^2\beta_0, \dots),&&\text{and} &&\gamma=(\gamma_0, \lambda\gamma_0, \lambda^2\gamma_0, \dots).
		\end{align*}
		Suppose that $\lambda \alpha_0\ne \gamma_0$ and $\gamma_0\ne \pm \sqrt{\lambda} \beta_0$. Then $\Hom_{\SUCob}(0,1)$ has dimension $3$, with basis
		\begin{align*}
			&u=	\begin{aligned}
				\resizebox{20pt}{!}{%
					\begin{tikzpicture}[tqft/cobordism/.style={draw,thick},
						tqft/view from=outgoing, tqft/boundary separation=30pt,
						tqft/cobordism height=40pt, tqft/circle x radius=8pt,
						tqft/circle y radius=4.5pt, tqft/every boundary component/.style={draw,rotate=90},tqft/every incoming
						boundary component/.style={draw,dotted,thick},tqft/every outgoing
						boundary component/.style={draw,thick}]
						\pic[tqft/cap,rotate=90,name=a, anchor={(1,-2)}]; 
				\end{tikzpicture} }
			\end{aligned}
			&	&\theta=	\begin{aligned}
				\resizebox{20pt}{!}{%
					\begin{tikzpicture}[tqft/cobordism/.style={draw,thick},
						tqft/view from=outgoing, tqft/boundary separation=30pt,
						tqft/cobordism height=40pt, tqft/circle x radius=8pt,
						tqft/circle y radius=4.5pt, tqft/every boundary component/.style={draw,rotate=90},tqft/every incoming
						boundary component/.style={draw,dotted,thick},tqft/every outgoing
						boundary component/.style={draw,thick}]
						\pic[tqft/cap,rotate=90,name=c, anchor={(1,-2)}]; 
						\node at ([xshift=-7pt]c-outgoing boundary)[color=gray]{$   \figureXv$};
				\end{tikzpicture} }
			\end{aligned},
			&	&\theta[2]=	\begin{aligned}
				\resizebox{20pt}{!}{%
					\begin{tikzpicture}[tqft/cobordism/.style={draw,thick},
						tqft/view from=outgoing, tqft/boundary separation=30pt,
						tqft/cobordism height=40pt, tqft/circle x radius=8pt,
						tqft/circle y radius=4.5pt, tqft/every boundary component/.style={draw,rotate=90},tqft/every incoming
						boundary component/.style={draw,dotted,thick},tqft/every outgoing
						boundary component/.style={draw,thick}]
						\pic[tqft/cap,rotate=90,name=c, anchor={(1,-2)}]; 
						\node at ([xshift=-7pt]c-outgoing boundary)[color=gray]{$   \figureXv$};
						\node at ([xshift=-3pt]c-outgoing boundary)[color=gray]{$   \figureXv$};
				\end{tikzpicture} }
			\end{aligned}:=
			\begin{aligned}
				\resizebox{50pt}{!}{%
					\begin{tikzpicture}[tqft/cobordism/.style={draw,thick},
						tqft/view from=outgoing, tqft/boundary separation=30pt,
						tqft/cobordism height=40pt, tqft/circle x radius=8pt,
						tqft/circle y radius=4.5pt, tqft/every boundary component/.style={draw,rotate=90},tqft/every incoming
						boundary component/.style={draw,dotted,thick},tqft/every outgoing
						boundary component/.style={draw,thick}]
						\pic[tqft/cap,rotate=90,name=c, anchor={(1,-2)}]; 
						\pic[tqft/cap,rotate=90,name=a, anchor={(0,-2)}]; 
						\node at ([xshift=-7pt]c-outgoing boundary)[color=gray]{$   \figureXv$};
						\node at ([xshift=-7pt]a-outgoing boundary)[color=gray]{$   \figureXv$};
						\pic[tqft/reverse pair of pants,
						rotate=90,name=b,at=(c-outgoing boundary)]; 
				\end{tikzpicture} }
			\end{aligned}.
		\end{align*}
		In fact, in this case $K=1$ and the handle relation is
		\begin{align*}
			x-\lambda \text{id}=0,
		\end{align*}
		see equation \eqref{skein relation}. Hence, by definition, morphisms in $\SUCob$ are linear combinations of cobordisms whose connected components have no handles. 
		Hence, by Propositions \ref{prop:orientable with boundary}  and \ref{prop unorientable with boundary}, the set $\{u, \theta, \theta[2]\}$ generates $\Hom_{\SUCob}(0,1)$. Moreover, the matrix of inner product (see Definition \ref{inner product}) of this set is given by 
		\begin{align*}
			A:=	\begin{bmatrix}
				\alpha_0 &\beta_0 &\gamma_0\\
				\beta_0 &\gamma_0 &\lambda\beta_0\\
				\gamma_0 &\lambda\beta_0 &\lambda\gamma_0
			\end{bmatrix},
		\end{align*}
		with determinant
		\begin{align*}
			\det(A)=\alpha_0(\lambda\gamma_0^2-\lambda^2\beta_0^2)+\gamma_0(\lambda\beta_0^2-\gamma_0^2)=(\alpha_0\lambda -\gamma_0)(\gamma_0^2-\lambda\beta_0^2),
		\end{align*}
		which is nonzero by our assumptions on $\alpha_0, \beta_0$ and $\gamma_0$. Hence $\{u, \theta, \theta[2]\}$  is  a basis for $\Hom_{\SUCob}(0,1)$.
	\end{example}

	\begin{definition}
		Let $(\alpha, \beta, \gamma)$ be as  in Lemma \ref{handle is negligible}. We denote by $\PsUCob$ the pseudo-abelian envelope of  $\SUCob$, i.e., $$\PsUCob=\mathcal K(\mathcal A(\SUCob)),$$ see Definition \ref{karoubian}.
	\end{definition}
	
	Thus, $\PsUCob$ is a pseudo-abelian (that is, additive and idempotent-complete) rigid symmetric tensor category, with finite dimensional Hom spaces. Our main results, Theorems \ref{maintheorem1} and \ref{maintheorem2}, describe the relationship between these categories and $\Rep(S_t\wr \mathbb Z_2)$.

	\subsection{Universal properties}\label{universal property} In this section we state the universal properties of the categories $\UCob$, $\VUCob$ and $\SUCob$. Let $\mathcal C$ be a symmetric monoidal category and let $A$ be an  Frobenius algebra in $\mathcal C$. Consider the following morphisms in $\End_{\mathcal C}(A)$,
	\begin{align}\label{x,y}
		\begin{aligned}
			&x: A\xrightarrow{\Delta_A} A\otimes A \xrightarrow{m_A} A, \ \ \text{and}\\
			&y: A \xrightarrow{\sim} 1\otimes A \xrightarrow{\theta_A\otimes 1}A\otimes A \xrightarrow{m_A}A.
		\end{aligned}
	\end{align}
	We define
	\begin{align*}
		&&a_n:=\varepsilon_A x^n u_A, \ \ \ b_{n}:=\varepsilon_A x^ny u_A, \ \ \text{and} \ 
		c_{n}:=	\varepsilon_A x^ny^2 u_A\in \End_{\mathcal C}(\mathbb 1_{\mathcal C}) \ \ \ \text{ for all } n\in \mathbb Z_{\geq 0}.
	\end{align*}
	Since $\mathbf k\simeq \End_{\mathcal C}(\mathbb{1}_{\mathcal C})$, we have that $a_n=\alpha_n \text{id}_{\mathbb 1_{\mathcal C}}$, $b_n=\beta_n \text{id}_{\mathbb 1_{\mathcal C}}$ and $c_n= \gamma_n \Id_{\mathbb 1_{\mathcal C}},$ for some $\alpha_n,\beta_n, \gamma_n\in \textbf{k}$. The sequences $\alpha:=(\alpha_n)_{n\geq 0}$, $\beta:=(\beta_n)_{n\geq 0}$ and $\gamma:=(\gamma_n)_{n\geq 0}$ will be called the \emph{evaluation} of $A$, and we say that $A$ is a $\emph{realization}$ of $(\alpha,\beta, \gamma)$ if the evaluation of $A$ is  $(\alpha,\beta,\gamma)$.

		Proposition \ref{frob algebras and TQFTs} induces an equivalence of categories
	\begin{align*}
		\begin{Bmatrix}\text{symmetric monoidal functors}  \UCob \to \mathcal C  
		\end{Bmatrix}
		\leftrightarrow 
		\begin{Bmatrix}
			\text{extended Frobenius algebras in } \mathcal C  
		\end{Bmatrix}.
	\end{align*}
\begin{proposition}
	When $\mathcal C$ is a tensor category, this results in an equivalence of categories
	\begin{align}\label{equivalence for VUCob}
		\begin{Bmatrix}
			\text{ symmetric tensor functors } \\
			\operatorname{VUCob}_{\alpha, \beta, \gamma} \to \mathcal C  
		\end{Bmatrix}
		\leftrightarrow 
		\begin{Bmatrix}
			\text{extended Frobenius algebras  in } \mathcal C  \\
			\text{ with evaluation } (\alpha, \beta, \gamma)
		\end{Bmatrix}.
	\end{align}
\end{proposition}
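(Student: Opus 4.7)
The plan is to leverage the correspondence of Proposition~\ref{frob algebras and TQFTs} between symmetric monoidal functors $\UCob \to \mathcal{C}$ and extended Frobenius algebras in $\mathcal{C}$, and to show that passing to $\VUCob$ is exactly the act of pinning down the three evaluation sequences. To begin, observe that $\VUCob$ is constructed from $\UCob$ in two steps: (i) freely $\mathbf{k}$-linearize all hom-sets, and (ii) impose the relations $M_g = \alpha_g \operatorname{id}_{\mathbb 1}$, $M_g^1 = \beta_g \operatorname{id}_{\mathbb 1}$, $M_g^2 = \gamma_g \operatorname{id}_{\mathbb 1}$ in $\End(\mathbb 1)$ for every $g \geq 0$. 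In particular, the canonical functor $\UCob \to \VUCob$ is symmetric monoidal.

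Given a symmetric tensor functor $F : \VUCob \to \mathcal{C}$, precomposition with $\UCob \to \VUCob$ yields a symmetric monoidal functor which by Proposition~\ref{frob algebras and TQFTs} corresponds to an extended Frobenius algebra $A := F(1)$ in $\mathcal{C}$, whose structure maps are the images of the generators in \eqref{unoriented generators}. Reading off the decompositions of the closed connected surfaces from Subsection~\ref{Closed connected components.} in terms of cap, cup, handle, and crosscap, one identifies
\[
M_g \;=\; \varepsilon \circ x^g \circ u, \qquad M_g^1 \;=\; \varepsilon \circ x^g \circ y \circ u, \qquad M_g^2 \;=\; \varepsilon \circ x^g \circ y^2 \circ u,
\]
as morphisms $0 \to 0$ in $\UCob$, where $x,y$ are the morphisms introduced in \eqref{x,y}. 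Applying $F$ and using $\End_{\mathcal C}(\mathbb 1_{\mathcal C}) \simeq \mathbf{k}$, the defining relations of $\VUCob$ translate exactly to $a_g = \alpha_g$, $b_g = \beta_g$, $c_g = \gamma_g$, i.e., $A$ realizes $(\alpha,\beta,\gamma)$.

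Conversely, given an extended Frobenius algebra $A$ in $\mathcal C$ with evaluation $(\alpha,\beta,\gamma)$, Proposition~\ref{frob algebras and TQFTs} produces a symmetric monoidal functor $F_A : \UCob \to \mathcal C$. Because $F_A$ is determined on the generators \eqref{unoriented generators} and $\mathcal C$ is $\mathbf{k}$-linear, it extends uniquely to a $\mathbf{k}$-linear symmetric monoidal functor out of the $\mathbf{k}$-linearization of $\UCob$. The realization hypothesis on $A$, together with the formulas above, ensures that this extension sends each defining relation of $\VUCob$ to zero, hence factors through $\VUCob$. The two assignments $F \mapsto F(1)$ and $A \mapsto F_A$ are inverse up to natural isomorphism, and functorial in morphisms (monoidal natural transformations on one side correspond to morphisms of extended Frobenius algebras on the other, again by Proposition~\ref{frob algebras and TQFTs}), yielding the desired equivalence of categories.

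The main obstacle is the identification of the three families of closed surfaces with the compositions $\varepsilon x^g u$, $\varepsilon x^g y u$, and $\varepsilon x^g y^2 u$. This amounts to decomposing each closed surface into its in-part, a string of handles, and its out-part in the sense of Propositions~\ref{prop:orientable with boundary} and \ref{prop unorientable with boundary}, using that the crosscap accounts for non-orientability and that three crosscaps equal a handle with one crosscap by \eqref{3 crosscaps}. Once this identification is in place, the rest of the proof reduces to routine bookkeeping built on the known $\UCob$-correspondence.
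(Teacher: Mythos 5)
Your proposal is correct and follows essentially the same route as the paper: the forward direction reads off the evaluation from $F(\varepsilon (m\Delta)^g u)=\alpha_g$ (and its one- and two-crosscap analogues), and the converse uses the $\UCob$-correspondence together with the realization hypothesis to see that $F_A$ kills the defining relations of $\VUCob$ and hence factors through it. Your additional remarks on the surface decompositions and on functoriality in morphisms only make explicit what the paper's proof leaves implicit.
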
	
\begin{proof}
	Let $F:\VUCob\to \mathcal C$ be a symmetric tensor functor and let $A=F(1)$ be the corresponding extended Frobenius algebra in $\mathcal C$. Note that 
	\begin{align*}
		a_n=\varepsilon_Ax^n u_A=F(\varepsilon)(F(m)F(\Delta))^nF(u)=F(\varepsilon (m\Delta)^nu)=F(\alpha_n)=\alpha_n.
	\end{align*} 
	Similarly, $b_n=\beta_n$ and $c_n=\gamma_n$. Hence $A$ has evaluation $(\alpha, \beta, \gamma)$.
	
	Conversely, let $A$ be an extended Frobenius algebra in $\mathcal C$ with evaluation $(\alpha, \beta, \gamma)$, and let $F_A:\UCob\to \mathcal C$ be the symmetric tensor functor mapping $1\to A$ and the generators of $\UCob$ to the corresponding structure maps of $A$. Then $F_A(\varepsilon (m\Delta)^nu)=\alpha_n$, $F_A(\varepsilon (m\Delta)^n y u)=\beta_n$ and $F_A(\varepsilon (m\Delta)^n y^2 u)=\gamma_n$. Hence $F_A$ factors through a symmetric tensor functor $F_A:\VUCob\to \mathcal C,$ as desired.
	\end{proof}

	Given an extended Frobenius algebra $A$ with extended evaluation $(\alpha, \beta, \gamma)$ in $\mathcal C$, denote by $F_A:\VUCob\to \mathcal C$ the corresponding functor  mapping $1$ to $A$. For $(\alpha,\beta, \gamma)$ as in Lemma \ref{handle is negligible}, $F_A$ factors through $\SUCob$ if and only if $F_A$ annhiliates the handle polynomial, see equation \eqref{skein relation}. When $\mathcal C$ is pseudo-abelian, there is a unique extension $F_A:\PsUCob \to \mathcal C$. 
	
	\subsection{Finite realizations}\label{finite realizations}
	Let $A\in \mathcal C$ be an extended Frobenius algebra with evaluation $(\alpha, \beta, \gamma)$. We say that the realization is \emph{finite} if the  Hom spaces $\Hom(\mathbb{1}, A^{\otimes n})$ in $\mathcal C$ are finite dimensional. 
	
	The following theorem is the unoriented analogue of  \cite[Theorem 3.1]{KKO}.

	\begin{theorem}\label{thm: finite realizations}
		The tuple $(\alpha,\beta,\gamma)$ admits a finite  realization if and only if the following conditions hold:
		\begin{itemize}
			\item $\alpha$ is linearly recurrent, with generating function $Z_{\alpha}(T)$ satisfying
			\begin{align*}
				Z_{\alpha}(T)=\frac{p_{\alpha}(T)}{q(T)}, &&(p_{\alpha}(T), q(T))=1 &&\text{and} &&q(0)=1;
			\end{align*}   
			\item $\beta$ and $\gamma$ have generating functions
			\begin{align*}
				Z_{\beta}(T)=\frac{p_{\beta}(T)}{q(T)} &&\text{and} &&	Z_{\gamma}(T)=\frac{p_{\gamma}(T)}{q(T)}, 
			\end{align*}
			where $\deg(p_{\beta}(T)), \deg(p_{\gamma}(T))\leq \max(\deg(p_{\alpha}(T))+1, \deg(q(T)))$.
		\end{itemize}
	\end{theorem}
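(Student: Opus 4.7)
The plan is to prove the two implications separately, using the universal category $\PsUCob$ constructed in the previous section as the central object on both sides.

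For the sufficient direction, assume the stated generating-function hypotheses on $(\alpha,\beta,\gamma)$. Lemma \ref{handle is negligible} then guarantees that the handle element $\sigma$ of \eqref{skein relation} is negligible in $\VUCob$, so the quotient $\SUCob$ and its pseudo-abelian envelope $\PsUCob$ are nontrivial rigid symmetric tensor categories. The circle object $A = 1 \in \PsUCob$ inherits the canonical extended Frobenius algebra structure from the generators of $\UCob$, and by the definition of $\VUCob$ it realizes the triple $(\alpha,\beta,\gamma)$. Finiteness of each $\Hom(\mathbb 1, A^{\otimes n})$ follows by decomposing connected cobordisms into \emph{in/mid/out} parts via Propositions \ref{prop:orientable with boundary} and \ref{prop unorientable with boundary}, and then using the handle relation to bound the genus of each mid part by $K-1$. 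Up to this bound only finitely many generators remain, proving finite dimensionality.

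For the necessary direction, let $A \in \mathcal C$ be a finite realization. Rigidity together with self-duality of the Frobenius algebra $A$ give $\End_{\mathcal C}(A) \simeq \Hom_{\mathcal C}(\mathbb 1, A^{\otimes 2})$, which is finite dimensional by assumption, so the handle endomorphism $x = m_A \Delta_A$ satisfies a minimal polynomial relation $\tilde\sigma(x) = 0$. Precomposing this relation with each of $u_A$, $yu_A$, $y^2 u_A$ and postcomposing with $\varepsilon_A \circ x^n$ yields a common linear recurrence satisfied eventually by all three sequences $\alpha, \beta, \gamma$. By Lemma \ref{lemma auxiliar gen functions}, each of $Z_\alpha, Z_\beta, Z_\gamma$ is then a rational function with common denominator $\tilde q$, the reversal of $\tilde\sigma$, and numerator of degree less than $\deg \tilde q$. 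Reducing $Z_\alpha$ to lowest terms yields $Z_\alpha = p_\alpha/q$ with $q \mid \tilde q$ and $q(0) = 1$, which establishes the recurrence condition on $\alpha$.

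The main obstacle is to upgrade this to the sharper assertion that $Z_\beta, Z_\gamma$ can be expressed over the same lowest-terms denominator $q$, with numerator degrees at most $K = \max(\deg p_\alpha + 1, \deg q)$, rather than only over the possibly larger $\tilde q$. My approach is to factor the functor $F_A: \VUCob \to \mathcal C$ through the universal skein category built from $(\alpha, \beta, \gamma)$ using the candidate handle polynomial of degree exactly $K$ associated to the lowest-terms $q$; this factorization must exist because $\tilde\sigma(x) = 0$ forces the corresponding handle relation to hold on $A$. Lemma \ref{handle is negligible}, read now as a necessary condition for the candidate skein category to embed into $\mathcal C$, then imposes exactly the required degree bounds on $p_\beta$ and $p_\gamma$. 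The technical crux is precisely this converse reading, which can be made explicit by analyzing how $\tilde\sigma(x)$ acts on the cyclic submodules generated by $u_A, yu_A, y^2u_A$ inside $\Hom_{\mathcal C}(\mathbb 1, A)$ and using the extended Frobenius compatibility \eqref{extended Frob alg} to transport the minimality of $q$ from $\alpha$ to $\beta$ and $\gamma$.
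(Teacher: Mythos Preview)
Your sufficient direction and the overall skeleton of your necessary direction both match the paper: use $\SUCob$ with its circle object as the realization in one direction, and in the other use finite-dimensionality of $\End_{\mathcal C}(A)\simeq\Hom_{\mathcal C}(\mathbb 1,A^{\otimes 2})$ to get a minimal polynomial for the handle endomorphism $x$, then extract recurrences for $\alpha,\beta,\gamma$. Where you diverge from the paper is in the passage from the minimal polynomial $\tilde\sigma$ of $x$ (of some degree $d$) to the sharper bounds over the lowest-terms denominator $q$ for $\alpha$, and here your argument has a genuine gap.

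Your proposed resolution is circular. To factor $F_A$ through the skein category built from the degree-$K$ handle polynomial $\sigma$ of~\eqref{skein relation}, you need $\sigma(x_A)=0$ in $\End_{\mathcal C}(A)$. But $\tilde\sigma$ is the \emph{minimal} polynomial of $x_A$, so $\sigma(x_A)=0$ holds iff $\tilde\sigma\mid\sigma$, i.e.\ iff $\deg\tilde\sigma\leq K$; combined with the inequality $K\leq\deg\tilde\sigma$ that your trace argument already gives, this is precisely the equality $K=\deg\tilde\sigma$ you set out to circumvent. So the sentence ``$\tilde\sigma(x)=0$ forces the corresponding handle relation to hold on $A$'' begs the question. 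The subsequent appeal to Lemma~\ref{handle is negligible} ``read now as a necessary condition'' is also unfounded, since that lemma proves only one implication and nothing in the paper supplies its converse; and the closing remarks about cyclic submodules and the compatibility~\eqref{extended Frob alg} are too vague to be checked. The paper, by contrast, does not route through $\SUCob$ at this step at all: it argues directly that $K=\deg m(T)$, after which the relations $y\,x^l m(x)=0$ and $y^2 x^l m(x)=0$ produce recurrences for $\beta$ and $\gamma$ starting at index $K$, and Lemma~\ref{lemma auxiliar gen functions} with $B=K$ immediately gives $Z_\beta=p_\beta/q$, $Z_\gamma=p_\gamma/q$ with the stated degree bounds.
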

	
	\begin{proof}
		Assume first that $(\alpha,\beta,\gamma)$ admits a finite realization $A$ in $\mathcal C$. Consider the map $x\in \End_{\mathcal C}(A)$ as defined in \eqref{x,y}. Since by assumption $\tr(x^n)=\alpha_n$, we aim to find a linear relation satisfied by powers of $x$, and thus also by $\alpha$. 
		
		Since $\End_{\mathcal C}(A)$ is finite dimensional, there exists a minimal polynomial $m(T)  \in \textbf k[T]$ for $x$. As $m(x)=0$, then $x^lm(x)=0$ for all $l\geq 0$. 
		Taking trace on both sides of these equations yields a linearly recurrent relation with coefficients in $\mathbf k$ satisfied by $\alpha_l$, for all $l\geq \deg(m(T))$. That is, $\alpha$ is linearly recurrent, and thus has a generating function	
		\begin{align}\label{gen equation}
			Z_{\alpha}(T)=\frac{p_{\alpha}(T)}{q(T)}=\sum\limits_{n\geq 0} \alpha_nT^n, &&\text{where} &&(p_{\alpha}(T), q(T))=1 &&\text{and} &&q(0)=1.
		\end{align}   
		Let $N=\deg(p_{\alpha}(T)), M=\deg(q(T)),$ and $K=\max(N+1, M)$. Since the first term at which the recurrence happens is $\alpha_K$, we must have $K=\deg(m(T))$.
		
		We now want to compute generating functions for $\beta$ and $\gamma$. Let $y$ as in \eqref{x,y}. Recall that $\tr(yx^n)=\beta_n$ and $\tr(y^2x^n)=\gamma_n$, for all $n\geq 0$. Since  $yx^im(x)=0$ and $y^2x^im(x)=0$, taking traces on these equations we obtain 
		linear relations  satisfied by $\beta_l$ and $\gamma_l$, for $l\geq K$, equal to the ones satisfied by $\alpha$. Hence by Lemma \ref{lemma auxiliar gen functions}, $\beta$ and $\gamma$ have generating functions 
		\begin{align*}
			&&Z_{\beta}(T)=\frac{p_{\beta}(T)}{q(T)} &&\text{and} &&Z_{\gamma}(T)=\frac{p_{\gamma}(T)}{q(T)},
		\end{align*}
		with $\deg(p_{\beta}(T)), \deg(p_{\gamma}(T))< K$, as desired.
		\medbreak
		
		Conversely, assume $\alpha, \beta$ and $\gamma$ are as in the statement. Under these assumptions, Lemma \ref{handle is negligible} implies that the cobordism $\sigma$  is negligible, and thus we can quotient $\VUCob$ by the handle relation. The resulting category $\SUCob$  (see Section \ref{SUCob}), with extended Frobenius algebra given by the circle $S^1,$ equipped with the orientation reversing diffeomorphism $\phi: S^1 \to S^1$ and the Möebius band $\theta:\emptyset \to S^1$, gives a finite extended realization of $(\alpha,\beta,\gamma)$.
	\end{proof}

	\section{Exterior product decompositions} \label{section: direct sums}
	
	This section is the analogue of \cite[Section 2.4]{KKO} for the unoriented case. We want  to show a decomposition of $\PsUCob$ as an exterior product of categories. 
	
	Let $\mathcal C$ be a symmetric tensor category and consider an extended Frobenius algebra $A\in \mathcal C$  with  evaluation $(\alpha, \beta, \gamma)$. Suppose $A$ has finite dimensional Hom spaces and let 
	\begin{align}\label{caso 1}
		&Z_{\alpha}(T)=\frac{p_{\alpha}(T)}{q(T)}, &&Z_{\beta}(T)=\frac{p_{\beta}(T)}{q(T)} & \text{ and } &&Z_{\gamma}(T)=\frac{p_{\gamma}(T)}{q(T)}, 
	\end{align}
	be the generating functions of $\alpha, \beta$ and  $\gamma$, respectively, satisfying the conditions of  Theorem \ref{thm: finite realizations}. That is, $\deg(p_{\beta}(T)), \deg(p_{\gamma}(T))\leq \max(\deg(p_{\alpha}(T)), \deg(q(T))+1).$
	
	Consider the algebra homomorphism
	\begin{align*}
		\Psi: \Hom_{\mathcal C}(\mathbb 1 , A) \to \End_{\mathcal C}(A),
	\end{align*}
	induced from the action of $\Hom_{\mathcal C}(\mathbb 1, A)$ on $A$. Let $m_A, \Delta_A, u_A, \theta_A$ denote the multiplication, comultiplication, unit and crosscap morphisms of $A$, respectively. Define the handle $x_0:= m_A\Delta_A u_A$  and cross $y_0:=m_A\Delta_A \theta_A$ morphisms of $A$, which are in $\Hom_{\mathcal C}(\mathbb 1, A)$. We denote by $A_0$  the subalgebra of $\Hom_{\mathcal C}(\mathbb 1, A)$ generated by $x_0$. Since $A_0$  is finite dimensional, $x_0$ has a minimal (monic) polynomial $m(T)$, where
	\begin{align}\label{Q and T}
		q(T)=T^dm(T^{-1}), \ \ d=\deg(m(T)).
	\end{align}
	
	Let $e\in A_0$ be an idempotent. Then $\Psi(e)A$ is an extended Frobenius subalgebra of $A$ with cross morphism $ \Psi(e)(\theta_A)$ and unit $\Psi(e)(u_A)$, and we have a decomposition
	\begin{align*}
		A=\Psi(e) A \oplus \Psi(1-e)A,
	\end{align*} 
	as a direct sum of extended Frobenius subalgebras. We denote their handle endomorphisms by $x_0'$ and $x_0''$, and their evaluations by $(\alpha', \beta', \gamma')$ and $(\alpha'', \beta'', \gamma'')$, respectively.
	
	\begin{lemma}\label{gen functions caso 2}
		If $Z_{\alpha}(T), Z_{\beta}(T), Z_{\gamma}(T)$ are the generating functions of $(\alpha,\beta,\gamma)$, then the generating functions $Z_{\alpha}', Z_{\beta}', Z_{\gamma}'$ and $Z_{\alpha}'', Z_{\beta}'', Z_{\gamma}''$ of $\Psi(e)A$ and $\Psi(1-e)(A)$, respectively,  satisfy
		\begin{align*}
			\begin{aligned}
				&&Z_{\alpha}(T)=  Z_{\alpha}'(T)+Z_{\alpha}''(T), &&	Z_{\beta}(T)= Z_{\beta}'(T) + Z_{\beta}''(T),  &&\text{ and}
				&&Z_{\gamma}(T)=  Z_{\gamma}'(T)+Z_{\gamma}''(T).
			\end{aligned}
		\end{align*}
	\end{lemma}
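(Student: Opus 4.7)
The plan is to reduce the statement to the coefficient-wise additivity
\[
\alpha_n = \alpha'_n + \alpha''_n, \quad \beta_n = \beta'_n + \beta''_n, \quad \gamma_n = \gamma'_n + \gamma''_n \qquad (n \geq 0),
\]
which, summed against $T^n$, yields the claimed additivity of generating functions. First I would record the structure of the splitting $A \simeq \Psi(e)A \oplus \Psi(1-e)A$: denote by $\iota', \pi'$ and $\iota'', \pi''$ the inclusion and projection morphisms, so that $\iota'\pi' = \Psi(e)$, $\iota''\pi'' = \Psi(1-e)$, and $\iota'\pi' + \iota''\pi'' = \mathrm{id}_A$. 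Because both summands are assumed to be extended Frobenius subalgebras of $A$, the structure maps of $A$ must decompose block-diagonally along this splitting: $u_A = \iota' u_{\Psi(e)A} + \iota'' u_{\Psi(1-e)A}$, $\varepsilon_A = \varepsilon_{\Psi(e)A}\pi' + \varepsilon_{\Psi(1-e)A}\pi''$, $\theta_A = \iota'\theta_{\Psi(e)A} + \iota''\theta_{\Psi(1-e)A}$, and both $m_A$ and $\Delta_A$ are block-diagonal (the off-diagonal blocks vanish because elements of the two summands multiply to zero, by orthogonality).

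Next, I would decompose the two morphisms driving the evaluation. The handle $x_0 := m_A\Delta_A u_A$ and the cross $y_0 := m_A\Delta_A\theta_A$, both in $\Hom_{\mathcal C}(\mathbb 1, A)$, split as $x_0 = \iota' x_0' + \iota'' x_0''$ and $y_0 = \iota' y_0' + \iota'' y_0''$, where the primed and double-primed elements are the analogous handle and cross for $\Psi(e)A$ and $\Psi(1-e)A$ respectively. Since $m_A$ is block-diagonal, any iterated product of factors $x_0, y_0$ decomposes as a sum of two terms, one living in each summand, with all cross-terms vanishing. Pairing with $\varepsilon_A$ and using $\varepsilon_A\iota' = \varepsilon_{\Psi(e)A}$, $\varepsilon_A\iota'' = \varepsilon_{\Psi(1-e)A}$, one reads off the three coefficient identities from the definitions $\alpha_n = \varepsilon_A(x_0^n)$, $\beta_n = \varepsilon_A(y_0 x_0^n)$, $\gamma_n = \varepsilon_A(y_0^2 x_0^n)$, where powers are formed under $m_A$.

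The only point that genuinely needs checking is the compatibility of the crosscap with the splitting, i.e.\ $\theta_A = \iota'\theta_{\Psi(e)A} + \iota''\theta_{\Psi(1-e)A}$. This is where the \emph{extended} part of the Frobenius structure enters: it is precisely the hypothesis that both $\Psi(e)A$ and $\Psi(1-e)A$ are extended Frobenius subalgebras which gives $\theta_{\Psi(e)A} = \pi'\theta_A$ and $\theta_{\Psi(1-e)A} = \pi''\theta_A$, so summing yields the desired decomposition. Once this is in place, the three sequences are handled uniformly by inserting zero, one, or two copies of $y_0$ into the iterated multiplication, and the arguments for $\alpha, \beta, \gamma$ become parallel graphical calculations.
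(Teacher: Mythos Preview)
Your proposal is correct and follows the same strategy as the paper: reduce to the coefficient-wise identities $\alpha_n = \alpha_n' + \alpha_n''$, $\beta_n = \beta_n' + \beta_n''$, $\gamma_n = \gamma_n' + \gamma_n''$. The paper's execution is considerably more direct, however. Since $e \in A_0$ is a polynomial in $x_0$ and $\Hom_{\mathcal C}(\mathbb 1, A)$ is commutative, one simply has $x_0' = ex_0$, $x_0'' = (1-e)x_0$, so $(x_0'')^n = (1-e)x_0^n$; the entire argument is then the one-line computation
\[
\beta_n'' = \varepsilon\bigl(y_0(1-e)x_0^n\bigr) = \varepsilon(y_0 x_0^n) - \varepsilon(y_0 e x_0^n) = \beta_n - \beta_n',
\]
with $\alpha$ and $\gamma$ handled identically. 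This bypasses the inclusion/projection formalism and the explicit verification of block-diagonality of $m_A$, $\Delta_A$, $\theta_A$ that you set up --- all of that is implicitly packaged in the single relation $e + (1-e) = 1$ inside the commutative algebra $\Hom_{\mathcal C}(\mathbb 1, A)$.
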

	\begin{proof}
		We compute 
		\begin{align*}
			&	\beta_n''=\varepsilon (y_0(x_0'')^n) = \varepsilon(y_0(1-e)x_0^n)=\varepsilon(y_0x_0^{n})-\varepsilon(y_0ex_0^n)=\beta_n-\varepsilon(y_0(x_0')^n)=\beta_n-\beta_n'.
		\end{align*} 
		That is, $\beta_n=\beta_n'+\beta_n''$, and so the statement follows for $\beta$. Analogously, we have that
		\begin{align*}
			&&\alpha_n=\alpha_n'+\alpha_n'' &&\text{and} &&\gamma_n=\gamma_n'+\gamma_n'',
		\end{align*}
		and so it is also true for $\alpha$ and $\gamma$.
	\end{proof}
	
	\begin{remark}\label{remark about idemp} The parametrization of idempotents given in \cite[Section 2.4]{KKO} is still true in this context. That is, idempotents $e\in A_0$ are labelled by factorizations 
		\begin{align}\label{fact of U}
			m(T)=m'(T)m''(T),
		\end{align}
		where $m'(T)$ and $m''(T)$ are relatively prime. This labelling goes as follows. Given said factorization, we have that $e=a(x_0)m(x_0)$ is an idempotent, where $a(T),b(T)\in \textbf k[T]$ satisfy $a(T)m'(T)+b(T)m''(T)=1$. Conversely, given an idempotent $e=s(x_0)$, setting $m'(T)=\gcd(m(T),s(T))$ and $m''(T)=\gcd(m(T),1-s(T)),$ we get the desired factorization.
	\end{remark}
	
	Note that a factorization \eqref{fact of U}  induces a factorization 
	\begin{align*}
		q(T)=q'(T)q''(T),
	\end{align*}
	where $q'(T), q''(T)$ are determined by $m(T)$ and $m'(T)$  as in \eqref{Q and T}. By assumption, $m'(T)$ and $m''(T)$ are relatively prime, and thus we may assume that $T$ does not divide $m'(T)$. It follows that we have partial fraction decompostitions
	\begin{align}\label{desc alpha}
		\begin{aligned}
			&Z_{\alpha}(T)=  \frac{v_{\alpha}'(T)}{q'(T)} + \frac{v_{\alpha}''(T)}{q''(T)}  =: Z_{\alpha}'(T)+Z_{\alpha}''(T),\\
			&	Z_{\beta}(T)=  \frac{v_{\beta}'(T)}{q'(T)} + \frac{v_{\beta}''(T)}{q''(T)}=: Z_{\beta}'(T) + Z_{\beta}''(T),  &\text{ and, }\\
			&Z_{\gamma}(T)=  \frac{v_{\gamma}'(T)}{q'(T)} + \frac{v_{\gamma}''(T)}{q''(T)}=:Z_{\gamma}'(T)+Z_{\gamma}''(T),
		\end{aligned}
	\end{align}
	so that $Z_{\alpha}'(T), Z_{\beta}'(T)$ and $Z_{\gamma}'(T)$ are proper fractions, but $Z_{\alpha}''(T), Z_{\beta}''(T)$ and $Z_{\gamma}''(T)$  may not be proper. 
	
	\begin{remark}\label{remark on degrees}
		Since $m''(x_0'')=0$, we get equations $(x_0'')^nm''(x_0'')=0$, $y(x_0'')^nm''(x_0'')=0$ and $y^2(x_0'')^nm''(x_0'')=0$, for all $n\geq 0$. From the first set of equations we get the linear recurrence relation satisfied by the sequence $\alpha''$, which begins at $K=\max(\deg(v_{\alpha}''(T))+1, q''(T))$. The other two sets of equations imply that $\beta$ and $\gamma$ also satisfy this recurrence relation, and thus by Lemma \ref{lemma auxiliar gen functions} we must have 
		\begin{align*}
			\deg(v_{\beta}''(T)), \deg(v_{\gamma}''(T))\leq K.
		\end{align*}
		The same can be done for $\alpha', \beta', \gamma'$. That is, we get that
		\begin{align*}
			\deg(v_{\beta}'(T)), \deg(v_{\gamma}'(T))\leq K.
		\end{align*}
		This will be important to us due to Lemma \ref{handle is negligible}.
	\end{remark}

	The following proposition is the analogue of \cite[Proposition 2.6]{KKO} for the case of extended Frobenius algebras. The proof is analogous but we include it for the sake of completeness.

	\begin{proposition}\label{gen functions caso 1}
		If the generating functions of $(\alpha,\beta,\gamma)$ are as in \eqref{caso 1}, then the generating functions for the evaluations $(\alpha',\beta', \gamma')$ and $(\alpha'', \beta'', \gamma'')$ of $\Psi(e)A$ and $\Psi(1-e)(A)$  are $Z_{\alpha}'(T), Z_{\beta}'(T), Z_{\gamma}'(T)$ and $Z_{\alpha}''(T), Z_{\beta}''(T), Z_{\gamma}''(T)$ as given in \eqref{desc alpha}, respectively.
	\end{proposition}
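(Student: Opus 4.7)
The plan is to compute the evaluation of each direct summand $\Psi(e)A$ and $\Psi(1-e)A$ explicitly inside the commutative algebra $\Hom_{\mathcal C}(\mathbb 1, A)$ (under the Frobenius product), translate the identities $e\cdot m'(x_0)=0$ and $(1-e)\cdot m''(x_0)=0$ into linear recurrences for the evaluation sequences, and then invoke uniqueness of the partial fraction decomposition with respect to the coprime denominators $q'(T),q''(T)$.

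First, I would make precise how the extended Frobenius structure on $\Psi(e)A$ is inherited from $A$: its unit is $e$, its counit is $\varepsilon_A$ restricted to the summand, its crosscap is $e\cdot\theta_A$, and, most importantly, its handle morphism is
\begin{align*}
x_0' \;=\; e\cdot x_0 \qquad\text{in}\qquad \Hom_{\mathcal C}(\mathbb 1, A),
\end{align*}
with analogous formulas for $\Psi(1-e)A$. Combining this with commutativity of $\Hom_{\mathcal C}(\mathbb 1, A)$ and the idempotency $e^2=e$, one obtains
\begin{align*}
\alpha'_n = \varepsilon_A(e\cdot x_0^n), \qquad \beta'_n = \varepsilon_A(e\cdot x_0^n\cdot \theta_A), \qquad \gamma'_n = \varepsilon_A(e\cdot x_0^n\cdot \theta_A^{\,2}),
\end{align*}
and parallel formulas for $\alpha'',\beta'',\gamma''$ with $e$ replaced by $1-e$.

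Next, using Remark \ref{remark about idemp} one may take $e=b(x_0)m''(x_0)$ with $a(T)m'(T)+b(T)m''(T)=1$; since $m(x_0)=0$, this gives $e\cdot m'(x_0)=b(x_0)m(x_0)=0$ and symmetrically $(1-e)\cdot m''(x_0)=0$. Multiplying each of these identities by $x_0^n$, $x_0^n\theta_A$, or $x_0^n\theta_A^{\,2}$ and applying $\varepsilon_A$ produces linear recurrences with characteristic polynomial $m'(T)$ on $\alpha',\beta',\gamma'$, valid for \emph{all} $n\geq 0$ (and analogously with $m''(T)$ for the double-primed sequences). Applying Lemma \ref{lemma auxiliar gen functions} with $B=\deg q'$ shows that the generating functions $W_{\alpha'}, W_{\beta'}, W_{\gamma'}$ are proper fractions with denominator $q'(T)=T^{d'}m'(T^{-1})$, and likewise $W_{\alpha''}, W_{\beta''}, W_{\gamma''}$ are proper with denominator $q''(T)$.

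Finally, Lemma \ref{gen functions caso 2} gives $Z_\alpha=W_{\alpha'}+W_{\alpha''}$ (and similarly for $\beta,\gamma$). To match these with \eqref{desc alpha}, observe that the difference $W_{\alpha'}-Z'_\alpha = Z''_\alpha - W_{\alpha''}$ is simultaneously a proper fraction with denominator dividing $q'$ and a rational function with denominator dividing $q''$; since $\gcd(q',q'')=1$ it must be a polynomial of degree strictly less than $\deg q'$, hence zero. The same argument handles $\beta$ and $\gamma$. The main obstacle is the first step: making precise that the handle and crosscap of the Frobenius subalgebra $\Psi(e)A$ really are $e\cdot x_0$ and $e\cdot\theta_A$, which requires unpacking how the comultiplication of $A$ restricts along the direct-sum decomposition; once this identification is in place, the remainder of the proof is a bookkeeping exercise combining the recurrences induced by the factorization $m(T)=m'(T)m''(T)$ with uniqueness of partial fractions.
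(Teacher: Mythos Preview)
Your argument is essentially correct and takes a genuinely different route from the paper's proof. The paper does not compute the recurrences and invoke partial-fraction uniqueness; instead, after writing the formulas $\alpha'_n=\varepsilon(s(x_0)x_0^n)u$, $\beta'_n=\varepsilon(s(x_0)y_0x_0^n)u$, $\gamma'_n=\varepsilon(s(x_0)y_0^2x_0^n)u$ (exactly as you do), it observes that these depend only on the factorization $m(T)=m'(T)m''(T)$, and therefore may be computed in \emph{any} extended Frobenius algebra with the same $(\alpha,\beta,\gamma)$ and the same minimal polynomial. It then builds a tailor-made model $A=(A'\boxtimes\mathbb 1)\oplus(\mathbb 1\boxtimes A'')$ inside $\mathcal C'\boxtimes\mathcal C''$ where $A'$ and $A''$ are chosen to have generating functions $Z'_\bullet$ and $Z''_\bullet$ a priori, and reads off the answer there (the idempotents being the obvious projections). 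Your approach is more elementary and self-contained, avoiding the construction of an auxiliary category; the paper's approach is more conceptual and transfers the computation to a setting where it is trivial.

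One small point to tighten: your assertion that $W_{\alpha''},W_{\beta''},W_{\gamma''}$ are \emph{proper} with denominator $q''$ is not quite right when $T\mid m''(T)$, since then $\deg q''<d''$ while Lemma~\ref{lemma auxiliar gen functions} only gives numerator degree $<d''$. (The paper itself flags that $Z''_\alpha,Z''_\beta,Z''_\gamma$ ``may not be proper''.) Fortunately your final uniqueness step only needs that $W_{\alpha'}-Z'_\alpha$ is proper with denominator $q'$ (which it is, since $T\nmid m'$) and that $Z''_\alpha-W_{\alpha''}$ has denominator dividing $q''$; coprimality of $q',q''$ then forces the numerator of the first expression to vanish. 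So the argument survives once you drop the word ``proper'' on the double-primed side.
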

	
	\begin{proof}
		Let 
		\begin{align*}
			&&	x_0'=ex_0,  \ y_0'=ey_0&&\text{ and} &&x_0''=(1-e)x_0,  \ y_0''=(1-e)y_0,
		\end{align*}
		be the handle and cross endomorphisms of $\Psi(e)A$ and $\Psi(1-e)A$, respectively. Let $s(T)\in \textbf{k}(T)$ be such that $e=s(x_0)\in A_0$. The extended evaluation of $\Psi(e)(A)$ is given by
		\begin{align}\label{eq eval}
			\begin{aligned}
				&	\alpha_n'=\varepsilon (x_0')^n u = \varepsilon(ex_0^n)u=\varepsilon(s(x_0)x_0^n)u,\\
				&	\beta_n'=\varepsilon (y_0x_0')^nu = \varepsilon(ey_0x_0^n)u=\varepsilon(s(x_0)y_0x_0^n)u,\\
				&\gamma_n'=\varepsilon (y_0^2x_0')^n u= \varepsilon(ey_0^2x_0^n)u=\varepsilon(s(x_0)y_0^2x_0^n)u,
			\end{aligned}
		\end{align}
		and thus it is  uniquely determined by $e$. Analogously, the extended evaluation of $\psi(1-e)A$ is uniquely determined by $e$. By Remark \ref{remark about idemp}, this implies that the extended evaluations of $\psi(e)A$ and $\psi(1-e)A$ are uniquely determined by the factorization of $m(T)$.

		Since the evaluations depend only on the factorization of $m(T)$, we may compute them in a specific setting. Consider symmetric tensor categories $\mathcal C'$ and $\mathcal C''$. Let $A'\in \mathcal C'$ and $A''\in \mathcal C''$ be extended Frobenius algebras such that the generating functions of their evaluations are $Z_{\alpha}'(T), Z_{\beta}'(T), Z_{\gamma}'(T)$ and $ Z_{\alpha}''(T),Z_{\beta}''(T), Z_{\gamma}''(T)$ as in \eqref{desc alpha}, respectively, and  their handle endomorphisms $x_0'$ and $x_0''$ have minimal polynomials $m'(T)$ and  $m''(T)$ as in \eqref{fact of U}. By Remark \ref{remark on degrees} and Section \ref{SUCob}, we know such categories exist, see Definition \ref{def of skein category}. Denote by $y_0'$ and $y_0''$ the cross endomorphisms of $A'$ and $A''$.  
		
		Let $\mathcal C' \boxtimes \mathcal C''$ be the external tensor product of $\mathcal C'$ and $\mathcal C''$ \cite[Section 2.2]{O}, and consider the object
		\begin{align*}
			A:= (A'\boxtimes \mathbb 1) \oplus (\mathbb 1 \boxtimes A'') \in \mathcal C' \boxtimes \mathcal C'',
		\end{align*}
		with extended Frobenius algebra structure induced from that of $A'$ and $A''$.  Then the handle and cross endomorphisms of $A$ are $x_0=x_0' \oplus x_0''$ and $y_0=y_0'\oplus y_0''$, respectively, and the generating functions of $A$ are
		\begin{align*}
			&&Z_{\alpha}=Z_{\alpha}' + Z_{\alpha}'', && Z_{\beta}=Z_{\beta}'+Z_{\beta}'', &&\text{and} &&Z_{\gamma}= Z_{\gamma}'+Z_{\gamma}''.
		\end{align*}
		Since $m'(T)$ and $m''(T)$ are relatively prime, the minimal polynomial of $x_0$ is $m(T)=m'(T)m''(T)$. The idempotents determined by them (see Remark \ref{remark about idemp}) are the units of $A'\boxtimes \mathbb 1$ and $\mathbb 1 \boxtimes A''$, respectively. Thus, the respective evaluations can be computed as in \eqref{eq eval}, and the result follows. 
	\end{proof}

	We list in what follows a series of results from \cite[Section 2.6]{KKO} that still hold in our context. From now on, let $(\alpha', \beta', \gamma')$ and $(\alpha'', \beta'', \gamma'')$ have generating functions as in \eqref{desc alpha} (satisfying the conditions in  Remark \ref{remark on degrees}). Let 
	\begin{align*}
		&	\text{SUCob}':=\text{SUCob}_{\alpha', \beta', \gamma' }&&\text{and} &&\text{SUCob}'':=\text{SUCob}_{\alpha'', \beta'', \gamma''}.
	\end{align*}
	Denote by $A'$ and $A''$ the extended Frobenius algebras in $\text{SUCob}'$ and $\text{SUCob}''$ given by the circle object with structure maps the generating cobordisms of $\text{SUCob}'$ and $\text{SUCob}''$, respectively. Consider also the category $\text{SUCob}' \boxtimes \text{SUCob}''$
	with extended Frobenius algebra
	\begin{align}\label{A}
		\mathcal A:=A'\boxtimes \mathbb 1 \oplus \mathbb 1 \boxtimes A'',
	\end{align}
	and denote by $\mathcal U(T)$ the minimal polynomial for its handle endomorphism. Define 
	\begin{align}\label{sum of seq}
		&\alpha:=\alpha'+\alpha'', &\beta:=\beta' + \beta'' &&\text{ and } &&\gamma=\gamma'+\gamma'',
	\end{align}
	so that $\mathcal A$ has extended evaluation $(\alpha,\beta,\gamma).$
	
	\begin{lemma}\cite[Lemma 2.8]{KKO} Let $m'(T)$ and $m''(T)$ denote the handle polynomials of $A'$ and $A''$, respectively. Then
		\begin{align*}
			\mathcal U(T) = \text{lcm}(m'(T), m''(T)).
		\end{align*}
	\end{lemma}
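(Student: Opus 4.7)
The plan is to reduce the statement to a standard linear algebra fact about minimal polynomials of block-diagonal operators, adapted to the categorical setting of $\mathcal{A}$ as a direct sum of extended Frobenius subalgebras.

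First, I would unpack the structure of the handle endomorphism of $\mathcal{A}$. Since $\mathcal{A} = A' \boxtimes \mathbb{1} \oplus \mathbb{1} \boxtimes A''$ and the extended Frobenius structure on $\mathcal{A}$ is induced componentwise from those on $A'$ and $A''$, the multiplication, comultiplication, unit and crosscap of $\mathcal{A}$ all respect this direct sum decomposition. Consequently, if $x_0'$ and $x_0''$ denote the handle endomorphisms of $A'$ and $A''$ viewed inside $\operatorname{End}(A'\boxtimes \mathbb{1})$ and $\operatorname{End}(\mathbb{1}\boxtimes A'')$ respectively, then the handle endomorphism of $\mathcal{A}$ is precisely
\begin{equation*}
x_0 \;=\; x_0' \oplus x_0'' \;\in\; \operatorname{End}(\mathcal{A}).
\end{equation*}

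Next I would observe that, for any polynomial $p(T) \in \mathbf{k}[T]$, we have $p(x_0) = p(x_0') \oplus p(x_0'')$, simply because applying $p$ commutes with the direct-sum decomposition. Therefore $p(x_0) = 0$ if and only if both $p(x_0') = 0$ and $p(x_0'') = 0$. By the definition of minimal polynomial, this is equivalent to $m'(T) \mid p(T)$ and $m''(T) \mid p(T)$, which in turn is equivalent to $\operatorname{lcm}(m'(T), m''(T)) \mid p(T)$.

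Finally, I would conclude. The monic polynomial $\operatorname{lcm}(m'(T), m''(T))$ annihilates $x_0$ by the criterion above, and any monic polynomial annihilating $x_0$ is divisible by it. Thus $\mathcal{U}(T) = \operatorname{lcm}(m'(T), m''(T))$, as claimed. I do not anticipate any genuine obstacle here; the only subtlety is verifying that the handle endomorphism of $\mathcal{A}$ really splits as a direct sum, which follows from the fact that $A'\boxtimes \mathbb{1}$ and $\mathbb{1}\boxtimes A''$ are extended Frobenius subalgebras whose structure maps assemble into those of $\mathcal{A}$ along the decomposition.
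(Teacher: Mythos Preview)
Your argument is correct and is exactly the standard one: once you know that the handle endomorphism of $\mathcal{A}$ decomposes as $x_0 = x_0' \oplus x_0''$ (which the paper itself records just above, in the proof of Proposition~\ref{gen functions caso 1}), the minimal polynomial of a direct sum is the lcm of the minimal polynomials of the summands, by precisely the divisibility argument you give.

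The paper does not supply its own proof here; it simply cites \cite[Lemma 2.8]{KKO}. Your write-up is presumably what that reference contains, so there is nothing to compare beyond noting that you have filled in the omitted details correctly.
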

	
	\begin{corollary}\cite[Corollary 1]{KKO}\label{U divisor}
		Consider the category $\operatorname{SUCob}_{\alpha, \beta, \gamma}$ with extended Frobenius algebra $A$, for $\alpha, \beta$ and $\gamma$ as in \eqref{sum of seq}. The minimal polynomial $m_{\alpha}(T)$  for its handle endomorphism is a divisor of $\mathcal U(T)$.
	\end{corollary}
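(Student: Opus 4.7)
The plan is to use the universal property of $\VUCob$ to produce a symmetric tensor functor from $\VUCob$ to $\text{SUCob}'\boxtimes\text{SUCob}''$, and then to leverage the minimal polynomial identity $\mathcal U(x_{\mathcal A}) = 0$ to prove the polynomial divisibility $\sigma \mid \mathcal U$, which yields $m_\alpha \mid \mathcal U$ since $m_\alpha \mid \sigma$ by construction of $\SUCob$. Since $\mathcal A$ (equation \eqref{A}) is an extended Frobenius algebra in $\text{SUCob}'\boxtimes\text{SUCob}''$ with evaluation $(\alpha,\beta,\gamma)$, equation \eqref{equivalence for VUCob} provides a symmetric tensor functor
\[
F_{\mathcal A} : \VUCob \longrightarrow \text{SUCob}'\boxtimes\text{SUCob}''
\]
sending the circle to $\mathcal A$ and the handle $x$ to $x_{\mathcal A}$.

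From $\mathcal U(x_{\mathcal A}) = 0$, computing $\varepsilon_{\mathcal A}\, x_{\mathcal A}^n\, \mathcal U(x_{\mathcal A})\, u_{\mathcal A}=0$ for every $n \geq 0$ and using $\alpha_n = \varepsilon_{\mathcal A} x_{\mathcal A}^n u_{\mathcal A}$ gives the recurrence $\sum_{i=0}^{d} c_i \alpha_{n+i} = 0$ for all $n \geq 0$, where $\mathcal U(T) = \sum_i c_i T^i$ has degree $d$. Translating to generating functions as in Lemma \ref{lemma auxiliar gen functions}, this forces $Z_\alpha(T) = R(T)/\tilde{\mathcal U}(T)$ with $\deg R \leq d-1$. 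Writing $\mathcal U(T) = T^k \mathcal U_0(T)$ with $\mathcal U_0(0) \neq 0$, one has $\tilde{\mathcal U} = \tilde{\mathcal U_0}$ of degree $d-k$, and reducing $Z_\alpha$ to lowest terms $p_\alpha / q$ yields $q \mid \tilde{\mathcal U_0}$ together with the degree bound $K - M = \max(N+1-M,0) \leq k$, where $N = \deg p_\alpha$ and $M = \deg q$.

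Since $q(0) \neq 0$ and $\mathcal U_0(0) \neq 0$, reversing the divisibility $q \mid \tilde{\mathcal U_0}$ produces $\tilde q \mid \mathcal U_0$; the factor $T^{K-M}$ is coprime to $\tilde q$ because $\tilde q(0) \neq 0$, so
\[
\sigma(T) \;=\; T^{K-M}\,\tilde q(T) \;\Big|\; T^{k}\,\mathcal U_0(T) \;=\; \mathcal U(T).
\]
As $m_\alpha$ divides $\sigma$ by definition of the skein quotient, the chain $m_\alpha \mid \sigma \mid \mathcal U$ gives the claim. The main obstacle I anticipate is extracting the sharp degree inequality $K - M \leq k$ from the reduction of $Z_\alpha$: the crucial input is that $\mathcal U(x_{\mathcal A}) = 0$ yields a recurrence starting from index $0$ (not merely eventually), which pins down $\deg R \leq d-1$ and allows the factor $T^k$ in $\mathcal U$ to absorb the $T^{K-M}$ that may appear in $\sigma$.
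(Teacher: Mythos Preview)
Your argument is correct. The paper gives no proof of its own here (it cites \cite[Corollary~1]{KKO}), and your generating-function computation---extracting the recurrence $\sum_i c_i\alpha_{n+i}=0$ for all $n\ge 0$ from $\mathcal U(x_{\mathcal A})=0$ and then reversing polynomials to obtain $\sigma\mid\mathcal U$---is the standard route; the functor $F_{\mathcal A}$ is a harmless but unnecessary detour, since the only fact you use is $\mathcal U(x_{\mathcal A})=0$, which is the very definition of $\mathcal U$ as the minimal polynomial of the handle of $\mathcal A$.
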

	
	The following Proposition is the analogue of \cite[Proposition 2.10]{KKO} for the unoriented case. 
	\begin{proposition} With the notation above, 
		$\mathcal U(T)=m_{\alpha}(T)$ iff there exists a functor
		\begin{align}\label{decomposition functor}
			F:\operatorname{SUCob}_{\alpha,\beta,\gamma} \to \operatorname{SUCob}'\boxtimes \operatorname{SUCob}'',
		\end{align} 
		mapping the extended Frobenius algebra $A$ given by the circle object of $\SUCob$ to  $\mathcal A$, see \eqref{A},  and the extended Frobenius structure maps of $A$ to the ones of $\mathcal A$.
	\end{proposition}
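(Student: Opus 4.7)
The plan is to leverage the universal property of $\VUCob$ established in Section \ref{universal property} together with Corollary \ref{U divisor}. The argument is clean once one observes that existence of $F$ is equivalent to the handle relation being killed in $\SUCob'\boxtimes \SUCob''$.

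First, I would use the universal property to produce a symmetric tensor functor $\tilde F : \VUCob_{\alpha,\beta,\gamma} \to \SUCob' \boxtimes \SUCob''$ sending the circle to $\mathcal A$ and the generators to the extended Frobenius structure maps on $\mathcal A$. Such a functor exists because $\mathcal A$ has extended evaluation $(\alpha,\beta,\gamma)$: this is the content of Lemma \ref{gen functions caso 2} combined with \eqref{sum of seq}. Denote by $x_{\mathcal A}$ the handle endomorphism of $\mathcal A$ in $\SUCob'\boxtimes \SUCob''$. Since $\mathcal A = A'\boxtimes \mathbb 1 \oplus \mathbb 1 \boxtimes A''$ as extended Frobenius algebras, $x_{\mathcal A}$ is the direct sum of the handles of $A'$ and $A''$, whose minimal polynomials are $m'(T)$ and $m''(T)$ by the assignment of Section \ref{section: direct sums}. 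Hence the minimal polynomial of $x_{\mathcal A}$ is $\operatorname{lcm}(m'(T), m''(T)) = \mathcal U(T)$.

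Next, I would observe that $\tilde F$ descends to a functor $F : \SUCob_{\alpha,\beta,\gamma} \to \SUCob'\boxtimes \SUCob''$ of the desired form if and only if $\tilde F$ annihilates the handle relation defining $\SUCob_{\alpha,\beta,\gamma}$. By definition of $\SUCob_{\alpha,\beta,\gamma}$ and the discussion following \eqref{skein relation}, this quotient is generated by a polynomial identity that, upon passing to the minimal polynomial of the handle endomorphism of the circle, is equivalent to $m_\alpha(x) = 0$. Applying $\tilde F$ to this identity gives $m_\alpha(x_{\mathcal A}) = 0$, which holds in $\SUCob'\boxtimes \SUCob''$ if and only if $\mathcal U(T) \mid m_\alpha(T)$. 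Therefore, existence of $F$ is equivalent to the divisibility $\mathcal U(T) \mid m_\alpha(T)$.

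Finally, I would invoke Corollary \ref{U divisor}, which supplies the reverse divisibility $m_\alpha(T) \mid \mathcal U(T)$ unconditionally. Since both polynomials are monic, combining the two divisibilities we obtain: $F$ exists if and only if $\mathcal U(T) = m_\alpha(T)$, as desired. The main subtlety to be careful about is the identification between the handle viewed as a morphism $\mathbb 1 \to A$ (as in Section \ref{section: direct sums}, where $m(T)$ is defined) and as an endomorphism of $A$ (as in the definition of the handle relation in $\SUCob_{\alpha,\beta,\gamma}$); these coincide after multiplying by the unit and using the Frobenius structure, so the relevant minimal polynomials agree, and the chain of equivalences above is valid.
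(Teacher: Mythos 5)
Your proposal is correct and follows essentially the same route as the paper: invoke the universal property of $\VUCob$ to obtain the functor into $\operatorname{SUCob}'\boxtimes\operatorname{SUCob}''$, observe that it factors through $\operatorname{SUCob}_{\alpha,\beta,\gamma}$ precisely when $m_{\alpha}$ annihilates the handle endomorphism of $\mathcal A$ (whose minimal polynomial is $\mathcal U(T)$), and combine the resulting divisibility with Corollary \ref{U divisor} to get equality. Your explicit remark on identifying the handle as a morphism $\mathbb 1\to A$ versus an endomorphism of $A$ is a small but welcome clarification that the paper leaves implicit.
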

	\begin{proof}
		Since both $A$ and $\mathcal A$ have extended evaluation $(\alpha,\beta,\gamma)$, by the universal property (see Section \ref{universal property}), there exists a symmetric tensor functor $F:\VUCob \to \text{SUCob}'\boxtimes \text{SUCob}''$ as described. Let $x_0$ and $z_0$ denote the handle endomorphisms of $\operatorname{SUCob}_{\alpha,\beta,\gamma}$ and $\operatorname{SUCob}'\boxtimes \operatorname{SUCob}$, respectively. Then $F$ factors through $\SUCob $ if and only if $F$ maps $m_{\alpha}(x_0)\mapsto 0$. If 	$\mathcal U(T)=m_{\alpha}(T)$, then $F(m_{\alpha}(x_0))=m_{\alpha}(z_0)=\mathcal U(z_0)=0$. For the converse, if the functor $F$ factors, then $m_{\alpha}(z_0)=0$, and so by Corollary \ref{U divisor} we must have $m_{\alpha}(T)=\mathcal U(T)$.
	\end{proof}
	
	Assume from now on that $m_{\alpha}(T)=\mathcal U(T)$. The following is the analogue of  \cite[Proposition 2.14]{KKO}.
	\begin{proposition}
		With the notation above, the functor \eqref{decomposition functor} induces an equivalence of pseudo-abelian symmetric tensor categories
		\begin{align*}
			F:\operatorname{UCob}_{\alpha, \beta, \gamma} \xrightarrow{\sim}\operatorname{UCob}'\boxtimes \operatorname{UCob}''.
		\end{align*}
	\end{proposition}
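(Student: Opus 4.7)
The plan is to extend $F$ to the pseudo-abelian envelopes and construct an essentially inverse functor $G$, following the strategy of \cite[Proposition 2.14]{KKO}. Since $\PsUCob' \boxtimes \PsUCob''$ is pseudo-abelian, the symmetric tensor functor $F:\SUCob_{\alpha,\beta,\gamma} \to \SUCob'\boxtimes\SUCob''$ extends uniquely (up to natural isomorphism) to a symmetric tensor functor $F:\PsUCob_{\alpha,\beta,\gamma} \to \PsUCob'\boxtimes\PsUCob''$ by the universal property of $\mathcal{K}(\mathcal{A}(-))$.

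For essential surjectivity, I would invoke the factorization $\mathcal{U}(T) = m_\alpha(T) = m'(T)m''(T)$ with $m'(T),m''(T)$ coprime. By Remark \ref{remark about idemp}, this produces an idempotent $e \in A_0 \subseteq \Hom_{\PsUCob_{\alpha,\beta,\gamma}}(\mathbb{1},A)$ with complement $1-e$ corresponding to the other factor, and hence orthogonal idempotent endomorphisms $\Psi(e), \Psi(1-e) \in \End(A)$. These yield a direct sum decomposition $A \simeq \Psi(e)A \oplus \Psi(1-e)A$ in $\PsUCob_{\alpha,\beta,\gamma}$ into extended Frobenius subalgebras whose evaluations are $(\alpha',\beta',\gamma')$ and $(\alpha'',\beta'',\gamma'')$ respectively, by Proposition \ref{gen functions caso 1}. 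The universal property of Section \ref{universal property} then produces symmetric tensor functors $\PsUCob' \to \PsUCob_{\alpha,\beta,\gamma}$ and $\PsUCob'' \to \PsUCob_{\alpha,\beta,\gamma}$ sending the canonical algebras $A',A''$ to $\Psi(e)A,\Psi(1-e)A$ respectively. Because $\Psi(e)$ and $\Psi(1-e)$ are orthogonal idempotents in the commutative algebra $A_0$, the images of these two functors are mutually transparent (their objects are annihilated by one another's projectors), and they assemble into a single symmetric tensor functor $G:\PsUCob'\boxtimes\PsUCob'' \to \PsUCob_{\alpha,\beta,\gamma}$ sending $A'\boxtimes\mathbb{1} \mapsto \Psi(e)A$ and $\mathbb{1}\boxtimes A'' \mapsto \Psi(1-e)A$.

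It then remains to check that $F$ and $G$ are mutually inverse. On the level of generating objects, $F\circ G$ clearly acts as the identity by construction. Conversely, $G\circ F$ sends $A$ to $\Psi(e)A \oplus \Psi(1-e)A \simeq A$, with the structure maps matching on each summand by how the idempotents were chosen to match the factorization of $\mathcal{U}(T)$. These identifications extend uniquely to natural isomorphisms on all of $\PsUCob_{\alpha,\beta,\gamma}$ and $\PsUCob'\boxtimes\PsUCob''$ since both are generated under direct sums and direct summands by tensor powers of the respective extended Frobenius algebras.

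The main obstacle I anticipate is verifying that $G$ is well-defined as a functor out of the \emph{external} product $\PsUCob'\boxtimes\PsUCob''$, rather than just out of the free product of $\PsUCob'$ and $\PsUCob''$. This requires showing that the two subcategories generated by $\Psi(e)A$ and $\Psi(1-e)A$ inside $\PsUCob_{\alpha,\beta,\gamma}$ are orthogonal in the appropriate sense, i.e.\ that $\Hom(\Psi(e)A^{\otimes n}, \Psi(1-e)A^{\otimes m}) = 0$ for all $n,m \geq 0$, and that the braiding between objects of different summands is trivial. Both facts ultimately reduce to the orthogonality relation $m_A(\Psi(e)\otimes\Psi(1-e))\Delta_A = 0$, which holds because $e(1-e)=0$ in the commutative algebra $A_0$ and because all relevant morphisms in the skein category factor through multiplications and comultiplications of $A$. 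Once orthogonality is established, the matching of trace pairings $\operatorname{tr}_{\alpha,\beta,\gamma} = \operatorname{tr}_{\alpha',\beta',\gamma'} + \operatorname{tr}_{\alpha'',\beta'',\gamma''}$ from Lemma \ref{gen functions caso 2} guarantees that fully faithfulness holds on all hom spaces.
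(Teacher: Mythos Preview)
Your overall strategy is exactly the paper's: extend $F$ to pseudo-abelian envelopes, use the factorization $m_\alpha(T)=m'(T)m''(T)$ and Remark \ref{remark about idemp} to split $A=\Psi(e)A\oplus\Psi(1-e)A$, build $G$ from the universal properties applied to each summand, and then observe that $G\circ F$ and $F\circ G$ fix the generating extended Frobenius algebras, hence are isomorphic to the identities.

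The ``main obstacle'' you anticipate, however, rests on a misconception. The external tensor product $\operatorname{UCob}'\boxtimes\operatorname{UCob}''$ is not a categorical product requiring orthogonality of the two images; its universal property (which the paper simply invokes from \cite[Section 2.2]{O}) says precisely that a pair of symmetric tensor functors $\mathcal C'\to\mathcal D$, $\mathcal C''\to\mathcal D$ assembles into a unique symmetric tensor functor $\mathcal C'\boxtimes\mathcal C''\to\mathcal D$, sending $X'\boxtimes X''$ to $F'(X')\otimes F''(X'')$. No orthogonality hypothesis is needed, and in fact your claimed vanishing $\Hom(\Psi(e)A^{\otimes n},\Psi(1-e)A^{\otimes m})=0$ is false: for $n=m=0$ both sides are $\mathbb 1$, and for $n,m\geq 1$ there are always nonzero maps factoring through $\mathbb 1$ via the counit and unit. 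So drop the last paragraph entirely; once you cite the universal property of $\boxtimes$, the construction of $G$ is immediate, and the trace-matching from Lemma \ref{gen functions caso 2} plays no role in the argument.
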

	
	\begin{proof}
		Consider the functor $F:\text{SUCob}_{(\alpha,\beta,\gamma) } \to \text{SUCob}'\boxtimes \text{SUCob}''$ as in \eqref{decomposition functor}, which maps $A\mapsto \mathcal A$. Taking the pseudo-abelian envelope, we get an induced functor $$F:\text{UCob}_{\alpha,\beta,\gamma} \to \text{UCob}'\boxtimes \text{UCob}''.$$
		
		On the other hand, consider the factorization $m_{\alpha}(T)=m'(T)m''(T),$ which induces a decomposition $Z_{\alpha}(T)=Z_{\alpha'}(T)+Z_{\alpha''}(T)$ as in equation \eqref{desc alpha}. By Remark \ref{remark about idemp} we have a corresponding idempotent $e\in \Hom(\mathbb 1, A),$ and a decomposition
		\begin{align*}
			A=\Psi(e)A \oplus \Psi(1-e)A,
		\end{align*}
		where the generating functions of $\Psi(e)A$ and $\Psi(1-e)A$ are as in Proposition \ref{gen functions caso 1}. Note that $A'$ in $\text{SUCob}'$ and $\Psi(e)A$ in $\PsUCob$  have the same extended evaluation and thus by the universal property, see Section \ref{universal property}, there exists a symmetric tensor functor $\text{SUCob}' \to \PsUCob$. Similarly, we have a functor  $\text{SUCob}'' \to \PsUCob$ mapping $A''\mapsto \Psi(1-e)A$. On the other hand, by the universal property of the external tensor product (see \cite[Section 2.2]{O}), we have a symmetric tensor functor
		\begin{align*}
			G:	\text{SUCob}' \boxtimes \text{SUCob}''  \to \PsUCob,
		\end{align*}
		mapping $A'\boxtimes  \mathbb 1$ to $\Psi(e)A$ and $\mathbb 1 \boxtimes A'$ to $\Psi(1-e)A$. This induces a unique tensor functor from the pseudo-abelian closure of the source category,
		\begin{align*}
			G:	\text{UCob}' \boxtimes \text{UCob}''  \to \PsUCob.
		\end{align*}
		Note that the compositions 
		\begin{align*}
			\PsUCob \xrightarrow{F} \text{UCob}'\boxtimes \text{UCob}'' \xrightarrow{G} \PsUCob,
		\end{align*}
		and 
		\begin{align*}
			\text{UCob}'\boxtimes \text{UCob}''\xrightarrow{G} \PsUCob \xrightarrow{F} \text{UCob}'\boxtimes \text{UCob}'',
		\end{align*}
		map $A$ to itself and $A'\boxtimes \mathbb 1$ and $\mathbb 1 \boxtimes A''$ to themselves, respectively. Hence $G\circ F$ and $F\circ G$ are isomorphic (as tensor functors) to the corresponding identity functor, and the statement follows. 
	\end{proof}

	\section{(Unoriented) orientable cobordisms}\label{section:SOCob}
	Let $\alpha$ be a linearly recurrent sequence in \textbf k. In this section, we define the category $\SOCob,$ obtained by modding out the crosscap cobordism $\theta$. We also show that  when specializing to the sequence $\alpha=(\alpha_0, \lambda \alpha_0, \lambda^2 \alpha_0, \dots)$, with $\alpha_0, \lambda \in \mathbf k^{\times}$ such that $\lambda\alpha_0$ is not a non-negative even integer, its pseudo-abelian envelope $\PsOCob$ is equivalent to $\Rep(S_{t}\wr \mathbb Z_2),$ for $t=\frac{\lambda \alpha_0}{2}$.
	
	\begin{definition}
		Let $\alpha=(\alpha_0, \alpha_1, \dots)$ be a linearly recurrent sequence in $\mathbf{k}$, and let
		$\beta= \gamma=(0, 0, \dots)$.  We define the \emph{orientable skein} category $\SOCob,$ with:
		\begin{itemize}
			\item Objects: Non-negative integers $n\in \mathbb Z_{\geq 0}.$
			\item Morphisms: $\Hom_{\SOCob}(m,n)$ is equal to $\Hom_{\SUCob}(m,n)$ modulo the relation $\theta=0$. 
			\item Composition: Induced from $\SUCob$.
		\end{itemize}
	\end{definition}
	
	The category $\SOCob$ is a rigid symmetric tensor category. Morphisms in $\SOCob$ are \textbf k-linear combinations of orientable cobordisms without closed components and genus at most $K$, for $K$ as in $\eqref{M y N}$. Hence, connected components  of cobordisms in $\SOCob$ are as in Proposition \ref{prop:orientable with boundary}. Same as in $\SUCob$,  Hom spaces in $\SOCob$ are finite dimensional. We denote its pseudo-abelian envelope by $\PsOCob$.  We note that the category $\SOCob$ of orientable cobordisms is different from the category of oriented cobordisms, see Example \ref{orientable example}.
	
	Let $\mathcal C$ be a symmetric tensor category with  an extended Frobenius algebra $A,$ such that $A$ has extended evaluation $\alpha$ and $\beta= \gamma=0,$ with $\alpha$  linearly recurrent. 
	Let $F_A:\VUCob\to \mathcal C$ be the symmetric tensor functor given by the universal property of $\VUCob$, mapping $1$ to $A$, see Section \ref{universal property}. Then $F_A$ factors through $\SUCob$ if and only if $F_A$ annhiliates the handle polynomial, and through $\SOCob$ if and only if it also anhiliates $\theta$, i.e, if and only if $0=F_A(\theta)=\theta_A$. In such case, when $\mathcal C$ is pseudo-abelian there is a unique extension $F_A:\PsOCob \to \mathcal C$. 
	\begin{example}\label{orientable example}
		The cobordisms below
		\begin{align*}
			x^n&=
			\begin{aligned}
				\resizebox{200pt}{!}{%
					\begin{tikzpicture}[tqft/cobordism/.style={draw,thick},
						tqft/view from=outgoing, tqft/boundary separation=30pt,
						tqft/cobordism height=40pt, tqft/circle x radius=8pt,
						tqft/circle y radius=4.5pt, tqft/every boundary component/.style={draw,rotate=90},tqft/every incoming
						boundary component/.style={draw,dotted,thick},tqft/every outgoing
						boundary component/.style={draw,thick}]
						\pic[tqft/pair of pants,
						rotate=90,name=h,anchor={(1,-1)}]; 
						\node at ([xshift=105pt, yshift=-30pt]h-outgoing boundary 1)[color=black][font=\huge]{$\underbrace{\ \ \ \ \ \ \ \ \ \ \ \ \ \ \ \ \ \ \ \ \ \ \ \ \ \  \ \ \ \ \ \ \ \ \ \ \ \ \ \ \ \ \ \ \ \ }_n$};
						\pic[tqft/reverse pair of pants,
						rotate=90,name=i,at=(h-outgoing boundary)];  
						\pic[tqft/pair of pants,
						rotate=90,name=a,at=(i-outgoing boundary)]; 
						\pic[tqft/reverse pair of pants,
						rotate=90,name=b,at=(a-outgoing boundary)];  
						\node at ([xshift=20pt]b-outgoing boundary 1)[color=black]{$\dots$};
						\pic[tqft/pair of pants,
						rotate=90,name=l,anchor={(1,-6)}]; 
						\pic[tqft/reverse pair of pants,
						rotate=90,name=j,at=(l-outgoing boundary)];  
				\end{tikzpicture} }
			\end{aligned}, \\
			\phi x^n&=
			\begin{aligned}
				\resizebox{220pt}{!}{%
					\begin{tikzpicture}[tqft/cobordism/.style={draw,thick},
						tqft/view from=outgoing, tqft/boundary separation=30pt,
						tqft/cobordism height=40pt, tqft/circle x radius=8pt,
						tqft/circle y radius=4.5pt, tqft/every boundary component/.style={draw,rotate=90},tqft/every incoming
						boundary component/.style={draw,dotted,thick},tqft/every outgoing
						boundary component/.style={draw,thick}]
						\pic[tqft/cylinder,rotate=90,name=g,anchor={(1,-1)}];
						\node at ([xshift=20pt]g-incoming boundary 1)[color=black]{$   \leftrightarrow$};
						\node at ([xshift=140pt, yshift=-40pt]g-outgoing boundary 1)[color=black][font=\huge]{$\underbrace{\ \ \ \ \ \ \ \ \ \ \ \ \ \ \ \ \ \ \ \ \ \ \ \ \ \ \ \ \ \ \ \ \ \ \ \ \ \ \ \ \ \ \ \ \ }_n$};
						\pic[tqft/pair of pants,
						rotate=90,name=h,at=(g-outgoing boundary)]; 
						\pic[tqft/reverse pair of pants,
						rotate=90,name=i,at=(h-outgoing boundary)];  
						\pic[tqft/pair of pants,
						rotate=90,name=a,at=(i-outgoing boundary)]; 
						\pic[tqft/reverse pair of pants,
						rotate=90,name=b,at=(a-outgoing boundary)];  
						\node at ([xshift=20pt]b-outgoing boundary 1)[color=black]{$\dots$};
						\pic[tqft/pair of pants,
						rotate=90,name=l,anchor={(1,-7)}]; 
						\pic[tqft/reverse pair of pants,
						rotate=90,name=j,at=(l-outgoing boundary)];  
				\end{tikzpicture} }
			\end{aligned}, \\
			x^mu \varepsilon x^n&=
			\begin{aligned}
				\resizebox{290pt}{!}{%
					\begin{tikzpicture}[tqft/cobordism/.style={draw,thick},
						tqft/view from=outgoing, tqft/boundary separation=30pt,
						tqft/cobordism height=40pt, tqft/circle x radius=8pt,
						tqft/circle y radius=4.5pt, tqft/every boundary component/.style={draw,rotate=90},tqft/every incoming
						boundary component/.style={draw,dotted,thick},tqft/every outgoing
						boundary component/.style={draw,thick}]
						\pic[tqft/pair of pants,
						rotate=90,name=a,anchor={(1,0)}]; 
						\node at ([xshift=60pt, yshift=-30pt]a-outgoing boundary 1)[color=black][font=\huge]{$\underbrace{\ \ \   \ \ \ \ \ \ \ \ \ \ \ \ \ \ \ \ \ \ \ \ \ \ \ \ \ \ \ \ }_n$};
						\pic[tqft/reverse pair of pants,
						rotate=90,name=b,at=(a-outgoing boundary)];  
						\node at ([xshift=20pt]b-outgoing boundary 1)[color=black]{$\dots$};
						\pic[tqft/pair of pants,
						rotate=90,name=l,anchor={(1,-3)}]; 
						\pic[tqft/reverse pair of pants,
						rotate=90,name=j,at=(l-outgoing boundary)];  
						\pic[tqft/cup,
						rotate=90,name=k,at=(j-outgoing boundary)];  
						\pic[tqft/cap,
						rotate=90,name=l,at=(k-incoming boundary)];  
						\pic[tqft/pair of pants,
						rotate=90,name=m,at=(l-outgoing boundary)];  
						\pic[tqft/reverse pair of pants,
						rotate=90,name=n,at=(m-outgoing boundary)];  
						\node at ([xshift=20pt, yshift=-40pt]n-outgoing boundary 1)[color=black][font=\huge]{$\underbrace{\ \ \   \ \ \ \ \ \ \ \ \ \ \ \ \ \ \ \ \ \ \ \ \ \ \ \ \ \ \ \ }_m$};
						\node at ([xshift=20pt]n-outgoing boundary 1)[color=black]{$\dots$};
						\pic[tqft/pair of pants,
						rotate=90,name=l,anchor={(1,-9)}]; 
						\pic[tqft/reverse pair of pants,
						rotate=90,name=l1,at=(l-outgoing boundary)]; 
				\end{tikzpicture} }
			\end{aligned}, 
		\end{align*} for $0\leq n,m \leq K$, where $K$ is as in $\eqref{linear recurrence for alpha}$, span $\End_{\SOCob}(1)$, see Proposition \ref{prop:orientable with boundary}.
	\end{example}

	\subsection{Equivalence with the category $\Rep(S_t \wr \mathbb Z_2)$} \label{section:theorem I} Throughout this section, let \textbf{k} be an algebraically closed field of characteristic zero.

	Let
	\begin{align*}
		&	\alpha=(\alpha_0,\lambda \alpha_0, \lambda^2 \alpha_0, \dots)&&\text{and} &&\beta=\gamma=(0, 0 , \dots),
	\end{align*}
	for some $\alpha_0, \lambda\in \mathbf k^\times.$ Here, the generating function for $\alpha$ is given by
	\begin{align*}
		Z_{\alpha}(T)=\frac{\alpha_0}{1-\lambda T}.
	\end{align*}
	Hence, $\SOCob$ is the quotient of $\VUCob$ by the relations $x-\lambda\Id=0$ and $\theta=0$. 
	
	We are interested in finding a spanning set for $\Hom_{\SOCob}(0,m)$ for all $m\in \mathbb N$. Let $\mathcal W_m$ denote the subspace spanned by connected cobordisms in $\Hom_{\SOCob}(0,m)$.
	For $m=1$, there is only one connected cobordism, namely $u$, see Section \ref{section:gens and rels}. Hence $\mathcal W_1$ has dimension 1. 
	
	\begin{proposition}\label{basis for connected unoriented orientable}
		Let $m\geq 2$.	The set $\{\xi^m_{\overline J}  : \overline J \in \mathcal R_m \}$ from Definition \ref{xi definition} is a basis for $\mathcal W_m$. In particular, $\dim(\mathcal W_m)=2^{m-1}$.
	\end{proposition}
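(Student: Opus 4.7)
The plan is to prove the two halves of the claim --- spanning and linear independence --- separately, using the structure theorem for orientable connected cobordisms in Proposition \ref{prop:orientable with boundary} for the former and Theorem \ref{xi are li} for the latter.

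For spanning, I would start with an arbitrary connected orientable cobordism $M \in \Hom_{\SOCob}(0,m)$ and apply Proposition \ref{prop:orientable with boundary}. Because the source is $0$, the \emph{in} part collapses to a single cap $u$. The \emph{mid} part is a composition of handles $x^{g}$, where $g$ is the genus of $M$. The \emph{out} part is a composition of pairs of pants $1 \to m$ followed by $m$ stacked cylinders, each of which is either the identity or $\phi$; let $J \subseteq \{1,\dots,m\}$ record the positions where $\phi$ appears. In $\SOCob$, the handle relation $x - \lambda\,\Id = 0$ gives $x^{g} = \lambda^{g}\,\Id$, so the \emph{mid} part contributes only a scalar $\lambda^{g}$. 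What remains is precisely the genus-zero cobordism $\phi_{J}\,\Delta^{m-1}u = \xi^{m}_{J}$ from Definition \ref{xi 1st def}. By Lemma \ref{xi=xj}, $\xi^{m}_{J} = \xi^{m}_{J^{c}}$, so $M = \lambda^{g}\,\xi^{m}_{\overline{J}}$ for a well-defined $\overline{J}\in \mathcal R_m$. Hence the family spans $\mathcal W_m$.

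For linear independence, the idea is to invoke Theorem \ref{xi are li} after checking that the bilinear pairing $(\cdot,\cdot)_{\alpha,\beta,\gamma}$ from Definition \ref{inner product} descends from $\VUCob$ to $\SOCob$. This descent works because the pairing outputs an evaluation of a closed surface, and both quotient relations defining $\SOCob$ are compatible with the evaluation map \eqref{evaluation}: the handle relation just expresses that $\alpha_{g+1} = \lambda \alpha_{g}$, and the relation $\theta = 0$ kills exactly those closed surfaces that evaluate to $\beta_g = 0$ or $\gamma_g = 0$. With $\gamma_{m-2} = 0$ and $\alpha_{m-1} = \lambda^{m-1}\alpha_{0} \neq 0$, both hypotheses of Theorem \ref{xi are li} are satisfied (both inequalities reduce to $\alpha_{m-1} \neq 0$), so the Gram matrix of $\{\xi^{m}_{\overline J} : \overline J \in \mathcal R_m\}$ is non-degenerate in $\SOCob$, which forces linear independence.

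The main point requiring care is the descent of the pairing through the quotient defining $\SOCob$; once that compatibility is established, both the spanning and independence arguments are short applications of results already in place. The dimension count $\dim(\mathcal W_m) = |\mathcal R_m| = 2^{m-1}$ is then immediate from the definition of $\mathcal R_m$.
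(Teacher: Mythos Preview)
Your proof is correct and follows essentially the same approach as the paper: spanning via Proposition~\ref{prop:orientable with boundary} together with the handle relation $x=\lambda\,\Id$, and linear independence via Theorem~\ref{xi are li} using $\gamma_{m-2}=0$ and $\alpha_{m-1}=\lambda^{m-1}\alpha_0\neq 0$. Your discussion of why the bilinear form descends to $\SOCob$ is a welcome bit of extra care that the paper leaves implicit.
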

	
	\begin{proof}
		We show first that  $\{\xi^m_{\overline J}  : \overline J \in \mathcal R_m \}$ spans $\mathcal W_m$. In fact, connected cobordisms in $\SOCob$ are orientable and thus they are of the form given in Proposition \ref{prop:orientable with boundary}. Moreover, by the relation $x=\lambda\Id,$  every handle gets replaced by a multiple of the identity and so every cobordism has genus zero. Hence every connected cobordism $0\to m$ in $\SOCob$ is in the set $\{\xi^m_{\overline J}  : \overline J \in \mathcal R_m \}$.
		
		On the other hand, note that $\alpha_{m-1}=\lambda^{m-1}\alpha_0\ne 0,$ as $\alpha_0, \lambda\in \mathbf k^{\times}$. Since the sequence $\gamma$ is constantly zero,  by Theorem \ref{xi are li} cobordisms in the set $\{\xi^m_{\overline J}  : \overline J \in \mathcal R_m \}$ are linearly independent.
	\end{proof}
	
	\begin{remark}\label{basis from basis for connected}
		Now that we have a basis for connected cobordisms $0 \to m$, we can describe all cobordisms $0\to m$ as follows. For every partition $P$ of $\{1, \dots, m\}$ and every $p\in P$, assign a connected cobordism $\xi_{\overline J}^{|p|}$ to $p$, where $\overline J\in \mathcal R_{|p|}$. 
		Then the set of cobordisms obtained by running through all possible partitions $P$ and all posible classes $\overline J\in \mathcal R_{|p|}$ for every $p\in P$ gives a  spanning set for $\Hom_{\SOCob}(0,m)$.
	\end{remark}

	\begin{example}\label{example S_m for m=2}
		If $\lambda \alpha_0 \ne 2$, the following is a basis for $\Hom_{\SOCob}(0,2)$:
		\begin{align*}
			&&	\begin{aligned}
				\resizebox{17pt}{!}{%
					\begin{tikzpicture}[tqft/cobordism/.style={draw,thick},
						tqft/view from=outgoing, tqft/boundary separation=30pt,
						tqft/cobordism height=40pt, tqft/circle x radius=8pt,
						tqft/circle y radius=4.5pt, tqft/every boundary component/.style={draw,rotate=90},tqft/every incoming
						boundary component/.style={draw,dotted,thick},tqft/every outgoing
						boundary component/.style={draw,thick}]
						\pic[tqft/cap,
						rotate=90,name=a,anchor={(1,-2)}]; 
						\pic[tqft/cap,
						rotate=90,name=b,anchor={(2,-2)}]; 
				\end{tikzpicture} }
			\end{aligned}\ \ ,
			&&\begin{aligned}
				\resizebox{45pt}{!}{%
					\begin{tikzpicture}[tqft/cobordism/.style={draw,thick},
						tqft/view from=outgoing, tqft/boundary separation=30pt,
						tqft/cobordism height=40pt, tqft/circle x radius=8pt,
						tqft/circle y radius=4.5pt, tqft/every boundary component/.style={draw,rotate=90},tqft/every incoming
						boundary component/.style={draw,dotted,thick},tqft/every outgoing
						boundary component/.style={draw,thick}]
						\pic[tqft/cap,
						rotate=90,name=a,anchor={(1,-2)}]; 
						\pic[tqft/pair of pants,
						rotate=90,name=b,at=(a-outgoing boundary)]; 
				\end{tikzpicture} }
			\end{aligned}\ \ ,
			&&\begin{aligned}
				\resizebox{75pt}{!}{%
					\begin{tikzpicture}[tqft/cobordism/.style={draw,thick},
						tqft/view from=outgoing, tqft/boundary separation=30pt,
						tqft/cobordism height=40pt, tqft/circle x radius=8pt,
						tqft/circle y radius=4.5pt, tqft/every boundary component/.style={draw,rotate=90},tqft/every incoming
						boundary component/.style={draw,dotted,thick},tqft/every outgoing
						boundary component/.style={draw,thick}]
						\pic[tqft/cap,
						rotate=90,name=a,anchor={(1,-2)}]; 
						\pic[tqft/pair of pants,
						rotate=90,name=b,at=(a-outgoing boundary)]; 
						\pic[tqft/cylinder,rotate=90,name=g at=(b-outgoing boundary 1),anchor={(1.5,-4)}];
						\pic[tqft/cylinder,rotate=90,name=f, at=(b-outgoing boundary 2)];
						\node at ([xshift=20pt]f-incoming boundary 1){$\leftrightarrow$};
				\end{tikzpicture} }
			\end{aligned}.
		\end{align*}
		In fact, we know this is a spanning set by Remark \ref{basis from basis for connected}. On the other hand, the matrix of inner products is given by
		\begin{align*}
			A=\begin{bmatrix}
				\alpha_0^2 & \alpha_0 &\alpha_0\\
				\alpha_0 &\alpha_1 &0\\
				\alpha_0 & 0 &\alpha_1
			\end{bmatrix},
		\end{align*}
		which has determinant
		\begin{align*}
			\det(A)=\alpha_0^2\alpha_1^2(\alpha_1-2)= \lambda \alpha_0^3 (\lambda\alpha_0-2)\ne 0,
		\end{align*}
		so the set is also linearly independent. Hence 
		\begin{align*}
			\dim(\Hom_{\SOCob}(0,2))=3.
		\end{align*}
	\end{example}

	\begin{example}\label{example S_m for m=3}
		Suppose that $\lambda\alpha_0\ne2, 4$. Then	the following is a basis for $\Hom_{\SOCob}(0,3)$:
		\begin{align*}
			&&	\begin{aligned}
				\resizebox{17pt}{!}{%
					\begin{tikzpicture}[tqft/cobordism/.style={draw,thick},
						tqft/view from=outgoing, tqft/boundary separation=30pt,
						tqft/cobordism height=40pt, tqft/circle x radius=8pt,
						tqft/circle y radius=4.5pt, tqft/every boundary component/.style={draw,rotate=90},tqft/every incoming
						boundary component/.style={draw,dotted,thick},tqft/every outgoing
						boundary component/.style={draw,thick}]
						\pic[tqft/cap,
						rotate=90,name=a,anchor={(1,-2)}]; 
						\pic[tqft/cap,
						rotate=90,name=b,anchor={(2,-2)}]; 
						\pic[tqft/cap,
						rotate=90,name=c,anchor={(3,-2)}]; 
				\end{tikzpicture} }
			\end{aligned}\ \ ,
			&&\begin{aligned}
				\resizebox{40pt}{!}{%
					\begin{tikzpicture}[tqft/cobordism/.style={draw,thick},
						tqft/view from=outgoing, tqft/boundary separation=30pt,
						tqft/cobordism height=40pt, tqft/circle x radius=8pt,
						tqft/circle y radius=4.5pt, tqft/every boundary component/.style={draw,rotate=90},tqft/every incoming
						boundary component/.style={draw,dotted,thick},tqft/every outgoing
						boundary component/.style={draw,thick}]
						\pic[tqft/cap,
						rotate=90,name=a,anchor={(1,-2)}]; 
						\pic[tqft/pair of pants,
						rotate=90,name=b,at=(a-outgoing boundary)]; 
						\pic[tqft/cap,
						rotate=90,name=c,anchor={(3,-2)}]; 
						\pic[tqft/cylinder,
						rotate=90,at=(c-outgoing boundary),name=d]; 
				\end{tikzpicture} }
			\end{aligned}\ \ ,
			&&\begin{aligned}
				\resizebox{40pt}{!}{%
					\begin{tikzpicture}[tqft/cobordism/.style={draw,thick},
						tqft/view from=outgoing, tqft/boundary separation=30pt,
						tqft/cobordism height=40pt, tqft/circle x radius=8pt,
						tqft/circle y radius=4.5pt, tqft/every boundary component/.style={draw,rotate=90},tqft/every incoming
						boundary component/.style={draw,dotted,thick},tqft/every outgoing
						boundary component/.style={draw,thick}]
						\pic[tqft/cap,
						rotate=90,name=a,anchor={(1,-2)}]; 
						\pic[tqft/cap,
						rotate=90,name=c,anchor={(3,-2)}]; 
						\pic[tqft/pair of pants,
						rotate=90,name=b,at=(c-outgoing boundary)]; 
						\pic[tqft/cylinder,
						rotate=90,name=d,at=(a-outgoing boundary)]; 
				\end{tikzpicture} }
			\end{aligned}\ \ ,
			&&\begin{aligned}
				\resizebox{35pt}{!}{%
					\begin{tikzpicture}[tqft/cobordism/.style={draw,thick},
						tqft/view from=outgoing, tqft/boundary separation=30pt,
						tqft/cobordism height=40pt, tqft/circle x radius=8pt,
						tqft/circle y radius=4.5pt, tqft/every boundary component/.style={draw,rotate=90},tqft/every incoming
						boundary component/.style={draw,dotted,thick},tqft/every outgoing
						boundary component/.style={draw,thick}]
						\pic[tqft,
						incoming boundary components=0,
						outgoing boundary components=2,
						rotate=90,name=a,boundary separation=60pt,cobordism height=60pt,anchor={(1,0)}];
						\pic[tqft/cap,
						rotate=90,name=b,anchor={(0,-0.5)}]; 
				\end{tikzpicture} }
			\end{aligned}
			\ \ ,	\end{align*}
		
		\begin{align*}
			&&\begin{aligned}
				\resizebox{70pt}{!}{%
					\begin{tikzpicture}[tqft/cobordism/.style={draw,thick},
						tqft/view from=outgoing, tqft/boundary separation=30pt,
						tqft/cobordism height=40pt, tqft/circle x radius=8pt,
						tqft/circle y radius=4.5pt, tqft/every boundary component/.style={draw,rotate=90},tqft/every incoming
						boundary component/.style={draw,dotted,thick},tqft/every outgoing
						boundary component/.style={draw,thick}]
						\pic[tqft/cap,
						rotate=90,name=a,anchor={(1,-2)}]; 
						\pic[tqft/pair of pants,
						rotate=90,name=b,at=(a-outgoing boundary)]; 
						\pic[tqft/cylinder,rotate=90,name=g at=(b-outgoing boundary 1),anchor={(1.5,-4)}];
						\pic[tqft/cap,
						rotate=90,name=c,anchor={(3,-2)}]; 
						\pic[tqft/cylinder,
						rotate=90,name=h,at=(c-outgoing boundary)]; 
						\pic[tqft/cylinder,rotate=90, at=(h-outgoing boundary)];
						\pic[tqft/cylinder,rotate=90,name=f, at=(b-outgoing boundary 2)];
						\node at ([xshift=20pt]f-incoming boundary 1){$\leftrightarrow$};
				\end{tikzpicture} }
			\end{aligned} \ \ ,
			&&\begin{aligned}
				\resizebox{70pt}{!}{%
					\begin{tikzpicture}[tqft/cobordism/.style={draw,thick},
						tqft/view from=outgoing, tqft/boundary separation=30pt,
						tqft/cobordism height=40pt, tqft/circle x radius=8pt,
						tqft/circle y radius=4.5pt, tqft/every boundary component/.style={draw,rotate=90},tqft/every incoming
						boundary component/.style={draw,dotted,thick},tqft/every outgoing
						boundary component/.style={draw,thick}]
						\pic[tqft/cap,
						rotate=90,name=a,anchor={(1,-2)}]; 
						\pic[tqft/cap,
						rotate=90,name=c,anchor={(3,-2)}]; 
						\pic[tqft/pair of pants,
						rotate=90,name=b,at=(c-outgoing boundary)]; 
						\pic[tqft/cylinder,
						rotate=90,name=d,at=(a-outgoing boundary)]; 
						\pic[tqft/cylinder,
						rotate=90,name=e,at=(b-outgoing boundary)]; 
						\pic[tqft/cylinder,
						rotate=90,name=f,at=(b-outgoing boundary 2)]; 
						\pic[tqft/cylinder,
						rotate=90,name=g,at=(d-outgoing boundary)]; 
						\node at ([xshift=20pt]e-incoming boundary 1){$\leftrightarrow$};
				\end{tikzpicture} }
			\end{aligned}\ \  ,
			&&\begin{aligned}
				\resizebox{60pt}{!}{%
					\begin{tikzpicture}[tqft/cobordism/.style={draw,thick},
						tqft/view from=outgoing, tqft/boundary separation=30pt,
						tqft/cobordism height=40pt, tqft/circle x radius=8pt,
						tqft/circle y radius=4.5pt, tqft/every boundary component/.style={draw,rotate=90},tqft/every incoming
						boundary component/.style={draw,dotted,thick},tqft/every outgoing
						boundary component/.style={draw,thick}]
						\pic[tqft,
						incoming boundary components=0,
						outgoing boundary components=2,
						rotate=90,name=a,boundary separation=60pt,cobordism height=60pt,anchor={(1,0)}];
						\pic[tqft/cap,
						rotate=90,name=b,anchor={(0,-0.5)}]; 
						\pic[tqft/cylinder,
						rotate=90,name=d,at=(a-outgoing boundary 2)]; 
						\pic[tqft/cylinder,
						rotate=90,name=e,at=(a-outgoing boundary 1)]; 
						\pic[tqft/cylinder,
						rotate=90,name=f,at=(b-outgoing boundary)]; 
						\node at ([xshift=20pt]d-incoming boundary 1){$\leftrightarrow$};
				\end{tikzpicture} }
			\end{aligned},	&&	\begin{aligned}
				\resizebox{70pt}{!}{%
					\begin{tikzpicture}[tqft/cobordism/.style={draw,thick},
						tqft/view from=outgoing, tqft/boundary separation=30pt,
						tqft/cobordism height=40pt, tqft/circle x radius=8pt,
						tqft/circle y radius=4.5pt, tqft/every boundary component/.style={draw,rotate=90},tqft/every incoming
						boundary component/.style={draw,dotted,thick},tqft/every outgoing
						boundary component/.style={draw,thick}]
						\pic[tqft/pair of pants,
						rotate=90,name=a, anchor={(1,-2)}]; 
						\pic[tqft/cylinder to next,rotate=90,name=b, at=(a-outgoing boundary 2)];
						\pic[tqft/pair of pants,
						rotate=90,name=d,at=(a-outgoing boundary)]; 
						\pic[tqft/cap,
						rotate=90,at=(a-outgoing boundary),anchor={(0.5,2)}]; 
				\end{tikzpicture} }
			\end{aligned},\ \
		\end{align*}
		
		\begin{align*}
			&&	\begin{aligned}
				\resizebox{100pt}{!}{%
					\begin{tikzpicture}[tqft/cobordism/.style={draw,thick},
						tqft/view from=outgoing, tqft/boundary separation=30pt,
						tqft/cobordism height=40pt, tqft/circle x radius=8pt,
						tqft/circle y radius=4.5pt, tqft/every boundary component/.style={draw,rotate=90},tqft/every incoming
						boundary component/.style={draw,dotted,thick},tqft/every outgoing
						boundary component/.style={draw,thick}]
						\pic[tqft/pair of pants,
						rotate=90,name=a, anchor={(1,-2)}]; 
						\pic[tqft/cylinder to next,rotate=90,name=b, at=(a-outgoing boundary 2)];
						\pic[tqft/pair of pants,
						rotate=90,name=d,at=(a-outgoing boundary)]; 
						\pic[tqft/cylinder,rotate=90,name=e, at=(d-outgoing boundary)];
						\pic[tqft/cylinder,rotate=90,name=f, at=(d-outgoing boundary 2)];
						\pic[tqft/cylinder,rotate=90,name=g, at=(b-outgoing boundary)];
						\node at ([xshift=20pt]g-incoming boundary 1){$\leftrightarrow$};
						\pic[tqft/cap,
						rotate=90,at=(a-outgoing boundary),anchor={(0.5,2)}]; 
				\end{tikzpicture} }
			\end{aligned},
			&&	\begin{aligned}
				\resizebox{100pt}{!}{%
					\begin{tikzpicture}[tqft/cobordism/.style={draw,thick},
						tqft/view from=outgoing, tqft/boundary separation=30pt,
						tqft/cobordism height=40pt, tqft/circle x radius=8pt,
						tqft/circle y radius=4.5pt, tqft/every boundary component/.style={draw,rotate=90},tqft/every incoming
						boundary component/.style={draw,dotted,thick},tqft/every outgoing
						boundary component/.style={draw,thick}]
						\pic[tqft/pair of pants,
						rotate=90,name=a, anchor={(1,-2)}]; 
						\pic[tqft/cylinder to next,rotate=90,name=b, at=(a-outgoing boundary 2)];
						\pic[tqft/pair of pants,
						rotate=90,name=d,at=(a-outgoing boundary)]; 
						\pic[tqft/cylinder,rotate=90,name=e, at=(d-outgoing boundary)];
						\pic[tqft/cylinder,rotate=90,name=f, at=(d-outgoing boundary 2)];
						\node at ([xshift=20pt]f-incoming boundary 1){$\leftrightarrow$};
						\pic[tqft/cylinder,rotate=90,name=g, at=(b-outgoing boundary)];
						\pic[tqft/cap,
						rotate=90,at=(a-outgoing boundary),anchor={(0.5,2)}]; 
				\end{tikzpicture} }
			\end{aligned},
			&&	\begin{aligned}
				\resizebox{100pt}{!}{%
					\begin{tikzpicture}[tqft/cobordism/.style={draw,thick},
						tqft/view from=outgoing, tqft/boundary separation=30pt,
						tqft/cobordism height=40pt, tqft/circle x radius=8pt,
						tqft/circle y radius=4.5pt, tqft/every boundary component/.style={draw,rotate=90},tqft/every incoming
						boundary component/.style={draw,dotted,thick},tqft/every outgoing
						boundary component/.style={draw,thick}]
						\pic[tqft/pair of pants,
						rotate=90,name=a, anchor={(1,-2)}]; 
						\pic[tqft/cylinder to next,rotate=90,name=b, at=(a-outgoing boundary 2)];
						\pic[tqft/pair of pants,
						rotate=90,name=d,at=(a-outgoing boundary)]; 
						\pic[tqft/cylinder,rotate=90,name=e, at=(d-outgoing boundary)];
						\pic[tqft/cylinder,rotate=90,name=f, at=(d-outgoing boundary 2)];
						\node at ([xshift=20pt]e-incoming boundary 1){$\leftrightarrow$};
						\pic[tqft/cylinder,rotate=90,name=g, at=(b-outgoing boundary)];
						\pic[tqft/cap,
						rotate=90,at=(a-outgoing boundary),anchor={(0.5,2)}]; 
				\end{tikzpicture} }
			\end{aligned}.
		\end{align*}
		Again, this is a spanning set by Proposition \ref{prop:orientable with boundary} and Remark \ref{basis from basis for connected}. In this case, the matrix of inner products is 
		\begin{align*}
			\left[ \begin{array}{ccccccccccc}
				\alpha_0^3 & \alpha_0^2&\alpha_0^2&\alpha_0^2&\alpha_0^2& \alpha_0^2& \alpha_0^2& \alpha_0& \alpha_0&\alpha_0& \alpha_0 \\
				\alpha_0^2& \lambda\alpha_0^2 &  \alpha_0 & \alpha_0 & 0 & \alpha_0 & \alpha_0 & \lambda\alpha_0& 0 & 0 & \lambda\alpha_0 \\
				\alpha_0^2 &\alpha_0& \lambda\alpha_0^2& \alpha_0& \alpha_0& 0& \alpha_0& \lambda\alpha_0& \lambda\alpha_0& 0& 0 \\
				\alpha_0^2& \alpha_0& \alpha_0& \lambda\alpha_0^2& \alpha_0& \alpha_0& 0& \lambda\alpha_0& 0& \lambda\alpha_0& 0 \\
				\alpha_0^2& 0& \alpha_0& \alpha_0& \lambda\alpha_0^2& \alpha_0& \alpha_0& 0&\lambda \alpha_0& \lambda\alpha_0& 0 \\
				\alpha_0^2& \alpha_0& 0& \alpha_0&\alpha_0& \lambda\alpha_0^2& \alpha_0& 0& 0& \lambda\alpha_0&\lambda\alpha_0 \\
				\alpha_0^2& \alpha_0& \alpha_0& 0& \alpha_0& \alpha_0& \lambda\alpha_0^2& 0&\lambda\alpha_0& 0& \lambda\alpha_0 \\
				\alpha_0& \lambda\alpha_0& \lambda\alpha_0& \lambda\alpha_0& 0& 0& 0& \lambda^2\alpha_0& 0& 0& 0 \\
				\alpha_0&0&\lambda\alpha_0& 0& \lambda\alpha_0& 0& \lambda\alpha_0&0& \lambda^2\alpha_0& 0& 0 \\
				\alpha_0& 0& 0& \lambda\alpha_0& \lambda\alpha_0& \lambda\alpha_0& 0& 0& 0& \lambda^2\alpha_0& 0 \\
				\alpha_0& \lambda\alpha_0& 0& 0& 0& \lambda\alpha_0& \lambda\alpha_0& 0& 0& 0&\lambda^2 \alpha_0
			\end{array}\right],
		\end{align*}
		which has determinant 
		\begin{align*}
		\lambda^6\alpha_0^{11}(\lambda\alpha_0-2)^7(\lambda\alpha_0-4).
		\end{align*}
		Hence 
		\begin{align*}
			\dim(\Hom_{\SOCob}(0,3))=11.
		\end{align*}
	\end{example}
	
Let $P=\{p_1, \dots, p_k\}$ be a partition of $\{1, \dots, m\},$ and let $\overline{J_i}\in \mathcal R_{|p_i|}$ for all $1\leq i \leq k$. Denote by $c_{P, \overline{J_1}, \dots, \overline{J_k}}$	the cobordism $0\to m$ with $k$ connected components, where the $i$-th component is of the form $\xi^{m_i}_{\overline J_i}$, with out-boundary given by the circles in positions $j_{i,1}, \dots, j_{i,m_i},$ where $p_i=\{j_{i,1}, \dots, j_{i,m_i}\}$. For instance, in Example \ref{example S_m for m=3} the first and seventh cobordisms are $c_{\{\{1\},\{2\},\{3\}\}, \overline\emptyset, \overline\emptyset, \overline\emptyset}$ and $c_{\{\{1,3\},\{2\}\},\overline{\{1\}}, \overline\emptyset}$, respectively. 
Then a spanning set for $\Hom_{\SOCob}(0,m)$ is given by 
	\begin{align}\label{spanning set}
		\mathcal S_m:=	\left\{  c_{P, \overline{J_1}, \dots, \overline{J_k}}:  P=\{p_1, \dots, p_k\} \text{ is a partition of } \{1,\dots, m\} \text{ and } \overline{J_i}\in \mathcal R_{|p_i|} \text{ for } 1\leq i \leq k \right\},
	\end{align}
	see Remark \ref{basis from basis for connected}.
	
	\begin{lemma}\label{lemma: inner products matrix}
	The determinant of the matrix of inner products of morphisms in $\mathcal S_m$ is a non-zero polynomial in the variables $\lambda$ and $\alpha_0$, for all $m\geq 1$. 
\end{lemma}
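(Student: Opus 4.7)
The plan is to prove the lemma by specialization. Since $\beta = \gamma = 0$ and $\alpha_g = \lambda^g\alpha_0$, each entry of the Gram matrix of $\mathcal S_m$ is a product of closed orientable surface evaluations, hence a polynomial in $\lambda$ and $\alpha_0$. Thus the determinant lies in $\mathbf{k}[\lambda,\alpha_0]$, and it suffices to exhibit one pair $(\lambda,\alpha_0)$ at which it does not vanish.

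Fix an integer $n \geq m$ and consider the specialization $\alpha_0 = 2n/\lambda$. The extended Frobenius algebra $A_\lambda \in \Rep(S_n\wr \mathbb{Z}_2)$ from Example \ref{algebra of functions} realizes $(\alpha,0,0)$ at this point: one computes directly that $x_{A_\lambda} = \lambda\,\Id$, $\theta_{A_\lambda} = 0$, and $\varepsilon_{A_\lambda}u_{A_\lambda} = 2n/\lambda = \alpha_0$, so $\alpha_g = \lambda^g\alpha_0$. By the universal property of Section \ref{universal property}, the induced symmetric tensor functor $F\colon\VUCob\to \Rep(S_n\wr\mathbb{Z}_2)$ descends to $\SOCob$, and $F$ preserves the inner product because the latter is built out of the Frobenius trace. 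It therefore suffices to show that the images $\{F(c):c\in\mathcal S_m\}$ form a basis of the invariant subspace $(A_\lambda^{\otimes m})^{S_n\wr\mathbb{Z}_2}$ on which the Frobenius pairing is non-degenerate.

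Identify $A_\lambda^{\otimes m}$ with the algebra of functions on $X^m$ for $X = \{x_{\pm 1},\ldots,x_{\pm n}\}$; then $(A_\lambda^{\otimes m})^{S_n\wr\mathbb{Z}_2}$ has as basis the characteristic functions $\chi_O$ of $S_n\wr\mathbb{Z}_2$-orbits on $X^m$. For $n \geq m$ these orbits are in canonical bijection with $\mathcal S_m$: a datum $(P = \{p_1,\ldots,p_k\},\overline J_1,\ldots,\overline J_k)$ labels the orbit of tuples whose entries in each $p_\alpha$ lie in $\{x_{i_\alpha},x_{-i_\alpha}\}$ for some pairwise distinct $i_\alpha$, with the signs inside $p_\alpha$ realizing $\overline J_\alpha$. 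The pairing is
\begin{equation*}
\langle \chi_O,\chi_{O'}\rangle \;=\; \varepsilon^{\otimes m}(\chi_O\chi_{O'}) \;=\; \lambda^{-m}|O|\,\delta_{O,O'},
\end{equation*}
diagonal with nonzero entries, hence non-degenerate on the invariant subspace.

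Unwinding Definition \ref{xi 1st def} gives $F(\xi^{|p|}_{\overline J}) = \lambda^{|p|-1}\chi_{O_{\overline J}}$, and consequently
\begin{equation*}
F\bigl(c_{(P,\overline J_1,\ldots,\overline J_k)}\bigr) \;=\; \lambda^{m-k}\chi_{(P,\overline J_*)} \;+\; \sum_{P' < P}(\text{terms in }\chi_{(P',\overline J_*')}),
\end{equation*}
the correction terms arising from tuples $(i_1,\ldots,i_k)$ with coincidences, which merge blocks and thereby contribute to strictly coarser partitions $P'$. Ordering $\mathcal S_m$ by a linear extension of partition refinement yields a triangular change of basis from $\{F(c)\}$ to $\{\chi_O\}$ with nonzero diagonal entries $\lambda^{m-k(P)}$. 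Hence $\{F(c):c\in\mathcal S_m\}$ is a basis, and its Gram matrix is the conjugate of the diagonal nonsingular Gram of $\{\chi_O\}$ by an invertible triangular matrix, hence invertible. The principal technical step is the verification of this triangular structure, namely identifying which orbit each tensor configuration belongs to when indices coincide; once this is established the conclusion is formal.
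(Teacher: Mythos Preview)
Your proof is correct but takes a substantially different route from the paper's. The paper argues by a direct combinatorial analysis: specializing $\lambda=\alpha_0$, it shows that in each row of the Gram matrix the diagonal entry is the unique entry of maximal $\alpha_0$-degree (namely $\alpha_0^m$), so the product of the diagonal entries is the unique term of maximal total degree in the determinant expansion and cannot cancel. This is entirely elementary and stays within the combinatorics of surfaces.

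Your approach instead specializes to a pair $(\lambda,\alpha_0)$ at which the theory is realized concretely by the algebra of functions on $\{x_{\pm 1},\dots,x_{\pm n}\}$ inside $\Rep(S_n\wr\mathbb Z_2)$, and checks nondegeneracy there by identifying the Gram matrix with the (manifestly diagonal, nondegenerate) Frobenius pairing on orbit indicators, up to a triangular change of basis. This is more conceptual --- it anticipates exactly the structure that Theorem~\ref{maintheorem1} later exhibits --- but it imports more machinery: the universal property, the fact that a symmetric tensor functor intertwines the Frobenius pairings, and the orbit description of $(A^{\otimes m})^{S_n\wr\mathbb Z_2}$ for $n\ge m$. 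One small point of exposition: you should make explicit that you also fix $\lambda$ to some nonzero value (say $\lambda=1$), since ``$\alpha_0=2n/\lambda$'' alone does not name a point of $\mathbf k^2$. The triangular claim itself is sound: distinct tuples $(i_1,\dots,i_k,\epsilon_1,\dots,\epsilon_k)$ produce distinct basis vectors of $A^{\otimes m}$, so $\lambda^{-(m-k)}F(c)$ is an honest indicator function; its support is $S_n\wr\mathbb Z_2$-invariant, and decomposes as $\chi_{(P,\overline J_*)}$ plus indicators of orbits with strictly coarser underlying partition.
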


\begin{proof}
	Fix $m\geq 1$. Let $A$ be the matrix of inner products of morphisms in $\mathcal S_m$, and let $p(\lambda, \alpha_0)$ be its determinant as a polynomial in $\lambda$ and $\alpha_0$.
	Note that to show $p(\lambda, \alpha_0)$ is a non-zero polynomial, it is enough to show that $p(\alpha_0, \alpha_0)\ne 0$. Hence we assume from now on that $\lambda=\alpha_0$. 
	
	We will show that every row of $A$ has the highest power of $\lambda$ only in its diagonal entry. Let $c:=c_{P,\overline{J_1},\dots, \overline{J_k}}$ in $\mathcal S_m$, and consider its corresponding row on $A$. Recall that, for any $d\in \mathcal S_m$, the inner product of $c$ and $d$ is given by evaluating the cobordism $c\sqcup (-d)$, see Definition \ref{inner product}. If $c\sqcup (-d)$ is unorientable, then $(c,d)=0$. Otherwise, 
	\begin{align*}
		(c,d)=\alpha_{l_1}\dots \alpha_{l_s} =\lambda^{l_1+\dots+l_s} \alpha_0^{s}=\alpha_0^{l_1+\dots +l_s+s},
	\end{align*}
	where $s$ is the number of connected components of $c\sqcup (-d),$ and $ l_i$ is the genus of its $i$-th component, for all $1\leq i \leq s$. That is, the power of $\alpha_0$ at the entry $(c,d)$ will be the sum of the genuses of the connected components of $c\sqcup (-d)$ and the number of connected components of $c\sqcup (-d)$. 	Hence the highest it can be is $$(|p_1|-1) + \dots+ (|p_k|-1)+ k =m-k+k=m.$$ 
	In fact, the number $s$ of connected components of $c\sqcup (-d)$ is at most the number of connected components $k$ of $c$, and the sum of the genuses is at most  $(|p_1|-1) + \dots+ (|p_k|-1)$ since that is the highest genus we can generate with $c$ (when every space between two connected circles is closed).
	
	We check now that this power is reached only in the diagonal entry $(c,c)$. Note first that the sum of genuses in $c\sqcup (-d)$ can be $(|p_1|-1) + \dots+ (|p_k|-1)$ only when every pair of connected circles in $c$ gets closed, and so $d$ must be of the form $d:c_{Q,\overline{L_1},\dots, \overline{L_k}}$ for some $ \overline{L_i} \in \mathcal R_{|p_i|}$. But by  relation \eqref{crosscap and involution} we have $(\xi^{|p_i|}_{\overline{J_i}} , \xi^{|p_i|}_{\overline{L_i}} )=0$ if $\overline{J_i}\ne \overline{L_i},$ in which case $(c,d)=0$. On the other hand,  involutions cancel each other out in $c\sqcup (-c)$ and so $(\xi^{|p_i|}_{\overline{J_i}} , \xi^{|p_i|}_{\overline{J_i}})=\lambda^{|p_i|-1}\alpha_0=\alpha_0^{|p_i|}$, which implies 
	$$(c,c)=\Pi_{i=1}^k \alpha_0^{|p_i|} = \alpha_0^m,$$
	as desired.
	
	We showed that in every row of the matrix $A$, the highest power of $\alpha_0$ in that row happens only at the diagonal entry. Hence the term of  $p(\alpha_0, \alpha_0)$ computed using the diagonal entries of $A$ will have degree (as a polynomial in $\alpha_0$) strictly greater than any other term, and thus does not cancel out. This shows the determinant of $A$ is not the zero polynomial in $\lambda$ and $\alpha_0$.  
\end{proof}


	\begin{corollary}\label{dimension of unoriented orientable} Fix $\alpha_0\in \mathbf k^{\times}$. For all but countably many $\lambda$ and $\alpha_0$ in $\mathbf k^{\times}$, the dimension $\dim_m$ of $\Hom_{\operatorname{SOCob}_{\alpha}}(0,m)$ is given by
		\begin{align*}
			\dim_m= \sum\limits_{l=1}^m 2^{m-l}  \begin{Bmatrix} 
				m \\l
			\end{Bmatrix}, \text{ for all } m\geq 1, 
		\end{align*}
		where $\begin{Bmatrix} 
			m \\l
		\end{Bmatrix}$ denotes the Stirling number of the second kind, which counts the number of partitions of a set with $m$ elements into $l$ non-empty subsets.
	\end{corollary}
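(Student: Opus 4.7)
The plan is to show that the explicit spanning set $\mathcal S_m$ from \eqref{spanning set} is a basis of $\Hom_{\SOCob}(0,m)$ for generic $(\lambda,\alpha_0)$, and then count $|\mathcal S_m|$. First I would recall that by Proposition \ref{prop:orientable with boundary} together with the skein relations $x-\lambda\,\id=0$ and $\theta=0$ defining $\SOCob$, every connected orientable cobordism $0\to n$ with $n\geq 1$ reduces to a genus-zero cobordism of the form $\xi^n_{\overline J}$ for some $\overline J\in\mathcal R_n$ (with $\mathcal R_1$ a singleton corresponding to the cap). Combined with Remark \ref{basis from basis for connected}, this exhibits $\mathcal S_m$ as a spanning set of $\Hom_{\SOCob}(0,m)$, whose elements are indexed by a partition $P=\{p_1,\dots,p_k\}$ of $\{1,\dots,m\}$ together with a tuple $(\overline{J_1},\dots,\overline{J_k})\in \prod_i \mathcal R_{|p_i|}$.

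Next I would count $|\mathcal S_m|$. Since $|\mathcal R_l|=2^{l-1}$ for $l\geq 1$, a partition $P$ with $k$ blocks contributes
\[
\prod_{i=1}^k 2^{|p_i|-1}=2^{m-k}
\]
distinct cobordisms. Grouping partitions by their number of blocks $l$ and using the Stirling number $\begin{Bmatrix}m\\ l\end{Bmatrix}$ for the count of set partitions of $\{1,\dots,m\}$ into $l$ non-empty blocks, one obtains
\[
|\mathcal S_m|=\sum_{l=1}^{m} 2^{m-l}\begin{Bmatrix}m\\ l\end{Bmatrix}.
\]

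For linear independence, I would invoke Lemma \ref{lemma: inner products matrix}: the determinant $p_m(\lambda,\alpha_0)$ of the Gram matrix of $\mathcal S_m$ under the bilinear form $(\cdot,\cdot)_{\alpha,\beta,\gamma}$ of Definition \ref{inner product} is a nonzero polynomial in $\lambda$ and $\alpha_0$. Hence the locus $\{p_m=0\}\subset (\mathbf k^{\times})^2$ is a proper algebraic subset; taking the union over $m\geq 1$ gives a countable union of proper algebraic subsets, outside of which the Gram matrix is non-degenerate and $\mathcal S_m$ is therefore linearly independent for every $m$. Combined with spanning, $\mathcal S_m$ is a basis for such generic $(\lambda,\alpha_0)$, and the cardinality computation above yields the claimed dimension formula.

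The substantive content is packaged in Lemma \ref{lemma: inner products matrix}, which is already established; once that is in hand, the main obstacle is bookkeeping. One must verify that the recollement/partition parametrization of $\mathcal S_m$ is unambiguous (so that elements are not double-counted), and identify the ``countably many exceptions'' precisely as the union over $m$ of the vanishing loci of the $p_m$, which is where the Gram matrix could degenerate.
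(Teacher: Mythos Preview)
Your proposal is correct and follows essentially the same approach as the paper: both establish that $\mathcal S_m$ spans via Proposition~\ref{prop:orientable with boundary} and Remark~\ref{basis from basis for connected}, count $|\mathcal S_m|$ by grouping partitions according to their number of blocks and using $|\mathcal R_l|=2^{l-1}$, and invoke Lemma~\ref{lemma: inner products matrix} to conclude linear independence away from the zero loci of the Gram determinants. Your remark identifying the exceptional set as the countable union over $m$ of the vanishing loci of the $p_m$ is a slight elaboration of the paper's argument, but the substance is identical.
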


	\begin{proof}
		As per Remark \ref{basis from basis for connected}, the set of cobordisms $\mathcal S_m$,  obtained by assigning to every partition $P$ of $\{1, \dots, m\}$ and every $p\in P$ a connected cobordism $\xi_{\overline J}^{|p|}$ (see \eqref{spanning set}),  gives a  spanning set for $\Hom_{\SOCob}(0,m)$. Let $P$ be a partition of $\{1,\dots, m\}$ with parts $\{p_1, \dots, p_l\},$ for $1\leq l \leq m$. The number of ways of assigning  connected cobordisms to $P$ is then
		\begin{align*}
			|\mathcal R_{|p_1|}|\dots  |\mathcal R_{|p_l|}|,
		\end{align*}
		see Section \ref{section: xi}.	Thus the number of cobordisms in the spanning set is
		\begin{align*}
			\sum\limits_{l=1}^m \left( \sum\limits_{\{p_1, \dots, p_l \}\in \text{Part(m)}} |\mathcal R_{|p_1|}|\dots  |\mathcal R_{|p_l|}| \right) 
			&	= \sum\limits_{l=1}^m \left( \sum\limits_{\{p_1, \dots, p_l \}\in \text{Part(m)}}  2^{|p_1|-1}\dots  2^{|p_l|-1} \right)\\
			&=\sum\limits_{l=1}^m \left(  \sum\limits_{\{p_1, \dots, p_l \}\in \text{Part(m)}}  2^{m-l}  \right)\\
			& =\sum\limits_{l=1}^m 2^{m-l}  \begin{Bmatrix} 
				m \\l
			\end{Bmatrix}.
		\end{align*}
		Lastly, note that by Lemma \ref{lemma: inner products matrix} the determinant of the matrix of inner products of cobordisms in $\mathcal S_m$ is a non-zero polynomial in $\lambda$ and $\alpha_0$, for all $m\geq 1$. Thus, for values of $\lambda$ and $\alpha_0$ such that these polynomials are not evaluated to zero, cobordisms in $\mathcal S_m$ are linearly independent, and the result follows. 
	\end{proof}

	\begin{remark}
		The generating function of the sequence given by $\sum\limits_{l=1}^m 2^{m-l}\begin{Bmatrix} 
			m \\l
		\end{Bmatrix}$ in the Lemma above is $\exp((\exp(2x)-1)/2)$. The first 6 terms are given by $1,1,3,11,49,257$. See sequence A004211 at https://oeis.org/A004211 for more information.
	\end{remark}

	\begin{remark}\label{remark: power of lambda}
		By  Corollary \ref{dimension of unoriented orientable} and the  proof of Lemma \ref{lemma: inner products matrix}, the highest power of $\lambda$ in  the determinant of the matrix of inner products in $\mathcal S_m$ is given by
		\begin{align*}
			\sum\limits_{l=1}^{m-1}  (m-l)\cdot 2^{m-l}\begin{Bmatrix} 
				m \\l
			\end{Bmatrix}.
			\end{align*}
		The first five terms in the sequence for $m\geq 1$ are 0, 2, 14, 92, 644.
		\end{remark}
	
	\begin{remark}
		It follows from Remark \ref{remark: power of lambda} and Corollary 	\ref{dimension of unoriented orientable}, that the highest power of $\alpha_0$ in the determinant of the matrix of inner products of $\mathcal S_m$ is given by 
		\begin{align*}
			\sum\limits_{l=1}^m l\cdot 2^{m-l}\begin{Bmatrix} 
				m \\l
			\end{Bmatrix}.
		\end{align*}
		The first five terms in the sequence for $m\geq 1$  are given by $1,4,19,106,641$.
	\end{remark}
	
	\begin{remark}
		We will see later on that the exceptional values of $\lambda$ and $\alpha_0$ in Corollary \ref{dimension of unoriented orientable} are those such that $\lambda\alpha_0$ is a non-negative even integer. We thus predict that the determinant of the matrix of inner products of $\mathcal S_m$, see \eqref{spanning set}, will factor into powers of the form $(\lambda \alpha_0-2k)$, for $k=0, \dots, m-1$. Note that we know this to be the case for $m=1,2,3$, see Examples \ref{example S_m for m=2} and \ref{example S_m for m=3}.
	\end{remark}
	
	\subsection{Extended Frobenius algebra in $\Rep(\mathbb Z_2)$}
	
	For this subsection, let $\mathcal C=\Rep(\mathbb Z_2)$. For $\lambda\in \mathbf k^{\times}$, let $A=A_{\lambda}$ be the extended Frobenius algebra given in Example \ref{algebra of functions} for $n=1$, and consider the induced extended Frobenius algebra $\langle A\rangle_t=\langle A_{\lambda}\rangle_t$ in $\Rep(S_t\wr\mathbb Z_2)$, as described in Proposition \ref{extended frob in C to frob in StC}. We remark that both of these algebras depend on $\lambda$, but we drop the subscript $\lambda$ to simplify the notation.
	
	\begin{lemma}\label{ext eval of A}
		The evaluation $(\alpha, \beta, \gamma)$ of $\langle A \rangle_t$ in $\Rep(S_t\wr \mathbb Z_2)$ is given by 
		\begin{align*}
			&&\alpha=(2\lambda^{-1}t, 2t, 2\lambda t, \dots) &&\text{and} &&\beta=(0, 0, \dots )= \gamma.
		\end{align*}
	\end{lemma}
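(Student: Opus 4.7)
The plan is to compute the evaluation of $\langle A\rangle_t$ by reducing it to a calculation entirely inside the base algebra $A$, exploiting the functoriality of $\langle \cdot\rangle_t$ together with the section--retraction relation $\mu_{\mathcal C}\circ \Delta_{\mathcal C} = \id$ from the graphical calculus of $S_t(\mathcal C)$.

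First I would dispatch the sequences $\beta$ and $\gamma$. In Example \ref{algebra of functions} with $n=1$ the crosscap map is $\theta_A:\mathbf k\to A$, $1\mapsto 0$, so $\theta_A=0$. By the definition of $\theta_{\langle A\rangle_t}$ in Proposition \ref{extended in StC}(\ref{extended frob in C to frob in StC}) and $\mathbf k$-linearity of $\langle\cdot\rangle_t$ we get $\theta_{\langle A\rangle_t}=0$, hence the cross morphism $y_{\langle A\rangle_t}$ of $\langle A\rangle_t$ is zero. Therefore $\beta_n = \varepsilon_{\langle A\rangle_t} x_{\langle A\rangle_t}^n y_{\langle A\rangle_t} u_{\langle A\rangle_t} = 0$ and similarly $\gamma_n=0$ for all $n\geq 0$.

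For the handle sequence $\alpha$, the key observation is that the handle endomorphism of $\langle A\rangle_t$ is induced from that of $A$. Indeed, using the formulas for $m_{\langle A\rangle_t}$ and $\Delta_{\langle A\rangle_t}$ from Proposition \ref{extended in StC}(\ref{frob in C to frob in StC}), together with the relation $\mu_{\mathcal C}(A,A)\circ \Delta_{\mathcal C}(A,A)=\id_{\langle A\otimes A\rangle_t}$ and compatibility of $\langle\cdot\rangle_t$ with composition, we obtain
\begin{align*}
x_{\langle A\rangle_t} = m_{\langle A\rangle_t}\circ\Delta_{\langle A\rangle_t} = \langle m_A\rangle_t\circ\mu_{\mathcal C}(A,A)\circ\Delta_{\mathcal C}(A,A)\circ\langle\Delta_A\rangle_t = \langle m_A\Delta_A\rangle_t = \langle x_A\rangle_t.
\end{align*}
Now $x_A=m_A\Delta_A$ acts on the basis $\{\delta_1,\delta_{-1}\}$ of $A$ by $x_A(\delta_i)=m_A(\lambda\,\delta_i\otimes\delta_i)=\lambda\delta_i$, so $x_A=\lambda\,\id_A$ and consequently $x_A^n u_A=\lambda^n u_A$. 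Applying $\langle\cdot\rangle_t$ and composing with $u_{\mathcal C}$ gives $x_{\langle A\rangle_t}^n u_{\langle A\rangle_t}=\lambda^n u_{\langle A\rangle_t}$.

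Finally, I would compute $\varepsilon_{\langle A\rangle_t}u_{\langle A\rangle_t}$. Writing it out graphically,
\begin{align*}
\varepsilon_{\langle A\rangle_t}\circ u_{\langle A\rangle_t} = \varepsilon_{\mathcal C}\circ \langle\varepsilon_A u_A\rangle_t\circ u_{\mathcal C},
\end{align*}
and since $\varepsilon_A u_A = \varepsilon_A(\delta_1+\delta_{-1})=\frac{2}{\lambda}$ in $\End_{\mathcal C}(\mathbb 1_{\mathcal C})\simeq \mathbf k$, together with the dimension relation $\varepsilon_{\mathcal C}\circ u_{\mathcal C}=t\,\id_{\mathbb 1}$, we conclude $\varepsilon_{\langle A\rangle_t}u_{\langle A\rangle_t}=\frac{2t}{\lambda}$. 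Combining everything, $\alpha_n = \lambda^n\cdot\frac{2t}{\lambda}=2\lambda^{n-1}t$, which gives the stated sequence. The computation is entirely mechanical once the identity $x_{\langle A\rangle_t}=\langle x_A\rangle_t$ is in hand, so the only genuine (but small) obstacle is bookkeeping in the graphical calculus of $S_t(\mathcal C)$; there is no real difficulty beyond unwinding the definitions.
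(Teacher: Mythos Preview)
Your proof is correct and follows essentially the same approach as the paper's own proof: both arguments use the section--retraction relation $\mu_{\mathcal C}\circ\Delta_{\mathcal C}=\id$ to collapse $x_{\langle A\rangle_t}$ to $\langle m_A\Delta_A\rangle_t=\lambda\,\id$, then finish via $\varepsilon_A u_A=2\lambda^{-1}$ and the dimension relation $\varepsilon_{\mathcal C}u_{\mathcal C}=t$. The paper presents the same computation via explicit graphical calculus diagrams, but the logic and the key reductions are identical to yours.
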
	
	\begin{proof}
		We will use the graphical description of maps in $\Rep(S_t\wr \mathbb Z_2)$ as shown in Section \ref{section: StC}. The structure maps of $\langle A \rangle_t$ are
		\begin{figure}[H]
			\begin{center}	\resizebox{170pt}{!}{%
					\begin{tikzpicture}[block/.style={draw, rectangle, minimum height=0.5cm}]
						\node (box) at (2.3,0)  [block] {$u_A$};
						\node at (-2.1,0)  {$u_{\langle A \rangle_t} :=$};
						\node   at  (-1.3,0)    {$\figureXv$};
						\node   at  (3.5,0.5)    {$A$};
						\node    at  (1.1,0.5)    {$ \mathbb 1_{\mathcal C}$};
						\draw[fill=black,style=thick] (1.1,0) circle (0.05cm and 0.05cm);
						\draw[style=thick]  (-1.3,0) to (1.1,0);
						\draw[style=thick]  (1.1,0) to (1.9,0);
						\draw[style=thick]  (2.7,0) to (3.5,0);
						\draw[fill=black,style=thick] (3.5,0) circle (0.05cm and 0.05cm);
						\node at (3.8,0)  {,};
				\end{tikzpicture}}
				\resizebox{170pt}{!}{%
					\begin{tikzpicture}[block/.style={draw, rectangle, minimum height=0.5cm, fill=white}]
						\node at (-3.6,0)  {$\varepsilon_{\langle A \rangle_t} :=$};
						\node (box) at (-1.7,0)  [block, fill=white] {$\varepsilon_A$};
						\node   at  (1.5,0)    {$\figureXv$};
						\node   at  (-2.6,0.5)    {$A$};
						\node   at  (-1,0.5)    {$ \mathbb 1_{\mathcal C}$};
						\draw[fill=black,style=thick] (-2.6,0) circle (0.05cm and 0.05cm);
						\draw[fill=black,style=thick] (-0.8,0) circle (0.05cm and 0.05cm);
						\draw[style=thick]  (-0.8,0) to (1.5,0);	
						\draw[style=thick]  (-0.8,0) to (-1.35,0);
						\draw[style=thick]  (-2.6,0) to (-2,0);
						\node at (1.9,0)  {,};
				\end{tikzpicture}}
				\resizebox{170pt}{!}{%
					\begin{tikzpicture}[block/.style={draw, rectangle, minimum height=0.5cm}]
						\node (box) at (2.05,0.5)  [block] {$m_A$};
						\node at (-2.7,0.5)  {$m_{\langle A \rangle_t}:=$};
						\node   at  (-1.7,0)    {$A$};
						\node   at  (3,1)    {$A$};
						\node   at  (-1.7,1)    {$A$};
						\node   at  (1.1,1)    {$A\otimes A$};
						\draw[fill=black,style=thick] (-1.4,0) circle (0.05cm and 0.05cm);
						\draw[fill=black,style=thick] (-1.4,1) circle (0.05cm and 0.05cm);
						\draw[fill=black,style=thick] (1.1,0.5) circle (0.05cm and 0.05cm);
						\draw[style=thick]  (-0.1,0.5) to   (1.1,0.5);
						\draw[style=thick]  (1.1,0.5) to   (1.6,0.5);
						\draw[style=thick]  (2.5,0.5) to   (3,0.5);
						\draw[style=thick]  (-1.4,1) to [out=0, looseness= 0.8,in=90] (-0.1,0.5);
						\draw[fill=black,style=thick] (3,0.5) circle (0.05cm and 0.05cm);
						\draw[style=thick]  (-1.4,0) to [out=0, looseness =0.8,in=-90](-0.1,0.5);
						\node at (3.4,0.5)  {,};
				\end{tikzpicture}}
				\resizebox{170pt}{!}{%
					\begin{tikzpicture}[block/.style={draw, rectangle, minimum height=0.5cm}]
						\node at (-4,0.5)  {$\Delta_{\langle A \rangle_t} :=$};
						\node   at  (1.6,0)    {$A$};
						\node   at  (1.6,1)    {$A$};
						\node   at  (-1.3,1)    {$A^{\otimes 2}$};
						\node   at  (-3,1)    {$A$};
						\node (box) at (-2.1,0.5)  [block] {$\Delta_A$};
						\draw[fill=black,style=thick] (-1.3,0.5) circle (0.05cm and 0.05cm);
						\draw[fill=black,style=thick] (1.2,1) circle (0.05cm and 0.05cm);
						\draw[fill=black,style=thick] (-3,0.5) circle (0.05cm and 0.05cm);
						\draw[fill=black,style=thick] (1.2,0) circle (0.05cm and 0.05cm);
						\draw[style=thick]  (-3,0.5) to   (-2.5,0.5);
						\draw[style=thick]  (-1.3,0.5) to   (-0.2,0.5);
						\draw[style=thick]  (-1.7,0.5) to  (-1.1,0.5);
						\draw[style=thick]  (-0.2,0.5) to [out=90, looseness= 1,in=0] (1.1,1);
						\draw[style=thick]  (-0.2,0.5) to [out=-90, looseness =1,in=0](1.1,0);
						\node at (2,0.5)  {.};
				\end{tikzpicture}}
				\resizebox{170pt}{!}{%
					\begin{tikzpicture}[block/.style={draw, rectangle, minimum height=0.5cm}]
						\node (box) [block] {$\phi_A$}
						node (in1)     [left=-.1cm and 1.1cm of box]     {$A$}
						node (out1)    [right=-.1cm and 1.1cm of box]    {$A$}
						node at (-3,0) {$\phi_{\langle A \rangle_t}:=$}
						{(in1) edge[-] (box.west |- in1)
							(box.east |- out1) edge[-] (out1)};
						\draw[fill=black,style=thick] (-1.4,0) circle (0.05cm and 0.05cm);
						\draw[fill=black,style=thick] (1.4,0) circle (0.05cm and 0.05cm);
						\node at (2,0)  {,};
				\end{tikzpicture}}
				\resizebox{170pt}{!}{%
					\begin{tikzpicture}[block/.style={draw, rectangle, minimum height=0.5cm}]
						\node (box) at (2.3,0)  [block] {$\theta_A$};
						\node at (-2.1,0)  {$\theta_{\langle A \rangle_t} :=$};
						\node   at  (-1.3,0)    {$\figureXv$};
						\node   at  (3.5,0.5)    {$A$};
						\node    at  (1.1,0.5)    {$ \mathbb 1_{\mathcal C}$};
						\draw[fill=black,style=thick] (1.1,0) circle (0.05cm and 0.05cm);
						\draw[style=thick]  (-1.3,0) to (1.1,0);
						\draw[style=thick]  (1.1,0) to (1.9,0);
						\draw[style=thick]  (2.7,0) to (3.5,0);
						\draw[fill=black,style=thick] (3.5,0) circle (0.05cm and 0.05cm);
						\node at (3.8,0)  {.};
				\end{tikzpicture}}
			\end{center}
		\end{figure}
		To obtain the sequences $\alpha, \beta$ and $\gamma$ we need to compute 
		\begin{align*}
			&&\alpha_n=\varepsilon_{\langle A\rangle_t}x^n u_{\langle A\rangle_t}, &&\beta_n=\varepsilon_{\langle A \rangle_t} x^n y u_{\langle A\rangle_t} &&\text{and} &&&&\gamma_n=\varepsilon_{\langle A \rangle_t} x^n y^2 u_{\langle A\rangle_t},
		\end{align*}
	for all $n\geq 0$,	where 
		\begin{align*}
			x:= m_{\langle A \rangle_t} \Delta_{\langle A \rangle_t} =
			\begin{aligned}\resizebox{210pt}{!}{%
					\begin{tikzpicture}[block/.style={draw, rectangle, minimum height=0.5cm}]
						\node   at  (1.6,-0.5)    {$A$};
						\node   at  (1.6,1.5)    {$A$};
						\node   at  (-1.3,1)    {$A^{\otimes 2}$};
						\node   at  (-3,1)    {$A$};
						\node (box) at (-2.1,0.5)  [block] {$\Delta_A$};
						\draw[fill=black,style=thick] (-1.3,0.5) circle (0.05cm and 0.05cm);
						\draw[fill=black,style=thick] (1.2,1) circle (0.05cm and 0.05cm);
						\draw[fill=black,style=thick] (-3,0.5) circle (0.05cm and 0.05cm);
						\draw[fill=black,style=thick] (1.2,0) circle (0.05cm and 0.05cm);
						\draw[style=thick]  (-3,0.5) to   (-2.5,0.5);
						\draw[style=thick]  (-1.3,0.5) to   (-0.2,0.5);
						\draw[style=thick]  (-1.7,0.5) to  (-1.1,0.5);
						\draw[style=thick]  (-0.2,0.5) to [out=90, looseness= 1,in=0] (1.1,1);
						\draw[style=thick]  (-0.2,0.5) to [out=-90, looseness =1,in=0](1.1,0);
						\node at (2,0.5)  {.};
						\draw[style=thick]  (1.1,1) to [out=0, looseness= 0.8,in=90] (2.4,0.5);
						\draw[fill=black,style=thick] (3.5,0.5) circle (0.05cm and 0.05cm);
						\node   at  (3.5,1)    {$A^{\otimes 2}$};
						\draw[style=thick]  (1.1,0) to [out=0, looseness =0.8,in=-90](2.4,0.5);
						\draw[style=thick]  (2.4,0.5) to   (3.6,0.5);
						\draw[style=thick]  (3.6,0.5) to   (4.1,0.5);
						\draw[style=thick]  (5,0.5) to   (5.6,0.5);
						\node (box) at (4.55,0.5)  [block] {$m_A$};
						\draw[fill=black,style=thick] (5.6,0.5) circle (0.05cm and 0.05cm);
						\node   at  (5.6,1)    {$A$};
				\end{tikzpicture}}
			\end{aligned},
		\end{align*}
		and 
		\begin{align*}
			y=m_{\langle A \rangle_t}(\theta_{\langle A \rangle_t}\otimes \text{id})=0,
		\end{align*}
		since $\theta_A=0$. It follows trivially that $\beta=(0, \dots)=\gamma$. To compute $\alpha_n$, we show first that $x=\lambda \text{Id}_A$ by graphical calculus:
		\begin{align}\label{computation of x in StC}
			\begin{aligned}	&\resizebox{220pt}{!}{%
					\begin{tikzpicture}[block/.style={draw, rectangle, minimum height=0.5cm}]
						\node   at  (1.6,-0.3)    {$A$};
						\node   at  (-3.8,0.5)    {\Large{$x=$}};
						\node   at  (1.6,1.3)    {$A$};
						\node   at  (-1.3,1)    {$A^{\otimes 2}$};
						\node   at  (-3,1)    {$A$};
						\node (box) at (-2.1,0.5)  [block] {$\Delta_A$};
						\draw[fill=black,style=thick] (-1.3,0.5) circle (0.05cm and 0.05cm);
						\draw[fill=black,style=thick] (1.2,1) circle (0.05cm and 0.05cm);
						\draw[fill=black,style=thick] (-3,0.5) circle (0.05cm and 0.05cm);
						\draw[fill=black,style=thick] (1.2,0) circle (0.05cm and 0.05cm);
						\draw[style=thick]  (-3,0.5) to   (-2.5,0.5);
						\draw[style=thick]  (-1.3,0.5) to   (-0.2,0.5);
						\draw[style=thick]  (-1.7,0.5) to  (-1.1,0.5);
						\draw[style=thick]  (-0.2,0.5) to [out=90, looseness= 1,in=0] (1.1,1);
						\draw[style=thick]  (-0.2,0.5) to [out=-90, looseness =1,in=0](1.1,0);
						\node at (2,0.5)  {.};
						\draw[style=thick]  (1.1,1) to [out=0, looseness= 0.8,in=90] (2.4,0.5);
						\draw[fill=black,style=thick] (3.5,0.5) circle (0.05cm and 0.05cm);
						\node   at  (3.5,1)    {$A^{\otimes 2}$};
						\draw[style=thick]  (1.1,0) to [out=0, looseness =0.8,in=-90](2.4,0.5);
						\draw[style=thick]  (2.4,0.5) to   (3.6,0.5);
						\draw[style=thick]  (3.6,0.5) to   (4.1,0.5);
						\draw[style=thick]  (5,0.5) to   (5.6,0.5);
						\node (box) at (4.55,0.5)  [block] {$m_A$};
						\draw[fill=black,style=thick] (5.6,0.5) circle (0.05cm and 0.05cm);
						\node   at  (5.6,1)    {$A$};
				\end{tikzpicture}}\\
				&\ \ =
				\resizebox{120pt}{!}{%
					\begin{tikzpicture}[block/.style={draw, rectangle, minimum height=0.5cm}]
						\node   at  (-1.3,1)    {$A^{\otimes 2}$};
						\node   at  (-3,1)    {$A$};
						\node (box) at (-2.1,0.5)  [block] {$\Delta_A$};
						\draw[fill=black,style=thick] (-1.3,0.5) circle (0.05cm and 0.05cm);
						\draw[fill=black,style=thick] (-3,0.5) circle (0.05cm and 0.05cm);
						\draw[style=thick]  (-3,0.5) to   (-2.5,0.5);
						\draw[style=thick]  (-1.3,0.5) to   (-0.2,0.5);
						\draw[style=thick]  (-1.7,0.5) to  (-1.1,0.5);
						\node   at  (-0.2,1)    {$A^{\otimes 2}$};
						\draw[style=thick]  (-0.2,0.5) to   (0.3,0.5);
						\draw[style=thick]  (1.2,0.5) to   (1.8,0.5);
						\node (box) at (0.75,0.5)  [block] {$m_A$};
						\draw[fill=black,style=thick] (-0.2,0.5) circle (0.05cm and 0.05cm);
						\draw[fill=black,style=thick] (1.8,0.5) circle (0.05cm and 0.05cm);
						\node   at  (1.8,1)    {$A$};
				\end{tikzpicture}}
				\\
				&\ \ =
				\resizebox{80pt}{!}{%
					\begin{tikzpicture}[block/.style={draw, rectangle, minimum height=0.5cm,fill=white}]
						\node   at  (-3,1)    {$A$};
						\node (box) at (-1.5,0.5)  [block] {$m_A\Delta_A$};
						\draw[fill=black,style=thick] (-3,0.5) circle (0.05cm and 0.05cm);
						\draw[style=thick]  (-3,0.5) to   (-2.2,0.5);
						\draw[style=thick]  (-0.8,0.5) to   (-0.2,0.5);
						\node   at  (-0.2,1)    {$A$};
						\draw[fill=black,style=thick] (-0.2,0.5) circle (0.05cm and 0.05cm);
				\end{tikzpicture}}\\
				&\ \ =\lambda \text{Id}_A,
			\end{aligned}
		\end{align}
		since $m_A\Delta_A=\lambda \text{Id}_A$ in $\Rep(\mathbb Z_2)$. Then $x^n=\lambda^n  \text{Id}_A$, and so
		\begin{align*}
			\varepsilon_{\langle A \rangle_t} x^n u_{\langle A \rangle_t} &=	\begin{aligned}
				\resizebox{230pt}{!}{%
					\begin{tikzpicture}[block/.style={draw, rectangle, minimum height=0.5cm}]
						\node (box) at (1,0)  [block] {$u_A$};
						\node   at  (-1.3,0)    {$\figureXv$};
						\node   at  (2,0.5)    {$A$};
						\node   at  (4,0.5)    {$A$};
						\node   at  (6,0.5)    {$A$};
						\node    at  (0,0.5)    {$ \mathbb 1_{\mathcal C}$};
						\draw[fill=black,style=thick] (0,0) circle (0.05cm and 0.05cm);
						\draw[style=thick]  (-1.3,0) to (0.65,0);
						\draw[style=thick]  (1.35,0) to (7.5,0);
						\draw[fill=black,style=thick] (2,0) circle (0.05cm and 0.05cm);
						\draw[fill=black,style=thick] (4,0) circle (0.05cm and 0.05cm);
						\draw[fill=black,style=thick] (6,0) circle (0.05cm and 0.05cm);
						\node (box) at (5,0)  [block, fill=white] {$\varepsilon_A$};
						\node (box) at (3,0)  [block, fill=white] {$\lambda^n\Id_A$};
						\node   at  (7.5,0)    {$\figureXv$};
				\end{tikzpicture}}
			\end{aligned}\\
			&=\lambda^n \begin{aligned}
				\resizebox{120pt}{!}{%
					\begin{tikzpicture}[block/.style={draw, rectangle, minimum height=0.5cm}]
						\node   at  (-1.3,0)    {$\figureXv$};
						\node   at  (2,0.5)    {$ \mathbb 1_{\mathcal C}$};
						\node    at  (0,0.5)    {$ \mathbb 1_{\mathcal C}$};
						\draw[fill=black,style=thick] (0,0) circle (0.05cm and 0.05cm);
						\draw[style=thick]  (-1.3,0) to (0.65,0);
						\draw[style=thick]  (1.35,0) to (3,0);
						\draw[fill=black,style=thick] (2,0) circle (0.05cm and 0.05cm);
						\node (box) at (1,0)  [block, fill=white] {$\varepsilon_A u_A$};
						\node   at  (3,0)    {$\figureXv$};
				\end{tikzpicture}}
			\end{aligned}\\
			&=2\lambda^{n-1}\begin{aligned}
				\resizebox{80pt}{!}{%
					\begin{tikzpicture}[block/.style={draw, rectangle, minimum height=0.5cm}]
						\node   at  (-1.3,0)    {$\figureXv$};
						\node    at  (0,0.5)    {$ \mathbb 1_{\mathcal C}$};
						\draw[fill=black,style=thick] (0,0) circle (0.05cm and 0.05cm);
						\draw[style=thick]  (-1.3,0) to (1.5,0);
						\node   at  (1.5,0)    {$\figureXv$};
				\end{tikzpicture}}
			\end{aligned}\\
			&=2\lambda^{n-1} t \text{Id}_{\mathbb 1}.
		\end{align*}
		Hence
		\begin{align*}
			\alpha_n=2\lambda^{n-1}t \ \ \text{ for all }n\geq 0,
		\end{align*}
		as desired.
	\end{proof}

	\begin{remark}\label{connected in St} Consider the graphical description of $\Rep(S_t \wr \mathbb Z_2)$ as given in Section \ref{section: StC}. 
		We will say that a map in $\Rep(S_{t}\wr \mathbb Z_2)$ is \emph{connected} if its graphical representation is a connected diagram. Thus maps given by stackings of connected diagrams generate $\Rep(S_t\wr \mathbb Z_2)$ as a pseudo-abelian category. 
	\end{remark}

	Let $u_{\mathcal C}, \varepsilon_{\mathcal C}, \mu_{\mathcal C}$ and $\Delta_{\mathcal C}$ in $\StC=\Rep(S_t\wr \mathbb Z_2)$ as defined in Section \ref{section: StC}.
	
	\begin{lemma}\label{standard form}
		Connected maps $\mathbb 1_{\Rep(S_{t}\wr \mathbb Z_2)}\to \langle A^{\otimes n} \rangle_t$ can be written as a composition 
		$$\Delta_{\mathcal C}^{n-1} \circ  \langle f \rangle_{t}\circ u_{\mathcal C},$$  where $f\in \Hom_{\mathcal C}(\mathbb 1_{\mathcal C}, A^{\otimes n})$ and $\Delta_{\mathcal C}^{n-1}$ is the appropriate composite of $\Delta_{\mathcal C}$'s.
	\end{lemma}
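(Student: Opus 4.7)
The plan is to take any morphism $\mathbb{1}_{S_t(\mathcal C)} \to \langle A\rangle_t^{\otimes n}$ whose graphical representation is connected and normalize it using the generators-and-relations description of $S_t(\mathcal C)$ recorded after Proposition \ref{generators of StC}. By that proposition, our morphism is obtained from the generators $\langle \Phi\rangle_t$, $u_{\mathcal C}$, $\varepsilon_{\mathcal C}$, $\mu_{\mathcal C}$, $\Delta_{\mathcal C}$ (and braidings) by composition and tensor product, and since the source is $\mathbb{1}$, every strand in the diagram must ultimately originate from an occurrence of $u_{\mathcal C}$.

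My first step would be to apply the Frobenius-type compatibility between $\mu_{\mathcal C}$ and $\Delta_{\mathcal C}$, together with (co)associativity, to rewrite the diagram so that every $\mu_{\mathcal C}$-vertex sits below every $\Delta_{\mathcal C}$-vertex. Whenever a $\mu_{\mathcal C}\circ \Delta_{\mathcal C}$ pair appears, the retraction relation $\mu_{\mathcal C}\Delta_{\mathcal C}=\id$ eliminates it. Because the diagram is connected and the source is empty, any $\varepsilon_{\mathcal C}$ that survives would disconnect it, contradicting our assumption; consequently, all occurrences of $\varepsilon_{\mathcal C}$ get absorbed against adjacent $\Delta_{\mathcal C}$'s via counitality. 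After finitely many such moves, the diagram has the shape of a tree: some number of $u_{\mathcal C}$'s collected by a $\mu_{\mathcal C}$-tree into one strand, followed by a chain of labels $\langle \Phi_i\rangle_t$ on that middle strand, followed by a $\Delta_{\mathcal C}$-tree expanding the strand into $n$ output points.

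Next I would use unitality, $\mu_{\mathcal C}(u_{\mathcal C}\otimes \id)=\id$, repeatedly to collapse the entire $\mu_{\mathcal C}$-tree to a single $u_{\mathcal C}$, and coassociativity to rewrite the $\Delta_{\mathcal C}$-tree as the iterated comultiplication $\Delta_{\mathcal C}^{n-1}$. The labels on the middle strand combine via $\langle \Psi\rangle_t \langle \Phi\rangle_t=\langle \Psi\Phi\rangle_t$, and any labels that had been pushed past a vertex using the tensor-product compatibility relations (\ref{relation for mu}) get folded together into a single morphism $f\in \Hom_{\mathcal C}(\mathbb{1}_{\mathcal C}, A^{\otimes n})$, yielding the desired form $\Delta_{\mathcal C}^{n-1}\circ \langle f\rangle_t\circ u_{\mathcal C}$.

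The main obstacle will be rigorously establishing termination of the normalization: one needs an appropriate complexity measure on diagrams (for instance, the lexicographic pair consisting of the total number of $\mu_{\mathcal C}/\Delta_{\mathcal C}$ vertices and the number of $\mu_{\mathcal C}$-above-$\Delta_{\mathcal C}$ crossings) which strictly decreases at every step. Any braidings that appear can be pushed out to the outputs using the braiding relations (items (5) and (8) of Mori's presentation cited at the end of Section \ref{section: StC}) and absorbed into a permutation of the tensor factors of $A^{\otimes n}$, which is then incorporated into the single label $f$; this shows braidings do not obstruct reaching the standard form.
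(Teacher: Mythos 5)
Your proposal is correct in outline but takes a genuinely different route from the paper: the paper's entire proof is a one-line citation to \cite[Proposition 4.24]{M}, which is precisely the statement that morphisms out of the unit in $\StC$ decompose into connected pieces of the standard form $\Delta_{\mathcal C}^{n-1}\circ\langle f\rangle_t\circ u_{\mathcal C}$. So what you are doing is re-proving the cited result from the generators-and-relations presentation rather than applying it. Your individual moves are all legitimate instances of the listed relations (Frobenius compatibility of $\mu_{\mathcal C}$ and $\Delta_{\mathcal C}$, the retraction $\mu_{\mathcal C}\Delta_{\mathcal C}=\id$, (co)unitality, linearity of $\mu_{\mathcal C}$ and $\Delta_{\mathcal C}$ to fold labels, and cocommutativity $c\circ\Delta_{\mathcal C}=\Delta_{\mathcal C}\circ\langle c_{A,A}\rangle_t$ to absorb braidings into $f$), and connectedness is indeed what rules out both closed components (which would evaluate to scalars via $\varepsilon_{\mathcal C}u_{\mathcal C}=t\,\id_{\mathbb 1}$) and stray $\varepsilon_{\mathcal C}$-capped components. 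The trade-off is that the citation outsources exactly the part you identify as the main obstacle: the termination (and implicitly confluence) of the rewriting. As written, that is the one real gap in your argument --- your lexicographic measure is plausible (the retraction, unitality and counitality moves strictly decrease vertex count, while the Frobenius move preserves it and should decrease the number of $\Delta$-before-$\mu$ inversions), but you would need to verify this carefully, including that pushing labels through vertices via the relations in \eqref{relation for mu} does not disturb the measure. If you complete that bookkeeping you obtain a self-contained proof; otherwise the honest course is to do what the paper does and cite Mori.
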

	\begin{proof}
		This follows from \cite[Proposition 4.24]{M}.
	\end{proof}
	
	\begin{example}
		Connected maps $\mathbb 1_{\Rep(S_{t}\wr \mathbb Z_2)}\to \langle A^{\otimes 3}\rangle_t$ in $\Rep(S_t\wr \mathbb Z_2)$ are given by
		\begin{align*}
			\Delta_{\mathcal C}^3 \circ  \langle f \rangle_{t}\circ u_{\mathcal C} =
			\begin{aligned}
				\resizebox{180pt}{!}{%
					\begin{tikzpicture}[block/.style={draw, rectangle, minimum height=0.5cm,fill=white}]
						\node   at  (-0.7,1.5)    {$A$};
						\node   at  (-0.7,0.5)    {$A^{\otimes 2}$};
						\node   at  (1.8,1.5)    {$A$};
						\node   at  (1.8,-0.1)    {$A$};
						\node   at  (-3,1)    {$A^{\otimes 3}$};
						\node at (-6,0.5) {$\figureXv$};
						\node at (-5,1) {$\mathbb 1_{\mathcal C}$};
						\draw[fill=black,style=thick] (-5,0.5) circle (0.05cm and 0.05cm);
						\draw[fill=black,style=thick] (1.9,0.5) circle (0.05cm and 0.05cm);
						\draw[fill=black,style=thick] (1.9,-0.5) circle (0.05cm and 0.05cm);
						\draw[fill=black,style=thick] (1.9,1) circle (0.05cm and 0.05cm);
						\draw[fill=black,style=thick] (-3,0.5) circle (0.05cm and 0.05cm);
						\draw[fill=black,style=thick] (-0.7,1) circle (0.05cm and 0.05cm);
						\draw[fill=black,style=thick] (-0.7,0) circle (0.05cm and 0.05cm);
						\draw[style=thick]  (-2,0.5) to [out=90, looseness= 1,in=0] (-0.7,1);
						\draw[style=thick]  (0.5,0) to [out=-90, looseness =1,in=0](1.9,-0.5);
						\draw[style=thick]  (0.5,0) to [out=90, looseness= 1,in=0] (1.9,0.5);
						\draw[style=thick]  (-2,0.5) to [out=-90, looseness =1,in=0](-0.7,0);
						\draw[style=thick]  (-2,0.5) to   (-6,0.5);
						\draw[style=thick]  (-0.7,0) to   (0.5,0);
						\draw[style=thick]  (-0.7,1) to   (1.9,1);
						\node (box) at (-4,0.5)  [block] {$f$};
				\end{tikzpicture}}
			\end{aligned}
			,	\end{align*}
		where $f : \mathbb 1 \to A^{\otimes 3}$ is a map in $\Rep(\mathbb Z_2)$.
	\end{example}
	
	\begin{corollary}\label{dim of connected in Rep(St wr Z2)}
		The subspace $\mathcal U_n$ spanned by connected maps  in $\Hom_{\Rep(S_t\wr \mathbb Z_2)}\left(\mathbb 1, \langle A^{\otimes n} \rangle_t \right)$ has dimension 
		\begin{align*}
			\dim(\mathcal U_n)=\dim(\Hom_{\mathcal C}(\mathbb{1}_{\mathcal C}, A^{\otimes n}))= 2^{n-1} \ \ \text{for all } n\geq 1.
		\end{align*}
	\end{corollary}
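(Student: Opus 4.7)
The plan is to exhibit an explicit linear isomorphism $\Phi: \Hom_{\mathcal{C}}(\mathbb{1}_{\mathcal{C}}, A^{\otimes n}) \xrightarrow{\sim} \mathcal{U}_n$ and then separately compute the dimension of the source. By Lemma \ref{standard form}, the assignment
\[
\Phi(f) := \Delta_{\mathcal{C}}^{n-1} \circ \langle f \rangle_t \circ u_{\mathcal{C}}
\]
defines a surjective linear map from $\Hom_{\mathcal{C}}(\mathbb{1}_{\mathcal{C}}, A^{\otimes n})$ onto $\mathcal{U}_n$, so the content of the proof is (i) injectivity of $\Phi$ and (ii) the dimension count for $\Hom_{\mathcal{C}}(\mathbb{1}_{\mathcal{C}}, A^{\otimes n})$.

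For injectivity, I would postcompose with the iterated multiplication $\mu_{\mathcal{C}}^{n-1} : \langle A\rangle_t^{\otimes n} \to \langle A^{\otimes n}\rangle_t$ obtained by iteratively applying $\mu_{\mathcal{C}}$. Using that $\mu_{\mathcal{C}}$ is a retraction of $\Delta_{\mathcal{C}}$ (one of the listed relations in Section \ref{section: StC}) together with associativity, one obtains $\mu_{\mathcal{C}}^{n-1} \circ \Delta_{\mathcal{C}}^{n-1} = \operatorname{id}_{\langle A^{\otimes n}\rangle_t}$, whence
\[
\mu_{\mathcal{C}}^{n-1} \circ \Phi(f) \;=\; \langle f \rangle_t \circ u_{\mathcal{C}}.
\]
Now specialize the defining formula \eqref{morphisms in StC} to $I=\emptyset$ and $J = \{1\}$ with $V_1 = A^{\otimes n}$: the only recollement is the singleton partition $\{\{1\}\}$, so $\Hom_{\StC}(\mathbb{1}_{\StC}, \langle A^{\otimes n}\rangle_t)\cong \Hom_{\mathcal{C}}(\mathbb{1}_{\mathcal{C}}, A^{\otimes n})$, and under this isomorphism $f$ corresponds precisely to $\langle f \rangle_t \circ u_{\mathcal{C}}$ (this matches, for instance, the definition $u_{\mathcal{C}} \leftrightarrow \operatorname{id}_{\mathbb{1}_{\mathcal{C}}}$). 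Consequently $\Phi(f) = 0$ forces $f = 0$.

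It remains to compute $\dim \Hom_{\mathcal{C}}(\mathbb{1}_{\mathcal{C}}, A^{\otimes n}) = \dim (A^{\otimes n})^{\mathbb{Z}_2}$. Since $A$ is the algebra of functions on $\{x_1, x_{-1}\}$ with $\mathbb{Z}_2$ acting by the swap $x_1 \leftrightarrow x_{-1}$, as a representation $A \cong \mathbf{k}[\mathbb{Z}_2]$ is regular, with basis $\{\delta_1, \delta_{-1}\}$ exchanged by the nontrivial element. Thus $A^{\otimes n}$ has basis $\{\delta_{\epsilon_1}\otimes \cdots \otimes \delta_{\epsilon_n}\}_{(\epsilon_1,\dots,\epsilon_n) \in \{\pm 1\}^n}$, on which $\mathbb{Z}_2$ acts by the diagonal simultaneous sign flip $(\epsilon_1,\dots,\epsilon_n)\mapsto (-\epsilon_1,\dots,-\epsilon_n)$. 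For $n \ge 1$ this action is free, so the invariants are spanned by the sums $\delta_{\epsilon_1}\otimes\cdots\otimes\delta_{\epsilon_n} + \delta_{-\epsilon_1}\otimes\cdots\otimes\delta_{-\epsilon_n}$ indexed by the $2^n/2 = 2^{n-1}$ orbits, yielding $\dim \mathcal{U}_n = 2^{n-1}$. There is no substantive obstacle here; the only point requiring care is verifying that the isomorphism from \eqref{morphisms in StC} really sends $\langle f\rangle_t \circ u_{\mathcal{C}}$ to $f$, which is immediate from the graphical/defining description of $u_{\mathcal{C}}$.
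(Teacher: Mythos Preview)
Your proof is correct and follows essentially the same line as the paper: surjectivity of $\Phi$ from Lemma~\ref{standard form}, injectivity from faithfulness of $\langle\cdot\rangle_t$, and then the dimension count. The paper's version is a one-line appeal to \cite[Proposition~4.26]{M} for the injectivity step; your argument unpacks this by using the retraction relation $\mu_{\mathcal C}\circ\Delta_{\mathcal C}=\mathrm{id}$ together with the Hom formula \eqref{morphisms in StC} at $I=\emptyset$, and you also make the invariant computation $\dim(A^{\otimes n})^{\mathbb Z_2}=2^{n-1}$ explicit, which the paper leaves implicit. So the route is the same, just more self-contained on your end.
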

	\begin{proof}
		This follows from linearity of the functor $\Rep(\mathbb Z_2)\to \mathcal \Rep(S_t \wr \mathbb Z_2)$, see \cite[Proposition 4.26]{M},  and the previous lemma.
	\end{proof}
	
	\subsubsection{Proof of Theorem I}
	We give here a proof of our first main result, stated below. For a symmetric tensor category $\mathcal C$, we denote by $\underline{\mathcal C}$ its quotient by the tensor ideal of negligible morphisms, see Section \ref{SUCob}.

	\begin{maintheorem}\label{maintheorem1}
		Let $\alpha=(\alpha_0, \lambda \alpha_0, \lambda^2 \alpha_0, \dots)$ and $\beta=(0,0,  \dots)=\gamma$ be sequences in $\mathbf k$, for $\alpha_0, \lambda \in \mathbf k^{\times}$
		. We have an equivalence of symmetric tensor categories
		\begin{align*}
			\underline{\operatorname{OCob}_{\alpha}}\simeq  \underline{\Rep(\text{S}_t\wr \mathbb Z_2 )},
		\end{align*}
		where $t=\frac{\lambda \alpha_0}{2}$.
	\end{maintheorem}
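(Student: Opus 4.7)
The plan is to build an explicit symmetric tensor functor from $\operatorname{OCob}_{\alpha}$ to $\Rep(S_t \wr \mathbb Z_2)$ using the universal property, descend it to the quotients by negligibles, and verify the resulting functor is an equivalence by checking essential surjectivity, fullness, and faithfulness in turn.

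First, I take the extended Frobenius algebra $A = A_\lambda \in \Rep(\mathbb Z_2)$ from Example \ref{algebra of functions} (case $n = 1$). By Proposition \ref{extended in StC} the lift $\langle A \rangle_t$ in $S_t(\Rep(\mathbb Z_2)) = \Rep(S_t \wr \mathbb Z_2)$ is again an extended Frobenius algebra, and by Lemma \ref{ext eval of A} its evaluation is $(\lambda^n \alpha_0, 0, 0)$ once $t = \lambda \alpha_0/2$, matching the defining data of $\operatorname{VUCob}_{\alpha,0,0}$. The universal property of Section \ref{universal property} then produces a symmetric tensor functor $F : \operatorname{VUCob}_{\alpha,0,0} \to \Rep(S_t \wr \mathbb Z_2)$ sending the circle to $\langle A \rangle_t$. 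The computation in \eqref{computation of x in StC} shows $F(x) = \lambda \operatorname{id}$, so the handle relation $x - \lambda \operatorname{id}$ is annihilated, and $\theta_A = 0$ gives $F(\theta) = 0$, so $F$ factors through $\operatorname{SOCob}_\alpha$ and extends to its pseudo-abelian envelope $\operatorname{OCob}_\alpha$. Since $F$ is a tensor functor between spherical categories, it preserves traces, hence the bilinear forms of Definition \ref{inner product}, and in particular sends negligible morphisms to negligibles; this gives the induced functor $\underline F : \underline{\operatorname{OCob}_\alpha} \to \underline{\Rep(S_t \wr \mathbb Z_2)}$.

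Next I show $F$ itself is full on Hom spaces. By rigidity it suffices to treat $\Hom(0, n)$, and using Remark \ref{basis from basis for connected} on the source side together with the recollement decomposition \eqref{morphisms in StC} on the target side, the problem reduces to connected morphisms. By Proposition \ref{basis for connected unoriented orientable} the space of connected cobordisms $0 \to n$ is spanned by $\{\xi^n_{\overline J} : \overline J \in \mathcal R_n\}$ of cardinality $2^{n-1}$; by Lemma \ref{standard form} together with Corollary \ref{dim of connected in Rep(St wr Z2)}, the connected morphisms $\mathbf 1 \to \langle A \rangle_t^{\otimes n}$ in the target form a space of the same dimension $2^{n-1}$ (since $A \cong \mathbf 1 \oplus \operatorname{sign}$ as $\mathbb Z_2$-representations, so $\dim \Hom_{\Rep(\mathbb Z_2)}(\mathbf 1, A^{\otimes n}) = 2^{n-1}$). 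I will verify by graphical calculus that $F(\xi^n_{\overline J}) = c \cdot \Delta_{\mathcal C}^{n-1} \langle \eta_{\overline J} \rangle_t u_{\mathcal C}$ for some $c \in \mathbf k^\times$ (with $\mathcal C = \Rep(\mathbb Z_2)$), where $\eta_{\overline J} \in (A^{\otimes n})^{\mathbb Z_2}$ is the invariant obtained by symmetrizing a standard tensor of $\delta$'s indexed by $J$; the ambiguity $\xi^n_{\overline J} = \xi^n_{\overline{J^c}}$ from Lemma \ref{xi=xj} exactly matches $\eta_J = \eta_{J^c}$. This gives a basis-to-basis correspondence, hence $F$ is full on connected maps, and by the partition decomposition on all of $\Hom(0,n)$. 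Essential surjectivity follows similarly: since $A$ is a tensor generator of $\Rep(\mathbb Z_2)$, every $\langle U \rangle_t$ is a summand of a tensor power of $\langle A \rangle_t$, so by Proposition \ref{generators of StC} the image of $F$ generates $\Rep(S_t \wr \mathbb Z_2)$ as a pseudo-abelian symmetric tensor category.

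Finally, faithfulness of $\underline F$ is formal given fullness and trace preservation: if $\underline F([f]) = 0$ then $F(f)$ is negligible, so $\operatorname{tr}(g\,F(f)) = 0$ for all $g$; by fullness every such $g$ is $F(h)$ for some $h$ in the source, and $\operatorname{tr}(hf) = \operatorname{tr}(F(hf)) = 0$, forcing $f$ to be negligible. Combined with essential surjectivity this makes $\underline F$ an equivalence. The main obstacle I expect is the explicit graphical verification $F(\xi^n_{\overline J}) = c \cdot \Delta_{\mathcal C}^{n-1} \langle \eta_{\overline J} \rangle_t u_{\mathcal C}$: one must unpack the definition $\xi^n_{\overline J} = \phi_{i_1, \dots, i_l} \Delta^{n-1} u$ in the cobordism category and translate through the structure maps of $\langle A \rangle_t$ from Proposition \ref{extended in StC}, tracking the scalars produced by $\Delta_A(\delta_i) = \lambda \delta_i \otimes \delta_i$ under iterated applications of $\mu_{\mathcal C}$ and $\Delta_{\mathcal C}$, and checking that the resulting $c$ is always nonzero (which should reduce to $\lambda, \alpha_0 \in \mathbf k^\times$). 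This step is essentially the entire content of the proof; the rest is bookkeeping with the universal property and the general nonsense of quotienting by negligibles.
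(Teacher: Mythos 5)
Your proposal is correct and follows essentially the same route as the paper: the same extended Frobenius algebra $\langle A_\lambda\rangle_t$, the same factorization through $\operatorname{SOCob}_\alpha$ via the handle computation and $\theta_A=0$, and the same fullness argument matching the basis $\{\xi^n_{\overline J}\}$ of connected cobordisms with connected diagrams $\Delta_{\mathcal C}^{n-1}\langle f\rangle_t u_{\mathcal C}$ of dimension $2^{n-1}$ (this is exactly the paper's Lemma \ref{surjective on connected}). The only difference is that the paper delegates the final descent-and-faithfulness step to the cited result of Brundan--Entova-Aizenbud--Etingof--Ostrik (Proposition \ref{proposition 1}), whereas you unwind that standard argument by hand; just note that your claim that $F$ sends negligibles to negligibles already requires the fullness you establish afterwards, so the order of those two observations should be swapped.
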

	
	To prove this, we will use the \cite[Lemma 2.6]{BEEO}, specialized to $\OCob_{\alpha}$ see also \cite[Proposition 2.4]{KKO}. We write the statement below for clarity. 
	
	\begin{proposition}\label{proposition 1}
		Let $\mathcal C$ be a  semisimple Karoubian symmetric tensor category with finite dimensional $\Hom$ spaces. Suppose there is 
		a symmetric tensor functor $F:\operatorname{OCob}_{\alpha} \to \mathcal C$ that is surjective on $\Hom$'s. 
		 Then there is a unique fully faithful symmetric tensor functor 
		\begin{align*}
			F: \underline{\operatorname{OCob}_{\alpha}}\xrightarrow{\sim}  \mathcal C.
		\end{align*}
	\end{proposition}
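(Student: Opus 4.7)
The plan is to show that, under the stated hypotheses, the kernel of $F$ on each $\Hom$ space is exactly the subspace of negligible morphisms; the required equivalence then follows from the universal property of the quotient by a tensor ideal. Throughout, recall that a spherical symmetric tensor category has a well-defined trace pairing on $\Hom$-spaces induced by the evaluation/coevaluation, and that a morphism is negligible precisely when it pairs to zero with every morphism in the opposite direction.

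First I would verify the ``easy'' inclusion: if $f \in \Hom_{\OCob_{\alpha}}(m,n)$ is negligible, then $F(f)$ is negligible in $\mathcal C$. Indeed, for every $g \in \Hom_{\mathcal C}(F(n),F(m))$, the surjectivity hypothesis gives some $\tilde g \in \Hom_{\OCob_{\alpha}}(n,m)$ with $F(\tilde g)=g$; then $\tr_{\mathcal C}(g\, F(f)) = \tr_{\mathcal C}(F(\tilde g f)) = F(\tr(\tilde g f))=0$, using that a symmetric tensor functor respects traces. Since $\mathcal C$ is semisimple, its only negligible morphism is zero (the trace pairing is non-degenerate on a semisimple spherical symmetric tensor category), so $F(f)=0$. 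Thus the ideal of negligible morphisms is contained in $\ker F$.

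Next I would establish the reverse inclusion: if $F(f)=0$, then $f$ is negligible. For any $h \in \Hom_{\OCob_{\alpha}}(n,m)$, we have $F(\tr(hf)) = \tr_{\mathcal C}(F(h)F(f)) = 0$, so $\tr(hf)=0$ as an element of $\End(\mathbb 1)$ because $F$ is faithful on endomorphisms of the unit (both $\End$'s are $\mathbf k$). Hence $f$ is negligible. Together with the first step this gives $\ker F = \mathcal N$, the negligible ideal, so $F$ descends to a symmetric tensor functor $\underline F:\underline{\OCob_{\alpha}}\to \mathcal C$ that is faithful by construction; it is full because $F$ itself is surjective on $\Hom$'s; and it is unique because any factorization of $F$ through the quotient by $\mathcal N$ is determined by $F$ on the level of $\Hom$-spaces.

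The main obstacle is not any single step but making sure the background facts are set up cleanly: specifically, that $\OCob_{\alpha}$ is spherical (so that ``negligible'' is well-defined), that a symmetric tensor functor between spherical categories preserves traces, and that in a semisimple spherical symmetric tensor category the only negligible morphism is zero. These facts are standard (see, e.g., \cite[Proposition 2.4]{EO} and \cite[Lemma 2.6]{BEEO}), and once invoked the argument reduces to the trace-pairing manipulation above.
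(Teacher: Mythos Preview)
Your argument is correct. The paper, however, does not give its own proof of this proposition: it is stated as a direct specialization of \cite[Lemma 2.6]{BEEO} (see also \cite[Proposition 2.4]{KKO}), with no further argument supplied. What you have written is essentially the standard proof underlying those cited results---identifying $\ker F$ with the negligible ideal via the trace pairing, using surjectivity on $\Hom$'s for one inclusion and faithfulness on $\End(\mathbb 1)$ for the other, together with the fact that a semisimple category has no nonzero negligibles. So your approach is not different from the paper's; it simply unpacks the citation.
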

	
	For the rest of this section, fix sequences 
	\begin{align*}
		&&\alpha=(\alpha_0, \lambda \alpha_0, \lambda^2 \alpha_0, \dots) &&\text{and} &&\beta=(0,0,  \dots)=\gamma,
	\end{align*}
	where $\alpha_0, \lambda \in \mathbf k^{\times},$ and set $t=\frac{\lambda\alpha_0}{2}$. We will show that $\underline{\Rep(S_t\wr \mathbb Z_2)}$ satisfies the conditions in the proposition above via a series of lemmas. 
	
	Recall we denote by $\langle A \rangle_t=\langle A_{\lambda} \rangle_t$  the extended Frobenius algebra in  $\Rep(S_t\wr \mathbb Z_2)$, where $A=A_{\lambda}$ in $\Rep(\mathbb Z_2)$ is defined in Example \ref{algebra of functions} for $n=1$, and structure maps are constructed as in Proposition \ref{extended in StC}. We recall that the structure maps of these algebras depend on $\lambda$, but we drop the subscript to simplify the notation. 
	
	\begin{lemma}
		There is a symmetric tensor functor $F_A:\operatorname{VUCob}_{\alpha, \beta, \gamma}\to \Rep(S_t\wr \mathbb Z_2)$, mapping the circle object of $\operatorname{VUCob}_{\alpha, \beta, \gamma}$ to the extended Frobenius algebra $\langle A \rangle_t$ in $\Rep(S_t\wr \mathbb Z_2)$.
	\end{lemma}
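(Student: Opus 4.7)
The plan is to invoke the universal property of $\operatorname{VUCob}_{\alpha, \beta, \gamma}$ recalled in Section \ref{universal property}: a symmetric tensor functor from $\operatorname{VUCob}_{\alpha, \beta, \gamma}$ into a symmetric tensor category $\mathcal C$ is equivalent to the datum of an extended Frobenius algebra in $\mathcal C$ whose evaluation is exactly $(\alpha, \beta, \gamma)$. Hence it suffices to exhibit such an algebra in $\Rep(S_t\wr \mathbb Z_2)$, and the natural candidate is $\langle A \rangle_t$ where $A = A_\lambda$ is the extended Frobenius algebra in $\Rep(\mathbb Z_2)$ from Example \ref{algebra of functions} (taking $n=1$).

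First I would invoke Proposition \ref{extended in StC}(\ref{extended frob in C to frob in StC}), which already establishes that $\langle A \rangle_t$ is an extended Frobenius algebra in $S_t(\Rep(\mathbb Z_2)) = \Rep(S_t \wr \mathbb Z_2)$, with the multiplication, unit, comultiplication, counit, involution and crosscap maps constructed explicitly from those of $A$. This takes care of the algebraic structure.

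Next I would identify the evaluation $(\alpha', \beta', \gamma')$ of $\langle A \rangle_t$. This is exactly Lemma \ref{ext eval of A}, which gives $\alpha' = (2\lambda^{-1}t, 2t, 2\lambda t, \dots)$ and $\beta' = \gamma' = (0, 0, \dots)$. Substituting $t = \frac{\lambda \alpha_0}{2}$, we obtain $\alpha'_n = 2\lambda^{n-1}\cdot \frac{\lambda \alpha_0}{2} = \lambda^n \alpha_0$, so $\alpha' = \alpha$, and trivially $\beta'=\beta$ and $\gamma'=\gamma$.

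Having matched the evaluation, the universal property (the equivalence \eqref{equivalence for VUCob}) produces a unique symmetric tensor functor
\[
F_A : \operatorname{VUCob}_{\alpha,\beta,\gamma}\longrightarrow \Rep(S_t\wr \mathbb Z_2)
\]
sending the circle object $1$ to $\langle A \rangle_t$ and each generating cobordism ($u$, $\varepsilon$, $m$, $\Delta$, $\phi$, $\theta$, $\tau$) to the corresponding structure morphism of $\langle A \rangle_t$, as desired. There is no real obstacle in this argument, since both the Frobenius-algebra verification and the evaluation computation have already been established; the lemma is essentially a direct application of the universal property specialized to the algebra $\langle A_\lambda\rangle_t$.
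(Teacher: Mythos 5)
Your proposal is correct and follows the same route as the paper, whose proof simply cites the universal property of $\VUCob$ from Section \ref{universal property} together with Lemma \ref{ext eval of A}; your explicit check that $\alpha'_n = 2\lambda^{n-1}t = \lambda^n\alpha_0$ under $t=\frac{\lambda\alpha_0}{2}$ is exactly the verification the paper leaves implicit.
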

	
	\begin{proof}
		This follows from the universal property of $\VUCob$, see Section \ref{universal property}, and Lemma \ref{ext eval of A}.
	\end{proof}
	
	\begin{lemma}
		The functor $F_A:\operatorname{VUCob}_{\alpha, \beta, \gamma} \to \Rep(S_t\wr \mathbb Z_2)$ factors through $\operatorname{SOCob}_{\alpha}$.
	\end{lemma}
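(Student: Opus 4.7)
The plan is as follows. By definition, $\operatorname{SOCob}_{\alpha}$ is the quotient of $\operatorname{VUCob}_{\alpha,\beta,\gamma}$ (with $\beta=\gamma=0$) by the tensor ideal generated by the handle relation $\sigma = x - \lambda\,\mathrm{id}$ (coming from the generating function $Z_\alpha(T) = \alpha_0/(1-\lambda T)$, which gives $K=1$ in \eqref{skein relation}) together with the relation $\theta=0$. So by the universal property of $\operatorname{VUCob}_{\alpha,\beta,\gamma}$ explained in Section \ref{universal property}, it suffices to verify the two equalities
\begin{align*}
F_A(\sigma) = 0 \qquad \text{and} \qquad F_A(\theta) = 0
\end{align*}
in $\Rep(S_t \wr \mathbb Z_2)$.

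The second equality is immediate from the construction. Indeed, by Example \ref{algebra of functions} with $n=1$ we have $\theta_A = 0$ in $\Rep(\mathbb Z_2)$, so the formula for $\theta_{\langle A \rangle_t}$ in Proposition \ref{extended in StC}\eqref{extended frob in C to frob in StC} forces $F_A(\theta) = \theta_{\langle A\rangle_t} = 0$.

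For the first equality, recall $F_A(x) = m_{\langle A\rangle_t}\circ \Delta_{\langle A\rangle_t}$. The graphical computation already carried out in \eqref{computation of x in StC}, using the compatibility relations between $\mu_{\mathcal C}$, $\Delta_{\mathcal C}$, $\langle m_A \rangle_t$ and $\langle \Delta_A \rangle_t$ in the graphical calculus for $S_t(\mathcal C)$, shows that
\begin{align*}
m_{\langle A\rangle_t}\circ \Delta_{\langle A\rangle_t} = \langle m_A \circ \Delta_A\rangle_t.
\end{align*}
In $\Rep(\mathbb Z_2)$, a direct calculation on the basis $\{\delta_1, \delta_{-1}\}$ from Example \ref{algebra of functions} gives $m_A\Delta_A(\delta_{\pm 1}) = \lambda\,\delta_{\pm 1}$, so $m_A\Delta_A = \lambda\,\mathrm{id}_A$. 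Therefore $F_A(x) = \lambda\,\mathrm{id}_{\langle A\rangle_t} = F_A(\lambda\,\mathrm{id})$, so $F_A(\sigma) = 0$.

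Combining these two observations, $F_A$ kills the ideal defining $\operatorname{SOCob}_\alpha$ and hence descends uniquely to a symmetric tensor functor $\operatorname{SOCob}_\alpha \to \Rep(S_t\wr \mathbb Z_2)$. There is no real obstacle here: everything reduces to the already-verified identity $m_A\Delta_A = \lambda\,\mathrm{id}_A$ and the defining vanishing $\theta_A=0$.
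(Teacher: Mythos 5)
Your proposal is correct and follows the same route as the paper: both proofs reduce the claim to checking that $F_A$ annihilates the crosscap relation (immediate from $\theta_A=0$ in $\Rep(\mathbb Z_2)$) and the handle relation (via the computation $m_{\langle A\rangle_t}\Delta_{\langle A\rangle_t}=\langle m_A\Delta_A\rangle_t=\lambda\,\mathrm{id}$, which is exactly equation \eqref{computation of x in StC}). Your explicit verification of $m_A\Delta_A=\lambda\,\mathrm{id}_A$ on the basis $\{\delta_1,\delta_{-1}\}$ is a welcome extra detail, but the argument is otherwise identical.
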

	\begin{proof}
		We need to check that $F_A$ annhilates the handle relation $x-\lambda \operatorname{Id}=0$ and the crosscap relation $\theta=0$. The latter is trivial since $F_A(\theta)=\theta_{\langle A \rangle_t}=0$. On the other hand, $F_A(x-\lambda I)=m_{\langle A \rangle_t} \Delta_{\langle A\rangle_t}- \lambda \text{Id}_{\langle A \rangle_t}=\lambda \text{Id}_{\langle A \rangle_t}=0$, see equation \eqref{computation of x in StC}.
	\end{proof}

	\begin{lemma}\label{lemma 1}
		The functor $F_A:\operatorname{SOCob}_{\alpha} \to \Rep(S_t\wr \mathbb Z_2)$ satisfies that any indecomposable object of $\Rep(S_t\wr \mathbb Z_2)$ is a direct summand of $F(n)$ for some $n$ in $\operatorname{SOCob}_{\alpha}$.
	\end{lemma}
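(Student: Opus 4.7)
The plan is to exhibit every indecomposable object of $\Rep(S_t\wr \mathbb Z_2)=S_t(\Rep(\mathbb Z_2))$ as a direct summand of some tensor power $\langle A\rangle_t^{\otimes n}$, which is precisely $F_A(n)$. The argument proceeds in three steps.

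First, by \cite[Corollary 4.18]{M} (as already invoked in the excerpt), the objects of the form $\langle U_1\rangle_t\otimes\cdots\otimes \langle U_l\rangle_t$ with $U_i\in \Rep(\mathbb Z_2)$ generate $S_t(\Rep(\mathbb Z_2))$ as a pseudo-abelian category. Consequently, any indecomposable object $X$ of $\Rep(S_t\wr \mathbb Z_2)$ appears as a direct summand of some $\langle U_1\rangle_t\otimes\cdots\otimes \langle U_l\rangle_t$. So it suffices to realise each $\langle U_i\rangle_t$ as a summand of a tensor power of $\langle A\rangle_t$.

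Second, I will observe that $A=A_\lambda$ decomposes in $\Rep(\mathbb Z_2)$ as $A\simeq \mathbf 1\oplus \chi$, where $\mathbf 1$ and $\chi$ denote the trivial and sign representations, respectively. Indeed, $A=\mathbf k\delta_1\oplus \mathbf k\delta_{-1}$ with the nontrivial element of $\mathbb Z_2$ swapping $\delta_1$ and $\delta_{-1}$, so (since $\operatorname{char}(\mathbf k)=0$) the vectors $\delta_1+\delta_{-1}$ and $\delta_1-\delta_{-1}$ span the isotypic components. In particular, any $U\in \Rep(\mathbb Z_2)$ is a finite direct sum of copies of $\mathbf 1$ and $\chi$, and therefore $U$ is a direct summand of $A^{\otimes k}$ for $k$ sufficiently large. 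Applying the $\mathbf k$-linear functor $\langle\cdot\rangle_t:\Rep(\mathbb Z_2)\to S_t(\Rep(\mathbb Z_2))$ yields that $\langle U\rangle_t$ is a direct summand of $\langle A^{\otimes k}\rangle_t$.

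Third, the retraction relation for $\mu_{\mathcal C}$ recalled in Section~\ref{section: StC} shows inductively that $\langle A^{\otimes k}\rangle_t$ is a direct summand of $\langle A\rangle_t^{\otimes k}$. Explicitly, the tensor-product formula in $S_t(\mathcal C)$ gives
\begin{align*}
\langle A\rangle_t\otimes \langle A\rangle_t\;\simeq\; \langle A,A\rangle_t\oplus \langle A\otimes A\rangle_t,
\end{align*}
so $\langle A\otimes A\rangle_t$ is a summand of $\langle A\rangle_t^{\otimes 2}$; iterating, $\langle A^{\otimes k}\rangle_t$ is a summand of $\langle A\rangle_t^{\otimes k}$. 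Combining the three steps, each $\langle U_i\rangle_t$ is a summand of $\langle A\rangle_t^{\otimes k_i}$ for some $k_i$, and so $X$ is a summand of $\langle A\rangle_t^{\otimes (k_1+\cdots+k_l)}=F_A(k_1+\cdots+k_l)$, as required. The argument is essentially a structural/bookkeeping one; the only conceptual input is the decomposition $A\simeq \mathbf 1\oplus \chi$, after which no real obstacle remains.
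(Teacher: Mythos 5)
Your proof is correct and takes essentially the same route as the paper: identify $F_A(n)=\langle A\rangle_t^{\otimes n}$ and show that these tensor powers generate $\Rep(S_t\wr \mathbb Z_2)$ as a pseudo-abelian category. The paper disposes of the generation statement by citing \cite[Remark 4.25]{M}, whereas you reprove it from \cite[Corollary 4.18]{M}, the decomposition $A\simeq \mathbb 1\oplus \chi$ in $\Rep(\mathbb Z_2)$, and the fact that $\langle U\otimes V\rangle_t$ is a direct summand of $\langle U\rangle_t\otimes\langle V\rangle_t$; all three steps are sound (the passage from ``summand of a direct sum of such objects'' to ``summand of a single one'' uses Krull--Schmidt, which holds here since the Hom spaces are finite dimensional).
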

	
	\begin{proof}
		By definition, $F_A(1)=	\langle A \rangle_t$, and so
		\begin{align*}
			F_A(n)=\langle A \rangle_t^{\otimes n}.
		\end{align*} The result follows since objects of the form $\langle A\rangle_t^{\otimes n}$ generate $\Rep(S_t\wr \mathbb Z_2)$ as a pseudo-abelian category, see \cite[Remark 4.25]{M}.
	\end{proof}
	
	Fix $n\geq 1$. Let $\mathcal W_n$ denote the subspace of $\Hom_{\SOCob}(0,n)$ spanned by connected cobordisms, 
	and let $\mathcal U_n$ be the subspace of $\Hom_{\Rep(S_t\wr \mathbb Z_2)}(\mathbb 1,\langle A\rangle_t^{\otimes n})$  spanned by connected diagrams, see Lemma \ref{standard form}.
	
	\begin{lemma} \label{surjective on connected}
		The functor $F_A:\operatorname{SOCob}_{\alpha}\to \Rep(S_t\wr \mathbb Z_2)$ induces an isomorphism
		\begin{align*}
			\mathcal W_n\xrightarrow{F_A} \mathcal U_n.
		\end{align*}
	\end{lemma}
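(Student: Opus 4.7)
The plan is to compare bases. By Proposition \ref{basis for connected unoriented orientable}, the set $\{\xi^n_{\overline{J}}: \overline{J}\in \mathcal{R}_n\}$ is a basis of $\mathcal{W}_n$ of size $2^{n-1}$, and by Corollary \ref{dim of connected in Rep(St wr Z2)} the space $\mathcal{U}_n$ also has dimension $2^{n-1}$ (identified with $\Hom_{\Rep(\mathbb Z_2)}(\mathbb{1}, A^{\otimes n}) = (A^{\otimes n})^{\mathbb{Z}_2}$ via Lemma \ref{standard form}). Thus it suffices to verify that $F_A$ sends $\mathcal{W}_n$ into $\mathcal{U}_n$ and maps the basis $\{\xi^n_{\overline{J}}\}$ to a linearly independent set.

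First I would check the inclusion $F_A(\mathcal{W}_n) \subseteq \mathcal{U}_n$. Each generator of $\UCob$ from Section \ref{section:gens and rels} is sent under $F_A$ to the corresponding structure map of $\langle A\rangle_t$ described in Proposition \ref{extended in StC}, whose graphical representation is a connected diagram (note $\theta_{\langle A\rangle_t}=0$ in our setting by Lemma \ref{ext eval of A}, which trivially preserves connectedness). Since gluing connected diagrams along shared outputs/inputs yields a connected diagram, a connected cobordism is sent to an element of $\mathcal{U}_n$.

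Next I would compute $F_A(\xi^n_{\overline{J}})$ explicitly. Recalling from Definition \ref{xi 1st def} that $\xi^n_{\overline{J}} = \phi_J \Delta^{n-1} u$, and using the formulas for $u_{\langle A\rangle_t}, \Delta_{\langle A\rangle_t}, \phi_{\langle A\rangle_t}$ in Proposition \ref{extended in StC}, repeated application of the compatibility relations \eqref{relation for mu} between $\Delta_{\mathcal C}$ and $\langle \cdot\rangle_t$ lets me collect the inner pieces into a single $\langle\cdot\rangle_t$-wrapped morphism, yielding
\begin{align*}
F_A(\xi^n_{\overline{J}}) \;=\; \Delta_{\mathcal{C}}^{\,n-1} \,\circ\, \langle \phi_{A,J}\, \Delta_A^{\,n-1}\, u_A\rangle_t \,\circ\, u_{\mathcal{C}},
\end{align*}
where $\phi_{A,J}$ denotes $\phi_A$ applied at positions in $J$ and $\mathrm{id}_A$ elsewhere. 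In the concrete model $A = A_\lambda$ of Example \ref{algebra of functions}, a direct computation gives $\phi_{A,J}\Delta_A^{n-1} u_A = \lambda^{n-1} v_{\overline{J}}$, where $v_{\overline{J}} = \delta_{\epsilon_1}\otimes\cdots\otimes \delta_{\epsilon_n} + \delta_{-\epsilon_1}\otimes\cdots\otimes \delta_{-\epsilon_n}$ with $\epsilon_i = -1$ if $i\in J$ and $+1$ otherwise; the well-definedness with respect to the class $\overline J$ (rather than $J$) reflects Lemma \ref{xi=xj}.

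Finally, $\{v_{\overline{J}}: \overline J\in \mathcal{R}_n\}$ is the natural basis of the $\mathbb{Z}_2$-invariants $(A^{\otimes n})^{\mathbb Z_2} = \Hom_{\Rep(\mathbb Z_2)}(\mathbb{1}, A^{\otimes n})$. Under the isomorphism $\Hom_{\Rep(\mathbb Z_2)}(\mathbb{1}, A^{\otimes n}) \xrightarrow{\sim} \mathcal{U}_n$ of Lemma \ref{standard form} and Corollary \ref{dim of connected in Rep(St wr Z2)}, this basis is sent to a basis of $\mathcal{U}_n$. Therefore $F_A$ maps a basis of $\mathcal{W}_n$ to a basis of $\mathcal{U}_n$, and is an isomorphism. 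The only subtle step is the bookkeeping in the second paragraph, moving the $\Delta_{\mathcal{C}}$'s past the $\langle\cdot\rangle_t$ wrappings, but this is a routine diagrammatic manipulation using the defining relations of Section \ref{section: StC}.
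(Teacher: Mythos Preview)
Your proof is correct and follows essentially the same route as the paper: both compute $F_A(\xi^n_{\overline J})=\Delta_{\mathcal C}^{\,n-1}\circ\langle\,\phi_{A,J}\,\Delta_A^{\,n-1}u_A\,\rangle_t\circ u_{\mathcal C}$ via the compatibility relation \eqref{relation for mu}, then verify that the inner maps $\phi_{A,J}\Delta_A^{n-1}u_A$ give (up to the scalar $\lambda^{n-1}$) the standard basis of $(A^{\otimes n})^{\mathbb Z_2}$, and conclude by the dimension count $\dim\mathcal W_n=2^{n-1}=\dim\mathcal U_n$. Your preliminary ``connectedness'' paragraph is an extra soft argument the paper omits (it goes straight to the explicit formula), but otherwise the two proofs match.
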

	
	\begin{proof}
		Consider the basis $\{\xi_{\overline J}^n\}_{\overline J \in \mathcal R_n}$ of $\mathcal W_n$  as in Definition \ref{xi definition}, see also Proposition \ref{basis for connected unoriented orientable}. That is, 
		\begin{align*}
			\xi_{\overline J}^n= \phi_{J}\Delta^{n-1}u, 
		\end{align*}
		where $J$ is the representative of $\overline J$ with $|J|\leq n/2$,
		\begin{align*}
			\phi_{J}:=c_{1,J}\otimes \dots \otimes c_{n,J}, \ \ \text{with } c_{j,J}= \begin{cases} \text{id} &\text{if } j\in J,\\
				\phi &\text{if }  j\not\in J,
			\end{cases}
		\end{align*}
	 $\Delta^{n-1}$ denotes 
		\begin{align*}
			\Delta^{n-1}=(\text{id}^{\otimes (n-2)}\otimes \Delta)\dots (\text{id}\otimes \Delta)\Delta,
		\end{align*}
	and $u$ is the cap cobordism of $\SOCob$.
	
	We compute the image of $\xi_{\overline J}^n$ under $F_A$. By definition,  $F_A(\Delta)= \Delta_{\mathcal C}\Delta_A$, and so
		\begin{align*}
			F_A(\Delta^{n-1}) = \left(\text{id}^{\otimes(n-2)}\otimes \Delta_{\mathcal C}\langle \Delta_A \rangle_t\right)\dots \left(\text{id} \otimes \Delta_{\mathcal C}\langle \Delta_A\rangle_t\right)\Delta_{\mathcal C}\langle \Delta_A\rangle_t.
		\end{align*}
		Let 
		\begin{align*}
			\Delta_{\mathcal C}^k=\left(\text{id}^{\otimes(k-2)}\otimes \Delta_{\mathcal C}\right)\dots \left(\text{id} \otimes \Delta_{\mathcal C}\right)\Delta_{\mathcal C},
		\end{align*}
		for all $k\geq 1$. Using the relation $(\langle f\rangle_t\otimes \langle g\rangle_t)\Delta_{\mathcal C}= \Delta_{\mathcal C}\langle f \otimes g \rangle_t$ in $\Rep(S_t\wr \mathbb Z_2)$, see equation \eqref{relation for mu}, we have that
		\begin{align*}
			\left(\text{id}\otimes \Delta_{\mathcal C}\langle \Delta_A\rangle_t\right)\Delta_{\mathcal C}\langle \Delta_A\rangle_t= (\text{id} \otimes \Delta_{\mathcal C})\Delta_{\mathcal C} \langle (\text{id} \otimes \Delta_A)\Delta_A\rangle_t= \Delta_{\mathcal C}^2\langle \Delta_A^2\rangle_t.
		\end{align*}
		In general,
		\begin{align*}
			\left(\text{id}^{\otimes(n-2)}\otimes \Delta_{\mathcal C}\langle \Delta_A\rangle_t\right)\dots \left(\mathbb 1 \otimes \Delta_{\mathcal C}\langle \Delta_A\rangle_t\right)\Delta_{\mathcal C}\langle \Delta_A\rangle_t=\Delta_{\mathcal C}^{n-1}\langle \Delta_A^{n-1}\rangle_t,
		\end{align*}
		and thus $F_A(\Delta^{n-1})=	\Delta_{\mathcal C}^{n-1}\langle \Delta_A^{n-1}\rangle_t$. Thus if we define
		\begin{align*}
			\psi_{i,J}=\
			\begin{cases}
				\phi_A &\text{if } i \in J,\\
				\text{id}  &\text{if } i\not\in J,
			\end{cases}
		\end{align*} then
		\begin{align}  \label{image of xi under F}
			\begin{aligned}
			F(\xi_{\overline J}^n)&= \left( F(c_{1,J}) \otimes \dots \otimes F(c_{n,J}) \right) F(\Delta_{\mathcal C}^{n-1}) F(u_A)\\
			&= \left( \langle\psi_{1,J}\rangle_t\otimes \dots \otimes \langle \psi_{n,J} \rangle_t\right) \Delta_{\mathcal C}^{n-1}\langle \Delta_A^{n-1}\rangle_t \langle u_A\rangle_t u_{\mathcal C}\\
			&=\Delta_{\mathcal C}^{n-1} \langle (\psi_{1,J}\otimes \dots \otimes \psi_{n,J} )\Delta_A^{n-1} u_A\rangle_t u_{\mathcal C}.
			\end{aligned}
		\end{align}
	Hence under $F_A$, $\mathcal W_n$ is mapped to $\mathcal U_n$. We want  to show that the set $\{F(\xi_{\overline J}^n)\}_{{\overline J}\in \mathcal R_n}$ is a basis of $\mathcal U_n$. We prove first that it is linearly independent. By the equation above, it is enough to show that $\{f_{\overline J}^n\}_{\overline J \in \mathcal R_k}$ is linearly independent in $\Rep(\mathbb Z_2)$, where
		$$f_{\overline J}^n:=(\psi_{1,J}\otimes \dots \otimes \psi_{n,J} )\Delta_A^{n-1} u_A \in \Hom_{\Rep(\mathbb Z_2)}(\mathbb 1, A^{\otimes n}).$$
		For this, we check that the value of each $f_{\overline J}^n$ at $1\in \mathbb 1\cong \mathbf k$ is  
		\begin{align*}
			\delta_{\overline J} := \delta_{1,J}\otimes \dots \otimes \delta_{n,J} + \delta_{1,J^c}\otimes \dots \otimes \delta_{n,J^c}, \ \ \text{where} \ \delta_{i,J}:=\begin{cases}
				\delta_1 &\text{if } i\in J,\\
				\delta_{-1} &\text{if } i\not\in J,
			\end{cases}
		\end{align*}
	for all $1\leq i\leq n$,	and $\delta_1, \delta_{-1} \in A$ are such that $\delta_1(x_l)=\delta_{l,1}$ and $\delta_{-1}(x_l)=\delta_{l,-1}$ for $l=1,-1$, see Example  \ref{algebra of functions}. We compute
		\begin{align*}
			f_{\overline J}^n(1)&= \left(\psi_{1,J} \otimes \dots \otimes \psi_{n,J}\right) \Delta_A^{n-1}(\delta_{-1}+ \delta_1)\\
			&=\left(\psi_{1,J} \otimes \dots \otimes \psi_{n,J}\right)(\delta_{-1}^{\otimes n}+\delta_1^{\otimes n})\\
			&=\psi_{1,J}(\delta_{-1}) \otimes \dots \otimes \psi_{n,J}(\delta_{-1})+\psi_{1,J}(\delta_1) \otimes \dots \otimes \psi_{n,J}(\delta_1).
		\end{align*}
		Recall that $\phi_A(\delta_{-1})=\delta_1$ and $\phi_A(\delta_1)=\delta_{-1}$. Hence
		\begin{align*}
			\psi_{i,J}(\delta_{-1})=\begin{cases}
				\delta_1 &\text{if } i\in J\\
				\delta_{-1} &\text{if } i\not\in J
			\end{cases} = \delta_{i,J}  \ \ \text{and} \ \ \ 	\psi_{i,J}(\delta_1)=\begin{cases}
				\delta_{-1}&\text{if } i\in J\\
				\delta_1 &\text{if } i\not\in J
			\end{cases} = \delta_{i,J^c},
		\end{align*}
		so we get that 
		\begin{align*}
			f_{\overline J}^n(1) &=\delta_{1,J} \otimes \dots \otimes \delta_{n,J}+\delta_{1,J^c} \otimes \dots \otimes \delta_{n,J^c} = \delta_{\overline J}.
		\end{align*}
		So it is enough to note that $\{\delta_{\overline J}\}_{\overline J \in \mathcal R_n}$ is linearly independent in $A^{\otimes n}$. In fact, since $\{\delta_1, \delta_{-1}\}$ is a basis for $A$ as a vector space, then $\{\delta_{1,J}\otimes \dots \otimes \delta_{n,J}\}_{J\in \mathcal P(n)}$ is a basis for $A^{\otimes n}$, and so the set $$\{\delta_{1,J}\otimes \dots \otimes \delta_{n,J}+\delta_{1,J^c}\otimes \dots \otimes \delta_{n,J^c}\}_{J\in \mathcal P(n)}=\{\delta_{\overline J}\}_{\overline J \in \mathcal R_n},$$
		is linearly independent, as desired. 
		
		Lastly, by Proposition \ref{basis for connected unoriented orientable} and Lemma \ref{dim of connected in Rep(St wr Z2)}, we know that $\dim(\mathcal W_n)=2^{n-1}=\dim(\mathcal U_n)$, and so the statement follows. 
	\end{proof}
	
	Taking Karoubian envelope on the source category, $F_A:\SOCob \to \Rep(S_t\wr \mathbb Z_2)$  extends uniquely to a symmetric tensor functor
	\begin{align*}
		F_A:\PsOCob \to \Rep(S_t\wr \mathbb Z_2).
	\end{align*}
	Moreover, by Lemma \ref{lemma 1} $F_A$  is  essentially surjective.

	\begin{lemma}\label{lemma 2}
		The functor $F_A:\operatorname{OCob}_{\alpha} \to \Rep(S_t\wr \mathbb Z_2)$  is full.
	\end{lemma}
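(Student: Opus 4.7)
The plan is to reduce fullness on all Hom spaces to surjectivity on the spaces $\Hom(0,n)$, decompose the target according to the tensor product formula in $S_t(\mathcal C)$, and then invoke Remark \ref{basis from basis for connected} together with Lemma \ref{surjective on connected} to match spanning sets part-by-part.

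First, since both $\PsOCob$ and $\Rep(S_t \wr \mathbb Z_2)$ are rigid symmetric tensor categories, and $F_A$ is a symmetric tensor functor sending the self-dual generator $1$ to the self-dual Frobenius algebra $\langle A \rangle_t$, the natural bending isomorphism $\Hom(m,n) \cong \Hom(0, m+n)$ is intertwined by $F_A$ on both sides. It therefore suffices to show that $F_A$ is surjective on every Hom space $\Hom_{\SOCob}(0, n) \to \Hom_{\Rep(S_t \wr \mathbb Z_2)}(\mathbb 1, \langle A \rangle_t^{\otimes n})$; fullness on $\PsOCob$ then follows automatically, since Hom spaces in the Karoubian envelope are cut out from Hom spaces of $\SOCob$ by composing with idempotents that $F_A$ preserves.

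Next, I would apply the tensor product formula in $S_t(\mathcal C)$ iteratively to obtain a direct sum decomposition
\begin{align*}
\langle A \rangle_t^{\otimes n} \;=\; \bigoplus_{P \in \operatorname{Part}(n)} \langle (A^{\otimes |p|})_{p \in P} \rangle_t,
\end{align*}
indexed by partitions $P = \{p_1, \dots, p_k\}$ of $\{1, \dots, n\}$. Using the description of Hom spaces in $S_t(\mathcal C)$ (see \eqref{morphisms in StC}) applied to recollements from the empty set, together with Lemma \ref{standard form}, each summand contributes
\begin{align*}
\Hom\bigl(\mathbb 1, \langle (A^{\otimes |p|})_{p \in P}\rangle_t\bigr) \;=\; \bigotimes_{p \in P} \Hom_{\mathcal C}(\mathbb 1, A^{\otimes |p|}),
\end{align*}
and each tensor factor is precisely the space of connected morphisms $\mathbb 1 \to \langle A^{\otimes |p|}\rangle_t$. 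Hence $\Hom(\mathbb 1, \langle A\rangle_t^{\otimes n})$ is spanned by external tensor products of connected morphisms, one factor per block of a partition $P$.

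On the source side, Remark \ref{basis from basis for connected} exhibits a spanning set for $\Hom_{\SOCob}(0, n)$ of exactly parallel shape: a partition $P$ of $\{1, \dots, n\}$ together with a choice of connected cobordism $\xi^{|p|}_{\overline{J_p}}$ for each part $p$. Since $F_A$ is a tensor functor and respects symmetry, the image of such a disjoint union is the corresponding tensor product of images of $\xi^{|p|}_{\overline{J_p}}$, landing in the $P$-summand of the decomposition above. By Lemma \ref{surjective on connected}, each $F_A(\xi^{|p|}_{\overline{J_p}})$ already spans the connected-morphism piece for the part $p$, so running over all choices of $P$ and all $\overline{J_p}$ produces a spanning set of the target. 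This gives surjectivity of $F_A$ on $\Hom(0,n)$, and hence fullness.

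The main point requiring care, and the step I expect to be the main obstacle, is the partition-indexed decomposition of $\langle A\rangle_t^{\otimes n}$ and the verification that the external tensor product of connected cobordisms $\xi^{|p|}_{\overline{J_p}}$ indexed by the parts of $P$ maps under $F_A$ precisely into the $P$-summand (and not also into coarser ones). This is a bookkeeping exercise with recollements, where one must track how iterated applications of $\mu_{\mathcal C}$ and $\Delta_{\mathcal C}$ separate an $n$-fold tensor product into its connected pieces; once phrased in terms of Lemma \ref{standard form} and the graphical calculus of Section \ref{section: StC} it is straightforward but notationally involved.
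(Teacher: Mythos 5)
Your proposal is correct and follows essentially the same route as the paper: reduce to surjectivity on $\Hom(0,n)$ via self-duality of $\langle A\rangle_t$, then conclude from Lemma \ref{surjective on connected} because stackings of connected diagrams span $\Hom_{\Rep(S_t\wr\mathbb Z_2)}(\mathbb 1,\langle A\rangle_t^{\otimes n})$. The paper simply cites Remark \ref{connected in St} for this last spanning fact, whereas you unpack it via the partition-indexed decomposition of $\langle A\rangle_t^{\otimes n}$; that is a more explicit version of the same argument, not a different one.
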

	
	\begin{proof}
	To show that $F_A$ is surjective on morphisms, it is enough to check that the maps 
		\begin{align*}
			\Hom_{\PsOCob}(n,m)  \xrightarrow{F_A} \Hom_{\Rep(\text{S}_t\wr \mathbb Z_2 )}( \langle A \rangle_t^{\otimes n},\langle A \rangle_t^{\otimes m})
		\end{align*}
		induced by $F_A$ are surjective for all $n,m\geq 1$. Note that since $A\in \Rep(\mathbb Z_2)$ is self-dual, then by 
		\cite[Appendix A]{M} so is $\langle A \rangle_t$ in $\Rep(S_t\wr \mathbb Z_2)$. Hence, by duality, it is enough to check surjectivity of the maps 
		\begin{align*}
			\Hom_{\PsOCob}(0,n)  \xrightarrow{F_A} \Hom_{\Rep(\text{S}_t\wr \mathbb Z_2 )}( \mathbb 1,\langle A \rangle_t^{\otimes n}),
		\end{align*}
		for all $n\geq 1$.  This follows from Lemma \ref{surjective on connected}, since connected diagrams generate $\Rep(S_t\wr \mathbb Z_2)$ as a pseudo-abelian tensor category.
	\end{proof}	

	We now show a proof for Theorem \ref{maintheorem1}.	
	
	\begin{proof}[Proof of Theorem \ref{maintheorem1}]
		Consider the symmetric tensor functor $$\underline{F_A}:=\PsOCob \xrightarrow{F_A}\Rep(\text{S}_t\wr \mathbb Z_2 )\to  \underline{\Rep(\text{S}_t\wr \mathbb Z_2 )},$$ where $F_A$ is as defined previously,  followed by the semisimplification functor. By Lemma \ref{lemma 2}, $\underline{F_A}$ satisfies the conditions of Proposition \ref{proposition 1}. Hence $\underline{F_A}$ induces a fully faithful symmetric tensor functor
		\begin{align*}
			\underline{F_A}: \underline{\PsOCob} \xrightarrow{} \underline{\Rep(\text{S}_t\wr \mathbb Z_2 )}.
		\end{align*}
	Moreover, since $F_A$ is essentially surjective then so is $\underline{F_A}$ and we have the desired equivalence.	\end{proof}	
	
	\begin{corollary}\label{Corollary I}
		If $\lambda\alpha_0$ is not a non-negative even integer, then 
		\begin{align*}
			\operatorname{OCob}_{\alpha}\cong \Rep(S_t\wr \mathbb Z_2).
		\end{align*}
	In particular, $\operatorname{OCob}_{\alpha}$ is semisimple, and $\dim_{\operatorname{OCob}_{\alpha}}(0,m)=\sum\limits_{l=1}^m 2^{m-l} \begin{Bmatrix} 
		m \\l
	\end{Bmatrix},$ for all $m\geq 1$.
	\end{corollary}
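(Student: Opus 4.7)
The strategy is to invoke Theorem \ref{maintheorem1} and then remove the semisimplification on both sides using the hypothesis, which translates to $t = \lambda\alpha_0/2 \notin \mathbb Z_{\geq 0}$. I would proceed in three stages.

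First, I would invoke the semisimplicity criterion for the interpolating categories of wreath products: when $t \notin \mathbb Z_{\geq 0}$, the category $\Rep(S_t \wr \mathbb Z_2) = S_t(\Rep(\mathbb Z_2))$ is semisimple (this is the wreath-product analog of Deligne's criterion for $\Rep(S_t)$, available through Knop \cite{Kn} and Mori \cite{M}). Since a semisimple symmetric tensor category has no non-zero negligible morphisms, $\underline{\Rep(S_t \wr \mathbb Z_2)} = \Rep(S_t \wr \mathbb Z_2)$, and Theorem \ref{maintheorem1} directly yields an equivalence $\underline{\operatorname{OCob}_\alpha} \simeq \Rep(S_t \wr \mathbb Z_2)$.

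Second, I would compute $\dim \Hom_{\Rep(S_t \wr \mathbb Z_2)}(\mathbb 1, \langle A \rangle_t^{\otimes m})$ by specialization. For integer $t = n$ with $n \geq m$, the algebra $\langle A \rangle_t$ corresponds to the permutation representation of $S_n \wr \mathbb Z_2$ on the $2n$-element set $\{x_{\pm 1}, \ldots, x_{\pm n}\}$, whose invariants in the $m$-fold tensor power are counted by orbits on the Cartesian product. An orbit is determined by a set partition $P$ of $[m]$ (recording which positions share an underlying index) together with, for each block $b \in P$, a sign pattern in $\{\pm 1\}^{b}$ modulo the global $\mathbb Z_2$-flip, yielding $\prod_{b \in P} 2^{|b|-1} = 2^{m-|P|}$ patterns per partition. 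Summing over partitions produces $\sum_{l=1}^{m} 2^{m-l} \begin{Bmatrix} m \\ l \end{Bmatrix}$. Since in the interpolation $\Rep(S_t \wr \mathbb Z_2)$ this dimension is a polynomial in $t$ and stabilizes as $n \to \infty$, the formula persists for all $t$ in the semisimple range.

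Third, I would promote the semisimplification equivalence to an honest equivalence by a dimension count. The functor $F_A: \operatorname{OCob}_\alpha \to \Rep(S_t \wr \mathbb Z_2)$ from Section \ref{section:theorem I} is essentially surjective (Lemma \ref{lemma 1}) and full (Lemma \ref{lemma 2}), and by self-duality of $\langle A \rangle_t$ it suffices to check faithfulness on morphism spaces $0 \to m$. The spanning set $\mathcal S_m$ of Remark \ref{basis from basis for connected} has cardinality $\sum_{l=1}^{m} 2^{m-l} \begin{Bmatrix} m \\ l \end{Bmatrix}$, yielding an upper bound on $\dim \Hom_{\operatorname{OCob}_\alpha}(0,m)$. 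Surjectivity of $F_A$ together with the dimension of the target gives the matching lower bound, forcing $F_A$ to induce isomorphisms on Hom spaces. Semisimplicity of $\operatorname{OCob}_\alpha$ and the dimension formula follow immediately.

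The main obstacle is the first stage: semisimplicity of $\Rep(S_t \wr \mathbb Z_2)$ for $t \notin \mathbb Z_{\geq 0}$ is not proved inside the paper and must be invoked from the Knop--Mori theory. The orbit computation in the second stage is elementary, but care is needed to transfer it from integer specializations to generic $t$; this can also be done more abstractly using the graphical description of morphisms in $S_t(\Rep(\mathbb Z_2))$ via recollements, together with Corollary \ref{dim of connected in Rep(St wr Z2)}.
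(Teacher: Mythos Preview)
Your proposal is correct and follows the same three-step skeleton as the paper: invoke Mori's semisimplicity criterion for $\Rep(S_t\wr\mathbb Z_2)$ when $t\notin\mathbb Z_{\geq 0}$ (the paper cites \cite[Proposition~5.5]{M}), establish the dimension formula $\dim\Hom_{\Rep(S_t\wr\mathbb Z_2)}(\mathbb 1,\langle A\rangle_t^{\otimes m})=\sum_l 2^{m-l}\begin{Bmatrix}m\\l\end{Bmatrix}$, and then squeeze $\dim\Hom_{\PsOCob}(0,m)$ between that number (from fullness of $F_A$) and $|\mathcal S_m|$ to conclude $F_A$ is an equivalence.

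The one substantive difference is in how the dimension formula on the $\Rep(S_t\wr\mathbb Z_2)$ side is obtained. The paper argues indirectly: by Lemma~\ref{lemma: inner products matrix} and Corollary~\ref{dimension of unoriented orientable}, for all but countably many $(\lambda,\alpha_0)$ the Gram matrix of $\mathcal S_m$ is nonsingular, so $\PsOCob$ already has no negligibles and Theorem~\ref{maintheorem1} transports the cardinality $|\mathcal S_m|$ to $\underline{\Rep(S_t\wr\mathbb Z_2)}$; then $t$-independence of Hom dimensions (from semisimplicity for $t\notin\mathbb Z_{\geq 0}$) extends the formula to the whole range. You instead compute the dimension directly on the target side, either via orbit-counting for $S_n\wr\mathbb Z_2$ on $\{x_{\pm 1},\dots,x_{\pm n}\}^m$ at large integer $n$, or (as you note) straight from the recollement description \eqref{morphisms in StC}, whose dimension is visibly $t$-independent. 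Your route bypasses the Gram-determinant Lemma~\ref{lemma: inner products matrix} entirely and is a bit more direct; the paper's route has the virtue of recycling the cobordism-side combinatorics already developed, and of pinpointing the generic-versus-exceptional dichotomy on the $\PsOCob$ side.
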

	
	\begin{proof}
		By Corollary \ref{dimension of unoriented orientable} we know that for all but countably many values of $\lambda$ and $\alpha_0$ in $\mathbf k^{\times}$, there are no negligible morphisms in $\PsOCob$. From this and Theorem \ref{maintheorem1}, it follows that  from all but countably many $\lambda$ and $\alpha_0$ we get an equivalence
		\begin{align*}
		\PsOCob=	\underline{\PsOCob}\cong \underline{\Rep(S_t\wr \mathbb Z_2)},
		\end{align*}  where $t=\frac{\lambda\alpha_0}{2}$.
	Hence, when $\lambda$ and $\alpha_0$ are not one of the exceptional values, we have that 
	\begin{align*}
		\dim \Hom_{\underline{\Rep(S_t\wr \mathbb Z_2)}}(\mathbb 1, \langle A \rangle_t^{\otimes m})= \dim \Hom_{\PsOCob}(0,m)=\sum\limits_{l=1}^m 2^{m-l}\begin{Bmatrix} 
			m \\l
		\end{Bmatrix},
	\end{align*}
	see Corollary \ref{dimension of unoriented orientable}. But by \cite[Proposition 5.5]{M}, for $t\not\in \mathbb Z_{\geq 0},$ the category $\Rep(S_t\wr \mathbb Z_2)$ is semisimple, and so $\dim_{\Rep(S_t\wr \mathbb Z_2)}(\mathbb 1, \langle A \rangle_t^{\otimes m})$ does not depend on $t$. Therefore, $\dim_{\Rep(S_t\wr \mathbb Z_2)}(\mathbb 1, \langle A \rangle_t^{\otimes m})= \sum\limits_{l=1}^m 2^{m-l}\begin{Bmatrix} 
		m \\l
	\end{Bmatrix}$ whenever $t=\frac{\lambda\alpha_0}{2}\not\in \mathbb Z_{\geq 0}$.
	
	Let $t=\frac{\lambda\alpha_0}{2}\not\in \mathbb Z_{\geq 0}$, and consider again the equivalence 
	\begin{align*}
		\underline{\PsOCob}=\underline{\Rep(S_t\wr \mathbb Z_2)}=\Rep(S_t\wr \mathbb Z_2).
	\end{align*}
Since 
\begin{align*}
\sum\limits_{l=1}^m 2^{m-l}\begin{Bmatrix} 
	m \\l
\end{Bmatrix}=	\dim \Hom_{\Rep(S_t\wr \mathbb Z_2)}(\mathbb 1, \langle A \rangle_t^{\otimes m})= \dim \Hom_{\PsOCob}(0,m)\leq \sum\limits_{l=1}^m 2^{m-l}\begin{Bmatrix} 
m \\l
\end{Bmatrix},
\end{align*}
then $\dim \Hom_{\PsOCob}(0,m)= \sum\limits_{l=1}^m 2^{m-l}\begin{Bmatrix} 
	m \\l
\end{Bmatrix}$. Hence, when $\lambda\alpha_0$ is not a non-negative even  integer, there are no negligible morphisms in $\PsOCob$  and thus by Theorem \ref{maintheorem1} we conclude
\begin{align*}
	\PsOCob=\underline{\PsOCob}\cong \underline{\Rep(S_t\wr \mathbb Z_2)}=\Rep(S_t\wr \mathbb Z_2),
\end{align*}
as desired. 
	\end{proof}

		\section{Equivalence with the category $\Rep(S_t \wr \mathbb Z_2) \boxtimes \Rep(S_{t_+}) \boxtimes \Rep(S_{t_-})$ }\label{section:theorem II}

	In this section we study the category $\SUCob$ for the sequences
	\begin{align}\label{sequences}
		&&\alpha=(\alpha_0, \lambda \alpha_0, \lambda^2 \alpha_0, \dots), &&\beta=(\beta_0, \lambda \beta_0, \lambda^2 \beta_0, \dots) &&\text{and} &&\gamma=(\gamma_0, \lambda \gamma_0, \lambda^2 \gamma_0, \dots),
	\end{align}
	where $\alpha_0, \beta_0, \gamma_0, \lambda\in \mathbf k^{\times}$. Here, the  generating functions for $\alpha, 
	\beta$ and $\gamma$ are $$Z_{\alpha}(T)=\frac{\alpha_0}{1-\lambda T}, \ \ Z_{\beta}(T)=\frac{\beta_0}{1-\lambda T}, \ \  Z_{\gamma}(T)=\frac{\gamma_0}{1-\lambda T},$$ respectively. 
	
	 Recall that for a symmetric tensor category $\mathcal C$, we denote by $\underline{\mathcal C}$ its quotient by the tensor ideal of negligible morpshims, see Section \ref{SUCob}.

	\begin{maintheorem}\label{maintheorem2}
		Let $\alpha, \beta$ and $\gamma$ be sequences as in \eqref{sequences}. 
		Then we have an equivalence of symmetric tensor categories
		\begin{align*}
			\underline{\operatorname{UCob}_{\alpha,\beta, \gamma}}\simeq  \underline{\Rep(\text{S}_t\wr \mathbb Z_2 )}\boxtimes \underline{\Rep(S_{t_+})}\boxtimes \underline{\Rep(S_{t_-})},
		\end{align*}
		where $t=\frac{1}{2}(\lambda \alpha_0-\gamma_0), t_{+}=\frac{1}{2}(\gamma_0+\sqrt{\lambda}\beta_0)$ and $t_{-}= \frac{1}{2}(\gamma_0-\sqrt{\lambda}\beta_0)$.
	\end{maintheorem}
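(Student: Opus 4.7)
The approach will mirror the strategy of Theorem \ref{maintheorem1}, but the target now splits into three external tensor factors rather than one. The plan is to construct an extended Frobenius algebra $\mathcal A$ with evaluation $(\alpha,\beta,\gamma)$ in
\[
\mathcal C := \underline{\Rep(S_t\wr \mathbb Z_2)}\boxtimes \underline{\Rep(S_{t_+})}\boxtimes \underline{\Rep(S_{t_-})},
\]
use the universal property of $\VUCob_{\alpha,\beta,\gamma}$ to produce a symmetric tensor functor $F_{\mathcal A}$, and then invoke a Proposition \ref{proposition 1}-style argument. Concretely, set $A_1=\langle A_\lambda\rangle_{t}\in \Rep(S_t\wr \mathbb Z_2)$ as in Theorem \ref{maintheorem1}, whose evaluation is $(\alpha^{(1)},0,0)$ with $\alpha^{(1)}_n = \lambda^n(\alpha_0 - \gamma_0/\lambda)$ by Lemma \ref{ext eval of A}. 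For $A_\pm\in \Rep(S_{t_\pm})$, take the generating commutative Frobenius algebra rescaled so that its handle endomorphism equals $\lambda\cdot \mathrm{id}$, and promote it to an extended Frobenius algebra by declaring $\phi_{A_\pm}:=\mathrm{id}_{A_\pm}$ and $\theta_{A_\pm}:=\pm\sqrt{\lambda}\,u_{A_\pm}$. Finally, set
\[
\mathcal A := A_1\boxtimes \mathbb 1 \boxtimes \mathbb 1 \;\oplus\; \mathbb 1\boxtimes A_+\boxtimes \mathbb 1\;\oplus\; \mathbb 1\boxtimes \mathbb 1\boxtimes A_-,
\]
with structure maps assembled coordinatewise.

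The first step is to check that $\mathcal A$ is indeed an extended Frobenius algebra with evaluation $(\alpha,\beta,\gamma)$. The extended axioms \eqref{extended Frob alg} for each summand reduce, for the two Deligne factors, to the single identity $m(\theta\otimes\theta)=m(\phi\otimes \mathrm{id})\Delta u$, which in the form $\theta_{\pm}^2 = x_\pm u_{\pm}$ becomes $\lambda\, u_\pm = \lambda\, u_\pm$ by the choice $\theta_{\pm}=\pm\sqrt{\lambda}u_\pm$ and $x_\pm=\lambda\,\mathrm{id}$; the first diagram is automatic since $\phi_\pm=\mathrm{id}$. For the evaluation, a direct computation gives $\alpha^{(\pm)}_n = \lambda^n t_\pm$, $\beta^{(\pm)}_n=\pm\sqrt{\lambda}\,\lambda^n t_\pm$, $\gamma^{(\pm)}_n=\lambda^{n+1}t_\pm$, and summing the three contributions reproduces $(\alpha,\beta,\gamma)$ exactly when $t=\tfrac12(\lambda\alpha_0-\gamma_0)$ and $t_{\pm}=\tfrac12(\gamma_0\pm\sqrt{\lambda}\beta_0)$. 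By Proposition \ref{frob algebras and TQFTs} and the universal property discussed in Section \ref{universal property}, this produces a symmetric tensor functor $F_{\mathcal A}\colon \VUCob_{\alpha,\beta,\gamma}\to \mathcal C$ which annihilates the handle polynomial $T-\lambda$ (the handle acts as $\lambda\cdot\mathrm{id}$ on every summand), hence factors through $\SUCob_{\alpha,\beta,\gamma}$ and then through $\PsUCob_{\alpha,\beta,\gamma}$ after Karoubian completion.

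The remaining steps are essential surjectivity and fullness. Essential surjectivity is straightforward: the three factors of $\mathcal A$ tensor-generate their respective components (e.g.\ $\langle A_\lambda\rangle_t^{\otimes n}$ generates $\Rep(S_t\wr \mathbb Z_2)$ by \cite[Remark 4.25]{M}, and analogously for $\Rep(S_{t_\pm})$), so $\mathcal A^{\otimes n}$ exhausts the simples of $\mathcal C$ up to direct summands. Fullness is proved, as in Lemma \ref{lemma 2}, by self-duality of $\mathcal A$ together with a surjectivity check on the connected subspaces of $\Hom_{\PsUCob}(0,n)\to \Hom_{\mathcal C}(\mathbb 1, \mathcal A^{\otimes n})$: one expresses a connected cobordism in $\SUCob$ in the form of Propositions \ref{prop:orientable with boundary}--\ref{prop unorientable with boundary} (a distinguished in/out pattern plus handles plus at most two crosscaps), computes its image by stacking the structure maps of the three summands of $\mathcal A$, and shows that the resulting connected diagrams span the connected hom-space of the exterior product. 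Finally, the analogue of Proposition \ref{proposition 1} applied to the induced symmetric tensor functor $\underline{F_{\mathcal A}}\colon \underline{\PsUCob_{\alpha,\beta,\gamma}}\to \mathcal C$ promotes it to an equivalence, as $\mathcal C$ is semisimple Karoubian with finite-dimensional hom-spaces.

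The main obstacle I expect is the fullness step. Unlike the $\theta=0$ setting of Theorem \ref{maintheorem1}, connected morphisms in $\SUCob_{\alpha,\beta,\gamma}$ include both orientable and unorientable pieces (the two families parametrized by $0,1,2$ crosscaps in Section \ref{unorientable connected cobordisms with boundary}), and one must carefully track how the three Frobenius summands $A_1,A_+,A_-$ interact under the external tensor product to fill out the full generator set of $\mathcal C$. Concretely, one will need an analogue of Theorem \ref{xi are li} for a combined family of ``decorated'' connected cobordisms keyed by the three choices of summand, together with a nonvanishing determinant for its matrix of inner products (computed via the explicit evaluations of closed surfaces by $(\alpha,\beta,\gamma)$). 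Provided this dimension count matches $\dim \Hom_{\mathcal C}(\mathbb 1,\mathcal A^{\otimes m})$, which is finite and computable via Mori's graphical calculus in each tensor factor, the equivalence follows.
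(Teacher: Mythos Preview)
Your overall strategy is correct and matches the paper's: build $\mathcal A$ as the direct sum of the three extended Frobenius algebras, verify its evaluation is $(\alpha,\beta,\gamma)$, factor through $\SUCob$, check essential surjectivity via tensor-generation, and then apply the analogue of Proposition~\ref{proposition 1} after semisimplification. (There is a small normalization slip in your evaluation of $A_\pm$: with the paper's choice $\varepsilon_\pm=\lambda^{-1}\cdot$(point) one gets $\alpha_n^{(\pm)}=\lambda^{n-1}t_\pm$ and $\gamma_n^{(\pm)}=\lambda^n t_\pm$, which is what makes the sums come out right; your formulas are off by a factor of $\lambda$.)

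The point where you diverge from the paper is the fullness step, and your proposed route there is more complicated than necessary. You anticipate needing an analogue of Theorem~\ref{xi are li}---a linear independence statement in the source, proved via a nonvanishing Gram determinant---but surjectivity on Hom's does not require that; it only requires hitting a spanning set in the target. The paper bypasses the determinant entirely (indeed, the relevant determinant statement is left as Conjecture~7.4) and instead argues directly: by the direct-sum decomposition of $\mathcal A^{\otimes m}$, it suffices to show that each of $\Hom_{\mathcal C}(\mathbb 1, A_t^{\otimes m}\boxtimes\mathbb 1\boxtimes\mathbb 1)$, $\Hom_{\mathcal C}(\mathbb 1,\mathbb 1\boxtimes A_+^{\otimes m}\boxtimes\mathbb 1)$, and $\Hom_{\mathcal C}(\mathbb 1,\mathbb 1\boxtimes\mathbb 1\boxtimes A_-^{\otimes m})$ is in the image, the mixed summands then following by induction. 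For the two Deligne factors, one simply computes $F_{\mathcal A}(\theta_m)$ and $F_{\mathcal A}(\theta_m[2])$ explicitly (the $A_t$-component vanishes since $\theta_{A_t}=0$) and takes the two obvious linear combinations to isolate the $A_+$- and $A_-$-pieces; these hit the connected generators of $\Rep(S_{t_\pm})$. For the $\Rep(S_t\wr\mathbb Z_2)$ factor, one reuses Lemma~\ref{surjective on connected}: the image of $\xi_{\overline J}^m$ has $A_t$-component equal to $F_{\overline J}^m$, and the already-established surjectivity onto the other two factors lets one subtract off the unwanted $A_\pm$-components. No dimension count or Gram matrix is needed.
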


The rest of this section is dedicated to giving a proof for Theorem \ref{maintheorem2}.
	
Recall that $\SUCob$ is a rigid symmetric tensor category with finite dimensional Hom spaces. In this case, the handle relation is $x-\lambda \text{Id}=0$, and so $\Hom$ spaces are spanned by cobordisms where all connected components have genus 0.
	
	\begin{proposition}
	For $m\geq 2$, let $\{\xi^m_{\overline J}  : \overline J \in \mathcal R_m\}$ be as in Definition \ref{xi definition}, and let $\theta_m:=\Delta^{m-1}\theta$ and  $\theta_m[2]:=\Delta^{m-1}m(\theta\otimes \theta)$. Then the set 
	\begin{align}\label{set t_m}
		\operatorname{\texttt{t}}_m:=\{\xi^m_{\overline J}  : \overline J \in \mathcal R_m \}\cup \{\theta_m, \theta_m[2]\},
	\end{align}
	is a spanning set for the subspace of $\Hom_{\operatorname{SUCob}_{\alpha,\beta,\gamma}}(0,m)$ spanned by connected cobordisms.
\end{proposition}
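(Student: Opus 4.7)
The plan is to exploit the topological classification of connected unoriented cobordisms with boundary from Section \ref{section: unoriented 2dim}, combined with the defining relations of $\SUCob$, to show that every connected cobordism $0 \to m$ equals a scalar multiple of an element of $\texttt{t}_m$. First I would use the handle relation $x = \lambda\,\Id$ in $\SUCob$, together with the three-crosscaps relation \eqref{3 crosscaps} (which rewrites three crosscaps as a handle with one crosscap), to reduce any connected cobordism $0 \to m$ to one of genus zero whose mid part is empty and whose total number of crosscaps is either $0$, $1$, or $2$. Each such reduction contributes only a factor of a power of $\lambda$.

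Next I would treat the orientable ($0$-crosscap) case and the unorientable cases separately. For a genus-zero orientable connected cobordism $0\to m$, Proposition \ref{prop:orientable with boundary} (with empty in-boundary) forces the cobordism to consist of a single cap $u$ as in-part, an empty mid part, and an out-part that is an iterated pair of pants $\Delta^{m-1}$ with each of the $m$ outgoing strands capped by either the identity or the involution $\phi$. This is precisely $\xi^m_{\{i_1,\dots,i_l\}}$ from Definition \ref{xi 1st def}, which by Lemma \ref{xi=xj} defines a well-defined element $\xi^m_{\overline J}\in\{\xi^m_{\overline J}:\overline J\in\mathcal R_m\}$. For a connected unorientable cobordism of genus zero, Proposition \ref{prop unorientable with boundary} supplies an analogous in/mid/out decomposition. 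With one crosscap, the in-part begins as $\theta:0\to 1$, leaving only a single strand, so no reverse pair of pants can appear; the whole cobordism collapses to $\Delta^{m-1}\theta=\theta_m$. With two crosscaps, the in-part begins as $\theta\otimes\theta:0\to 2$; for the resulting cobordism to remain connected with an empty mid part, exactly one reverse pair of pants $m:2\to 1$ must be inserted, and the complete cobordism becomes $\Delta^{m-1}m(\theta\otimes\theta)=\theta_m[2]$.

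The main obstacle is verifying that the freedom in the decompositions of Propositions \ref{prop:orientable with boundary} and \ref{prop unorientable with boundary}—namely the ordering of pairs of pants, the insertion of identity cylinders, and the placement of involutions next to crosscaps—does not produce genuinely new connected cobordisms beyond $\xi^m_{\overline J}$, $\theta_m$, and $\theta_m[2]$. This reduces to a careful bookkeeping argument using the (co)associativity, (co)commutativity, and (co)unit relations of $\UCob$ together with the extended Frobenius relations \eqref{crosscap and involution}, all of which descend to $\SUCob$. Since these moves only rearrange or absorb pieces of the decomposition, the three claimed representatives are exhaustive, which proves the spanning statement.
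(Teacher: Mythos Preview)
Your proposal is correct and follows essentially the same approach as the paper: the paper's proof is a one-liner citing Propositions \ref{prop:orientable with boundary} and \ref{prop unorientable with boundary} together with the fact that the handle relation forces genus zero, and you have simply unpacked what those propositions say in the case of a connected cobordism $0\to m$. Your ``main obstacle'' paragraph is unnecessary, since the canonical forms in those propositions already absorb the ambiguities you worry about (coassociativity of $\Delta^{m-1}$, absence of $\phi$'s in the unorientable out-part, etc.), so no further bookkeeping is required.
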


\begin{proof}
	This follows from Propositions \ref{prop:orientable with boundary} and \ref{prop unorientable with boundary}, since connected cobordisms have genus $0$. 
\end{proof}

Let $P=\{p_1, \dots, p_k\}$ be a partition of $\{1, \dots, m\}$. Let $P_1=\{p_{l_i}\}_{i=1}^{k_1},P_2$ and $P_3$ be disjoint subsets of $P$ such that $P=P_1\cup P_2\cup P_3.$ Choose $\overline{J_{l_i}}\in \mathcal R_{|p_{l_i}|}$ for all $1\leq i \leq k_1.$ 
Denote by $c_{P, \overline{J_{l_1}}, \dots, \overline{J_{l_{k_1}}}, \theta_{P_2}, \theta^2_{P_3}}$ the cobordism $0\to m$ with $k$ connected components where:
\begin{itemize}
	\item the $l_i$-th component is of the form $\xi^{|p_{l_i}|}_{\overline J_{l_i}}$ with out-boundary the circles in positions corresponding to the elements of $p_{l_i},$ for all $1\leq i \leq k_1$, 
	\item for $p\in P_2$, the corresponding connected component is of the form $\theta_{|p|}$ with out-boundary the circles in positions corresponding to the elements of $p,$ 
	\item for $p\in P_3$, the corresponding connected component is of the form $\theta_{|p|}[2]$ with out-boundary the circles in positions corresponding to the elements of $p$. 
\end{itemize}
Then 
\begin{align}\label{spanning set}
	\mathcal T_m:=	\left\{  c_{P, \overline{J_{l_1}}, \dots, \overline{J_{l_{k_1}}}, \theta_{P_2}, \theta^2_{P_3}} \right\}
\end{align}
moving over all possible such choices is a spanning set for $\Hom_{\SOCob}(0,m).$

	\begin{example}\label{example T_m for m=1} 
	Suppose that $\lambda \alpha_0\ne \gamma_0$ and $\gamma_0\ne \pm \sqrt{\lambda} \beta_0$. Then $\Hom_{\SUCob}(0,1)$ has dimension $3$, with basis
	\begin{align*}
		&u=	\begin{aligned}
			\resizebox{20pt}{!}{%
				\begin{tikzpicture}[tqft/cobordism/.style={draw,thick},
					tqft/view from=outgoing, tqft/boundary separation=30pt,
					tqft/cobordism height=40pt, tqft/circle x radius=8pt,
					tqft/circle y radius=4.5pt, tqft/every boundary component/.style={draw,rotate=90},tqft/every incoming
					boundary component/.style={draw,dotted,thick},tqft/every outgoing
					boundary component/.style={draw,thick}]
					\pic[tqft/cap,rotate=90,name=a, anchor={(1,-2)}]; 
			\end{tikzpicture} }
		\end{aligned}
		&	&\theta=	\begin{aligned}
			\resizebox{20pt}{!}{%
				\begin{tikzpicture}[tqft/cobordism/.style={draw,thick},
					tqft/view from=outgoing, tqft/boundary separation=30pt,
					tqft/cobordism height=40pt, tqft/circle x radius=8pt,
					tqft/circle y radius=4.5pt, tqft/every boundary component/.style={draw,rotate=90},tqft/every incoming
					boundary component/.style={draw,dotted,thick},tqft/every outgoing
					boundary component/.style={draw,thick}]
					\pic[tqft/cap,rotate=90,name=c, anchor={(1,-2)}]; 
					\node at ([xshift=-7pt]c-outgoing boundary)[color=gray]{$   \figureXv$};
			\end{tikzpicture} }
		\end{aligned},
		&	&\theta[2]=	\begin{aligned}
			\resizebox{20pt}{!}{%
				\begin{tikzpicture}[tqft/cobordism/.style={draw,thick},
					tqft/view from=outgoing, tqft/boundary separation=30pt,
					tqft/cobordism height=40pt, tqft/circle x radius=8pt,
					tqft/circle y radius=4.5pt, tqft/every boundary component/.style={draw,rotate=90},tqft/every incoming
					boundary component/.style={draw,dotted,thick},tqft/every outgoing
					boundary component/.style={draw,thick}]
					\pic[tqft/cap,rotate=90,name=c, anchor={(1,-2)}]; 
					\node at ([xshift=-7pt]c-outgoing boundary)[color=gray]{$   \figureXv$};
					\node at ([xshift=-3pt]c-outgoing boundary)[color=gray]{$   \figureXv$};
			\end{tikzpicture} }
		\end{aligned}:=
		\begin{aligned}
			\resizebox{50pt}{!}{%
				\begin{tikzpicture}[tqft/cobordism/.style={draw,thick},
					tqft/view from=outgoing, tqft/boundary separation=30pt,
					tqft/cobordism height=40pt, tqft/circle x radius=8pt,
					tqft/circle y radius=4.5pt, tqft/every boundary component/.style={draw,rotate=90},tqft/every incoming
					boundary component/.style={draw,dotted,thick},tqft/every outgoing
					boundary component/.style={draw,thick}]
					\pic[tqft/cap,rotate=90,name=c, anchor={(1,-2)}]; 
					\pic[tqft/cap,rotate=90,name=a, anchor={(0,-2)}]; 
					\node at ([xshift=-7pt]c-outgoing boundary)[color=gray]{$   \figureXv$};
					\node at ([xshift=-7pt]a-outgoing boundary)[color=gray]{$   \figureXv$};
					\pic[tqft/reverse pair of pants,
					rotate=90,name=b,at=(c-outgoing boundary)]; 
			\end{tikzpicture} }
		\end{aligned}.
	\end{align*}
In fact, from Example \ref{example 0 to 1} we know that the matrix of inner products has determinant
	\begin{align*}
\alpha_0(\lambda\gamma_0^2-\lambda^2\beta_0^2)+\gamma_0(\lambda\beta_0^2-\gamma_0^2)=(\alpha_0\lambda -\gamma_0)(\gamma_0^2-\lambda\beta_0^2).
\end{align*}

\end{example}

	\begin{example}\label{example T_m for m=2} Suppose that $\lambda\alpha_0- \gamma_0\ne 0, 2$ and $\gamma_0\pm \sqrt{\lambda}\beta_0\ne 0, 2$. Then the following is a  basis for $\Hom_{\SUCob}(0,2)$:
	\begin{align*}
		&	\begin{aligned}
			\resizebox{20pt}{!}{%
				\begin{tikzpicture}[tqft/cobordism/.style={draw,thick},
					tqft/view from=outgoing, tqft/boundary separation=30pt,
					tqft/cobordism height=40pt, tqft/circle x radius=8pt,
					tqft/circle y radius=4.5pt, tqft/every boundary component/.style={draw,rotate=90},tqft/every incoming
					boundary component/.style={draw,dotted,thick},tqft/every outgoing
					boundary component/.style={draw,thick}]
					\pic[tqft/cap,
					rotate=90,name=a,anchor={(1,-2)}]; 
					\pic[tqft/cap,
					rotate=90,name=b,anchor={(2,-2)}]; 
			\end{tikzpicture} }
		\end{aligned},
		&&	\begin{aligned}
			\resizebox{20pt}{!}{%
				\begin{tikzpicture}[tqft/cobordism/.style={draw,thick},
					tqft/view from=outgoing, tqft/boundary separation=30pt,
					tqft/cobordism height=40pt, tqft/circle x radius=8pt,
					tqft/circle y radius=4.5pt, tqft/every boundary component/.style={draw,rotate=90},tqft/every incoming
					boundary component/.style={draw,dotted,thick},tqft/every outgoing
					boundary component/.style={draw,thick}]
					\pic[tqft/cap,
					rotate=90,name=a,anchor={(1,-2)}]; 
					\node at ([xshift=-7pt]a-outgoing boundary 1) {$\figureXv$};
					\pic[tqft/cap,
					rotate=90,name=b,anchor={(2,-2)}]; 
			\end{tikzpicture} }
		\end{aligned},
		&&	\begin{aligned}
			\resizebox{20pt}{!}{%
				\begin{tikzpicture}[tqft/cobordism/.style={draw,thick},
					tqft/view from=outgoing, tqft/boundary separation=30pt,
					tqft/cobordism height=40pt, tqft/circle x radius=8pt,
					tqft/circle y radius=4.5pt, tqft/every boundary component/.style={draw,rotate=90},tqft/every incoming
					boundary component/.style={draw,dotted,thick},tqft/every outgoing
					boundary component/.style={draw,thick}]
					\pic[tqft/cap,
					rotate=90,name=a,anchor={(1,-2)}]; 
					\pic[tqft/cap,
					rotate=90,name=b,anchor={(2,-2)}]; 
					\node at ([xshift=-7pt]b-outgoing boundary 1) {$\figureXv$};
			\end{tikzpicture} }
		\end{aligned},
		&&	\begin{aligned}
			\resizebox{20pt}{!}{%
				\begin{tikzpicture}[tqft/cobordism/.style={draw,thick},
					tqft/view from=outgoing, tqft/boundary separation=30pt,
					tqft/cobordism height=40pt, tqft/circle x radius=8pt,
					tqft/circle y radius=4.5pt, tqft/every boundary component/.style={draw,rotate=90},tqft/every incoming
					boundary component/.style={draw,dotted,thick},tqft/every outgoing
					boundary component/.style={draw,thick}]
					\pic[tqft/cap,
					rotate=90,name=a,anchor={(1,-2)}]; 
					\node at ([xshift=-7pt]a-outgoing boundary 1) {$\figureXv$};
					\pic[tqft/cap,
					rotate=90,name=b,anchor={(2,-2)}]; 
					\node at ([xshift=-7pt]b-outgoing boundary 1) {$\figureXv$};
			\end{tikzpicture} },
		\end{aligned}
	\end{align*}
	\begin{align*}
		&	\begin{aligned}
			\resizebox{20pt}{!}{%
				\begin{tikzpicture}[tqft/cobordism/.style={draw,thick},
					tqft/view from=outgoing, tqft/boundary separation=30pt,
					tqft/cobordism height=40pt, tqft/circle x radius=8pt,
					tqft/circle y radius=4.5pt, tqft/every boundary component/.style={draw,rotate=90},tqft/every incoming
					boundary component/.style={draw,dotted,thick},tqft/every outgoing
					boundary component/.style={draw,thick}]
					\pic[tqft/cap,
					rotate=90,name=a,anchor={(1,-2)}]; 
					\node at ([xshift=-7pt]a-outgoing boundary 1) {$\figureXv$};
					\node at ([xshift=-3pt]a-outgoing boundary)[color=gray]{$   \figureXvl$};
					\pic[tqft/cap,
					rotate=90,name=b,anchor={(2,-2)}]; 
			\end{tikzpicture} }
		\end{aligned},
		&&	\begin{aligned}
			\resizebox{20pt}{!}{%
				\begin{tikzpicture}[tqft/cobordism/.style={draw,thick},
					tqft/view from=outgoing, tqft/boundary separation=30pt,
					tqft/cobordism height=40pt, tqft/circle x radius=8pt,
					tqft/circle y radius=4.5pt, tqft/every boundary component/.style={draw,rotate=90},tqft/every incoming
					boundary component/.style={draw,dotted,thick},tqft/every outgoing
					boundary component/.style={draw,thick}]
					\pic[tqft/cap,
					rotate=90,name=a,anchor={(1,-2)}]; 
					\pic[tqft/cap,
					rotate=90,name=b,anchor={(2,-2)}]; 
					\node at ([xshift=-7pt]b-outgoing boundary 1) {$\figureXv$};
					\node at ([xshift=-3pt]b-outgoing boundary)[color=gray]{$   \figureXvl$};
			\end{tikzpicture} }
		\end{aligned},
		&&	\begin{aligned}
			\resizebox{20pt}{!}{%
				\begin{tikzpicture}[tqft/cobordism/.style={draw,thick},
					tqft/view from=outgoing, tqft/boundary separation=30pt,
					tqft/cobordism height=40pt, tqft/circle x radius=8pt,
					tqft/circle y radius=4.5pt, tqft/every boundary component/.style={draw,rotate=90},tqft/every incoming
					boundary component/.style={draw,dotted,thick},tqft/every outgoing
					boundary component/.style={draw,thick}]
					\pic[tqft/cap,
					rotate=90,name=a,anchor={(1,-2)}]; 
					\pic[tqft/cap,
					rotate=90,name=b,anchor={(2,-2)}]; 
					\node at ([xshift=-7pt]a-outgoing boundary 1) {$\figureXv$};
					\node at ([xshift=-7pt]b-outgoing boundary 1) {$\figureXv$};
					\node at ([xshift=-3pt]b-outgoing boundary)[color=gray]{$   \figureXvl$};
			\end{tikzpicture} }
		\end{aligned},
		&&	\begin{aligned}
			\resizebox{20pt}{!}{%
				\begin{tikzpicture}[tqft/cobordism/.style={draw,thick},
					tqft/view from=outgoing, tqft/boundary separation=30pt,
					tqft/cobordism height=40pt, tqft/circle x radius=8pt,
					tqft/circle y radius=4.5pt, tqft/every boundary component/.style={draw,rotate=90},tqft/every incoming
					boundary component/.style={draw,dotted,thick},tqft/every outgoing
					boundary component/.style={draw,thick}]
					\pic[tqft/cap,
					rotate=90,name=a,anchor={(1,-2)}]; 
					\pic[tqft/cap,
					rotate=90,name=b,anchor={(2,-2)}]; 
					\node at ([xshift=-7pt]a-outgoing boundary 1) {$\figureXv$};
					\node at ([xshift=-7pt]b-outgoing boundary 1) {$\figureXv$};
					\node at ([xshift=-3pt]a-outgoing boundary)[color=gray]{$   \figureXvl$};
			\end{tikzpicture} }
		\end{aligned},
		&&	\begin{aligned}
			\resizebox{20pt}{!}{%
				\begin{tikzpicture}[tqft/cobordism/.style={draw,thick},
					tqft/view from=outgoing, tqft/boundary separation=30pt,
					tqft/cobordism height=40pt, tqft/circle x radius=8pt,
					tqft/circle y radius=4.5pt, tqft/every boundary component/.style={draw,rotate=90},tqft/every incoming
					boundary component/.style={draw,dotted,thick},tqft/every outgoing
					boundary component/.style={draw,thick}]
					\pic[tqft/cap,
					rotate=90,name=a,anchor={(1,-2)}]; 
					\pic[tqft/cap,
					rotate=90,name=b,anchor={(2,-2)}]; 
					\node at ([xshift=-7pt]a-outgoing boundary 1) {$\figureXv$};
					\node at ([xshift=-7pt]b-outgoing boundary 1) {$\figureXv$};
					\node at ([xshift=-3pt]a-outgoing boundary 1) {$\figureXvl$};
					\node at ([xshift=-3pt]b-outgoing boundary)[color=gray]{$   \figureXvl$};
			\end{tikzpicture} }
		\end{aligned},
	\end{align*}
	\begin{align*}
		&\begin{aligned}
			\resizebox{50pt}{!}{%
				\begin{tikzpicture}[tqft/cobordism/.style={draw,thick},
					tqft/view from=outgoing, tqft/boundary separation=30pt,
					tqft/cobordism height=40pt, tqft/circle x radius=8pt,
					tqft/circle y radius=4.5pt, tqft/every boundary component/.style={draw,rotate=90},tqft/every incoming
					boundary component/.style={draw,dotted,thick},tqft/every outgoing
					boundary component/.style={draw,thick}]
					\pic[tqft/cap,
					rotate=90,name=a,anchor={(1,-2)}]; 
					\pic[tqft/pair of pants,
					rotate=90,name=b,at=(a-outgoing boundary)]; 
			\end{tikzpicture} }
		\end{aligned}
		&&\begin{aligned}
			\resizebox{80pt}{!}{%
				\begin{tikzpicture}[tqft/cobordism/.style={draw,thick},
					tqft/view from=outgoing, tqft/boundary separation=30pt,
					tqft/cobordism height=40pt, tqft/circle x radius=8pt,
					tqft/circle y radius=4.5pt, tqft/every boundary component/.style={draw,rotate=90},tqft/every incoming
					boundary component/.style={draw,dotted,thick},tqft/every outgoing
					boundary component/.style={draw,thick}]
					\pic[tqft/cap,
					rotate=90,name=a,anchor={(1,-2)}]; 
					\pic[tqft/pair of pants,
					rotate=90,name=b,at=(a-outgoing boundary)]; 
					\pic[tqft/cylinder,rotate=90,name=g at=(b-outgoing boundary 1),anchor={(1.5,-4)}];
					\pic[tqft/cylinder,rotate=90,name=f, at=(b-outgoing boundary 2)];
					\node at ([xshift=20pt]f-incoming boundary 1){$\leftrightarrow$};
			\end{tikzpicture} }
		\end{aligned}
		&&\begin{aligned}
			\resizebox{50pt}{!}{%
				\begin{tikzpicture}[tqft/cobordism/.style={draw,thick},
					tqft/view from=outgoing, tqft/boundary separation=30pt,
					tqft/cobordism height=40pt, tqft/circle x radius=8pt,
					tqft/circle y radius=4.5pt, tqft/every boundary component/.style={draw,rotate=90},tqft/every incoming
					boundary component/.style={draw,dotted,thick},tqft/every outgoing
					boundary component/.style={draw,thick}]
					\pic[tqft/cap,
					rotate=90,name=a,anchor={(1,-2)}]; 
					\pic[tqft/pair of pants,
					rotate=90,name=b,at=(a-outgoing boundary)]; 
					\node at ([xshift=-7pt]a-outgoing boundary 1) {$\figureXv$};
			\end{tikzpicture} }
		\end{aligned}
		&&	\begin{aligned}
			\resizebox{50pt}{!}{%
				\begin{tikzpicture}[tqft/cobordism/.style={draw,thick},
					tqft/view from=outgoing, tqft/boundary separation=30pt,
					tqft/cobordism height=40pt, tqft/circle x radius=8pt,
					tqft/circle y radius=4.5pt, tqft/every boundary component/.style={draw,rotate=90},tqft/every incoming
					boundary component/.style={draw,dotted,thick},tqft/every outgoing
					boundary component/.style={draw,thick}]
					\pic[tqft/cap,
					rotate=90,name=a,anchor={(1,-2)}]; 
					\node at ([xshift=-7pt]a-outgoing boundary 1) {$\figureXv$};
					\node at ([xshift=-3pt]a-outgoing boundary 1) {$\figureXvl$};
					\pic[tqft/pair of pants,at=(a-outgoing boundary 1),
					rotate=90]; 
			\end{tikzpicture} }
		\end{aligned}.
	\end{align*}
	
	In fact, it is a generating set by the proposition above, and the matrix of inner products is
	\footnotesize
	\begin{align*}
		\left[	 \begin{array}{c c ccccccccccc}
			\alpha_0^2&    \alpha_0  \beta_0&    \alpha_0  \beta_0&      \beta_0^2&    \alpha_0  \gamma_0&    \alpha_0  \gamma_0&      \beta_0  \gamma_0&      \beta_0  \gamma_0&      \gamma_0^2&    \alpha_0&    \alpha_0&      \beta_0&      \gamma_0\\
			\alpha_0  \beta_0&    \alpha_0  \gamma_0&    \beta_0^2&      \beta_0  \gamma_0&  \alpha_0  \beta_0  \lambda&    \beta_0  \gamma_0&      \gamma_0^2&    \beta_0^2  \lambda&    \beta_0  \gamma_0  \lambda&    \beta_0&    \beta_0&      \gamma_0&    \beta_0  \lambda\\
			\alpha_0  \beta_0&    \beta_0^2&    \alpha_0  \gamma_0&     \beta_0  \gamma_0&    \beta_0  \gamma_0&  \alpha_0  \beta_0  \lambda&    \beta_0^2  \lambda&      \gamma_0^2&    \beta_0  \gamma_0  \lambda&    \beta_0&    \beta_0&      \gamma_0&    \beta_0  \lambda\\
			\beta_0^2&    \beta_0  \gamma_0&    \beta_0  \gamma_0&      \gamma_0^2&  \beta_0^2  \lambda&  \beta_0^2  \lambda&    \beta_0  \gamma_0  \lambda&    \beta_0  \gamma_0  \lambda&  \beta_0^2  \lambda^2&    \gamma_0&    \gamma_0&    \beta_0  \lambda&    \gamma_0  \lambda\\
			\alpha_0  \gamma_0&  \alpha_0  \beta_0  \lambda&    \beta_0  \gamma_0&     \beta_0^2  \lambda&  \alpha_0  \gamma_0  \lambda&    \gamma_0^2&    \beta_0  \gamma_0  \lambda&    \beta_0  \gamma_0  \lambda&    \gamma_0^2  \lambda&    \gamma_0&    \gamma_0&    \beta_0  \lambda&    \gamma_0  \lambda\\
			\alpha_0  \gamma_0&    \beta_0  \gamma_0&  \alpha_0  \beta_0  \lambda&    \beta_0^2 l&    \gamma_0^2&  \alpha_0  \gamma_0  \lambda&    \beta_0  \gamma_0  \lambda&    \beta_0  \gamma_0  \lambda&    \gamma_0^2  \lambda&    \gamma_0&    \gamma_0&    \beta_0  \lambda&    \gamma_0  \lambda\\
			\beta_0  \gamma_0&    \gamma_0^2&  \beta_0^2  \lambda&     \beta_0  \gamma_0  \lambda&  \beta_0  \gamma_0  \lambda&  \beta_0  \gamma_0  \lambda&    \gamma_0^2  \lambda&  \beta_0^2  \lambda^2&  \beta_0  \gamma_0  \lambda^2&  \beta_0  \lambda&  \beta_0  \lambda&    \gamma_0  \lambda&  \beta_0  \lambda^2\\
			\beta_0  \gamma_0&  \beta_0^2  \lambda&    \gamma_0^2&     \beta_0  \gamma_0  \lambda&  \beta_0  \gamma_0  \lambda&  \beta_0  \gamma_0  \lambda&  \beta_0^2  \lambda^2&    \gamma_0^2  \lambda&  \beta_0  \gamma_0  \lambda^2&  \beta_0  \lambda&  \beta_0  \lambda&    \gamma_0  \lambda&  \beta_0  \lambda^2\\
			\gamma_0^2&  \beta_0  \gamma_0  \lambda&  \beta_0  \gamma_0  \lambda&  \beta_0^2  \lambda^2&  \gamma_0^2  \lambda&  \gamma_0^2  \lambda&  \beta_0  \gamma_0  \lambda^2&  \beta_0  \gamma_0  \lambda^2&  \gamma_0^2  \lambda^2&  \gamma_0  \lambda&  \gamma_0  \lambda&  \beta_0  \lambda^2&  \gamma_0  \lambda^2\\
			\alpha_0&      \beta_0&      \beta_0&        \gamma_0&      \gamma_0&      \gamma_0&      \beta_0  \lambda&      \beta_0  \lambda&      \gamma_0  \lambda&  \alpha_0  \lambda&    \gamma_0&    \beta_0  \lambda&    \gamma_0  \lambda\\
			\alpha_0&      \beta_0&      \beta_0&        \gamma_0&      \gamma_0&      \gamma_0&      \beta_0  \lambda&      \beta_0  \lambda&      \gamma_0  \lambda&    \gamma_0&  \alpha_0  \lambda&    \beta_0  \lambda&    \gamma_0  \lambda\\
			\beta_0&      \gamma_0&      \gamma_0&      \beta_0  \lambda&    \beta_0  \lambda&    \beta_0  \lambda&      \gamma_0  \lambda&      \gamma_0  \lambda&    \beta_0  \lambda^2&  \beta_0  \lambda&  \beta_0  \lambda&    \gamma_0  \lambda&  \beta_0  \lambda^2\\
			\gamma_0&    \beta_0  \lambda&    \beta_0  \lambda&      \gamma_0  \lambda&    \gamma_0  \lambda&    \gamma_0  \lambda&    \beta_0  \lambda^2&    \beta_0  \lambda^2&    \gamma_0  \lambda^2&  \gamma_0  \lambda&  \gamma_0  \lambda&  \beta_0  \lambda^2&  \gamma_0 \lambda^2
		\end{array}\right],\end{align*}
	\normalsize
	which has determinant 
	\begin{align*}
		\lambda^3(\gamma_0-\sqrt{\lambda}\beta_0)^6(\gamma_0+\sqrt{\lambda}\beta_0)^6(\gamma_0-\sqrt{\lambda}\beta_0-2)(\gamma_0+\sqrt{\lambda}\beta_0-2)(\lambda\alpha_0-\gamma_0)^7(\lambda\alpha_0-\gamma_0-2).
	\end{align*}
	Hence
	\begin{align*}
		\dim(\Hom_{\SUCob}(0,2))=13.
	\end{align*}
\end{example}

\begin{conjecture}\label{conjecture for product}
	We conjecture that for $\lambda, \alpha_0, \beta_0, \gamma_0$ in $\mathbf k^{\times}$ such that  $\lambda\alpha_0-\gamma_0$ and $\gamma_0\pm \sqrt{\lambda}\beta_0$ are  non-negative even integers, the determinants of the matrices of inner products of morphisms in $\mathcal T_m$ are non-zero polynomials on $\lambda, \alpha_0, \beta_0, \gamma_0$, for all $m\geq 1$.
	
	We know this to be true for $m=0,1,2$, see the examples above. The general computation would follow the same lines as the proof of Lemma \ref{lemma: inner products matrix}. However, this case requires a more careful combinatorial analysis, since for instance, for certain $a_i, b_i\geq 0$ such that $\sum\limits_{i=1}^l(a_i+b_i)=k$, we will get
	\begin{align*}
		(\theta_k, \theta_k)=\gamma_{k-1}=(\theta_k,\theta^{\otimes a_1}\otimes \theta[2]^{\otimes b_1}\otimes \theta^{\otimes a_2}\otimes \dots \otimes \theta[2]^{\otimes b_l}).
	\end{align*}
	That is, the diagonal entry in $\mathcal T_k$ corresponding to the row of $\theta_k$ will be repeated in another entry of the same row. Moreover, it will actually be repeated more than once in the same row, since
	\begin{align*}
		(\theta_k,\theta^{\otimes a_1}\otimes \theta[2]^{\otimes b_1}\otimes \theta^{\otimes a_2}\otimes \dots \otimes \theta[2]^{\otimes b_l})=(\theta_k,\theta^{\otimes a_1'}\otimes \theta[2]^{\otimes b_1'}\otimes \theta^{\otimes a_2'}\otimes \dots \otimes \theta[2]^{\otimes b_j'}),
	\end{align*} 
	whenever $a_i, a_i', b_i, b_i'\geq 0$ are such that $\sum\limits_{i=1}^l(a_i+b_i)=\sum\limits_{i=1}^j(a_i'+b_i')=k$,
	and $a_1+\dots+a_l+2(b_1+\dots+b_l)	=a_1'+\dots+a_l'+2(b_1'+\dots+b_l')	$.
\end{conjecture}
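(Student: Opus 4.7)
The plan is to prove the conjecture by generalizing the dominance argument from Lemma \ref{lemma: inner products matrix}, combined with a block analysis indexed by set-partitions. The base cases $m=1,2$ are verified by the explicit computations in Examples \ref{example T_m for m=1} and \ref{example T_m for m=2}.

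First I would analyze the entries of the Gram matrix $G_m$ of $\mathcal{T}_m$. For $c,d\in\mathcal{T}_m$, the inner product $(c,d)$ is, up to a power of $\lambda$, a monomial $\alpha_0^a\beta_0^b\gamma_0^c$ (or zero), where $a,b,c$ count the connected components of the closed surface $c\sqcup(-d)$ with $0$, $1$, and $2$ crosscaps respectively (after reducing triples of crosscaps to handles plus a single crosscap via \eqref{3 crosscaps}). Extending the key observation from the proof of Lemma \ref{lemma: inner products matrix}, the total degree $a+b+c$ equals the number of components of $c\sqcup(-d)$, which is bounded by the number of components of either $c$ or $d$; equality holds only when $c$ and $d$ determine the same underlying set-partition $P$ of $\{1,\ldots,m\}$.

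Next I would group the rows and columns of $G_m$ into blocks $\mathcal{T}_m^P$ indexed by set-partitions $P$ of $\{1,\ldots,m\}$, and reduce to diagonal blocks. On a diagonal block $\mathcal{T}_m^P\times\mathcal{T}_m^P$, the inner products factor multiplicatively over the parts of $P$, reducing the analysis to the single-part case $\mathcal{T}_k^{\{\{1,\ldots,k\}\}}=\{\xi^k_{\overline J}:\overline J\in\mathcal R_k\}\cup\{\theta_k,\theta_k[2]\}$. This single-part Gram matrix is itself $2\times 2$ block-structured: the $\xi$-block is controlled by Theorem \ref{xi are li} (contributing the factor $(\lambda\alpha_0-\gamma_0)^{2^{k-1}-1}(\lambda\alpha_0+(2^{k-1}-1)\gamma_0)$), the $\{\theta_k,\theta_k[2]\}$-block yields a factor of $\gamma_0^2-\lambda\beta_0^2$ up to a power of $\lambda$ (generalizing the $m=1$ computation), and a Schur-complement computation on the cross entries should yield the shifted factors $(\gamma_0\pm\sqrt\lambda\beta_0-2j)$ and $(\lambda\alpha_0-\gamma_0-2j)$ for the appropriate $j\geq 0$ predicted by the factorization pattern observed in Examples \ref{example T_m for m=1} and \ref{example T_m for m=2}.

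The main obstacle will be controlling the off-diagonal blocks $\mathcal{T}_m^P\times\mathcal{T}_m^Q$ with $P\neq Q$, since these have non-zero entries whenever the combined closed surface is admissible, so $G_m$ is not literally block-diagonal. My plan is to introduce the refinement partial order on partitions and show that, after grading by total degree in $\alpha_0,\beta_0,\gamma_0$, the top-degree stratum of $G_m$ is block-triangular, forcing only the diagonal blocks to contribute to the leading term of $\det G_m$. If the direct combinatorial argument proves too intricate, a fallback strategy is to invoke Theorem \ref{maintheorem2}: for generic parameters $\underline{\operatorname{UCob}_{\alpha,\beta,\gamma}}$ is equivalent to a product of three semisimple Deligne categories whose $\operatorname{Hom}$-dimensions can be computed independently via the analogues of Corollary \ref{dimension of unoriented orientable} in each factor; checking that this count equals $|\mathcal{T}_m|$ would force linear independence of $\mathcal{T}_m$ for generic parameters and hence non-vanishing of $\det G_m$ as a polynomial in $\lambda,\alpha_0,\beta_0,\gamma_0$.
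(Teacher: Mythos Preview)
This statement is explicitly left as an open \emph{conjecture} in the paper; there is no proof to compare against. The paper only verifies it for $m\le 2$ by direct computation and explains precisely why the dominance argument of Lemma~\ref{lemma: inner products matrix} does not carry over verbatim: in the row of $\theta_k$ the diagonal entry $\gamma_{k-1}$ is repeated at several off-diagonal positions. So your task is not to match a known argument but to actually supply one.

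Your first strategy is sound at the outer layer but has a real gap at the inner one. The reduction to diagonal blocks is correct: if $c\in\mathcal T_m^P$ and $d\in\mathcal T_m^Q$, then the total degree of $(c,d)$ in $\alpha_0,\beta_0,\gamma_0$ equals $|P\vee Q|\le\min(|P|,|Q|)$, and a short counting argument shows that the Leibniz terms of maximal total degree in $\det G_m$ come exactly from block-preserving permutations, so the top-degree part of $\det G_m$ factors as $\prod_P\det G_m^P=\prod_P\prod_{p\in P}\det G^{\mathrm{conn}}_{|p|}$. The gap is that you never establish $\det G^{\mathrm{conn}}_k\neq 0$ for general $k$. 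The $\xi$-block is handled by Theorem~\ref{xi are li}, but the cross entries between the $\xi^k_{\overline J}$ and $\theta_k,\theta_k[2]$ involve $\beta$'s and $\gamma$'s in a nontrivial pattern, and ``a Schur-complement computation \dots\ should yield the shifted factors'' is exactly the step the paper flags as requiring a more careful combinatorial analysis. Without this, the first approach does not close.

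Your fallback, by contrast, is a genuine and complete route that the paper does not take. Theorem~\ref{maintheorem2} and Lemma~\ref{2nd lemma} are proved in the paper independently of the conjecture, so for generic $t,t_{\pm}$ the functor $F_{\mathcal A}$ is full onto a semisimple target. It therefore suffices to show $|\mathcal T_m|=\dim\Hom_{\mathcal C}(\mathbb 1,\mathcal A^{\otimes m})$. Both sides have the same exponential generating function
\[
\exp\!\Bigl(\tfrac12(e^{2x}-1)+2(e^{x}-1)\Bigr),
\]
the left because each part $p$ of a partition contributes $2^{|p|-1}+2$ choices in $\texttt t_{|p|}$, and the right because it is the product of the EGFs $e^{(e^{2x}-1)/2}$ for $\Rep(S_t\wr\mathbb Z_2)$ (Corollary~\ref{Corollary I}) and $e^{e^x-1}$ for each factor $\Rep(S_{t_\pm})$. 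Equality of dimensions forces $F_{\mathcal A}$ to be faithful for generic parameters, hence $\SUCob$ has no negligibles there, hence $\det G_m$ is nonzero at generic parameters and thus nonzero as a polynomial. You should lead with this argument rather than present it as a fallback.
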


\begin{remark}
	We conjectured that the exceptional values of $\lambda, \alpha_0, \beta_0, \gamma_0$ are those such that $\lambda\alpha_0-\gamma_0$ and $\gamma_0\pm \sqrt{\lambda}\beta_0$ are  non-negative even integers. We thus predict that the determinant of the matrix of inner products of $\mathcal T_m$, see \eqref{spanning set for product}, will factor into powers of the form $\lambda \alpha_0-\gamma_0-2k$ and $\gamma_0\pm \sqrt{\lambda}\beta_0-2k$, for $k=0, \dots, m-1$. Note that we know this to be the case for $m=1$ and $2$, see Examples \ref{example 0 to 1} and \ref{example T_m for m=2}.
\end{remark}

	To prove this, we will use the \cite[Lemma 2.6]{BEEO}, specialized to $\UCob$. We write the statement below for clarity. 

	\begin{proposition}\label{proposition 2}
		Let $\mathcal C$ be a semisimple Karoubian symmetric tensor category with finite dimensional Hom spaces. Suppose there is a symmetric tensor functor $F:\operatorname{UCob}_{\alpha,\beta,\gamma}\to \mathcal C$ that is surjective on $\Hom$'s. 
		Then $F$ induces a fully faithful symmetric tensor functor  
		\begin{align*}
			F:\underline{\operatorname{UCob}_{\alpha, \beta, \gamma} }\xrightarrow{\sim} \mathcal C.
		\end{align*}
	\end{proposition}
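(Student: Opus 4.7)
The plan is to follow the standard semisimplification argument, splitting the work into two parts: (i) showing that $F$ annihilates the tensor ideal $\mathcal N$ of negligible morphisms in $\operatorname{UCob}_{\alpha,\beta,\gamma}$, so that $F$ factors through the quotient $\underline{\operatorname{UCob}_{\alpha,\beta,\gamma}}=\operatorname{UCob}_{\alpha,\beta,\gamma}/\mathcal N$, and (ii) showing that the induced functor $\underline{F}$ is faithful. Fullness of $\underline{F}$ is immediate from the Hom-surjectivity assumption on $F$, so (i) and (ii) together give the claim. The two key inputs are that any symmetric tensor functor preserves categorical trace (under the identification $\operatorname{End}(\mathbb 1)\simeq \mathbf k$ on both sides) and that, in a semisimple Karoubian tensor category with finite dimensional Hom spaces over $\mathbf k$, the trace pairing is non-degenerate, so that negligible morphisms vanish (see \cite[Proposition 2.4]{EO}).

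For step (i), let $f\in\operatorname{Hom}_{\operatorname{UCob}_{\alpha,\beta,\gamma}}(X,Y)$ be negligible, i.e.\ $\operatorname{tr}_{\alpha,\beta,\gamma}(g\circ f)=0$ for every $g\in \operatorname{Hom}(Y,X)$. Applying $F$ and using its compatibility with the rigid symmetric monoidal structure yields $\operatorname{tr}_{\mathcal C}(F(g)\circ F(f))=F(\operatorname{tr}_{\alpha,\beta,\gamma}(g\circ f))=0$. The Hom-surjectivity hypothesis says every morphism $F(Y)\to F(X)$ in $\mathcal C$ is of the form $F(g)$ for some $g$, so $F(f)$ pairs trivially with all morphisms out of $F(Y)$. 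Because $\mathcal C$ is semisimple, this forces $F(f)=0$, and hence $F$ descends to a symmetric tensor functor $\underline{F}:\underline{\operatorname{UCob}_{\alpha,\beta,\gamma}}\to\mathcal C$.

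For step (ii), suppose $\bar f\in \operatorname{Hom}_{\underline{\operatorname{UCob}_{\alpha,\beta,\gamma}}}(X,Y)$ satisfies $\underline{F}(\bar f)=0$. Choose any lift $f\in\operatorname{Hom}_{\operatorname{UCob}_{\alpha,\beta,\gamma}}(X,Y)$, so that $F(f)=0$. For every $g\in \operatorname{Hom}(Y,X)$ we then have
\begin{equation*}
\operatorname{tr}_{\alpha,\beta,\gamma}(g\circ f)\,\operatorname{id}_{\mathbb 1}=F(\operatorname{tr}_{\alpha,\beta,\gamma}(g\circ f))=\operatorname{tr}_{\mathcal C}(F(g)\circ F(f))=0.
\end{equation*}
Thus $f$ lies in $\mathcal N$, so $\bar f=0$ in $\underline{\operatorname{UCob}_{\alpha,\beta,\gamma}}$. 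This proves faithfulness, and combined with the fullness inherited from $F$ gives fully faithful.

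The argument is essentially formal once the right framework is in place, so the main obstacle is verifying the compatibility of $F$ with trace for the cobordism category. Concretely, one needs to know that the trace of Definition \ref{trace}, defined geometrically by closing a cobordism $M:n\to n$ into a closed surface and evaluating via $\alpha,\beta,\gamma$, agrees with the abstract categorical trace coming from the rigid, spherical structure of $\operatorname{UCob}_{\alpha,\beta,\gamma}$. This is standard: the closing procedure is exactly the composition $\varepsilon^{\otimes n}\circ M\circ u^{\otimes n}$ read as a scalar in $\operatorname{End}(\mathbb 1)$, and both $\varepsilon,u$ and the symmetric monoidal structure are preserved by $F$. Once this identification is made, steps (i) and (ii) become a single application of the general principle that semisimplification is the universal semisimple quotient, exactly as in \cite[Lemma 2.6]{BEEO}.
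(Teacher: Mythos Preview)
Your two-step argument is exactly the content of \cite[Lemma 2.6]{BEEO}, which is all the paper invokes here (the paper does not give its own proof of Proposition~\ref{proposition 2}, only states it as a specialization of that lemma). Steps (i) and (ii) are correct as written.

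There is, however, a genuine error in your last paragraph. You claim that the closing procedure of Definition~\ref{trace} is the composition $\varepsilon^{\otimes n}\circ M\circ u^{\otimes n}$. This is false: that expression caps off each in- and out-boundary circle of $M$ separately with a disc, whereas Definition~\ref{trace} connects the $i$-th source circle to the $i$-th target circle by an annulus. (For instance, if $M=\operatorname{id}_1$ your formula gives a sphere, evaluated to $\alpha_0$, while the trace of $\operatorname{id}_1$ is the torus, evaluated to $\alpha_1$.) The correct identification is that the annular closing \emph{is} the categorical trace once one uses the self-duality of the circle furnished by the bent cylinders $0\to 2$ and $2\to 0$ as coevaluation and evaluation; since symmetric tensor functors preserve duality data, $F$ then preserves this trace. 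With this correction your argument goes through and matches the paper's cited approach.
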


	We define now Deligne’s  category $\Rep(S_t)$ for $t\in \mathbf k$, following \cite{CO,D}.
	\begin{definition}\cite[Definition 2.11]{CO}
		Consider the category $\Rep_0(S_t)$ given by:
		\begin{itemize}
			\item Objects are non-negative integers. We represent $n\in \mathbb Z_{\geq 0}$ by $n$ horizontal dots (zero is represented by ``no dots"). 
			\item Morphisms $m\to m'$ are given by $\mathbf k$-linear combinations of partitions of the set $\{1, \dots, m, 1', \dots, m'\}$. Such a partition is represented by a diagram with $m$ points on top labelled $1$ to $m$ and $m'$ points on the bottom labeled $1'$ to $m'$, such that points in the same part of the partition are connected by a path. 
			\item Composition is as described in \cite[Definition 2.11]{CO}.
		\end{itemize}
	\end{definition}

	\begin{definition}
		\cite{D} Let $\Rep(S_t)$ be the pseudo-abelian envelope of $\Rep_0(S_t)$.
	\end{definition}
	
	\begin{remark}
		Let $t\in \mathbf k^{\times}$. Recall that we represent morphisms in $\Rep_0(S_t)$ as going from top to bottom. There is a canonical Frobenius algebra $A=A_{\lambda}$ in $\Rep(S_t)$, given by the object 1 (represented as 1 point) and unit, mutiplication, counit and comultiplication maps given by
		\begin{align*}
			&	u=
			\begin{aligned}
				\resizebox{4pt}{!}{%
					\begin{tikzpicture}[block/.style={draw, rectangle, minimum height=0.5cm,fill=white}]
						\draw[fill=black] (0,0) circle (0.05cm and 0.05cm);
						\draw[fill=white,color=white] (0,1) circle (0.05cm and 0.05cm);
				\end{tikzpicture}}
			\end{aligned}\ \ \ ,
			&&	m=
			\begin{aligned}
				\resizebox{40pt}{!}{%
					\begin{tikzpicture}[block/.style={draw, rectangle, minimum height=0.5cm,fill=white}]
						\draw[fill=black] (-0.5,1) circle (0.05cm and 0.05cm);
						\draw[fill=black] (0.5,1) circle (0.05cm and 0.05cm);
						\draw[fill=black] (0,0) circle (0.05cm and 0.05cm);
						\draw  (0,0) to   (0.5,1);
						\draw  (0,0) to   (-0.5,1);
						\draw  (0.5,1) to   (-0.5,1);
				\end{tikzpicture}}
			\end{aligned},
			&&	\varepsilon= \frac{1}{\lambda} \
			\begin{aligned}
				\resizebox{4pt}{!}{%
					\begin{tikzpicture}[block/.style={draw, rectangle, minimum height=0.5cm,fill=white}]
						\draw[fill=black] (0,1) circle (0.05cm and 0.05cm);
						\draw[fill=white,color=white] (0,0) circle (0.05cm and 0.05cm);
				\end{tikzpicture}}
			\end{aligned}\ \ \ ,
			&&\Delta = \lambda
			\begin{aligned}
				\resizebox{40pt}{!}{%
					\begin{tikzpicture}[block/.style={draw, rectangle, minimum height=0.5cm,fill=white}]
						\draw[fill=black] (-0.5,0) circle (0.05cm and 0.05cm);
						\draw[fill=black] (0.5,0) circle (0.05cm and 0.05cm);
						\draw[fill=black] (0,1) circle (0.05cm and 0.05cm);
						\draw  (0,1) to   (0.5,0);
						\draw  (0,1) to   (-0.5,0);
						\draw  (0.5,0) to   (-0.5,0);
				\end{tikzpicture}}
			\end{aligned},
		\end{align*}
	respectively. Then if
		\begin{align*}
			&&\phi=
			\begin{aligned}
				\resizebox{4pt}{!}{%
					\begin{tikzpicture}[block/.style={draw, rectangle, minimum height=0.5cm,fill=white}]
						\draw[fill=black] (0,0) circle (0.05cm and 0.05cm);
						\draw[fill=black] (0,1) circle (0.05cm and 0.05cm);
						\draw  (0,1) to   (0,0);
				\end{tikzpicture}}
			\end{aligned}
			&&\text{and}
			&&	\theta=  \pm \sqrt{\lambda} u,
		\end{align*}
		we get that $A$ with these structure maps is an extended Frobenius algebra in $\Rep(S_t)$, see Definition 	\ref{extended Frob alg} 
	\end{remark}
	
	From now on, let 
	\begin{align*}
		&&	t=\frac{1}{2}(\lambda\alpha_0-\gamma_0), &&t_+=\frac{1}{2}(\gamma_0+\sqrt{\lambda}\beta_0) &&\text{and} &&t_-=\frac{1}{2}(\gamma_0-\sqrt{\lambda}\beta_0).
	\end{align*}
	Let $A_{\pm}=A_{\pm,\lambda}$ denote the extended Frobenius algebras in $\Rep(S_{t_{\pm}})$ as defined above, respectively, and let $A_t=A_{t,\lambda}:=\langle A \rangle_t$ in $\Rep(S_t \wr \mathbb Z_2)$ be the extended Frobenius algebra induced from the algebra of functions introduced in Example \ref{algebra of functions} for $n=1$. We remark that the structure maps of these algebras depend on $\lambda$, but we drop the subscript to simplify notation.
	
	\begin{lemma}
The extended Frobenius algebra
		\begin{align*}
			\mathcal A := (A_t \boxtimes \mathbb 1 \boxtimes \mathbb 1) \oplus (\mathbb 1 \boxtimes A_+ \boxtimes \mathbb 1) \oplus (\mathbb 1 \boxtimes \mathbb 1 \boxtimes A_-)
		\end{align*}
		in $\Rep(S_t \wr \mathbb Z_2) \boxtimes \Rep(S_{t_+}) \boxtimes \Rep(S_{t_-})$ has evaluation $\alpha, \beta, \gamma$ as defined on Equation \eqref{sequences}.
	\end{lemma}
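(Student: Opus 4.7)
The strategy is to compute the evaluation of each of the three summands of $\mathcal{A}$ separately and then add them. Recall that the evaluation of a Frobenius algebra $A$ in a symmetric tensor category is given by $\alpha_n = \varepsilon_A x^n u_A$, $\beta_n = \varepsilon_A x^n y u_A$, $\gamma_n = \varepsilon_A x^n y^2 u_A$, where $x = m_A\Delta_A$ and $y = m_A(\theta_A\otimes \id)$. I will use that for an extended Frobenius algebra $A'\boxtimes \mathbb{1}$ sitting inside an external tensor product $\mathcal{C}'\boxtimes \mathcal{C}''$, its evaluation equals that of $A'$ in $\mathcal{C}'$; and, as recorded in Lemma~\ref{gen functions caso 2}, the evaluation of a direct sum of extended Frobenius algebras is the termwise sum of the evaluations of the summands. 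So it suffices to compute the evaluations of $A_t\in \Rep(S_t\wr\mathbb{Z}_2)$, $A_+\in \Rep(S_{t_+})$ and $A_-\in \Rep(S_{t_-})$ individually.

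For $A_t=\langle A_\lambda\rangle_t$, Lemma~\ref{ext eval of A} already gives the evaluation $\alpha^t_n = 2\lambda^{n-1}t$, $\beta^t_n=\gamma^t_n=0$, since $\theta_{A_\lambda}=0$ in $\Rep(\mathbb{Z}_2)$. For $A_\pm\in\Rep(S_{t_\pm})$, the graphical calculus for $\Rep(S_{t_\pm})$ makes the calculation immediate: composing the $\lambda$-rescaled comultiplication with the multiplication produces no interior loops and yields $x_\pm = m\Delta = \lambda\,\id$, so $x_\pm^n = \lambda^n\id$. Since $\theta_\pm = \pm\sqrt{\lambda}\,u$, we get $y_\pm = m(\theta_\pm\otimes \id) = \pm\sqrt{\lambda}\,\id$ and hence $y_\pm^2 = \lambda\,\id$. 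Finally, $\varepsilon_\pm u_\pm$ closes up to a single loop with the $\tfrac{1}{\lambda}$ rescaling from the counit, giving $\varepsilon_\pm u_\pm = t_\pm/\lambda$. Combining these,
\begin{align*}
\alpha^{\pm}_n &= \lambda^{n-1}\,t_\pm,  &\beta^{\pm}_n &= \pm\sqrt{\lambda}\,\lambda^{n-1}\,t_\pm, &\gamma^{\pm}_n &= \lambda^{n}\,t_\pm.
\end{align*}

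Adding the three contributions and substituting $t=\tfrac{1}{2}(\lambda\alpha_0-\gamma_0)$, $t_+=\tfrac{1}{2}(\gamma_0+\sqrt{\lambda}\beta_0)$, $t_-=\tfrac{1}{2}(\gamma_0-\sqrt{\lambda}\beta_0)$ (so that $2t+t_++t_-=\lambda\alpha_0$, $t_+-t_-=\sqrt{\lambda}\beta_0$, $t_++t_-=\gamma_0$), the evaluation $(\alpha,\beta,\gamma)$ of $\mathcal{A}$ becomes
\begin{align*}
\alpha_n &= 2\lambda^{n-1}t + \lambda^{n-1}t_+ + \lambda^{n-1}t_- = \lambda^{n-1}(2t+t_++t_-) = \lambda^n\alpha_0,\\
\beta_n &= 0 + \sqrt{\lambda}\,\lambda^{n-1}t_+ - \sqrt{\lambda}\,\lambda^{n-1}t_- = \sqrt{\lambda}\,\lambda^{n-1}(t_+-t_-) = \lambda^n\beta_0,\\
\gamma_n &= 0 + \lambda^n t_+ + \lambda^n t_- = \lambda^n(t_++t_-) = \lambda^n\gamma_0,
\end{align*}
which matches the sequences in \eqref{sequences}. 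The only part requiring genuine verification is the computation $x_\pm = \lambda\,\id$ and $\varepsilon_\pm u_\pm = t_\pm/\lambda$ in Deligne's category, which is routine graphical calculus; everything else is bookkeeping. No serious obstacle is anticipated, since additivity of the evaluation for direct sums and multiplicativity under $\boxtimes$ (in the sense that only one factor contributes) are already established machinery in Section~\ref{section: direct sums}.
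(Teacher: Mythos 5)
Your proposal is correct and follows essentially the same route as the paper: compute the evaluation of each of the three summands (quoting Lemma \ref{ext eval of A} for $A_t$ and doing the short graphical computation in $\Rep(S_{t_\pm})$ for $A_\pm$), then add termwise and use the identities $2t+t_++t_-=\lambda\alpha_0$, $t_+-t_-=\sqrt{\lambda}\beta_0$, $t_++t_-=\gamma_0$. Your explicit verification of $x_\pm=\lambda\,\id$, $y_\pm=\pm\sqrt{\lambda}\,\id$ and $\varepsilon_\pm u_\pm=t_\pm/\lambda$ is exactly the "easy to check" step the paper leaves implicit, and all the resulting sequences match.
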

	\begin{proof}
		The evaluation of $A_t\boxtimes \mathbb 1\boxtimes \mathbb 1$ was computed in Lemma \ref{ext eval of A}, and is given by
		\begin{align*}
			&&\alpha_t=(2\lambda^{-1}t, 2t, 2\lambda t, \dots) &&\text{and} &&\beta_t=\gamma_t=(0,\dots).
		\end{align*}
		On the other hand, it is easy to check that the evaluations of $\mathbb 1 \boxtimes A_+ \boxtimes \mathbb 1$ and $\mathbb 1 \boxtimes \mathbb 1 \boxtimes A_-$ are 
		\begin{align*}
			\alpha_{\pm}=(\lambda^{-1}t_{\pm}, t_{\pm}, \lambda t_{\pm}, \dots), &&\beta_{\pm}=(\pm\lambda^{-1/2}t_{\pm},\pm\lambda^{1/2} t_{\pm}, \pm\lambda^{3/2}  t_{\pm}, \dots),&&\gamma_{\pm}=(t_{\pm},\lambda t_{\pm}, \lambda^2 t_{\pm}, \dots),
		\end{align*}
		respectively.
		
	Call $\tilde \alpha, \tilde \beta$ and $\tilde \gamma$ the evaluation sequences of $\mathcal A$. Note that $2t+t_{+}+t_{-}=\lambda\alpha_0$. Hence the $\tilde\alpha$ sequence of the evaluation of $\mathcal A$ is
		\begin{align*}
		\tilde	\alpha&=\alpha_t+\alpha_{+}+\alpha_{-}\\
			&=(\lambda^{-1}(2t+t_++t_{-}), 2t+t_++t_{-}, \lambda(2t+t_++t_{-}), \dots)\\
			&=(\alpha_0, \lambda \alpha_0, \lambda^2\alpha_0, \dots),
		\end{align*}
		as desired. 
		On the other hand, $t_+ -t_- =\sqrt{\lambda}\beta_0$ and so
		\begin{align*}
		\tilde	\beta&=\beta_t+\beta_+ + \beta_-\\
			&=(\lambda^{-1/2}(t_+ -t_-),\lambda^{1/2} (t_+ -t_-), \lambda^{3/2}  (t_+ -t_-), \dots)\\
			&=(\beta_0, \lambda\beta_0, \lambda^2\beta_0, \dots).
		\end{align*}
		Lastly, $t_+ +t_- =\gamma_0$, thus
		\begin{align*}
		\tilde	\gamma&=\gamma_t+\gamma_+ + \gamma_-\\
			&=(t_+ +t_-,\lambda (t_+ +t_-), \lambda^{2}  (t_+ +t_-), \dots)\\
			&=(\gamma_0, \lambda\gamma_0, \lambda^2\gamma_0, \dots).
		\end{align*}
	\end{proof}
	
	It follows from the previous lemma and the universal property of $\VUCob$, see Section \ref{universal property}, that there exists a symmetric tensor functor
	\begin{align*}
		F_{\mathcal A}: \VUCob \to \Rep(S_t \wr \mathbb Z_2) \boxtimes \Rep(S_{t_+}) \boxtimes \Rep(S_{t_-}),
	\end{align*}
	mapping  the circle object of $\VUCob$ to  $\mathcal A,$ and the extended Frobenius algebra structure maps of the circle to those of $\mathcal A$.

	\begin{lemma}
		$F_{\mathcal A}: \operatorname{VUCob}_{\alpha,\beta,\gamma} \to \Rep(S_t \wr \mathbb Z_2) \boxtimes \Rep(S_{t_+}) \boxtimes \Rep(S_{t_-})$ annihilates the handle relation $x-\lambda \Id,$ and so it factors through $\SUCob$. 
	\end{lemma}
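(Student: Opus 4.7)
The plan is to compute the handle endomorphism $F_{\mathcal A}(x) = m_{\mathcal A}\circ \Delta_{\mathcal A}$ on each of the three direct summands of $\mathcal A$ separately, and show it equals $\lambda$ times the identity on each piece. Since the Frobenius structure on a direct sum of (extended) Frobenius subalgebras has multiplication and comultiplication that are block-diagonal on the summands, the handle endomorphism of $\mathcal A$ is the direct sum of the handle endomorphisms of $A_t\boxtimes \mathbb 1\boxtimes \mathbb 1$, $\mathbb 1\boxtimes A_+\boxtimes \mathbb 1$ and $\mathbb 1\boxtimes \mathbb 1\boxtimes A_-$. So it suffices to verify the handle relation $m\Delta = \lambda\,\mathrm{id}$ on each summand.

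For the first summand $A_t\boxtimes \mathbb 1\boxtimes \mathbb 1$, this has already been done: in the proof of Lemma \ref{ext eval of A}, equation \eqref{computation of x in StC} shows via graphical calculus in $\Rep(S_t\wr \mathbb Z_2)$ that $m_{\langle A\rangle_t}\Delta_{\langle A \rangle_t} = \lambda\,\mathrm{id}_{\langle A\rangle_t}$. The key input is that in $\Rep(\mathbb Z_2)$, the underlying algebra $A=A_{\lambda}$ satisfies $m_A\Delta_A=\lambda\,\mathrm{id}_A$ on both $\delta_1$ and $\delta_{-1}$, which then lifts to $\langle A\rangle_t$ through the retraction/section $\mu_{\mathcal C}, \Delta_{\mathcal C}$.

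For the summands $\mathbb 1\boxtimes A_{\pm}\boxtimes \mathbb 1$ and $\mathbb 1\boxtimes \mathbb 1\boxtimes A_{\pm}$, I would use the partition description of $\Rep(S_{t_{\pm}})$. With the convention that $\Delta = \lambda\cdot(\text{split})$ and $m = (\text{merge})$ at the level of partitions, the composition $m\circ \Delta$ stacks a trivalent split on top of a trivalent merge along two middle points. Under Deligne's composition rule, identifying the two middle points produces the partition $\{\{1_{\mathrm{top}}, 1_{\mathrm{bot}}\}\}$ with no free internal components, and hence no extra factor of $t_{\pm}$ appears. The result is $\lambda$ times the identity partition, i.e.\ $\lambda\,\mathrm{id}_{A_{\pm}}$.

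Putting these computations together, $F_{\mathcal A}(x) = \lambda\,\mathrm{Id}_{\mathcal A}$, so $F_{\mathcal A}(x-\lambda\,\mathrm{Id}) = 0$. By the universal property of the quotient (see Section \ref{universal property}), $F_{\mathcal A}$ descends to a symmetric tensor functor $\SUCob\to \Rep(S_t\wr \mathbb Z_2)\boxtimes \Rep(S_{t_+})\boxtimes \Rep(S_{t_-})$. I do not anticipate any substantial obstacle here, since each of the three handle computations is a direct graphical-calculus exercise in the respective pseudo-abelian categories; the main care required is just checking that the Frobenius structure on the direct-sum object $\mathcal A$ splits as claimed, which follows from the construction of Frobenius algebras on direct sums of Frobenius subalgebras (as used implicitly in Section \ref{section: direct sums}).
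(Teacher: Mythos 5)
Your proposal is correct and follows essentially the same route as the paper: reduce to the three summands, invoke the graphical computation in equation \eqref{computation of x in StC} for $\langle A\rangle_t$, and verify $m_{A_{\pm}}\Delta_{A_{\pm}}=\lambda\,\mathrm{id}$ by composing the trivalent partitions in $\Rep(S_{t_{\pm}})$. The only difference is that you spell out the block-diagonal splitting of the handle endomorphism across the direct sum, which the paper leaves implicit.
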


\begin{proof}
	We know by equation \eqref{computation of x in StC} that $m_{A_t}\Delta_{A_t}-\lambda \text{Id}_{A_t}=0$ in $\Rep(S_t\wr \mathbb Z_2)$. On the other hand, 
	\begin{align*}
		m_{A_{\pm}} \Delta_{A_{\pm}} =\lambda
		\begin{aligned}
			\resizebox{30pt}{!}{%
				\begin{tikzpicture}[block/.style={draw, rectangle, minimum height=0.5cm,fill=white}]
					\draw[fill=black] (-0.5,0) circle (0.05cm and 0.05cm);
					\draw[fill=black] (0.5,0) circle (0.05cm and 0.05cm);
					\draw[fill=black] (0,1) circle (0.05cm and 0.05cm);
					\draw  (0,1) to   (0.5,0);
					\draw  (0,1) to   (-0.5,0);
					\draw  (0.5,0) to   (-0.5,0);
			\end{tikzpicture}}
		\end{aligned} \circ \begin{aligned}
		\resizebox{30pt}{!}{%
			\begin{tikzpicture}[block/.style={draw, rectangle, minimum height=0.5cm,fill=white}]
				\draw[fill=black] (-0.5,1) circle (0.05cm and 0.05cm);
				\draw[fill=black] (0.5,1) circle (0.05cm and 0.05cm);
				\draw[fill=black] (0,0) circle (0.05cm and 0.05cm);
				\draw  (0,0) to   (0.5,1);
				\draw  (0,0) to   (-0.5,1);
				\draw  (0.5,1) to   (-0.5,1);
		\end{tikzpicture}}
	\end{aligned}= \lambda \begin{aligned}
	\resizebox{3pt}{!}{%
		\begin{tikzpicture}[block/.style={draw, rectangle, minimum height=0.5cm,fill=white}]
			\draw[fill=black] (0,0) circle (0.05cm and 0.05cm);
			\draw[fill=black] (0,1) circle (0.05cm and 0.05cm);
			\draw  (0,1) to   (0,0);
	\end{tikzpicture}}
\end{aligned},
	\end{align*}
in $\Rep(S_{t_{\pm}})$. It follows that $F_{\mathcal A}(x-\lambda\text{Id}_{\mathcal A})=0$.
\end{proof}

	\begin{lemma}\label{1st lemma}
		The functor $	F_{\mathcal A}: \operatorname{SUCob}_{\alpha, \beta, \gamma} \to \Rep(S_t \wr \mathbb Z_2) \boxtimes \Rep(S_{t_+}) \boxtimes \Rep(S_{t_-})$ satisfies that any indecomposable object of $\Rep(S_t \wr \mathbb Z_2) \boxtimes \Rep(S_{t_+}) \boxtimes \Rep(S_{t_-})$ is a direct sumand of $F(n)$ for some $n$. 
	\end{lemma}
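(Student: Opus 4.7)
The plan is to show that every indecomposable object in the target arises as a summand of some $F_{\mathcal A}(n) = \mathcal A^{\otimes n}$. I will first expand $\mathcal A^{\otimes n}$ using distributivity of $\otimes$ over $\oplus$ in a pseudo-abelian tensor category, and then invoke generation results for each tensor factor.

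Concretely, writing $\mathcal A_1 := A_t\boxtimes \mathbb 1\boxtimes \mathbb 1$, $\mathcal A_2 := \mathbb 1 \boxtimes A_+\boxtimes \mathbb 1$ and $\mathcal A_3:=\mathbb 1 \boxtimes \mathbb 1\boxtimes A_-$, the $n$-fold tensor power decomposes as
\begin{align*}
\mathcal A^{\otimes n} \;\simeq\; \bigoplus_{(i_1,\dots,i_n)\in\{1,2,3\}^n} \mathcal A_{i_1}\otimes \dots \otimes \mathcal A_{i_n}.
\end{align*}
Since the tensor product in $\Rep(S_t\wr \mathbb Z_2)\boxtimes \Rep(S_{t_+})\boxtimes \Rep(S_{t_-})$ is computed factor-wise, each summand is isomorphic to an object of the form $\langle A\rangle_t^{\otimes a}\boxtimes A_+^{\otimes b}\boxtimes A_-^{\otimes c}$ with $a+b+c = n$, and every such triple $(a,b,c)$ appears for $n=a+b+c$.

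Next, I will invoke the generation results for each tensor factor. By \cite[Remark 4.25]{M}, the objects $\langle A \rangle_t^{\otimes a}$ generate $\Rep(S_t\wr \mathbb Z_2)$ as a pseudo-abelian category. By Deligne's construction (see \cite{CO, D}), the objects $A_{\pm}^{\otimes b}$ generate $\Rep(S_{t_\pm})$ as a pseudo-abelian category. Combining these via the universal property of the external tensor product \cite[Section 2.2]{O}, objects of the form $\langle A \rangle_t^{\otimes a}\boxtimes A_+^{\otimes b}\boxtimes A_-^{\otimes c}$ generate the whole product $\Rep(S_t\wr \mathbb Z_2)\boxtimes \Rep(S_{t_+})\boxtimes \Rep(S_{t_-})$ as a pseudo-abelian category.

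Finally, I will combine the two steps: any indecomposable object $X$ of the product category is a direct summand of some generator $\langle A\rangle_t^{\otimes a}\boxtimes A_+^{\otimes b}\boxtimes A_-^{\otimes c}$, which by the above decomposition is itself a direct summand of $\mathcal A^{\otimes(a+b+c)} = F_{\mathcal A}(a+b+c)$. Being a summand is transitive, so $X$ is a summand of $F_{\mathcal A}(n)$ for $n = a+b+c$, as required. The argument is essentially formal once the decomposition of $\mathcal A^{\otimes n}$ is written out; I do not expect any real obstacle beyond carefully invoking the pseudo-abelian generation in each factor and the universal property of $\boxtimes$.
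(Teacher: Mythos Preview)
Your proposal is correct and follows essentially the same approach as the paper: the paper's proof simply notes that $F_{\mathcal A}(1)=\mathcal A$, that every object of $\Rep(S_t\wr\mathbb Z_2)$ (respectively $\Rep(S_{t_\pm})$) is a summand of some tensor power of $A_t$ (respectively $A_\pm$), and declares that the statement follows. You have spelled out the implicit step---the distributive expansion of $\mathcal A^{\otimes n}$ into summands $\langle A\rangle_t^{\otimes a}\boxtimes A_+^{\otimes b}\boxtimes A_-^{\otimes c}$ and the transitivity of being a summand---but the underlying argument is the same.
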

	
	\begin{proof}
		The functor $F_{\mathcal A}$ maps $1\mapsto \mathcal A$. Any object in $\Rep(S_t\wr \mathbb Z_2)$, respectively, in $\Rep(S_{t_{\pm}}) $, is a direct summand of tensor powers of $A$, respectively $A_{\pm}$, and so the statement follows. 
	\end{proof}
	
	\begin{lemma}\label{2nd lemma}
		The unique extension
		\begin{align*}
			F_{\mathcal A}: \operatorname{UCob}_{\alpha, \beta, \gamma} \to \Rep(S_t \wr \mathbb Z_2) \boxtimes \Rep(S_{t_+}) \boxtimes \Rep(S_{t_-}),
		\end{align*}
		is surjective on Hom's.
	\end{lemma}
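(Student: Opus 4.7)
The plan is to follow the strategy used in Lemma \ref{lemma 2}. First, since $A_t$ is self-dual in $\Rep(S_t\wr \mathbb Z_2)$ by \cite[Appendix A]{M}, and $A_+, A_-$ are self-dual in $\Rep(S_{t_+}), \Rep(S_{t_-})$ respectively, their external product $\mathcal A$ is self-dual in the target category. Hence by rigidity, it suffices to prove that the maps
\begin{align*}
\Hom_{\PsUCob}(0,n) \xrightarrow{F_{\mathcal A}} \Hom_{\Rep(S_t\wr \mathbb Z_2) \boxtimes \Rep(S_{t_+}) \boxtimes \Rep(S_{t_-})}(\mathbb{1}, \mathcal A^{\otimes n})
\end{align*}
are surjective for every $n \geq 1$.

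Next I would expand $\mathcal A^{\otimes n}$ via the direct sum decomposition of $\mathcal A$, which, after using the braiding to reorder tensor factors, gives
\begin{align*}
\mathcal A^{\otimes n} \cong \bigoplus_{\varphi\colon \{1,\dots,n\} \to \{t,+,-\}} A_t^{\otimes a_{\varphi}} \boxtimes A_+^{\otimes b_{\varphi}} \boxtimes A_-^{\otimes c_{\varphi}},
\end{align*}
with $a_\varphi, b_\varphi, c_\varphi$ the cardinalities of the preimages. By the Hom-space formula for external tensor products, this reduces the problem to producing, for each triple $(a,b,c)$ with $a+b+c=n$, preimages of a spanning set of
\begin{align*}
\Hom_{\Rep(S_t\wr \mathbb Z_2)}(\mathbb{1}, A_t^{\otimes a}) \otimes \Hom_{\Rep(S_{t_+})}(\mathbb{1}, A_+^{\otimes b}) \otimes \Hom_{\Rep(S_{t_-})}(\mathbb{1}, A_-^{\otimes c}).
\end{align*}
Each such tensor factor is generated pseudo-abelianly by tensor products of connected maps. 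For the $A_t$ factor, Lemma \ref{surjective on connected} already shows that the connected maps $\mathbb{1}\to A_t^{\otimes m}$ are exhausted by $F_A(\xi^m_{\overline J})$ for $\overline J \in \mathcal R_m$. For each $A_\pm$ factor, a connected map $\mathbb{1}\to A_\pm^{\otimes m}$ in the Deligne category $\Rep(S_{t_\pm})$ corresponds to the unique single-block partition, namely $\Delta_{A_\pm}^{m-1} u_{A_\pm}$, which equals (up to a nonzero power of $\pm\sqrt{\lambda}$) the image $F_{\mathcal A}(\theta_m)$ or $F_{\mathcal A}(\theta_m[2])$, using $\theta_{A_\pm} = \pm\sqrt{\lambda}\, u_{A_\pm}$.

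To route such a generator into the correct direct summand of $\mathcal A^{\otimes n}$, I would use that the three summands of $\mathcal A$ are cut out by three mutually orthogonal idempotents $e_t, e_+, e_- \in \End(\mathcal A)$ which lift to idempotents $\tilde e_t, \tilde e_+, \tilde e_- \in \End_{\PsUCob}(1)$. Concretely these are polynomials in the ``cross'' endomorphism $y = m\circ(\theta \otimes \id)$ of the circle, since after imposing the handle relation $x = \lambda\,\id$ the commutative subalgebra $A_0 \subset \Hom_{\PsUCob}(0,1)$ generated by $u, \theta, \theta[2]$ is three-dimensional (cf.\ Example \ref{example 0 to 1}). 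Tensoring $\tilde e_t^{\otimes a} \otimes \tilde e_+^{\otimes b} \otimes \tilde e_-^{\otimes c}$ (permuted to the appropriate slots) with the connected generators above then yields an explicit preimage of any chosen spanning element, completing the proof. The main technical obstacle will be verifying that $\tilde e_+$ and $\tilde e_-$ are indeed distinct, i.e.\ that $y$ has three distinct eigenvalues on $A_0$; this is precisely where the hypothesis $\beta_0 \in \mathbf k^\times$ (equivalently $t_+\neq t_-$) and $\gamma_0^2 \neq \lambda\beta_0^2$ (equivalently $t_{\pm}\neq 0$) must enter, matching the non-vanishing of the inner-product determinant in Example \ref{example 0 to 1}.
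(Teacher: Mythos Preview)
Your proposal is correct and runs parallel to the paper's proof. Both arguments reduce via self-duality of $\mathcal A$ to surjectivity on $\Hom(0,n)\to\Hom(\mathbb 1,\mathcal A^{\otimes n})$, decompose the target along $\mathcal A=(A_t\boxtimes\mathbb 1\boxtimes\mathbb 1)\oplus(\mathbb 1\boxtimes A_+\boxtimes\mathbb 1)\oplus(\mathbb 1\boxtimes\mathbb 1\boxtimes A_-)$, invoke Lemma~\ref{surjective on connected} for the $A_t$ factor, and note that the single connected partition generates each $\Rep(S_{t_\pm})$ factor. The only organizational difference is how the three pure diagonal summands are separated: the paper computes $F_{\mathcal A}(\theta_m)$ and $F_{\mathcal A}(\theta_m[2])$ explicitly (using $\theta_{A_t}=0$ and $\theta_{A_\pm}=\pm\sqrt\lambda\,u_{A_\pm}$) and takes the two obvious linear combinations, whereas you package the same computation as composition with idempotents $\tilde e_t,\tilde e_\pm\in\End_{\SUCob}(1)$ written as polynomials in $y$. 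Since $\theta_m=(y\otimes\id^{\otimes(m-1)})\xi^m_{\overline\emptyset}$ and $\theta_m[2]=(y^2\otimes\id^{\otimes(m-1)})\xi^m_{\overline\emptyset}$, the two procedures literally coincide. Mixed summands are then reached in both proofs by tensoring.

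One correction to your final paragraph: the ``main technical obstacle'' is not where you place it. In $\SUCob$ the relation $y^3=\lambda y$ holds (combine relation~\eqref{3 crosscaps} with the handle relation $x=\lambda\id$), and its roots $0,\sqrt\lambda,-\sqrt\lambda$ are pairwise distinct as soon as $\lambda\in\mathbf k^\times$ in characteristic zero, both of which are standing assumptions. So the three idempotents exist unconditionally in the setup of Theorem~\ref{maintheorem2}. The extra condition $\gamma_0^2\ne\lambda\beta_0^2$ you cite from Example~\ref{example 0 to 1} governs non-degeneracy of the trace pairing on $\Hom_{\SUCob}(0,1)$ (hence absence of negligible morphisms there); it is not a hypothesis of Theorem~\ref{maintheorem2} and is not needed for the idempotents to exist or for surjectivity to hold.
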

	
	\begin{proof} Let $\mathcal C:=\Rep(S_t \wr \mathbb Z_2) \boxtimes \Rep(S_{t_+}) \boxtimes \Rep(S_{t_-})$.
		To show that $F_{\mathcal A}$ is surjective on morphisms, it is enough to check that the maps 
		\begin{align*}
			\Hom_{\UCob}(n,m)  \xrightarrow{F_{\mathcal A}} \Hom_{\mathcal C}( \mathcal  A^{\otimes n},\mathcal A ^{\otimes m}),
		\end{align*}
		induced by $F_{\mathcal A}$ are surjective for all $n,m\geq 1$. Since $\mathcal A$ is self-dual, it is enough to check surjectivity of the maps 
		\begin{align*}
			\Hom_{\UCob}(0,m)  \xrightarrow{F_{\mathcal A}} \Hom_{\mathcal C}( \mathbb 1,\mathcal A^{\otimes m}),
		\end{align*}
		for all $m\geq 1$.  Since $	\mathcal A= (A_t \boxtimes \mathbb 1 \boxtimes \mathbb 1) \oplus (\mathbb 1 \boxtimes A_+ \boxtimes \mathbb 1) \oplus (\mathbb 1 \boxtimes \mathbb 1 \boxtimes A_-)$, it follows it is enough to check that the direct summands  $\Hom_{\mathcal C}(\mathbb 1, A_t^{\otimes m}\boxtimes \mathbb 1 \boxtimes \mathbb 1)$, $\Hom_{\mathcal C}(\mathbb 1, \mathbb 1 \boxtimes A_+^{\otimes m}\boxtimes \mathbb 1)$ and $\Hom_{\mathcal C}(\mathbb 1, \mathbb 1 \boxtimes \mathbb 1\boxtimes  A_-^{\otimes m})$ of $\Hom_{\mathcal C}( \mathbb 1,\mathcal A^{\otimes m})$ are in the image of $F_{\mathcal A},$ for all $m\geq 1$. Then, all remaining summands will be in the image by induction. 
		
		We show surjectivity on $\Hom_{\mathcal C}(\mathbb 1, \mathbb 1 \boxtimes A_+^{\otimes m}\boxtimes \mathbb 1)$ and $\Hom_{\mathcal C}(\mathbb 1, \mathbb 1 \boxtimes \mathbb 1\boxtimes  A_-^{\otimes m})$ for all $m\geq 1$ first. Note that partitions of the form 
		\begin{align*}
			\begin{aligned}
				\resizebox{60pt}{!}{%
					\begin{tikzpicture}[block/.style={draw, rectangle, minimum height=0.5cm,fill=white}]
						\draw[fill=black] (0,0) circle (0.05cm and 0.05cm);
						\draw[fill=black] (0.5,0) circle (0.05cm and 0.05cm);
						\node at (1,0) {$\dots$};
						\draw[fill=black] (1.5,0) circle (0.05cm and 0.05cm);
						\draw  (0,0) to   (0.7,0);
						\draw  (1.3,0) to   (1.5,0);
						\node at (0.8,-0.5)[color=black][font=\large]{$\underbrace{\ \ \ \ \ \ \ \ \ \ \ \ \ }_k$};
				\end{tikzpicture}}
			\end{aligned}\  \ \ \text{for } k\leq m,
		\end{align*}
		generate $\Hom_{\Rep(S_{t_{\pm}})}(0,m)$. That is, any morphism in $\Hom_{\Rep(S_{t_{\pm}})}(0,m)$ is linear combination of tensor products of partitions of this form. So it is enough to show that these partitions are in the image of $F_{\mathcal A}$. Recall that $F_{\mathcal A}$ maps the structure maps of the circle object in $\PsUCob$ to those of $\mathcal A$. So
		\vspace{-0.5cm}
		\begin{align*}
			&	\theta_m \mapsto 0\oplus \lambda^{m-1/2} 
			\begin{aligned}
				\resizebox{50pt}{!}{%
					\begin{tikzpicture}[block/.style={draw, rectangle, minimum height=0.5cm,fill=white}]
						\node at (1,1) {};
						\draw[fill=black] (0,0) circle (0.05cm and 0.05cm);
						\draw[fill=black] (0.5,0) circle (0.05cm and 0.05cm);
						\node at (1,0) {$\dots$};
						\draw[fill=black] (1.5,0) circle (0.05cm and 0.05cm);
						\draw  (0,0) to   (0.7,0);
						\draw  (1.3,0) to   (1.5,0);
						\node at (0.8,-0.6)[color=black][font=\large]{$\underbrace{\ \ \ \ \ \ \ \ \ \ \ \ \ }_m$};
				\end{tikzpicture}}
			\end{aligned}\oplus (- \lambda^{m-1/2}  )
			\begin{aligned}
				\resizebox{50pt}{!}{%
					\begin{tikzpicture}[block/.style={draw, rectangle, minimum height=0.5cm,fill=white}]
						\node at (1,1) {};
						\draw[fill=black] (0,0) circle (0.05cm and 0.05cm);
						\draw[fill=black] (0.5,0) circle (0.05cm and 0.05cm);
						\node at (1,0) {$\dots$};
						\draw[fill=black] (1.5,0) circle (0.05cm and 0.05cm);
						\draw  (0,0) to   (0.7,0);
						\draw  (1.3,0) to   (1.5,0);
						\node at (0.8,-0.6)[color=black][font=\large]{$\underbrace{\ \ \ \ \ \ \ \ \ \ \ \ \ }_m$};
				\end{tikzpicture}}
			\end{aligned}
		\end{align*}
		and
			\vspace{-0.5cm}
		\begin{align*}
			&	\theta_m[2] \mapsto 0\oplus  \lambda^m 
			\begin{aligned}
				\resizebox{50pt}{!}{%
					\begin{tikzpicture}[block/.style={draw, rectangle, minimum height=0.5cm,fill=white}]
						\node at (1,1) {};
						\draw[fill=black] (0,0) circle (0.05cm and 0.05cm);
						\draw[fill=black] (0.5,0) circle (0.05cm and 0.05cm);
						\node at (1,0) {$\dots$};
						\draw[fill=black] (1.5,0) circle (0.05cm and 0.05cm);
						\draw  (0,0) to   (0.7,0);
						\draw  (1.3,0) to   (1.5,0);
						\node at (0.8,-0.6)[color=black][font=\large]{$\underbrace{\ \ \ \ \ \ \ \ \ \ \ \ \ }_m$};
				\end{tikzpicture}}
			\end{aligned}\oplus \ \lambda^m 
			\begin{aligned}
				\resizebox{50pt}{!}{%
					\begin{tikzpicture}[block/.style={draw, rectangle, minimum height=0.5cm,fill=white}]
						\node at (1,1) {};
						\draw[fill=black] (0,0) circle (0.05cm and 0.05cm);
						\draw[fill=black] (0.5,0) circle (0.05cm and 0.05cm);
						\node at (1,0) {$\dots$};
						\draw[fill=black] (1.5,0) circle (0.05cm and 0.05cm);
						\draw  (0,0) to   (0.7,0);
						\draw  (1.3,0) to   (1.5,0);
						\node at (0.8,-0.6)[color=black][font=\large]{$\underbrace{\ \ \ \ \ \ \ \ \ \ \ \ \ }_m$};
				\end{tikzpicture}}
			\end{aligned},
		\end{align*}
		for all $m\geq 1$.

		Hence 
			\vspace{-0.5cm}
		\begin{align*}
				\lambda^{-(m-1/2)} \theta_m  + \lambda^{-m} \theta_m[2] \mapsto 
			0\oplus  
			\begin{aligned}
				\resizebox{50pt}{!}{%
					\begin{tikzpicture}[block/.style={draw, rectangle, minimum height=0.5cm,fill=white}]
						\node at (1,1) {};
						\draw[fill=black] (0,0) circle (0.05cm and 0.05cm);
						\draw[fill=black] (0.5,0) circle (0.05cm and 0.05cm);
						\node at (1,0) {$\dots$};
						\draw[fill=black] (1.5,0) circle (0.05cm and 0.05cm);
						\draw  (0,0) to   (0.7,0);
						\draw  (1.3,0) to   (1.5,0);
						\node at (0.8,-0.6)[color=black][font=\large]{$\underbrace{\ \ \ \ \ \ \ \ \ \ \ \ \ }_m$};
				\end{tikzpicture}}
			\end{aligned}\oplus 0,\ \ \ \text{and} 
			\end{align*}
			\vspace{-0.5cm}
				\begin{align*}
				-\lambda^{-(m-1/2)} \theta_m  + \lambda^{-m} \theta_m[2] \mapsto 0\oplus  0\oplus  
			\begin{aligned}
				\resizebox{50pt}{!}{%
					\begin{tikzpicture}[block/.style={draw, rectangle, minimum height=0.5cm,fill=white}]
						\node at (1,1) {};
						\draw[fill=black] (0,0) circle (0.05cm and 0.05cm);
						\draw[fill=black] (0.5,0) circle (0.05cm and 0.05cm);
						\node at (1,0) {$\dots$};
						\draw[fill=black] (1.5,0) circle (0.05cm and 0.05cm);
						\draw  (0,0) to   (0.7,0);
						\draw  (1.3,0) to   (1.5,0);
						\node at (0.8,-0.6)[color=black][font=\large]{$\underbrace{\ \ \ \ \ \ \ \ \ \ \ \ \ }_m$};
				\end{tikzpicture}}
			\end{aligned}.
		\end{align*}
		Thus  we have surjectivity on $\Hom_{\mathcal C}(\mathbb 1, \mathbb 1\boxtimes A_{+}^{\otimes m}\boxtimes \mathbb 1)$ and $\Hom_{\mathcal C}(\mathbb 1, \mathbb 1\boxtimes \mathbb 1 \boxtimes A_-^{\otimes m}),$  for all $m\geq 1$.
		
		It remains to show that $F_{\mathcal A}$ is surjective on $\Hom_{\mathcal C}(\mathbb 1, A_t^{\otimes m}\boxtimes \mathbb 1 \boxtimes \mathbb 1)$. Consider the set $\{\xi_{\overline{J}}^m\}_{\overline J \in \mathcal R_n}$ as  in Definition \ref{xi definition}. Then $F_{\mathcal A}$ maps 
			\vspace{-0.5cm}
		\begin{align*}
			\xi_{\overline{J}}^m \mapsto F_{\overline{J}}^m  \oplus \lambda^{m-1} 
			\begin{aligned}
				\resizebox{50pt}{!}{%
					\begin{tikzpicture}[block/.style={draw, rectangle, minimum height=0.5cm,fill=white}]
						\node at (1,1) {};
						\draw[fill=black] (0,0) circle (0.05cm and 0.05cm);
						\draw[fill=black] (0.5,0) circle (0.05cm and 0.05cm);
						\node at (1,0) {$\dots$};
						\draw[fill=black] (1.5,0) circle (0.05cm and 0.05cm);
						\draw  (0,0) to   (0.7,0);
						\draw  (1.3,0) to   (1.5,0);
						\node at (0.8,-0.6)[color=black][font=\large]{$\underbrace{\ \ \ \ \ \ \ \ \ \ \ \ \ }_m$};
				\end{tikzpicture}}
			\end{aligned}\oplus \lambda^{m-1} 
		\begin{aligned}
		\resizebox{50pt}{!}{%
			\begin{tikzpicture}[block/.style={draw, rectangle, minimum height=0.5cm,fill=white}]
				\node at (1,1) {};
				\draw[fill=black] (0,0) circle (0.05cm and 0.05cm);
				\draw[fill=black] (0.5,0) circle (0.05cm and 0.05cm);
				\node at (1,0) {$\dots$};
				\draw[fill=black] (1.5,0) circle (0.05cm and 0.05cm);
				\draw  (0,0) to   (0.7,0);
				\draw  (1.3,0) to   (1.5,0);
				\node at (0.8,-0.6)[color=black][font=\large]{$\underbrace{\ \ \ \ \ \ \ \ \ \ \ \ \ }_m$};
		\end{tikzpicture}}
	\end{aligned},
		\end{align*}
		where $\{ F_{\overline{J}}^m\}$ is a basis for the subspace of connected maps $\mathbb 1 \to \langle A\rangle_t^{\otimes m}$ in $\Rep(S_t\wr \mathbb Z_2)$, see Lemma  \ref{surjective on connected} and Equation \eqref{image of xi under F}. Since we already know $F_{\mathcal A}$ is surjective on $\Hom_{\mathcal C}(\mathbb 1, \mathbb 1\boxtimes  A_{+}\boxtimes \mathbb 1)$ and $\Hom_{\mathcal C}(\mathbb 1, \mathbb 1\boxtimes  \mathbb 1 \boxtimes A_{+})$, this implies that $F_{\overline J}^n$ is in the image of $F_{\mathcal A}$, and it follows that $F_{\mathcal A}$ is surjective on $\Hom_{\mathcal C}(\mathbb 1, A_t\boxtimes \mathbb 1\boxtimes  \mathbb 1)$, as desired. 
	\end{proof}

		\begin{proof}[Proof of Theorem \ref{maintheorem2}]
		Consider the symmetric tensor functor 
		$$	\underline{F_{\mathcal A}}: \operatorname{UCob}_{\alpha, \beta, \gamma} \xrightarrow{F_{\mathcal A}} \Rep(S_t \wr \mathbb Z_2) \boxtimes \Rep(S_{t_+}) \boxtimes \Rep(S_{t_-})\to \underline{\Rep(S_t \wr \mathbb Z_2)} \boxtimes \underline{\Rep(S_{t_+})} \boxtimes \underline{\Rep(S_{t_-})},$$
		where $F_{\mathcal A}$ is as defined previously,  followed by the semisimplification functor. By Lemma \ref{2nd lemma}, $\underline{F_{\mathcal A}}$ satisfies the conditions of Proposition \ref{proposition 2}. Moreover, by Lemma \ref{1st lemma} $\underline{F_{\mathcal A}}$  is essentially surjective.  Hence $\underline{F_{\mathcal A}}$ induces an equivalence
		\begin{align*}
		\underline{	\operatorname{UCob}_{\alpha, \beta, \gamma} }\cong  \underline{\Rep(S_t \wr \mathbb Z_2)} \boxtimes \underline{\Rep(S_{t_+})} \boxtimes \underline{\Rep(S_{t_-})},
		\end{align*}
		as desired. 
	\end{proof}

		\begin{proposition}\label{Corollary II}
		Let $\alpha, \beta$ and $\gamma$ be sequences as in \eqref{sequences}. Suppose $\lambda\alpha_0-\gamma_0$ and $\gamma_0\pm \sqrt{\lambda}\beta_0$ are not non-negative even  integers.  If Conjecture \ref{conjecture for product} holds, we get an equivalence of tensor categories
		\begin{align*}
			\operatorname{UCob}_{\alpha, \beta, \gamma} \cong  \Rep(S_t \wr \mathbb Z_2) \boxtimes \Rep(S_{t_+}) \boxtimes \Rep(S_{t_-}).
		\end{align*}
	\end{proposition}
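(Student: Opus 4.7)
The plan is to mimic the proof of Corollary \ref{Corollary I}, replacing the orientable spanning set $\mathcal S_m$ by $\mathcal T_m$ and the dimension count of Corollary \ref{dimension of unoriented orientable} by Conjecture \ref{conjecture for product}.

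First I would convert the numerical hypothesis into semisimplicity: since $2t=\lambda\alpha_0-\gamma_0$ and $2t_{\pm}=\gamma_0\pm\sqrt{\lambda}\beta_0$, requiring these not to be non-negative even integers is exactly the condition $t,t_+,t_-\notin\mathbb Z_{\geq 0}$. By \cite[Proposition 5.5]{M} and \cite{D}, each of $\Rep(S_t\wr\mathbb Z_2)$, $\Rep(S_{t_+})$, $\Rep(S_{t_-})$ is then semisimple, so the exterior product
\begin{align*}
\mathcal C\;:=\;\Rep(S_t\wr\mathbb Z_2)\boxtimes\Rep(S_{t_+})\boxtimes\Rep(S_{t_-})
\end{align*}
is semisimple and coincides with its semisimplification. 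Combined with Theorem \ref{maintheorem2}, this gives a symmetric tensor equivalence $\underline{\operatorname{UCob}_{\alpha,\beta,\gamma}}\simeq\mathcal C$ under which tensor powers of the circle object correspond to $\mathcal A^{\otimes m}$.

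Next I would pin down $\dim\Hom_{\mathcal C}(\mathbb 1,\mathcal A^{\otimes m})$ as a combinatorial invariant equal to $|\mathcal T_m|$. For generic parameters $(\lambda,\alpha_0,\beta_0,\gamma_0)$ the Gram determinant of Conjecture \ref{conjecture for product} is nonzero, so $\mathcal T_m$ is a basis of $\Hom_{\operatorname{UCob}_{\alpha,\beta,\gamma}}(0,m)$ and hence $\dim\Hom_{\operatorname{UCob}_{\alpha,\beta,\gamma}}(0,m)=|\mathcal T_m|$. Choosing such generic parameters inside the semisimple locus $\{t,t_\pm\notin\mathbb Z_{\geq 0}\}$ and invoking the equivalence of the previous paragraph gives $\dim\Hom_{\mathcal C}(\mathbb 1,\mathcal A^{\otimes m})=|\mathcal T_m|$. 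Because the simple objects and their decomposition multiplicities in each of $\Rep(S_t\wr\mathbb Z_2)$ and $\Rep(S_{t_\pm})$ are controlled by combinatorial data that does not depend on the parameters throughout the semisimple locus, this Hom-dimension is locally constant in $(t,t_+,t_-)$, so the identity $\dim\Hom_{\mathcal C}(\mathbb 1,\mathcal A^{\otimes m})=|\mathcal T_m|$ persists at every admissible parameter value.

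Finally, fix $(\lambda,\alpha_0,\beta_0,\gamma_0)$ satisfying the hypotheses. Since $\mathcal T_m$ spans $\Hom_{\operatorname{UCob}_{\alpha,\beta,\gamma}}(0,m)$, the sandwich
\begin{align*}
|\mathcal T_m|=\dim\Hom_{\underline{\operatorname{UCob}_{\alpha,\beta,\gamma}}}(0,m)\;\leq\;\dim\Hom_{\operatorname{UCob}_{\alpha,\beta,\gamma}}(0,m)\;\leq\;|\mathcal T_m|
\end{align*}
forces equality and hence the absence of nonzero negligible morphisms $0\to m$; the argument for general $\Hom_{\operatorname{UCob}_{\alpha,\beta,\gamma}}(n,m)$ follows by the self-duality trick used in the proof of Lemma \ref{2nd lemma}. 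Therefore $\operatorname{UCob}_{\alpha,\beta,\gamma}$ coincides with its semisimplification, and Theorem \ref{maintheorem2} delivers the claimed equivalence. The main obstacle is of course Conjecture \ref{conjecture for product} itself; a secondary (but more tractable) point that merits careful justification is the local-constancy assertion on the semisimple locus, which should follow from the combinatorial parametrization of simples in each tensor factor.
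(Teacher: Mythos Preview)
Your proof is correct and follows the same template as the paper, which in turn models the argument on Corollary~\ref{Corollary I}; the paper's version is terser, simply asserting that Conjecture~\ref{conjecture for product} plus the hypothesis yields no negligible morphisms and then invoking semisimplicity of the target together with Theorem~\ref{maintheorem2}. Your generic-plus-constancy step makes explicit the passage from the conjecture (Gram determinant a nonzero polynomial) to the specific parameter values, which the paper leaves implicit.
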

	
	\begin{proof}
	 If Conjecture \ref{conjecture for product} holds, then for $\lambda, \alpha_0,\beta_0$ and $\gamma_0$ as in the statement there are no negligible morphisms in $\PsUCob$. 
		On the other hand, by \cite{M, D}  the categories $\Rep(S_t\wr \mathbb Z_2)$ and $\Rep(S_{t_{\pm}})$ are semisimple when $t=\frac{1}{2}(\lambda\alpha_0-\gamma_0)$ and $t_{\pm}= \frac{1}{2}(\gamma_0\pm \sqrt{\lambda}\beta_0)$ are not positive integers. Hence by Theorem \ref{maintheorem2} we conclude
		\begin{align*}
		\operatorname{UCob}_{\alpha, \beta, \gamma} \cong  \Rep(S_t \wr \mathbb Z_2) \boxtimes \Rep(S_{t_+}) \boxtimes \Rep(S_{t_-}).
	\end{align*}
	\end{proof}

	
	\bibliographystyle{plain}
	
\end{document}